\documentclass[a4paper, leqno, 10pt]{amsart}

\usepackage[utf8]{inputenc}
\usepackage[a4paper, left=90pt, right=90pt, bottom=100pt, top=100pt]{geometry}

\usepackage[american]{babel}
\usepackage{amsthm}
\usepackage{amssymb}
\usepackage{amsmath}
\usepackage[shortlabels]{enumitem}
\usepackage{mathrsfs}
\usepackage{url}

\usepackage{tikz} 
\usetikzlibrary{matrix,arrows,calc,decorations.pathreplacing,fit}
\usepackage{tikz-cd}

\numberwithin{equation}{section}

\newtheorem{theorem}{Theorem}[section]
\newtheorem{lemma}[theorem]{Lemma}
\newtheorem{corollary}[theorem]{Corollary}
\newtheorem{proposition}[theorem]{Proposition}

\theoremstyle{definition}
\newtheorem{definition}[theorem]{Definition}
\newtheorem{notation}[theorem]{Notation}
\theoremstyle{remark}
\newtheorem{remark}[theorem]{Remark}
\newtheorem{example}[theorem]{Example}
\newtheorem{claim}{Claim}

\setlength{\parskip}{6pt}

%-----Mathematical Operators-----

\DeclareMathOperator{\Ad}{Ad}

\DeclareMathOperator{\Aut}{Aut}

\DeclareMathOperator{\codim}{codim}
\DeclareMathOperator{\cha}{char}
\DeclareMathOperator{\Def}{Def}
\DeclareMathOperator{\defect}{def}
\DeclareMathOperator{\diag}{diag}
\DeclareMathOperator{\End}{End}

\DeclareMathOperator{\id}{id}
\DeclareMathOperator{\image}{im}

\DeclareMathOperator{\inv}{inv}
\DeclareMathOperator{\Gal}{Gal}
\DeclareMathOperator{\GL}{GL}
\DeclareMathOperator{\GSp}{GSp}
\DeclareMathOperator{\GU}{GU}
\DeclareMathOperator{\height}{ht}
\DeclareMathOperator{\Hom}{Hom}
\DeclareMathOperator{\Lie}{Lie}
\DeclareMathOperator{\Mor}{Mor}
\DeclareMathOperator{\Norm}{Norm}
\DeclareMathOperator{\M}{M}

\DeclareMathOperator{\PGL}{PGL}

\DeclareMathOperator{\spa}{span}
\DeclareMathOperator{\Spec}{Spec}
\DeclareMathOperator{\Spf}{Spf}

\DeclareMathOperator{\Res}{Res}

\DeclareMathOperator{\rank}{rk}
\DeclareMathOperator{\tr}{tr}
\DeclareMathOperator{\val}{val}
\DeclareMathOperator{\vol}{vol}

\def \Homfunc {\underline{\Hom}}
\def \Specfunc {\underline{\smash{\Spec}}}

\def \ad {\mathrm{ad}}

\def \dom {\mathrm{dom}}
\def \nr {\mathrm{nr}}

\def \mmax {\mathrm{max}}

\newcommand{\pot}[1]{ [\hspace{-0,17em}[ {#1} ]\hspace{-0,17em}] }
\newcommand{\rpot}[1]{ (\hspace{-0,23em}( {#1} )\hspace{-0,23em}) }

%-----Mathematical Macros-----

\def \AA {\mathbb{A}}

\def \CC {\mathbb{C}}

\def \FF {\mathbb{F}}
\def \GG {\mathbb{G}}

\def \NN {\mathbb{N}}

\def \QQ {\mathbb{Q}}
\def \RR {\mathbb{R}}

\def \XX {\mathbb{X}}

\def \ZZ {\mathbb{Z}}

\def \afr {\mathfrak{a}}

\def \gfr {\mathfrak{g}}

\def \nfr {\mathfrak{n}}

\def \Ascr {\mathscr{A}}
\def \Bscr {\mathscr{B}}

\def \Dscr {\mathscr{D}}
\def \Escr {\mathscr{E}}
\def \Fscr {\mathscr{F}}

\def \Iscr {\mathscr{I}}

\def \Mscr {\mathscr{M}}
\def \Nscr {\mathscr{N}}
\def \Oscr {\mathscr{O}}

\def \Rscr {\mathscr{R}}
\def \Sscr {\mathscr{S}}
\def \Tscr {\mathscr{T}}

\def \Vscr {\mathscr{V}}

\def \Yscr {\mathscr{Y}}

\def \Nbar {\bar{N}}

\def \Xbar {\bar{X}}
\def \Ybar {\bar{Y}}

\def \abar {\bar{a}}

\def \cbar {\bar{c}}

\def \hbar {\bar{h}}

\def \kbar {\bar{k}}

\def \sbar {\bar{s}}

\def \zbar {\bar{z}}

\def \Wtilde {\tilde{W}}

\def \wtilde {\tilde{w}}

\def \Gsf {\mathbf{G}}

\def \nbf {\mathbf{n}}

\def \FFbar {\overline{\mathbb{F}}}

\def \QQbar {\overline{\mathbb{Q}}}

\def \XXbar {\overline{\mathbb{X}}}

%-----More Macros-----

\def \BT    {{Barsotti-Tate group}}
\def \BTD   {{Barsotti-Tate group with $\Dscr$-structure}}
\def \BTDs  {{Barsotti-Tate groups with $\Dscr$-structure}}

\def \Lra   {\Leftrightarrow}
\def \lto   {\longrightarrow}
\def \mono  {\hookrightarrow}

\def \epi   {\twoheadrightarrow}

\def \isom  {\stackrel{\sim}{\rightarrow}}

\def \pair {\langle\,\, , \, \rangle}

\newcommand{\bigslant}[2]{{\raisebox{.2em}{$#1$}\hspace{-.3em}\left/ \hspace{-.2em}\raisebox{-.2em}{$#2$}\right.}}

%-----default-----

\def \Bsf  {\mathsf{B}}
\def \Osf  {\mathsf{O}}
\def \Vsf  {\mathsf{V}}
\def \Esf  {\mathsf{E}}
\def \Gsf  {\mathsf{G}}

\def \Aund {\underline{A}}
\def \Mund {\underline{M}}
\def \Xund {\underline{X}}
\def \Yund {\underline{Y}}
\def \XXund {{\underline{\mathbb{X}}}}
\def \omegaund {{\underline{\omega}}}
\def \mubar {\bar{\mu}}

\def \univ {{\rm univ}}
\def \cont {{\rm cont}}

\DeclareMathOperator{\nd}{ndim}
\DeclareMathOperator{\relht}{relht}
\DeclareMathOperator{\NP}{NP}
\DeclareMathOperator{\tc}{tc}
\DeclareMathOperator{\length}{length}

\newcommand{\lengthG}[2]{\length ([ {#1}, {#2}])}

\newcommand{\textsubscript}[1]{$_{\text{#1}}$}

\newenvironment{subenv} {\begin{enumerate}[(1), labelsep = 0.5em, leftmargin = 1.5em]} {\end{enumerate}}
\newenvironment{assertionlist} {\begin{enumerate}[(a), labelsep = 5pt, labelwidth = 13pt, leftmargin = 25pt, topsep = -3pt, parsep = 3pt ]} {\end{enumerate}}
\newenvironment{itemlist} {\begin{itemize}[itemsep=1ex, labelsep = 0.5em, leftmargin = 1.5 em]} {\end{itemize}}

\newenvironment{simplelist}{%
  \begin{list}{}
     {\setlength{\leftmargin}{10pt}
      \setlength{\rightmargin}{0pt}
      \setlength{\itemindent}{0pt}
      \setlength{\labelsep}{5pt}
      \setlength{\listparindent}{\parindent}
      \setlength{\parsep}{0pt}
      \setlength{\itemsep}{0pt}
      \setlength{\topsep}{0pt}}}
  {\end{list}}

\def \BTEL {{Barsotti-Tate group with EL structure}}
\def \BTPEL {{Barsotti-Tate group with PEL structure}}
\def \BTPELs {{Barsotti-Tate groups with PEL structure}}
\def \BTpEL {{Barsotti-Tate group with (P)EL structure}}
\def \BTpELs {{Barsotti-Tate groups with (P)EL structure}}

%-----Articleinfo----

\title[The Newton stratification on Shimura varieties of PEL type]{The geometry of Newton strata in the reduction modulo $p$ of Shimura varieties of PEL type}
\author[P. Hamacher]{by Paul Hamacher}

\begin{document}

 \maketitle
 
 \begin{abstract}
  In this paper we study the Newton stratification on the reduction of Shimura varieties of PEL type with hyperspecial level structure. Our main result is a formula for the dimension of Newton strata and the description of their closure, where the dimension formula was conjectured by Chai. As a key ingredient of its proof we calculate the dimension of some Rapoport-Zink spaces. Our result yields a dimension formula, which was conjectured by Rapoport (up to a minor correction).

  As an interesting application to deformation theory, we determine the dimension and closure  of Newton strata on the algebraisation of the deformation space of a \BTpEL . Our result on the closure of a Newton stratum generalises conjectures of Grothendieck and Koblitz.
 \end{abstract}

 \section{Introduction} \label{sect introduction}
 
 We fix a prime $p$ and denote by $\sigma$ the Frobenius automorphism over $\FF_p$ or $\QQ_p$ (where the latter is considered in an unramified field extension). We denote by $\breve\QQ_p := \widehat\QQ_p^{\nr}$ the completion of the maximal unramified extension of $\QQ_p$. Let $\Dscr$ be a PEL-Shimura datum unramified at $p$ as in \cite{kottwitz92}~ch.~5 such that the associated linear algebraic group $\Gsf$ is connected. We denote by $\Ascr_0$ the reduction modulo $p$ of the associated moduli space defined by Kottwitz in \cite{kottwitz92}. The points of $\Ascr_0$ correspond to abelian varieties equipped with polarisation, endomorphisms and level structure. The Newton stratification is the stratification corresponding to the isogeny class of \BT s (with endomorphisms and polarisation) $\Aund[p^\infty] = (A[p^\infty],\lambda_{|A[p^\infty]},\iota_{|A[p^\infty]})$ of points $\Aund = (A,\lambda,\iota,\eta)$ of $\Ascr_0$.

 We call \BT s with additional structure induced by $\Dscr$ ``{\BTDs}'' (for a more precise definition see section~\ref{ss preliminaries}). By Dieudonn\'e theory their isogeny classes correspond to a certain finite subset $B(\Gsf_{\QQ_p},\mu)$ of the set $B(\Gsf_{\QQ_p})$ of $\sigma$-conjugacy classes in $G(\breve\QQ_p)$. For $b \in B(\Gsf_{\QQ_p},\mu)$ denote by $\Ascr_0^{b}$ the associated Newton stratum of  $\Ascr_0$. Viehmann and Wedhorn have shown in \cite{VW13}, Thm.~11.1 that $\Ascr_0^{b}$ is always non-empty.

 The set $B(\Gsf_{\QQ_p})$ is equipped with a partial order, which is given in group theoretic terms. In the ``classical'' case of \BT s without additional structure, i.e.\ $\Gsf_{\QQ_p} = \GL_n$, we have the following description of this order. By a result of Dieudonn\'e, the set $B(\Gsf_{\QQ_p})$ equals the set of (concave) Newton polygons over $[0,n]$. Then $b' \leq b$ iff the polygons have the same endpoint and $b$ lies above $b'$ (for more details see section~\ref{ss sigma conjugacy}). It is known that the closure of $\Ascr_0^{b}$ in $\Ascr_0$ is contained in $\Ascr_0^{\leq b} := \bigcup_{b' \leq b} \Ascr_0^{b'}$ by a theorem of Rapoport and Richartz (\cite{RR96}~Thm.~3.6). Their result generalises of Grothendieck's specialisation theorem which states that (concave) Newton polygons only ``go down'' under specialisation.

 \subsection{The main results}

 The primary goal of this paper is the following theorem.

 \begin{theorem} \label{thm dimension shimura}
  \begin{subenv}
   \item $\Ascr_0^{\leq b}$ is equidimensional of dimension
   \begin{equation} \label{term dimension shimura}
    \langle \rho, \mu+\nu(b) \rangle - \frac{1}{2} \defect (b)
   \end{equation}
   where $\rho$ denotes the half-sum of (absolute) positive roots of $\Gsf$, $\mu$ is the cocharacter induced by $\Dscr$ and $\nu(b)$ and $\defect (b)$ denote the Newton point resp.\ the defect of $b$ (cf.~section~\ref{ss sigma conjugacy}).
   \item $\Ascr_0^{\leq b}$ is the closure of $\Ascr_0^{b}$ in $\Ascr_0$.
  \end{subenv}
 \end{theorem}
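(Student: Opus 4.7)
The plan is to deduce Theorem~\ref{thm dimension shimura} by combining three independent ingredients: (i) Oort's almost product structure, expressing each Newton stratum---up to a finite surjective map and purely inseparable phenomena---as a product of a central leaf $C_b$ and (the perfection of) the Rapoport-Zink space $\Mcal_b$ attached to $b$; (ii) Chai's dimension formula for central leaves; and (iii) a new dimension formula for $\Mcal_b$, which (with a small correction) confirms Rapoport's conjecture. Together with a deformation-theoretic input, these yield both assertions of the theorem.

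\textbf{Dimension of a single stratum.} Via the almost product structure the dimension computation reduces to
\[
  \dim \Ascr_0^b \;=\; \dim C_b \,+\, \dim \Mcal_b .
\]
Chai's formula computes $\dim C_b$ in terms of $\nu(b)$, and I would prove
\[
  \dim \Mcal_b \;=\; \langle \rho, \mu - \nu(b)\rangle - \tfrac{1}{2}\defect(b).
\]
Substituting these gives the expression~\eqref{term dimension shimura}. Equidimensionality of each $\Ascr_0^b$ follows because both factors are equidimensional.

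\textbf{Closure and equidimensionality of $\Ascr_0^{\leq b}$.} The inclusion $\overline{\Ascr_0^b} \subseteq \Ascr_0^{\leq b}$ is the specialisation theorem of Rapoport-Richartz. For the converse, I would argue locally: at any point $x \in \Ascr_0^{b'}$ with $b' \leq b$, analysing the Newton stratification on the equicharacteristic deformation space $\Def(\Aund_x[p^\infty])$ of the \BTD\ at $x$ should show that $x$ lies in the closure of the $b$-stratum. This is precisely a PEL generalisation of the Grothendieck-Koblitz conjecture, and coincides with the second main (deformation-theoretic) result advertised in the abstract. Serre-Tate theory transports this statement from the local model to $\Ascr_0$, yielding $\Ascr_0^{\leq b} \subseteq \overline{\Ascr_0^b}$. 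Combined with part~(1) and the observation that $\dim \Ascr_0^{b'} < \dim \Ascr_0^{b}$ whenever $b' < b$, this gives equidimensionality of $\Ascr_0^{\leq b}$ with the stated dimension.

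\textbf{Main obstacle.} The decisive step is the dimension formula for $\Mcal_b$, where only partial results were previously available. I expect the bulk of the work to lie in a group-theoretic analysis of Kottwitz's affine Deligne-Lusztig varieties $X_\mu(b)$ in the affine Grassmannian, together with a careful comparison between $X_\mu(b)$ and the moduli-theoretic $\Mcal_b$. The deformation-theoretic half for part~(2) is of independent interest but, once the local Newton stratification is understood, assembles with the foliation structure in a now-standard way.
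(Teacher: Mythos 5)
Your skeleton for the dimension computation (Mantovan's almost product structure, a formula for the dimension of central leaves, and a dimension formula for the Rapoport--Zink space) is the same as the paper's, but the proposal has two genuine gaps, one of which is the heart of the matter.

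First, the closure relation. Your plan is to prove $\Ascr_0^{\leq b}\subseteq\overline{\Ascr_0^{b}}$ by analysing the Newton stratification on the deformation space $\Def(\Aund_x[p^\infty])$ and transporting the result via Serre--Tate. In this paper that argument is circular: the structure of the Newton stratification on $\Sscr_{\Xund}$ (Theorem~\ref{thm dimension deformation}, equivalently the Grothendieck--Koblitz generalisation) is itself \emph{deduced} from Theorem~\ref{thm dimension shimura} via Proposition~\ref{prop serre-tate} and Proposition~\ref{prop relation}; no independent local proof is given or sketched, and "analysing the deformation space should show $x$ lies in the closure" is precisely the statement to be proved. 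The missing idea is the purity mechanism: one generalises Yang's improvement of de Jong--Oort purity to \BTpELs\ (Proposition~\ref{prop yang generalised}, via the comparison of the Newton and Newton-polygon stratifications and the group-theoretic description of break points), and then runs an induction on $\lengthG{\nu}{\mu}$ (Proposition~\ref{prop stratification}, following Viehmann's equal-characteristic argument): purity forces each $\Ascr_0^{\leq\nu'}$ to have codimension at most $1$ in $\Ascr_0^{\leq\nu}$ along a chain, while the dimension \emph{upper bound} forces the codimension to be at least $1$; together these give the exact dimension, the equidimensionality of $\Ascr_0^{\leq b}$, and the density of $\Ascr_0^{b}$ all at once. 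Note also that equidimensionality of the closed stratum $\Ascr_0^{\leq b}$ cannot be read off from the product structure of the open strata; it requires exactly this density statement.

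Second, the logical order of the Rapoport--Zink input. You propose to establish the exact equality $\dim\Mcal_b=\langle\rho,\mu-\nu(b)\rangle-\tfrac12\defect(b)$ first and substitute it in. The paper only proves the \emph{upper bound} directly (by reduction to the superbasic EL case through the parabolic correspondence of sections~\ref{sect BT again}--\ref{sect reduction to superbasic}); the equality in general is recovered a posteriori from the already-proved dimension of $\Ascr_0^{b}$ via Corollary~\ref{cor dimension of central leaves}(3). If you insist on proving the exact RZ dimension first you must supply a lower bound for $\dim\Mscr_G(b,\mu)$ in the non-superbasic case, which your sketch does not address; with the purity argument in place, only the upper bound is needed and this difficulty disappears. (A minor point: the central-leaf dimension $\langle 2\rho,\nu(b)\rangle$ is obtained here not from Chai but from the Ekedahl--Oort theory of Viehmann--Wedhorn, by exhibiting a fundamental, hence minimal, element in each isogeny class and computing the length of the corresponding EO stratum.)
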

 In the case of the Siegel moduli variety this theorem was proven by Oort (cf.~\cite{oort00}~Cor.~3.5).

 In the general PEL-case the dimension formula (\ref{term dimension shimura}) proves a conjecture of Chai. In \cite{chai00}~Question~7.6 he conjectured a formula for the codimension of $\Ascr_0^b$ using his notion of chains of Newton points. We prove the equivalence of his conjecture and our dimension formula in section~\ref{ss chains}.

 The most important ingredient of the proof of the above theorem is the following result, which is also interesting in its own right.
 \begin{theorem} \label{thm dimension RZ-space}
  Let $\Mscr_G(b,\mu)$ be the underlying reduced subscheme of the Rapoport-Zink space associated to an unramified Rapoport-Zink datum (cf.\ Def.~\ref{def RZ datum}).
  \begin{subenv}
   \item The dimension of $\Mscr_G(b,\mu)$ equals
   \begin{equation} \label{term dimension RZ-space}
    \langle \rho, \mu - \nu_G(b) \rangle - \frac{1}{2} \defect_G (b)
   \end{equation}
   \item If $b$ is superbasic then the connected components of $\Mscr_G (b,\mu)$ are projective.
  \end{subenv}
 \end{theorem}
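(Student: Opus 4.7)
The approach is to pass from the Rapoport-Zink space to its underlying combinatorial model. Via the Rapoport-Zink uniformization and classical (covariant) Dieudonné theory, the set of $\bar\FF_p$-points of $\Mscr_G(b,\mu)^{\mathrm{red}}$ is, up to the free action of $J_b(\QQ_p)$, in bijection with the affine Deligne-Lusztig variety
\[
  X_\mu(b) = \{ g \in G(\breve\QQ_p)/G(\breve\ZZ_p) : g^{-1} b \sigma(g) \in G(\breve\ZZ_p)\mu(p)G(\breve\ZZ_p) \}.
\]
Because $J_b(\QQ_p)$ acts discretely and the quotient is finite-dimensional, (1) and (2) are equivalent to the corresponding statements for $X_\mu(b)$, where (\ref{term dimension RZ-space}) is exactly Rapoport's conjectural dimension formula for affine Deligne-Lusztig varieties.

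The proof then proceeds by induction on the semisimple rank of $G$, using the Hodge-Newton decomposition as the main reduction mechanism. If $(b,\mu)$ is Hodge-Newton decomposable with respect to a standard Levi $M \subsetneq G$, then by the Hodge-Newton decomposition for Rapoport-Zink spaces (established by Mantovan in the PEL setting and extended by Shen and Kim) there is a natural closed immersion $\Mscr_M(b,\mu) \hookrightarrow \Mscr_G(b,\mu)$ whose translates under $J_b(\QQ_p)$ cover the target. Since $\mu - \nu_G(b)$ lies in the coroot space of $M$, one checks $\langle \rho_G, \mu - \nu_G(b) \rangle = \langle \rho_M, \mu - \nu_M(b) \rangle$ and $\defect_G(b) = \defect_M(b)$, so the right-hand side of (\ref{term dimension RZ-space}) is preserved. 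After finitely many such reductions, and after decomposing $G^{\mathrm{ad}}$ into simple factors, one is reduced to the case that $G$ is simple and $b$ is superbasic.

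In the superbasic case $G$ is forced to be of the form $\Res_{F/\QQ_p} \GL_n$, with $\nu(b)$ of denominator $n$, since superbasic $\sigma$-conjugacy classes in unramified simple groups exist only for this type. Here $X_\mu(b)$ admits an explicit description going back to Viehmann for minuscule $\mu$ and extended to general $\mu$: upon fixing a suitable chain of lattices in the isocrystal, $X_\mu(b)$ is stratified by classical Deligne-Lusztig varieties sitting inside a partial flag variety attached to a parabolic of $\GL_n$. These Deligne-Lusztig varieties have exactly the predicted dimension, and their closures are projective, which yields (1) and (2) simultaneously.

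The principal obstacle is carrying out the Hodge-Newton step at the moduli-theoretic rather than purely group-theoretic level, so that the reduction preserves the reduced scheme structure and is compatible with the polarization datum; in particular one must verify that the slope filtration lifts to a filtration of the universal $p$-divisible group (with $\Dscr$-structure) over $\Mscr_G(b,\mu)^{\mathrm{red}}$ on the stratum where $(b,\mu)$ is decomposable. Once this is in place, the superbasic case reduces to a concrete computation with Deligne-Lusztig varieties of type $A$, and bookkeeping of $\langle\rho,\cdot\rangle$ and $\defect$ under restriction to Levi subgroups.
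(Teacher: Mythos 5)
Your reduction to the superbasic case does not work as stated. The Hodge--Newton decomposition applies only when $(b,\mu)$ is Hodge--Newton decomposable with respect to a proper standard Levi $M$, which requires in particular that $M$ contain the centralizer $M_{\nu(b)}$ of the Newton point and that $\kappa_M(b)$ equal the image of $\mu$ in $\pi_1(M)_\Gamma$. For any \emph{basic} but not superbasic $b$ (e.g.\ $G=\GL_4$, $\mu=(1,1,0,0)$, $\nu(b)=(\tfrac12,\tfrac12,\tfrac12,\tfrac12)$) one has $M_{\nu(b)}=G$, so no proper Levi works and your induction stalls before reaching the superbasic case; the same happens for many non-basic pairs. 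The paper instead uses parabolic induction in the style of \cite{GHKR06}: a correspondence $\Mscr_M(b,\mu)\leftarrow\Mscr_P(b,\mu)\rightarrow\Mscr_G(b,\mu)$ where $M$ is chosen so that $b$ becomes superbasic in $M$ — this exists for \emph{every} $b$ — but the price is that $p_M$ has positive-dimensional fibres, and computing their dimension (Proposition~\ref{prop fibre dimension}, via the ``numerical dimension'' machinery of sections~\ref{sect numerical dimension}--\ref{sect fibre dimension}) is the technical heart of the proof. In the Hodge--Newton decomposable case these fibres are points, which is precisely why that special case cannot see the general difficulty. You would also need to justify the equality $\defect_G(b)=\defect_M(b)$ and the matching of $\langle\rho,\cdot\rangle$-terms in the non-HN setting, where $\mu-\nu(b)$ need not lie in the coroot space of $M$; the paper absorbs the discrepancy into the fibre dimension.

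Your description of the superbasic case is also not the right model. For superbasic $b$ in $\Res_{F/\QQ_p}\GL_h$ the space $\Mscr_G(b,\mu)^0$ is \emph{not} stratified by classical Deligne--Lusztig varieties in a partial flag variety; it is decomposed into finitely many locally closed strata $\Sscr_A$ indexed by (normalized small) EL-charts $A$, each isomorphic to an affine space $\AA^{\Vscr_A}$ (Proposition~\ref{prop decomposition}, following Viehmann), and the dimension formula reduces to the combinatorial maximization $\max_A\#\Vscr_A=\sum_i\lfloor\langle\omega_i,\mu-\nu\rangle\rfloor$. Projectivity of the components then follows from quasi-compactness of this finite decomposition together with \cite{RZ96}~Cor.~2.31, not from properness of Deligne--Lusztig closures. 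Finally, note that one does not quotient by $J_b(\QQ_p)$: the $k$-points of $\Mscr_G(b,\mu)$ \emph{are} the affine Deligne--Lusztig set, on which $J_b(\QQ_p)$ merely acts.
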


 The dimension formula for $\Mscr_G(b,\mu)$ coincides with the formula conjectured by Rapoport in \cite{rapoport05}, p.296 up to a minor correction. See also Remark~\ref{rem dimension formula}.

 Theorem~\ref{thm dimension RZ-space} is already known in the case of moduli spaces of \BT s without endomorphism structure by results of Viehmann (\cite{viehmann08}, \cite{viehmann08b}). Moreover, the dimension formula (\ref{term dimension RZ-space}) is known to hold for some affine Deligne-Lusztig varieties, which are a function field analogue of Rapoport-Zink spaces. The dimension formula was proved for affine Deligne-Lusztig varieties in the affine Grassmannian of split groups in \cite{GHKR06} and \cite{viehmann06}, this proof was generalized to the case of unramified groups in \cite{hamacher}. 

 \subsection{Application to deformation theory} 
 Let $\Xund$ be a {\BTD} over a perfect field $k_0$ of characteristic $p$. We denote by $\Def(\Xund)$ the deformation functor of $\Xund$. It is known that $\Def(\Xund)$ is representable, we denote by $\Sscr_{\Xund}$ its algebraisation. By a result of Drinfeld there exists a (unique) algebraisation of the universal deformation of $\Xund$ to a {\BTD} over $\Sscr_{\Xund}$ (for more details see section~\ref{ss deformation}). This induces a Newton stratification on $\Sscr_{\Xund}$ for which we use the analogous notation as above. We derive the following theorem from Theorem~\ref{thm dimension shimura} by using a Serre-Tate argument (see section~\ref{ss comparision}).

 \begin{theorem} \label{thm dimension deformation}
   Denote by $b_0$ the isogeny class of $\Xund$ and let $b \in B(\Gsf_{\QQ_p},\mu)$ with $b \geq b_0$.
  \begin{subenv}
   \item $\Sscr_{\Xund}^{\leq b}$ is equidimensional of dimension
   \[
    \langle \rho_{\Gsf}, \mu+\nu(b) \rangle - \frac{1}{2} \defect (b).
   \]
   \item $\Sscr_{\Xund}^{\leq  b }$ is the closure of $\Sscr_{\Xund}^{b}$ in $\Sscr_{\Xund}$.
  \end{subenv}
 \end{theorem}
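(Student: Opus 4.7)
The plan is to derive Theorem~\ref{thm dimension deformation} from Theorem~\ref{thm dimension shimura} by means of Serre--Tate theory. First, fix a closed point $x \in \Ascr_0^{b_0}(\bar k_0)$ whose associated {\BTD} is isomorphic to $\Xund$; such a point exists because the stratum $\Ascr_0^{b_0}$ is non-empty (\cite{VW13}, Thm.~11.1). The Serre--Tate theorem, applied in the PEL setting, yields a canonical isomorphism $\widehat{\Ascr_0}_x \isom \Def(\Xund)$ between the formal completion of the Shimura variety at $x$ and the formal deformation space. Passing to Drinfeld's algebraisation identifies the formal completion of $\Sscr_{\Xund}$ at the closed point with $\widehat{\Ascr_0}_x$. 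Since the Newton stratification is determined by the isogeny class of the universal {\BTD}, and the two universal families correspond under this identification, under the identification the stratum $\Sscr_{\Xund}^{\leq b}$ (resp.\ $\Sscr_{\Xund}^{b}$) pulls back from $\Ascr_0^{\leq b}$ (resp.\ $\Ascr_0^b$).

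For part~(1), the local ring $\Ocal_{\Ascr_0,x}$ is excellent (being essentially of finite type over a field), so its completion preserves equidimensionality and dimension. Combined with Theorem~\ref{thm dimension shimura}(1), the pullback of $\Ascr_0^{\leq b}$ to $\widehat{\Ascr_0}_x$ is equidimensional of dimension $\langle \rho_{\Gsf}, \mu + \nu(b)\rangle - \frac{1}{2}\defect(b)$. Since $\Sscr_{\Xund}^{\leq b}$ and its formal completion at the closed point share dimension and irreducible components, (1) follows.

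For part~(2), we combine (1) with the strict monotonicity $d_{b'} < d_b$ of the Kottwitz dimension function $d_b := \langle \rho_{\Gsf}, \mu + \nu(b)\rangle - \frac{1}{2}\defect(b)$ for $b_0 \leq b' < b$ in $B(\Gsf_{\QQ_p},\mu)$. Applying (1) to each such $b'$ shows that $\bigcup_{b_0 \leq b' < b} \Sscr_{\Xund}^{b'}$ is a closed subset of $\Sscr_{\Xund}^{\leq b}$ of dimension strictly less than $d_b$. By the equidimensionality of $\Sscr_{\Xund}^{\leq b}$ established in (1), its complement $\Sscr_{\Xund}^{b}$ is therefore dense in $\Sscr_{\Xund}^{\leq b}$, yielding (2).

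The principal technical obstacle is to make the Serre--Tate/algebraisation identification rigorous at the level of Newton stratifications, namely to verify that the universal {\BTD} on $\Sscr_{\Xund}$ obtained via Drinfeld's algebraisation corresponds to the restriction of the universal family on $\Ascr_0$, so that the pullback of the closed stratum $\Ascr_0^{\leq b}$ matches the formal completion of $\Sscr_{\Xund}^{\leq b}$ at the closed point. Once this compatibility is in place, the dimension computation and closure statement transfer formally from Theorem~\ref{thm dimension shimura} via excellence of the local ring.
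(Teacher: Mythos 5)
Your strategy is the same as the paper's (Proposition~\ref{prop relation}): realise $\Xund$ as the Barsotti--Tate group of a point $x$ of a PEL Shimura variety, identify $\Sscr_{\Xund}$ with $\Spec\widehat\Oscr_{\Ascr_0,x}$ compatibly with the universal families via Serre--Tate and Drinfeld's algebraisation (this is exactly Proposition~\ref{prop serre-tate}, so the ``principal technical obstacle'' you flag is already resolved in the paper), and transfer Theorem~\ref{thm dimension shimura} through the excellent completion. Your derivation of (2) from (1) via strict monotonicity of $d_b$ is a small variant of the paper's route (which simply pulls back the closure relation of Theorem~\ref{thm dimension shimura}(2)); it is correct, since $d_{b'}=2\langle\rho,\mu\rangle-\lengthG{\nu(b')}{\mu}$ by Corollary~\ref{cor dimension formulas} and the length strictly increases as $b'$ decreases.

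One genuine slip: the existence of a point $x$ whose associated {\BTD} is \emph{isomorphic} to $\Xund$ does not follow from the non-emptiness of the Newton stratum (\cite{VW13}~Thm.~11.1), which only produces a point with isogenous Barsotti--Tate group; an isogeny would not identify $\Def(\Xund)$ with $\widehat\Oscr_{\Ascr_0,x}$. What you need is \cite{VW13}~Thm.~10.2 (every isomorphism class of {\BTDs} occurs on $\Ascr_0$, equivalently non-emptiness of central leaves), which is what the paper invokes. You should also record the harmless reductions the paper makes first: base change to $\kbar_0$ (the statement is insensitive to this), and the remark that the deformation functor is unchanged when the polarisation is taken $\ZZ_p^\times$-homogeneous, so that $\Xund$ genuinely carries a $\Dscr'$-structure in the sense of Proposition~\ref{prop PEL vs D}.
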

 In the case of \BT s without additional structure and for polarised \BT s (without endomorphism structure) this was also proven by Oort (\cite{oort00}~Thm.~3.2,~3.3).

 We give an intrinsic definition of the \BT s with additional structure which coincides with $\Dscr'$-structure for a suitable PEL-Shimura datum $\Dscr'$ (and thus Theorem~\ref{thm dimension deformation} applies). We show that one basically has to exclude case D in Kottwitz's notation and hence call these groups of type (AC) (for more details, see section~\ref{ss PEL vs D}).

 \subsection{Conjectures of Grothendieck and Koblitz}
 Let $\Xund_0$ und $\Xund_\eta$ be two {\BTEL} or PEL structure of type (AC). We say that  $\Xund_0$ is the specialisation of $\Xund_\eta$ if there exists an integral local scheme $S$ of characteristic $p$ and a {\BTpEL} $\Xund$ over $S$ which has generic fibre $\Xund_\eta$ and special fibre $\Xund_0$.

 Now assume that $\Xund_0$ is a specialisation of $\Xund_\eta$ and denote by $b_0$ and $b$ their respective isogeny classes. Then \cite{RR96}~Thm.~3.6 states that $b_0 \leq b$. In the case of \BT s without additional structure this is a result of Grothendieck known as Grothendieck's specialisation theorem. Grothendieck conjectured in a letter to Barsotti (see e.g.\ the appendix of \cite{grothendieck74}) that the converse of his specialisation theorem also holds true. He writes ``The necessary conditions (1) (2) that $G'$ is a specialisation of $G$ are also sufficient. In other words, taking the formal modular deformation in char.~$p$ (over a modular formal variety $S$ [...]) and the BT group $G$ over $S$ thus obtained, we want to know if for every sequence of rational numbers $(\lambda_i)_i$ which satisfies (1) and (2), these numbers occur as the sequence of slopes of a fibre of $G$ at some point $S$.'' Here he considers the isogeny class $b$ of a {\BT} via the family of the slopes of their Newton polygon and the conditions (1) and (2) reformulate to $b_0 \leq b$ where $b_0$ denotes the isogeny class of $G'$.

 The following generalisation follows from Theorem~\ref{thm dimension deformation}, as it is a reformulation of the non-emptiness of Newton strata. In particular, it was already shown in the case of \BT s without additional structure and for polarised \BT s by Oort (\cite{oort00}, Thm.~6.2, Thm.~6.3).

 \begin{proposition}
  Let $\Xund$ be a {\BTEL} or PEL structure of type (AC) and let $b_0$ denote its isogeny class. For any isogeny class $b$ with $b \geq b_0$ there exists a deformation of $\Xund$ which has generically isogeny class $b$.
 \end{proposition}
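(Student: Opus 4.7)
The plan is to deduce the proposition directly from Theorem~\ref{thm dimension deformation}, as the author indicates. The crux is to extract from the non-emptiness and density statements about Newton strata on $\Sscr_{\Xund}$ an actual family over an integral local base.

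First I would observe that the closed point $s_0 \in \Sscr_{\Xund}$ corresponds to $\Xund$ itself and hence lies in $\Sscr_{\Xund}^{b_0}$; since $b \geq b_0$, this gives $s_0 \in \Sscr_{\Xund}^{\leq b}$, so $\Sscr_{\Xund}^{\leq b}$ is non-empty. By Theorem~\ref{thm dimension deformation}~(2), $\Sscr_{\Xund}^{\leq b}$ is the closure of $\Sscr_{\Xund}^{b}$ in $\Sscr_{\Xund}$, so $\Sscr_{\Xund}^{b}$ is itself non-empty and open dense in $\Sscr_{\Xund}^{\leq b}$.

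Next I would choose an irreducible component $Z$ of $\Sscr_{\Xund}^{\leq b}$ passing through $s_0$; this exists because $\Sscr_{\Xund}$ is a local scheme, so every non-empty closed subset contains its unique closed point, and in particular every irreducible component of $\Sscr_{\Xund}^{\leq b}$ contains $s_0$. Endowing $Z$ with its reduced subscheme structure makes it integral, and the density of $\Sscr_{\Xund}^{b}$ in $\Sscr_{\Xund}^{\leq b}$ combined with the irreducibility of $Z$ forces the generic point $\eta_Z$ of $Z$ to lie in $\Sscr_{\Xund}^{b}$ (otherwise the open dense set $\Sscr_{\Xund}^{b}$ would miss $Z$ entirely, contradicting that $Z$ is one of the irreducible components whose union is the closure $\Sscr_{\Xund}^{\leq b}$).

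Finally, I would take $S := \Spec \Ocal_{Z, s_0}$, which is an integral local scheme of characteristic $p$ with closed point $s_0$ and whose generic point maps to $\eta_Z$. Pulling back the universal {\BTpEL} on $\Sscr_{\Xund}$ along the canonical morphism $S \to \Sscr_{\Xund}$ yields a {\BTpEL} $\Xund_S$ whose special fibre is $\Xund$ (by the universal property of the algebraised deformation space) and whose generic fibre has isogeny class $b$ (since $\eta_Z \in \Sscr_{\Xund}^{b}$). This $\Xund_S$ is the required deformation. The real obstacle in the argument — namely the non-emptiness and density of $\Sscr_{\Xund}^{b}$ inside $\Sscr_{\Xund}^{\leq b}$ — is entirely absorbed into Theorem~\ref{thm dimension deformation}~(2); the remaining passage from a point of the stratum to a deformation over an irreducible component through the closed point is routine.
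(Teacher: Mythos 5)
Your argument is correct and is exactly the unwinding the paper intends: the author dismisses the proposition as ``a reformulation of the non-emptiness of Newton strata'' following from Theorem~\ref{thm dimension deformation}, and you supply the routine details (non-emptiness of $\Sscr_{\Xund}^{b}$ from the closure relation, passage to an irreducible component through the closed point of the local scheme $\Sscr_{\Xund}$, and pullback of the universal object). The only cosmetic remark is that one could equally take the reduced closure of a single point of $\Sscr_{\Xund}^{b}$ instead of a whole irreducible component of $\Sscr_{\Xund}^{\leq b}$; both yield the required integral local base.
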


 Motivated by Grothendieck's conjecture, Koblitz conjectured in \cite{koblitz75}~p.~211 that ``all totally ordered sequences of Newton polygons can be realized by successive specialisations of principally polarised abelian varieties''. In other words, if $\Ascr_0$ is the Siegel moduli space and $b_1 > \ldots > b_h$ is a chain of isogeny classes (or equivalently a chain of symmetric Newton polygons with slopes between $0$ and $1$) then
 \[
  \overline{\Ascr_0^{b_1} \cap \overline{\Ascr_0^{b_2} \cap \ldots}} \cap \Ascr_0^{b_h} \not= \emptyset
 \]
 The second assertion of Theorem~\ref{thm dimension shimura} implies that the analogue holds for arbitrary Shimura varieties of PEL-type $\Ascr_0$, as the left hand side equals $\Ascr_0^{b_h}$.

 \subsection{Overview}
 The proof of Theorem~\ref{thm dimension shimura} follows an idea of Viehmann. By an analogous argument as in \cite{viehmann13} we prove in section~\ref{sect newton stratification} that the dimension formula as well as the closure relations follow if we show that $\dim \Ascr_0^{\leq b}$ is less or equal than the term (\ref{term dimension shimura}). By the work of Mantovan (\cite{mantovan05}) each Newton stratum is in a finite to finite correspondence with the product of a (truncated) Rapoport-Zink space and a so-called central leaf inside the Newton stratum. In particular the dimension of a Newton stratum is the sum of the dimension of a central leaf and a Rapoport-Zink space. Here a central leaf is defined as the locally closed subset of $\Ascr_0(\FFbar_p)$ where $\Aund[p^\infty] \cong \Xund$ for a fixed {\BTD} $\Xund$. In the sections \ref{sect central leaves} and \ref{sect EO strata} we calculate the dimension of the central leaves, thus reducing Theorem~\ref{thm dimension shimura} to the claim that $\Mscr(b,\mu)$ has dimension less or equal than (\ref{term dimension RZ-space}).

 We construct a correspondence between $\Mscr_G(b,\mu)$ and a disjoint union of Rapoport-Zink spaces associated to data with superbasic $\sigma$-conjugacy classes in section~\ref{sect BT again} using similar moduli spaces as in \cite{mantovan08}. In section~\ref{sect numerical dimension} we translate the dimension of the fibres of the correspondence into group theoretical terms and calculate it in section~\ref{sect fibre dimension}. This allows us to reduce to the case of a superbasic Rapoport-Zink datum of EL type in section~\ref{sect reduction to superbasic}. In section~\ref{sect superbasic} we prove Theorem~\ref{thm dimension RZ-space} in this special case thus finishing the proof of Theorem~\ref{thm dimension shimura}.

 We remark that the reduction step which reduces the task of estimating the dimension of Rapoport-Zink spaces to the case of a superbasic datum of EL type is an analogy of the reduction step in \cite{GHKR06}~sect.~5 for affine Deligne-Lusztig varieties. Especially the arguments given in section~\ref{sect fibre dimension} and section~\ref{sect reduction to superbasic} are very similar to those in \cite{GHKR06}. The proof of Theorem~\ref{thm dimension RZ-space} in the superbasic EL case in section~\ref{sect superbasic} follows the proof in \cite{viehmann08} replacing her combinatorial invariants by their generalisation introduced in \cite{hamacher}.

 The article is subdivided as follows. Sections~\ref{sect shimura} to \ref{sect BT} are mostly recapitulations of already known facts, except for section~\ref{ss defect} and \ref{ss PEL vs D}. In sections~\ref{sect newton stratification} to \ref{sect EO strata} we consider the Newton stratification on the special fibre of Shimura varieties. We give an overview of the relationship between theorems~1.1 -1.3 % !!!
 in section~\ref{sect epilogue}. In the subsequent sections we exclusively deal with the geometry of Rapoport-Zink spaces.

 \begin{notation}
  Throughout the article we keep the following notation. For any ring $R$ we denote by $W(R)$ its Witt-vectors. If $R$ has characteristic $p$, we denote by $\sigma$ the Frobenius endomorphism of $R$, as well as the Frobenius of $W(R)$ and $W(R)_\QQ$. For any $p$-adic field $F$ we denote by $O_F$ its ring of integers, by $k_F$ its residue field and set $q_F := \# k_F$. We denote $\Gamma_F := \Gal (\kbar_F/k_F) = \Aut_{F,\cont} (\hat{F}^\nr)$ and $\Gamma := \Gamma_{\QQ_p}$.  We denote by $k$ an arbitrary algebraically closed field of characteristic $p$ and $L = W(k)_\QQ$, $O_L = W(k)$. Starting in section~\ref{sect BT again} we will assume $k = \FFbar_p$.
  
  In most cases we will denote objects defined over number fields by letters in sans serif while we use the usual italic letters for objects defined over $p$-adic fields.
 \end{notation}

 \emph{Acknowledgements:} I would like to express my sincere gratitude to my advisor E.~Viehmann for entrusting me with this topic and for her steady encouragement and advice. I thank M.~Chen, M.~Kisin, E.~Viehmann and M.~Rapoport for giving me a preliminary version of their respective articles. The author was partially supported by ERC starting grant 277889 ``Moduli spaces of local $G$-shtukas''.

 \section{Shimura varieties of PEL type with good reduction} \label{sect shimura}

 \subsection{Moduli spaces of abelian varieties}
 We recall  Kottwitz's definition of an integral PEL-Shimura datum with hyperspecial level structure at $p$ (which we will abbreviate to ``unramified PEL-Shimura datum'') and the associated moduli spaces. They were defined in \cite{kottwitz92}, which is also the main reference for this section.

 The moduli problem is given by a datum $\Dscr= (\Bsf,\, ^{\ast},\Vsf,\pair,\Osf_\Bsf,\Lambda,h)$. To this datum one associates an linear algebraic group $\Gsf$ over $\QQ$, a conjugacy class $[\mu_h]$ of cocharacters of $\Gsf_\CC$ and a number field $\Esf$. This data have the following meaning.

 \begin{itemlist}
  \item $\Bsf$ is a finite dimensional semi-simple $\QQ$-algebra such that $\Bsf_{\QQ_p}$ is a product of matrix algebras over unramified extensions of $\QQ_p$.
  \item $^{\ast}$ is a positive involution of $\Bsf$ over $\QQ$.
  \item $\Vsf$ is a non-zero finitely generated left-$\Bsf$-module.
  \item $\pair$ is a symplectic form of the underlying $\QQ$-vector space of $\Vsf$ which is $\Bsf$-adapted, i.e.\ for all $v,w \in \Vsf$ and $b\in \Bsf$
  \[
   \langle bv,w\rangle = \langle v,b^\ast w\rangle.
  \]
  \item $\Osf_\Bsf$ is a $\ZZ_{(p)}$-order of $\Bsf$, whose $p$-adic completion $\Osf_{\Bsf,p}$ is a maximal order of $\Bsf_{\QQ_p}$.
  \item $\Lambda$ is a lattice in $\Vsf_{\QQ_p}$ which is self dual for $\pair$ and preserved under the action of $\Osf_\Bsf$.
  \item The group $\Gsf$ represents the functor
  \[
   \Gsf (R) = \{ g \in \GL_{\Bsf}(\Vsf \otimes R) \mid \exists c(g) \in R^\times: \langle g(v), g(w) \rangle = c(g) \cdot \langle v,w \rangle\}.
  \]
  We assume henceforth that $\Gsf$ is connected.
  \item $h: \CC \rightarrow \End_B(\Vsf)_\RR$ is a homomorphism of algebras with involution (where on the left hand side the involution is the complex conjugation and on the right hand side the involution maps a homomorphism to its adjoint with respect to $\pair$) such that the form $\langle v, h(i)\cdot w\rangle$ on $\End_\Bsf(\Vsf)_\RR$ is positive definite.
  \item Let $\Vsf_\CC = \Vsf^0 \oplus \Vsf^1$, where $\Vsf^0$ resp.\ $\Vsf^1$ is the subspace where $h(z)$ acts by $\zbar$ resp.\ $z$. We define $\mu_h$ to be the cocharacter of $\Gsf_\CC$ which acts with weight $0$ on $\Vsf^0$ and with weight $1$ on $\Vsf^1$. Then $[\mu_h]$ is defined as the $\Gsf(\CC)$-conjugacy class of $\mu_h$.
  \item $\Esf$ is the field of definition of $[\mu_h]$.
 \end{itemlist}

 \begin{definition}
  We call a datum $\Dscr$ as above an unramified PEL-Shimura datum. The field $\Esf$ is called reflex field of $\Dscr$.
 \end{definition}

 Let $K^p \subset \Gsf(\AA^p)$ be an open compact subgroup. We consider the functor $\Ascr_{\Dscr,K^p}$ which associates to an $\Osf_\Esf \otimes \ZZ_{(p)}$-algebra $R$ the set of isomorphism classes of tuples $(A,\lambda,\iota,\eta)$ where
 \begin{itemlist}
  \item $A$ is a projective abelian scheme over $\Spec R$.
  \item $\lambda$ is a polarization of $A$ of degree prime to $p$.
  \item $\iota: \Osf_\Bsf \rightarrow \Aut(A)$ is a homomorphism satisfying the following two conditions. For every $a\in \Osf_\Bsf$ we have the compatibility of $\lambda$ and $\iota$
   \begin{equation} \label{term kottwitz compatibility}
    \iota(a) = \lambda^{-1} \circ \iota(a^\ast)^\vee \circ \lambda
   \end{equation}
   and $\iota$ satisfies the Kottwitz determinant condition. That is, we have an equality of characteristic polynomials
   \begin{equation} \label{term kottwitz determinant}
    \cha (\iota(a)|\Lie A) = \cha (a|\Vsf^1).
   \end{equation}
   The polynomial on the right hand side has actually coefficients in $\Osf_\Esf \otimes \ZZ_{(p)}$, but we consider it as element of $R[X]$ via the structural morphism.
  \item $\eta$ is a level structure of type $K^p$ in the sense of \cite{kottwitz92} \S 5.
 \end{itemlist}
 Two such tuples $(A,\lambda,\iota,\eta), (A',\iota',\lambda',\eta')$ are isomorphic if there exists an isogeny $A \rightarrow A'$ of degree prime to $p$ which commutes with $\iota$, carrying $\lambda$ into a $\ZZ_{(p)}^\times$-scalar multiple of $\lambda'$ and carrying $\eta$ to $\eta'$.

 If $K^p$ is small enough, this functor is representable by a smooth quasi-projective $\Osf_\Esf \otimes \ZZ_{(p)}$-scheme. Henceforth we will always assume that this is the case.

 We fix $\Dscr,K^p$ as above and choose an embedding $\QQbar \mono \QQbar_p$ of the algebraic closure of $\QQ$ in $\CC$ into an algebraic closure of $\QQ_p$. We denote by $v$ the place of $\Esf$ over $p$ which is given by this embedding and by $\kappa$ its residue field. We write
 \[
  \Ascr_0 := \Ascr_{\Dscr,K^p,0} = \Ascr_{\Dscr,K^p} \times \bar\kappa
 \]
 for the geometric special fibre of $\Ascr_{\Dscr,K^p}$ at $v$. We fix an isomorphism $\bar\kappa \cong \bar\FF_p$ and denote by $\Aund^{\univ}$ the universal object over $\Ascr_0$.

 \subsection{\BTDs} \label{ss preliminaries}

 Let $R$ be a $\FFbar_p$-algebra and $(A,\iota,\lambda,\eta) \in \Ascr_0(R)$. Then by functoriality we obtain additional structure on the Barsotti-Tate group $X = A[p^\infty]$ of $A$. That is, we get an action of $\Osf_{\Bsf,p}$ on $A[p^\infty]$ and a polarisation up to $\ZZ_p^\times$-scalar multiple, satisfying the same compatibility conditions as $\iota$ and $\lambda$. 

 \begin{notation}
  We call a polarisation up to $\ZZ_p^\times$-scalar multiple as above a $\ZZ_p^\times$-homogeneous polarisation. We will use the analogous notion for bilinear forms which are defined up $\ZZ_p^\times$- or $\QQ_p^\times$-scalar multiple. 
 \end{notation}

 \begin{definition}
   Let $X$ be a \BT\ over a $\kappa$-algebra $R$, together with a homomorphism $\iota:\Osf_{\Bsf,p} \to \End X$ and a $\ZZ_p^\times$-homogeneous polarisation $\lambda:X \to X^\vee$. The tuple $\Xund = (X,\iota,\lambda)$ is called a \BTD\ if the following conditions are satisfied.
   \begin{assertionlist}
    \item Let $\Bsf_{\QQ_p} = \prod B_i$ be a decomposition into simple factors and $\Vsf_{\QQ_p} = \prod V_i$, $\Osf_{\Bsf,p} = \prod O_{B_i}$,the induced decompositions. Denote by $\epsilon_i$ the multiplicative unit in $B_i$ and let $X_i := \image \epsilon_i$. Then $X_i$ is a \BT\ with height $\dim_{\QQ_p} V_i$.
    \item $\iota(a) = \lambda^{-1} \circ \iota(a^\ast)^\vee \circ \lambda$.
    \item $\cha (\iota(a)|\Lie X) = \cha (a| \Vsf_v^1)$ where $\Vsf_v^1$ denotes the $v$-adic completion of $\Vsf^1$.
   \end{assertionlist}
 \end{definition}
 
 \begin{remark}
  The condition that $X_i$ is a \BT\ is automatic, cf.~section~\ref{ss normal forms of BTpELs}.
 \end{remark}

 Some of the data above are superfluous for the general study of \BTDs. Therefore we introduce the following (simpler) objects.

 \begin{definition}
  \begin{subenv} 
   \item Let $B$ be a finite product of matrix algebras over finite unramified field extensions of $\QQ_p$ and $O_B \subset B$ be a maximal order. We call a \BT\ with $O_B$-action a \BT\ with EL structure.
   \item Let $O_B,B$ be as above and $^\ast$ be a $\QQ_p$-linear involution of $B$ which stabilizes $O_B$. We call a \BT\ with polarisation $\lambda$ and $O_B$-action $\iota$ satisfying
   \[
    \iota(a) = \lambda^{-1} \circ \iota(a^\ast)^\vee \circ \lambda
   \]
   a \BT\ with PEL structure.
   \item We call a tuple $(B,O_B)$ resp.\ $(B,O_B,\ast)$ as above an EL-datum resp.\ a PEL-datum.
  \end{subenv}
 \end{definition}

 \begin{notation}
  When we want to consider the EL case and the PEL case simultaneously, we will write the additional data in brackets, e.g.\ ``Let $\Xund$ be a \BTpEL .''
 \end{notation}

 \begin{definition}
  Let $\Xund = (X,\iota,(\lambda))$ and $\Xund' = (X',\iota',(\lambda'))$ be two \BTpELs.
  \begin{subenv}
   \item A morphism $\Xund \to \Xund'$ is a homomorphism of \BT s $X \to X'$ which commutes with the $O_B$-action and the polarisation in the PEL case.
   \item An isogeny $\Xund \to \Xund'$ is an isogeny $X \to X'$ which commutes with the $O_B$-action and in the PEL case also commutes with the polarisation up to $\QQ_p^\times$-scalar. The scalar given in the PEL case is called the similitude factor of the isogeny.
  \end{subenv}
 \end{definition}
 We note that an isogeny of {\BTPELs} is not necessarily a homomorphism.

 \subsection{Deformation theory} \label{ss deformation}
 Let $\Xund = (X,\iota,(\lambda))$ be a \BTpEL\ over a perfect field $k_0$ of characteristic $p$. We briefly recall the construction of its universal deformation $\Xund^\univ$, where we only consider deformations in equal characteristic. By \cite{illusie85},~Cor.~4.8 the deformation space $\Def (X)$ is pro-representable by $\Spf k_0 \pot{X_1, \ldots , X_{d\cdot (n-d)}}$ where $n$ denotes the height of $X$ and $d$ its dimension. In order to describe $\Def(\Xund)$ we use the following result of Drinfeld. For any Artinian local $k_0$-algebra $A$ and \BT s $X',X''$ over $A$ the canonical map
 \[
  \Hom(X',X'') \otimes_{\ZZ_p} \QQ_p \to \Hom(X'_{k_0},X''_{k_0}) \otimes_{\ZZ_p} \QQ_p
 \]
 is an isomorphism. Now the condition that an element of $\Hom(X',X'') \otimes \QQ_p$ is a homomorphism is closed on $\Spec A$ by \cite{RZ96},~Prop.~2.9. Thus $\Def (\Xund)$ is a closed subfunctor of $\Def(X)$ and in particular pro-representable by $\Spf \Rscr_{\Xund}$ for some adic ring $\Rscr_{\Xund}$. Denote by $\Xund^{\rm def}$ the universal object over $\Spf \Rscr_X$. By a result of Messing (\cite{messing72}~Lemma~II.4.16) for any $I$-adic ring $R$ the functor
 \begin{eqnarray*}
  \{\textnormal{\BT s over } \Spec R\} &\to& \{\textnormal{\BT s over} \Spf R\} \\
  X' &\mapsto& (X' \mod I^n)_{n\in\NN}
 \end{eqnarray*}
 is an equivalence of categories. Thus $\Xund^{\rm def}$ induces a \BTpEL\ $\Xund^\univ$ over $\Spec\Rscr_{\Xund}$. We note that the same construction also works for \BTPELs\ with $\ZZ_p^\times$-homogeneous polarisation and in particular yields a canonically isomorphic deformation functor.
 
 \begin{definition} \label{defi deformation space}
  We call $\Xund^\univ$ the universal deformation of $\Xund$. We denote $\Sscr_{\Xund} = \Spec \Rscr_{\Xund}$
 \end{definition}
 
 The Serre-Tate theorem states that the canonical homomorphism $\Def (A) \to \Def (A[p^\infty])$ is an isomorphism for every abelian variety $A$ over an algebraically closed field $k$ of characteristic $p$. We obtain the following corollary.
 
 \begin{proposition} \label{prop serre-tate}
  Let $x = \Aund \in \Ascr_0 (\FFbar_p)$ and $\Xund := \Aund[p^\infty]$. Then the morphism $\Rscr_{\Xund} \to \widehat\Oscr_{\Ascr_0,x}$ induced by the deformation $(A^{\univ}[p^\infty])_{\widehat\Oscr_{\Ascr_0,x}}$ is an isomorphism and the pull-back of $(A^{\univ}[p^\infty])_{\widehat\Oscr_{\Ascr_0,x}}$ equals $\Xund^\univ$.
 \end{proposition}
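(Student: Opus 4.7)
The plan is to reduce the statement to the classical Serre-Tate theorem together with the rigidity properties already invoked in section~\ref{ss deformation}. Since $\Ascr_{\Dscr,K^p}$ is smooth and represents the PEL moduli problem, the completed local ring $\widehat\Oscr_{\Ascr_0,x}$ pro-represents the functor $\Def(\Aund)$ of deformations of the tuple $(A,\lambda,\iota,\eta)$ to complete local Noetherian $\FFbar_p$-algebras with residue field $\FFbar_p$, and the pullback of $\Aund^\univ$ is the universal object. The proposition is therefore equivalent to asserting that ``take the $p$-divisible group'' yields an isomorphism of deformation functors $\Def(\Aund) \lisom \Def(\Xund)$; the corresponding equality of universal objects then follows from Messing's equivalence between \BT s over $\Spf$ and over $\Spec$ recalled in section~\ref{ss deformation}.

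To establish this isomorphism of functors I would proceed structure by structure. The classical Serre-Tate theorem gives a bijection $\Def(A) \lisom \Def(A[p^\infty])$ for the bare abelian variety and its $p$-divisible group, compatibly with Cartier duality in the sense that a lift $\tilde A^\vee$ of $A^\vee$ corresponds to the lift $\tilde X^\vee$ of $X^\vee$. Hence a prime-to-$p$ polarization $\tilde\lambda$ on $\tilde A$ lifting $\lambda$ is the same datum as a $\ZZ_p^\times$-homogeneous polarization of $\tilde X$ lifting $\lambda_{|X}$; the prime-to-$p$-isogeny equivalence on the abelian side matches the $\ZZ_p^\times$-scalar equivalence on the $p$-divisible side, since the level structure $\eta$ pins down the prime-to-$p$ ambiguity.

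For the endomorphism structure I would combine Drinfeld rigidity (recalled in section~\ref{ss deformation}) with the analogous rigidity of endomorphisms of abelian schemes: an $\Osf_{\Bsf,p}$-action $\tilde\iota$ on $\tilde A$ restricting to $\iota$ exists iff the induced $\Osf_{\Bsf,p}$-action on $\tilde X$ exists, and the compatibility condition~\eqref{term kottwitz compatibility} passes back and forth verbatim. The Kottwitz determinant condition involves only $\Lie A \cong \Lie A[p^\infty]$, so it is transported identically, and the prime-to-$p$ level structure $\eta$ is pro-\'etale and lifts uniquely along infinitesimal extensions, contributing no additional data on either side.

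I expect no substantive obstacle: granted classical Serre-Tate and Drinfeld rigidity, the argument is a matter of unwinding the deformation functors. The only slightly delicate point is matching the prime-to-$p$-isogeny equivalence of polarizations on the abelian side with the $\ZZ_p^\times$-homogeneous equivalence on the $p$-divisible side, which is handled by the presence of $\eta$; once the functorial isomorphism is in place, the identification of the universal objects is automatic from the construction of $\Xund^\univ$ recalled above.
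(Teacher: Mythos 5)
Your proposal is correct and follows exactly the route the paper intends: the paper offers no written proof, simply deriving the proposition from the classical Serre--Tate isomorphism $\Def(A)\isom\Def(A[p^\infty])$ together with the rigidity statements recalled in section~\ref{ss deformation}, which is precisely what you spell out structure by structure (polarisation via duality, endomorphisms via Drinfeld rigidity, determinant condition via $\Lie A \cong \Lie A[p^\infty]$, and unique lifting of the prime-to-$p$ level structure). Your handling of the only delicate point --- matching the prime-to-$p$-isogeny equivalence with the $\ZZ_p^\times$-homogeneous one --- is also consistent with the paper's conventions.
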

 
 \section{Group theoretic preliminaries} \label{sect group theory}

 \subsection{Reductive group schemes over $\ZZ_p$} \label{ss group theory}

 We recall the following definitions from \cite{SGA3-3}.

 \begin{definition}
  Let $S$ be an arbitrary scheme. A group scheme $G \to S$ is called reductive if the structure morphism is smooth and affine with connected fibres and for every geometric point $\sbar$ of $S$ the linear algebraic group $G_{\sbar}$ is reductive.  
 \end{definition}

 \begin{definition}
  Let $G \to S$ be a reductive group scheme.
  \begin{subenv}
   \item A maximal torus of $G$ is a subtorus $T \subset G$ such that for every $T_{\sbar}$ is a maximal torus of $G_{\sbar}$ for all geometric points $\sbar$ of $S$.
   \item A Borel subgroup of $G$ is a subgroup $B \subset G$ such that $B_{\sbar}$ is a Borel subgroup of $G_{\sbar}$ for all geometric points $\sbar$ of $S$.
  \end{subenv}
 \end{definition}

 In the case where $S = \Spec R$ is the spectrum of a local ring we make the following definitions. A reductive group scheme $G$ over $S$ is called \emph{split} if it contains a maximal split torus and it is called \emph{quasi-split} if it contains a Borel subgroup. The notions of split resp.\ quasi-split reductive group schemes also exist over arbitrary bases, but are a bit more complicated.

 Let $G$ be a reductive group scheme over $\ZZ_p$. Then it is automatically quasi-split and splits over a finite unramified extension $O_F$ of $\ZZ_p$ (cf. \cite{VW13}~A.4). We fix $T \subset B \subset G$, where $T$ is a maximal torus and $B$ a Borel subgroup of $G$. Denote
 \begin{eqnarray*}
  X^*(T) &=& \Homfunc (T,\GG_m) \\
  X_*(T) &=& \Homfunc (\GG_m,T)
 \end{eqnarray*}
 These sheaves become constant after base change to $O_F$, thus we regard them as abstract groups with $\Aut_{\ZZ_p}(O_F)$ action. We obtain canonical isomorphisms of Galois-modules
 \[
  X_*(T) \cong X_*(T_{\QQ_p}) \cong X_*(T_{\FF_p})
 \]
 where we also identify $\Gal(F/\QQ_p) = \Aut_{\ZZ_p}(O_F) = \Gal(k_F/\FF_p)$ (and analogously for $X^*(T)$). We denote by $R$ (resp. $R^\vee$) the set of absolute roots of $G$ with respect to $T$, that is the lifts of the absolute roots (resp. coroots) of $G_{\FF_p}$ to $O_F$. This definition coincides with the definition of roots (resp.\ coroots) of $G_{O_F}$ given in \cite{SGA3-3}, Exp.~XXII, ch.~1. In particular, the absolute roots (resp.\ coroots) of $G$ also coincide with the absolute roots (resp.\ coroots) of $G_{\QQ_p}$ w.r.t. the identifications above. We denote by $R^+,\Delta^+$ and $R^{\vee,+},\Delta^{\vee,+}$ the system of positive/simple roots resp.\ positive/simple coroots determined by $B$. Let $\pi_1(G)$ denote the fundamental group of $G$, i.e. the quotient of $X_*(T)$ by the coroot lattice.

 The Weyl group of $G$ is defined as the quotient $W := (\Norm_G T)/T$. It is represented by a finite \'etale scheme which becomes constant after base change to $O_F$. Thus we may identify $W = (\Norm_G T)(O_F)/T(O_F)$ with the canonical Galois action. In particular $W$ is canonically isomorphic to the absolute Weyl groups of $G_{\FF_p}$ and $G_{\QQ_p}$ equipped with Galois action.

 We denote by $\Wtilde := \Norm_G(T)(L)/T(O_L) \cong \Norm_G T(F)/T(O_F)$ the (absolute) extended affine Weyl group of $G$, equipped with the canonical Galois action. We will often consider an element $x \in \Wtilde$ as an element of $G(L)$ by which we mean an arbitrary lift of $x$. We have $\Wtilde \cong W \rtimes X_*(T)$ canonically; denote by $p^\mu$ the image of a cocharacter $\mu$ in $\Wtilde$. The canonical inclusion of the affine Weyl group $W_a$ into $\Wtilde$ yields a short exact sequence
 \begin{center}
  \begin{tikzcd}
   0 \arrow{r} & W_a \arrow{r} & \Wtilde \arrow{r} & \pi_1(G) \arrow{r} & 0.
  \end{tikzcd}
 \end{center}
 The isomorphism $\Wtilde \cong W \rtimes X_*(T)$ defines an action of $\Wtilde$ on the apartment $\afr := X_*(T)_\RR$ by affine linear maps. As $W_a$ acts simply transitively on the set of alcoves in $\afr$, the stabilizer $\Omega \subset \Wtilde$ of a fixed ``base'' alcove defines a right-splitting of the exact sequence above. We choose as the base alcove the alcove in the anti-dominant chamber whose closure contains the origin. This alcove corresponds to the Iwahori subgroup $\Iscr$ of $G(L)$ which is defined as the preimage of $B(\FFbar_p)$ w.r.t.\ the canonical projection $G(O_L) \epi G(\FFbar_p)$. We define the length function on $\Wtilde$ by
 \[
  \ell(w\tau) = \ell(w).
 \]
 for $w \in W_a,\tau\in  \Omega$. In particular, the elements of length $0$ are precisely those which are contained in $\Omega$.

 \subsection{$\sigma$-conjugacy classes} \label{ss sigma conjugacy}
 Using Dieudonn\'e theory one gets a bijection between the isogeny classes of \BTDs\ over $k$ and certain $\sigma$-conjugacy classes in the $L$-valued points of a reductive group scheme over $\ZZ_p$ (cf.\ next section). For this reason we briefly recall Kottwitz's classification of $\sigma$-conjugacy classes in the case of unramified groups.  The main reference for this subsection is the article of Rapoport and Richartz \cite{RR96}.

 We keep the notation of the previous subsection. Recall that two elements $b,b' \in G(L)$ are called $\sigma$-conjugated if there exists an element $g\in G(L)$ such that $b' = gb\sigma(g)^{-1}$. The equivalence classes with respect to this relation are called $\sigma$-conjugacy classes; we denote the $\sigma$-conjugacy class of an element $b \in G(L)$ by $[b]$. Let $B(G)$ denote  the set of all $\sigma$-conjugacy classes in $G(L)$. By \cite{RR96} Lemma 1.3 the sets of $\sigma$-conjugacy classes does not depend on the choice of $k$ (up to canonical bijection), so this notation is without ambiguity.

 Kottwitz assigns in \cite{kottwitz85} to each $\sigma$-conjugacy class $[b]$ two functorial invariants
 \[
  \nu_G(b) \in X_*(T)_{\QQ,\dom}^\Gamma
 \]
 \[
  \kappa_G (b) \in \pi_1(G)_\Gamma, 
 \]
 which are called the Newton point resp.\ the Kottwitz point of $[b]$. Those two invariants determine $[b]$ uniquely. 

 \begin{example} \label{ex isocrystal}
   \emph{(1)} Assume $G = \GL_n$. We have a one-to-one correspondence
   \begin{eqnarray*}
    B(G) &\leftrightarrow& \{ \textnormal{isocrystals over } k \textnormal{ of height } n \}/\cong \\
    \left[ b \right] &\mapsto& (L^n,b\sigma).
   \end{eqnarray*}
   The above bijection is easy to see, as a base change of $(L^n,b\sigma)$ by the matrix $g$ replaces $b$ with $gb\sigma(g)^{-1}$. 
   
   Now we choose $T$ to be the diagonal torus and $B$ to be the Borel subgroup of upper triangular matrices. Then we have canonical isomorphisms $X_*(T)_{\QQ}^\Gamma = X_*(T)_{\QQ} = \QQ^n$ and $\pi_1(G)_\Gamma = \pi_1(G) = \ZZ$. The first isomorphism identifies
   \[
    X_*(T)_{\QQ,\dom} = \{\nu = (\nu_1,\ldots,\nu_n\} \in \QQ^n\mid \nu_1 \geq \ldots \geq \nu_n\}.
   \]
   Then $\nu_G(b) = (\nu_1, \ldots, \nu_n)$ is the vector of Newton slopes of the isocrystal $(L^n,b\sigma)$ given in descending order. Of course, this already determines  $[b]$ uniquely. The Kottwitz point is given by
   \[
    \kappa_G (b) = \val \det b = \nu_1 + \ldots + \nu_n.
   \]
 
   \emph{(2)} Let $F/\QQ_p$ be a finite unramified field extension of degree $d$ and let $G = \Res_{F/\QQ_p} \GL_n$. Similar as above one sees that $[b] \mapsto ((F\otimes L)^n,b(1\otimes\sigma))$ defines a bijection between $B(G)$ and the isomorphism classes of isocrystals $(N,\Phi)$ over $k$ of height $n$ together with an $F$-action $\iota: F \mono \Aut(N,\Phi)$. We have a canonical isomorphism $F\otimes L \cong \prod_{\tau: F \mono L} L$ and likewise
   \[
    (L\otimes F)^n \cong \prod_{\tau:F\mono L} L^n =: \prod_{\tau:F\mono L} N_\tau.
   \]
   Then $\sigma$ defines a bijection of $N_\tau$ onto $N_{\sigma\tau}$ and any element $b\in G(L)$ stabilizes the $N_\tau$. Fixing an embedding $\tau: F \mono L$, we thus obtain an equivalence of categories
   \begin{eqnarray*}
    \{ \textnormal{isocrystals over } k \textnormal{ of height } n \textnormal{ with } F-\textnormal{action} \} &\rightarrow& \{ \sigma^d-\textnormal{isocrystals over } k \textnormal{ of height } n \} \\
    (N,\Phi,\iota) &\mapsto& (N_\tau, \Phi^d).
   \end{eqnarray*}
   Using that in $\GL_n(L)$ any element $g$ is $\sigma^d$-conjugate to $\sigma(g)$ (this holds as every $\sigma^d$-conjugacy class contains a $\sigma$-stable element,\ e.g. a suitable lift of an element in $\Wtilde$) one sees that the isomorphism class of the object on the right hand side does not depend on the choice of $\tau$. Hence if we denote the Newton slopes of $(N_\tau, \Phi^d)$  by $(\lambda_1,\ldots,\lambda_n)$ the slopes of the isocrystal $(N,\Phi)$ (forgetting the $F$-action) equal
   \[
    (\underbrace{\frac{\lambda_1}{d},\ldots,\frac{\lambda_1}{d}}_{d\textnormal{ times}},\ldots,\underbrace{\frac{\lambda_n}{d},\ldots,\frac{\lambda_n}{d}}_{d \textnormal{ times}}).
   \]
   We choose $T$ to be the diagonal torus and $B$ to be the Borel subgroup of upper triangular matrices. Then $X_*(T) \cong \prod_{\tau:F\mono L} \ZZ^n$ canonically identifying
   \[
    X_*(T)_{\QQ,\dom}^\Gamma = \{\nu = ((\nu_1,\ldots,\nu_n))_{\tau} \in \prod \QQ^n\mid \nu_1 \geq\ldots\geq \nu_n \}
   \]
   Then by functoriality $\nu_G(b) = ((\frac{\lambda_1}{d},\ldots , \frac{\lambda_n}{d}))_\tau$.  
 \end{example}

 Recall that we have the Cartan decomposition
 \[
  G(L) = \bigsqcup_{\mu \in X_*(T)_\dom} G(O_L)p^{\mu}G(O_L).
 \]
 An estimate for $\nu$ and $\kappa$ on a $G(O_L)$-double coset is given by the generalised Mazur inequality. Before we can state it, we need to introduce some more notation.
 We equip $X_*(T)_\QQ$ with a partial order $\leq$ where we say that $\mu' \leq \mu$ if $\mu-\mu'$ is a linear combination of positive coroots with positive (rational) coefficients. For any cocharacter $\mu \in X_*(T)$ we denote by $\mubar$ the average of its $\Gamma$-orbit.

 \begin{proposition}[\cite{RR96},~Thm.~4.2]
  Let $b \in G(O_L)\mu(p)G(O_L)$ for $\mu \in X_*(T)_\dom$. Then the following assertions hold.
  \begin{subenv}
   \item We have $\nu_G(b) \leq \mu$.
   \item The Kottwitz point $\kappa_G(b)$ equals the image of $\mu$ in $\pi_1(G)_\Gamma$.
  \end{subenv}
 \end{proposition}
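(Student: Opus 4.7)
The plan is to establish the two assertions separately, with $(2)$ being a formal consequence of the defining properties of $\kappa_G$, and $(1)$ being a reduction to the classical Mazur inequality for $\GL_n$ via a representation-theoretic argument.

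For $(2)$, I would invoke three standard facts about the Kottwitz homomorphism: it factors through $B(G)$; it vanishes on $G(O_L)$, since for an unramified reductive $G$ over $\ZZ_p$ the integral points $G(O_L)$ lie in the identity component of the loop group, whose group of connected components is precisely $\pi_1(G)_\Gamma$; and by functoriality and restriction to $T$ it sends $\mu(p)$ to the class of $\mu$ in $\pi_1(G)_\Gamma$. Combined with $b \in G(O_L)\mu(p)G(O_L)$, these immediately give $\kappa_G(b) = \kappa_G(\mu(p)) = [\mu]$.

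For $(1)$, I would first observe that the inequality $\nu_G(b) \leq \mu$ in the dominant cone of $X_*(T)_\QQ^\Gamma$ is, by standard cone duality, equivalent to the numerical inequalities $\langle \chi, \nu_G(b) \rangle \leq \langle \chi, \mu \rangle$ for every $\Gamma$-invariant character $\chi \in X^*(T)_\QQ$ that is non-negative on each simple coroot. For each such $\chi$ I would construct a representation $\rho \colon G \to \GL(V)$ defined over $\ZZ_p$ whose highest weight over the splitting field is a positive integer multiple of $\chi$; this is possible by Chevalley--Demazure theory in the split setting combined with $\Gamma$-descent, which applies because $\chi$ is $\Gamma$-stable and $G$ is unramified. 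Since $\rho$ is defined over $\ZZ_p$, the image $\rho(b)$ lies in $\GL(V)(O_L)\rho(\mu(p))\GL(V)(O_L)$, and functoriality of the Newton point identifies the Newton slopes of the F-isocrystal $(V \otimes L, \rho(b)\sigma)$ with the numbers $\langle \chi_i, \nu_G(b) \rangle$ as $\chi_i$ runs over the weights of $\rho$ with multiplicity. Dominance of $\nu_G(b)$ and the fact that $\chi$ is the highest weight of $\rho$ imply that $\langle \chi, \nu_G(b) \rangle$ is the top Newton slope, while $\langle \chi, \mu \rangle$ is the corresponding top Hodge slope. The classical Mazur inequality for $\GL_n$ then yields $\langle \chi, \nu_G(b) \rangle \leq \langle \chi, \mu \rangle$, and varying $\chi$ gives $\nu_G(b) \leq \mu$.

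The main obstacle is the classical Mazur inequality for $\GL_n$ itself, a substantive comparison between Hodge and Newton polygons of F-crystals, typically proved via an elementary-divisor argument on iterated applications of Frobenius. The cone duality and the construction of $\rho$ over $\ZZ_p$ are both formal, though the latter uses essentially that $G$ is unramified so that $\Gamma$-stable highest weights descend to $\ZZ_p$-representations.
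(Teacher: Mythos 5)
The paper does not prove this proposition; it is quoted verbatim as Theorem~4.2 of Rapoport--Richartz, so there is no internal proof to compare against. Your sketch is, in substance, the argument from that reference: part (2) from the properties of the Kottwitz homomorphism (trivial on $G(O_L)$ since $G$ is reductive over $\ZZ_p$, sending $\mu(p)$ to the class of $\mu$), and part (1) by cone duality plus reduction to the Katz--Mazur inequality for $\GL_n$ through representations of $G$. As a reconstruction it is correct in outline.

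Two points deserve tightening. First, in the cone-duality step, a $\Gamma$-invariant $\chi$ that vanishes on all coroots has both $\pm\chi$ in the dual cone, so for those directions you need \emph{equality} $\langle\chi,\nu_G(b)\rangle=\langle\chi,\mu\rangle$, not just an inequality; this is the equal-endpoints half of Mazur (the determinant of $\rho$) and amounts to the image of part (2) in $\pi_1(G)_\Gamma\otimes\QQ$ --- you should say explicitly that part (1) uses this. Second, the descent step is stated too strongly: the irreducible representation of highest weight $N\chi$ need not descend to $\QQ_p$ (there can be a Brauer obstruction even when the weight is $\Gamma$-stable). What you actually need is weaker and always available: some representation of the group scheme $G$ on a finite free $\ZZ_p$-module whose weights admit $N\chi$ as unique maximal element for the root order, which you can manufacture from tensor and exterior powers of a faithful $\ZZ_p$-representation; the $\ZZ_p$-lattice is also what guarantees $\rho(b)\in\GL(V)(O_L)\,\rho(\mu(p))\,\GL(V)(O_L)$. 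Finally, note that since $\nu_G(b)$ is $\Gamma$-invariant while $\mu$ need not be, the inequality $\nu_G(b)\leq\mu$ must be read as $\nu_G(b)\leq\bar\mu$ (the Galois average); testing only against $\Gamma$-invariant $\chi$, as you do, proves exactly that statement, which is the intended one.
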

 
 \begin{definition}
  \begin{subenv}
   \item We define the partial order $\leq$ on $B(G)$ by
   \[
    [b'] \leq [b] :\Lra \nu_G(b') \leq \nu_G(b) \textnormal{ and } \kappa_G(b') = \kappa_G(b). 
   \]
   \item We denote
   \begin{eqnarray*}
    B(G,\mu) &=& \{[b] \in B(G)\mid \nu_G(b) \leq \mu \textnormal{ and } \kappa_G(B) \textnormal{ is the image of } \mu \textnormal{ in } \pi_1(G)_\Gamma\} \\
             &=& \{[b] \in B(G)\mid [b] \leq [p^\mu]\}.
   \end{eqnarray*}
  \end{subenv}
 \end{definition}

 By the generalized Mazur inequality $B(G,\mu)$ contains all $\sigma$-conjugacy classes which intersect $G(O_L)\mu(p)G(O_L)$ non-emptily. It is known that the converse is also true. Many authors have worked on this conjecture, the result in the generality we need was proven by Kottwitz and Gashi (\cite{kottwitz03}~ch.~4.3, \cite{gashi10}~Thm.~5.2).

 To every $\sigma$-conjugacy class $[b]$ one associates linear algebraic groups $M_b$ and $J_b$ which are defined over $\QQ_p$. The group $M_b$ is defined as the centraliser of $\nu(b)$ in $G$. So in particular $M_b$ is a standard Levi subgroup of $G$. Kottwitz showed that the intersection of $M_b$ and $[b]$ is non-empty (\cite{kottwitz85}~ch.~6). The group $J_b$ represents the functor
 \[
  J_b(R) = \{g \in G(R \otimes_{\QQ_p} L)\mid gb = b\sigma(g) \}.
 \]
 This group is an inner form of $M_b$ which (up to canonical isomorphism) does not depend on the choice of the representative of $[b]$ (\cite{kottwitz85}~\S~5.2).

 \begin{definition} \label{def basic}
  Let $[b] \in B(G)$.
  \begin{subenv}
   \item $[b]$ is called basic if $\nu_G(b)$ is central.
   \item $[b]$ is called superbasic if every intersection with a proper Levi subgroup of $G$ is empty.
  \end{subenv}
 \end{definition}
 We note that $M_b$ is a proper subgroup of $G$ if and only if $[b]$ is not basic. As $M_b$ always intersects $[b]$ non-trivially, this observation shows that every superbasic $\sigma$-conjugacy class is also basic. We have a bijection between the basic $\sigma$-conjugacy classes of $G$ and $\pi(G)_\Gamma$ induced by the Kottwitz point (\cite{kottwitz85}~Prop.~5.6).
 
 Finally we can define the last group theoretic invariant which appears in the dimension formula.
 \begin{definition}
  Let $[b] \in B(G)$. We define the defect of $[b]$ by
  \[
   \defect_G (b) := \rank_{\QQ_p} G - \rank_{\QQ_p} J_b
  \]
 \end{definition}

 \subsection{A formula for the defect} \label{ss defect}
 We keep the notation above. Furthermore, let $\omegaund_1,\ldots,\omegaund_l$ be the sums over all elements in a Galois orbit of absolute fundamental weights of $G$. Recall that we have an embedding $\pi_1(G) \cong \Omega \mono \Wtilde$. For $\varpi \in \pi_1(G)$ let $\dot\varpi$ be its image in $\Wtilde$. Then by construction $\dot\varpi$ is basic and $\kappa_G(\dot\varpi)$ is the image of $\varpi$ in $\pi_1(G)_\Gamma$.

 \begin{proposition} \label{prop calculation of defect}
  Let $b \in G(L)$. Then
  \[
   \defect_G (b) = 2\cdot\sum_{i=1}^l \{ \langle \nu_G(b), \omegaund_i \rangle \}
  \]
  where $\{\cdot\}$ denotes the fractional part of a rational number.
 \end{proposition}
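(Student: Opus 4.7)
The strategy is to reduce first to the basic case, then to an absolutely almost simple group, and to verify the formula there by case analysis.

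\textbf{Reduction to basic $b$.} I would first show both sides are preserved when replacing $G$ by the standard Levi $M_b = \Cent_G(\nu_G(b))$. For the left-hand side, $J_b$ is intrinsic to $b$ and independent of the ambient group, and because $G$ is unramified hence quasi-split, a maximal $\QQ_p$-split torus of $G$ lies in $T \subset M_b$ and is automatically maximal $\QQ_p$-split in $M_b$, so $\rank_{\QQ_p} G = \rank_{\QQ_p} M_b$. For the right-hand side, Galois orbits in $\Delta$ split into those inside $\Delta_{M_b}$ and those disjoint from $\Delta_{M_b}$ (both being $\Gamma$-stable, since $M_b$ is defined over $\QQ_p$). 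For an orbit disjoint from $\Delta_{M_b}$, the corresponding $\Gamma$-invariant rational character $\omegaund_i$ vanishes on the coroots of $M_b$ and so descends to $T/(T \cap M_b^{\sc})$, which is isogenous to the abelianisation $M_b^{\mathrm{ab}}$; the Newton point of the image $b^{\mathrm{ab}} \in M_b^{\mathrm{ab}}(L)$ pairs integrally with any $\Gamma$-invariant integer character (a standard fact for basic elements in unramified tori), giving $\langle \nu_G(b), \omegaund_i\rangle \in \ZZ$. For orbits inside $\Delta_{M_b}$ the natural identification with $M_b$-fundamental weights, after adjusting by characters of $Z(M_b)^0$ whose pairings with $\nu_G(b)$ are integral, shows the sum of fractional parts is preserved. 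Hence we may assume $b$ is basic.

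\textbf{Reduction to absolutely almost simple $G$.} For basic $b$ both sides are additive under a product decomposition of $G$ (a central torus contributes zero on each side) and compatible with restriction of scalars along unramified extensions of $\QQ_p$, since $\rank_{\QQ_p}(\Res_{F/\QQ_p} H) = \rank_F H$, and the Galois orbits of fundamental weights transfer accordingly. This reduces us to the case of absolutely almost simple unramified $G$ over $\QQ_p$ with $b$ basic.

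\textbf{Direct verification.} In this last case, basic $\sigma$-conjugacy classes are parametrised by $\pi_1(G)_\Gamma$, and both $\nu_G(b)$ and the inner form $J_b$ are determined explicitly by the Kottwitz invariant. The identity is then verified type by type---split $A_n, B_n, C_n, D_n$, the outer forms ${}^2\!A_n$ and ${}^2\!D_n$, and the exceptional types---using Tits' classification of inner forms of reductive groups over $p$-adic fields to read off $\rank_{\QQ_p} J_b$ from the cohomology class in $H^1(\QQ_p, G^{\mathrm{ad}})$ associated to $\kappa_G(b)$, and matching it against the combinatorial right-hand side.

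\textbf{Main obstacle.} The delicate step is the integrality claim $\langle \nu_G(b), \omegaund_i\rangle \in \ZZ$ for Galois orbits outside $\Delta_{M_b}$ in the reduction to the basic case; it rests on describing the Newton point of a basic element as the $\Gamma$-averaged central lift of its Kottwitz invariant, and on passing through the abelianisation of the Levi so that the relevant characters become integer characters of an unramified torus. The case-by-case verification in the last step is lengthy but essentially combinatorial.
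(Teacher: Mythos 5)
Your reduction to the basic case via $M_b$ is essentially the paper's first step (the paper simply quotes Kottwitz, \emph{Dimensions of Newton strata\dots}, Lemma~1.9.1 for the fact that both sides are unchanged when $G$ is replaced by $M_b^{\ad}$), and your bookkeeping of Galois orbits inside versus outside $\Delta_{M_b}$ is the right way to justify it, granted the implicit convention that the $\omegaund_i$ are taken as \emph{integral} ($\Gamma$-compatible) lifts in $X^*(T)$ so that the adjustment characters really pair integrally with $\nu_G(b)$. After that point, however, you diverge from the paper, and this is where the proposal has a genuine gap: your entire verification of the identity in the basic absolutely simple case is deferred to an unexecuted type-by-type check via Tits' classification of inner forms. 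That step carries all of the content of the proposition, and nothing in the proposal indicates how $\rank_{\QQ_p} J_b$, read off from a Tits index, is to be matched against $\#\{i \mid \langle \nu_G(b),\omegaund_i\rangle \in \ZZ\}$ in even one nontrivial case. There is also a point you pass over too quickly: once you are in the adjoint (or almost simple) basic case, $\nu_G(b)$ is central hence trivial, so the right-hand side cannot be evaluated literally with the Newton point of the reduced group; it must be re-expressed through the pairing of $\kappa_G(b) \in \pi_1(G)_\Gamma$ with the fundamental weights modulo $\ZZ$, and the compatibility of this reinterpretation with the original $\langle\nu_G(b),\omegaund_i\rangle$ is exactly the kind of bookkeeping where a case-by-case argument goes wrong.

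For comparison, the paper avoids the classification entirely. Having reduced to $b$ basic in an adjoint group, it represents $[b]$ by a length-zero element $\wtilde \in \Omega \subset \Wtilde$; since conjugation by $\wtilde$ preserves the Iwahori and the torus, Bruhat--Tits theory gives $\rank_{\QQ_p} J_b = \dim \afr^{w\sigma}$, where $w$ is the finite Weyl part of $\wtilde$. The fixed space is then computed from the identity of $\pi_1 \rtimes \Aut$-representations $\afr \cong \chi_1 \oplus \cdots \oplus \chi_l$ (Lemma~\ref{lem root theoretic}), whose proof is a character computation that only has to be checked for the Dynkin types admitting nontrivial diagram automorphisms ($A_l$, $D_l$, $E_6$); the split case is Kottwitz's Lemma~4.1.1. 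If you want to keep your outline, you must either carry out the Tits-index computation in all types (including the exceptional ones and the various unramified forms of $D_n$), or replace the last step by an argument of this structural kind.
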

 
 The proposition is a generalisation of \cite{kottwitz06}~Cor.~1.10.2. The proof of the proposition given here is a generalisation of Kottwitz's proof. The calculations will use the following combinatorial result.

 \begin{lemma} \label{lem root theoretic}
  Let $\Psi = (X,R,R^+,X^\vee,R^\vee,R^{\vee,+})$ be a based reduced root datum. We fix the following notations, which will be used until the end of the proof of this lemma.
  \begin{simplelist}
   \item $P^\vee := $ coweight lattice of $\Psi$.
   \item $Q^\vee := $ coroot lattice of $\Psi$.
   \item $\pi_1 := P^\vee/Q^\vee$ denotes the fundamental group.
   \item $I := \Aut\Psi$.
   \item $\omega_1,\ldots,\omega_l := $ fundamental weights of $\Psi$.
   \item $\varpi_1,\ldots,\varpi_l := $ images of $\omega_1^\vee,\ldots,\omega_l^\vee$ in $\pi_1$. 
   \item $\chi_1,\ldots, \chi_l$ are the characters of $\pi_1$ defined by $\chi_j(\varpi) := \exp (-2\pi i \cdot \langle \varpi,\omega_j\rangle)$.
   \item $\Xi := \mathbf{1} \oplus \chi_1 \oplus \cdots \oplus \chi_l$ seen as $I \ltimes \pi_1$-representation. Here the action of $I$ is given by the permutation of the $\chi_j$ according to its action on the fundamental weights.
   \item $V' :=$ vector space of affine linear functions on $X^\vee_\RR$ seen as $I \ltimes \pi_1$-representation.
  \end{simplelist}
  Then $V' \cong \Xi$.
 \end{lemma}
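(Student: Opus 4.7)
The plan is to realize $V'$ as a permutation representation of $I \ltimes \pi_1$ and then identify $V' \otimes_{\RR} \CC$ with $\Xi$ by character comparison. Let $\theta$ denote the highest root of $\Psi$ and $\delta \in V'$ the constant function $1$. I claim that the simple affine roots $\alpha_0 := \delta - \theta, \alpha_1, \ldots, \alpha_l$ form a basis of $V'$ permuted by $I \ltimes \pi_1$. Their linear parts $-\theta, \alpha_1, \ldots, \alpha_l$ satisfy the single relation $-\theta + \sum c_i \alpha_i = 0$ (with $c_i$ the marks of $\theta$), which is compensated by the constant $\delta$ appearing in $\alpha_0$, so these $l+1$ elements are linearly independent. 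The length-zero subgroup $\Omega$ of the extended affine Weyl group of the adjoint root datum is canonically identified with $\pi_1 = P^\vee/Q^\vee$, and since it stabilizes the fundamental alcove it permutes the simple affine roots; meanwhile $I$ fixes $\alpha_0$ (since it preserves $\theta$) and permutes $\alpha_1, \ldots, \alpha_l$ via its action on the simple roots. Hence $V' \otimes \CC$ is the complex permutation representation of $I \ltimes \pi_1$ on the set $\{0, 1, \ldots, l\}$.

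Since both $V' \otimes \CC$ and $\Xi$ are $(l+1)$-dimensional complex representations of the finite group $I \ltimes \pi_1$, the desired isomorphism reduces to an equality of characters. At $(i, \varpi) \in I \ltimes \pi_1$ the character of $V' \otimes \CC$ equals the number of fixed points of $(i, \varpi)$ on $\{0, 1, \ldots, l\}$, while the character of $\Xi$ equals $1 + \sum_{j : i(j) = j} \chi_j(\varpi)$. Equivalently, decomposing $V' \otimes \CC$ into $\pi_1$-isotypic components via Frobenius reciprocity, it suffices to show that for every character $\chi$ of $\pi_1$ the number of $\pi_1$-orbits on $\{0, 1, \ldots, l\}$ whose stabilizer is contained in $\ker \chi$ equals $|\{j \in \{0, 1, \ldots, l\} : \chi_j = \chi\}|$ (setting $\chi_0 := \mathbf{1}$), in a manner compatible with the $I$-action on both sides.

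To prove this character identity I first reduce to the case where $\Psi$ is irreducible (both $V'$ and $\Xi$ behave well under direct product decompositions of $\Psi$, with $I$ possibly permuting the factors). For each simple type the action of $\pi_1 \cong \Omega$ on the extended Dynkin diagram is determined by the minuscule coweights, and the identity reduces to a finite calculation with the values $\chi_j(\varpi) = \exp(-2\pi i \langle \varpi, \omega_j\rangle)$. The main obstacle will be to carry out this verification uniformly rather than type-by-type; a cleaner conceptual route is to exploit the natural pairing $\pi_1 \times (X/Q) \to \QQ/\ZZ$ induced by $\langle \cdot, \cdot \rangle$, under which each fundamental weight $\omega_j$ corresponds to the character $\chi_j$ of $\pi_1$, and then to show directly that the $\pi_1$-permutation representation on the extended Dynkin nodes decomposes as $\bigoplus_{j=0}^l \chi_j$ compatibly with $I$.
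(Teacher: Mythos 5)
Your proposal is correct and follows essentially the same route as the paper: both identify $V'$ (after complexification) with the permutation representation of $I \ltimes \pi_1$ on the simple affine roots and then compare its character with that of $\Xi$. The only real difference is organisational — where you defer the resulting character identity to a type-by-type (or pairing-based) verification, the paper disposes of the restriction to $\pi_1$ by citing Kottwitz's Lemma~4.1.1, reduces to the connected types $A_l$, $D_l$, $E_6$ where $I$ can act nontrivially, and carries out the $A_l$ computation explicitly from Bourbaki's tables.
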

 
 \begin{remark}
  If $\Psi$ is the root datum of a reductive group $G$ then in general $\pi_1(G)$ is only a subgroup of $\pi_1$. We have equality if and only if $P^\vee = X^\vee$, i.e. $G$ is adjoint.
 \end{remark}

 \begin{proof}
  The assertion that $\Xi$ and $V'$ are isomorphic as representations of $\pi_1$ was proven in \cite{kottwitz06}~Lemma~4.1.1. In particular, it proves our assertion in the case where $I$ is trivial.
  
  We assume without loss of generality that the Dynkin diagram of $\Psi$ is connected. We have to check the assertion only in the cases where $I$ is non-trivial, i.e.\ when the Dynkin diagram is of type $A_l$, $D_l$ or $E_6$. We show that $V'$ and $\Xi$ are isomorphic by calculating their characters. It is obvious how to calculate the character of $\Xi$. For the character of $V'$ we use that the action of $I \ltimes \pi_1$ permutes the simple affine roots. We obtain for $\sigma\cdot\varpi \in I \ltimes \pi_1$
  \begin{eqnarray*}
   \tr (\sigma\cdot\varpi \mid V') &=& \# \textnormal{ simple affine roots fixed by } \sigma\cdot\varpi, \\
   \tr (\sigma\cdot\varpi \mid \Xi) &=& 1 + \sum_{i; \sigma(\omega_i)  = \omega_i} \omega_i(\varpi).
  \end{eqnarray*}
  Now all data we need to calculate the right hand sides are given in \cite{bourbaki68} and thus the claim is reduced to some straightforward calculations.

  We give these calculations for type $A_l$ as an example and skip the calculations in the other two cases as they are analogous. We use the notation of \cite{bourbaki68}~ch.~VI,~planche~I. That is $\alpha_0,\alpha_1,\ldots,\alpha_l$ denote the simple affine roots, where $\alpha_0$ is the unique root which is not finite and $\omega_i$ (in Bourbaki's notation $\bar\omega_i$) denotes the fundamental weight associated to $\alpha_i$. We will consider the indices as elements of $\ZZ/(l+1)\ZZ$.

  We have $\pi_1 = \{1, \varpi_1,\ldots,\varpi_l \} \cong \ZZ/(l+1)\ZZ$ with distinguished generator $\varpi_1$. The group $\pi_1$ acts  on the set of simple affine roots by cyclic permutation. In particular an element of $\pi_1$ different from the identity acts fixed point free on the set of simple affine roots.
  
  Now $I = \{1,\sigma\} \cong \ZZ/2\ZZ$. The non-trivial automorphism $\sigma$ acts on the set of simple affine roots by $\sigma(\alpha_j) = \alpha_{-j}$. Altogether, we obtain
  \[
   \sigma\cdot\varpi_1^k(\alpha_j) = \alpha_{-j-k}
  \]
  Thus the number of simple affine roots fixed by $\sigma\cdot\varpi_1^k$ equals the number of solutions of the equation \[2j \equiv -k \mod l+1.\] This yields the following values for the character of $V'$. If $l$ is even, then
  \begin{eqnarray*}
   \tr (1\cdot \varpi_1^k \mid V') &=& \left\{ \begin{array}{ll}
                                       0 \quad & \textnormal{ if } k = 1,\ldots,l \\
                                       l+1 \quad & \textnormal{ if } k = 0
                                      \end{array} \right. \\
   \tr (\sigma\cdot\varpi_1^k \mid V') &=& 1
  \end{eqnarray*}
  and if $l$ is odd we have
  \begin{eqnarray*}
   \tr (1\cdot \varpi_1^k \mid V') &=& \left\{ \begin{array}{ll}
                                       0 \quad & \textnormal{ if } k = 1,\ldots,l \\
                                       l+1 \quad & \textnormal{ if } k = 0
                                      \end{array} \right. \\
   \tr (\sigma\cdot\varpi_1^k \mid V') &=& \left\{ \begin{array}{ll}
                                          0 \quad & \textnormal{ if } k \textnormal{ is odd,} \\
                                          2 \quad & \textnormal{ if } k \textnormal{ is even.}
                                         \end{array} \right.
  \end{eqnarray*}
  Now we have to compare the above formulas to the character of $\Xi$. We note that
  \[
   \omega_1^\vee = \frac{1}{l+1} ( l \alpha_1^\vee + (l-1) \alpha_2^\vee + \ldots + 2 \alpha_{l-1}^\vee + \alpha_l^\vee).
  \]
  Hence
  \[
   \chi_j(\varpi_1^k) = \chi_j(\varpi_1)^k = \exp(2\pi i \cdot \frac{j}{l+1})^k.
  \]
  We obtain
  \[
   \tr (1\cdot \varpi_1^k \mid \Xi) = \sum_{k=0}^{l-1} \exp(2\pi i \cdot \frac{j}{l+1})^k = \left\{ \begin{array}{ll}
                                       0 \quad & \textnormal{ if } k = 1,\ldots,l \\
                                       l+1 \quad & \textnormal{ if } k = 0
                                      \end{array} \right. , 
  \]
  coinciding with the formula above. Now we have $\sigma(\alpha_j) = \alpha_j$ if and only if $l$ is odd and $j = \frac{l+1}{2}$. We obtain for even $l$
  \[
   \tr (\sigma\cdot\varpi_1^k \mid \Xi) = 1 + 0 = 1,
  \]
  and for odd $l$
  \[
   \tr (\sigma\cdot\varpi_1^k \mid \Xi) = 1 + \chi_{\frac{l+1}{2}} (\varpi_1^k) = 1 + (-1)^k.
  \]
  Thus we have indeed $ \tr ( \cdot \mid V') = \tr( \cdot \mid \Xi)$.
 \end{proof}

 \begin{proof}[Proof of Proposition~\ref{prop calculation of defect}]
  First we note that the equation does not change if we replace $G$ by $M_b^{\ad}$ (cf.~\cite{kottwitz06}~Lemma~1.9.1). Thus we may assume that $b$ is basic and $G$ of adjoint type. Hence $[b]$ is uniquely determined by its Kottwitz point, so we may even assume that $b$ is a representative of an element $\wtilde\in\Omega$ in the normaliser $(\Norm_G T)(L)$.

  Now conjugation with $b$ fixes $T$ and the standard Iwahori subgroup. By Bruhat-Tits theory the twist of $T$ by $b$ is a maximal torus of $J_b$ which contains a maximal split torus (see for example \cite{hamacher}~Lemma~4.5). Now the automorphism of $\afr$ induced by conjugation with $\wtilde$ equals the finite Weyl group part of $\wtilde$, which we denote by $w$. Thus $\rank_{\QQ_p} J_b = \dim \afr^{w\sigma}$ and we obtain
  \[
   \defect_G (b) = \dim \afr^\sigma - \dim \afr^{w\sigma}.
  \]
  As the action of $\Gamma$ factorises through the automorphism group of the root datum of $G$, we have an isomorphism $\afr \cong \chi_1 \oplus \cdots \oplus \chi_l$ of $\pi_1(G) \rtimes \Gamma$-representations by Lemma~\ref{lem root theoretic}. We have to calculate the dimension of $\afr^{w\sigma}$. Assume that $\chi_1,\ldots,\chi_{m}$ are one $\Gamma$-orbit with $\sigma$ mapping $\chi_j$ to $\chi_{j+1}$. Let $ v = (v_1,\ldots,v_m) \in \chi_1 \oplus \cdots\oplus\chi_m$, then 
  \[
   (w\sigma) (v) = (\chi_1(\wtilde)\cdot v_m, \chi_2(\wtilde)\cdot v_1, \ldots , \chi_m(\wtilde)\cdot v_{m-1}).
  \]
  We see that $v$ is fixed by $w\sigma$ if and only if $v_1 = \chi_1(\wtilde) \cdot \cdots \cdot \chi_m(\wtilde) \cdot v_1$ and $v_j = \chi_j (v_{j-1})$ for $j>1$. Thus the subspace of $\chi_1 \oplus \cdots\oplus\chi_m$ of vectors fixed by $w\sigma$ has dimension $1$ if $\chi_1(\wtilde) \cdot \ldots \cdot \chi_m(\wtilde) =1$ and dimension $0$ otherwise. We obtain
  \[
   \rank J_b = \dim \afr^{w\sigma} = \#\{i \mid \langle \nu, \omegaund_i \rangle \in \ZZ\}.
  \]
  As the $\Omega \rtimes \Gamma$-representation $\afr$ (and thus $\chi_1 \oplus\cdots\oplus \chi_l$) is self-contragredient, we have
  \begin{eqnarray*}
   2 \sum_{i=1}^l \{\langle \nu, \omegaund_i\rangle\} &=& \sum_{i=1}^l \{\langle \nu, \omegaund_i\rangle\} + \sum_{i=1}^l \{\langle \nu, -\omegaund_i\rangle\} \\
   &=& l - \#\{ i \mid \langle \nu,\omegaund_i \rangle \in \ZZ \} \\
   &=& \rank_{\QQ_p} G - \rank_{\QQ_p} J_b \\
   &=& \defect_G (b).
  \end{eqnarray*}
 \end{proof}

 \subsection{Chains of Newton points} \label{ss chains}
 Our calculations in the previous subsection allows us to reformulate the dimension formula (\ref{term dimension shimura}) in terms of Chai's chains of Newton points. We first recall some of his notions and results from \cite{chai00}.

 Denote by $\Nscr(G)$ the image of $\nu_G$. For $\nu \in \Nscr(G)$ and $[b] \in B(G)$ with $\nu_G(b) = \nu$ the image $\Nscr(G)_{\leq\nu}$ of the set
 $
  \{ [b'] \in B(G)\mid \nu(b') \leq \nu, \kappa(b') = \kappa(b) \}
 $
 in $\Nscr(G)$ only depends on $\nu$ (\cite{chai00}~Prop.~4.4). For elements $\nu'' \leq \nu'$ in $\Nscr(G)_{\leq \nu}$ define
 \[
  [\nu'',\nu'] = \{ \xi \in \Nscr(G)_{\leq\nu}\mid \nu'' \leq \xi \leq \nu'\}.
 \]
 We note that $[\nu'',\nu']$ does not change if we replace $\nu$ by $\nu'$, thus it is independent of the choice of $\nu$. We denote by  $\lengthG{\nu'}{\nu}$ the maximum of all integers $n$ such that there exists a chain $\nu_0 \leq \ldots \leq \nu_n$ in $[\nu',\nu]$.

 Chai gave a formula for $\lengthG{\nu'}{\nu}$ but made a small mistake in his calculations. In the formula at the bottom of page 982 one has to replace the relative fundamental weights $\omega_{F,j}$ by the sum of elements of a Galois orbit of absolute fundamental weights $\omegaund_j$. As the $\omega_{F.j}$ and $\omegaund_j$ are scalar multiples of each other, the other assertions in \cite{chai00}~section~7 and the proofs remain valid.

 \begin{proposition}[cf.~\cite{chai00}~Thm.~7.4~(iv)] \label{prop chai}
  Let $\nu \in \Nscr(G)$ and $\nu' \in \Nscr(G)_{\leq\nu}$. Then
  \[
   \lengthG{\nu'}{\nu} = \sum_{j=1}^{l} \lceil\langle\nu,\omegaund_j\rangle - \langle\nu',\omegaund_j\rangle\rceil.
  \]
 \end{proposition}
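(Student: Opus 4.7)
The plan is to adapt Chai's original proof of \cite{chai00}~Thm.~7.4~(iv), tracing carefully where the normalization of fundamental weights enters the combinatorial calculation and substituting the Galois-orbit sums $\omegaund_j$ for his relative fundamental weights $\omega_{F,j}$.

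First, I would recall Chai's combinatorial description of $\Nscr(G)_{\leq\nu}$ in terms of ``Newton polygon'' data together with his notion of an elementary move connecting two adjacent elements. By definition, $\lengthG{\nu'}{\nu}$ is the maximal number of such moves needed to connect $\nu'$ to $\nu$ inside $[\nu',\nu]$. Chai expresses this length by pairing $\nu-\nu'$ with each relative fundamental weight and summing ceilings, and the proof of this expression reduces via induction to checking that exactly one such pairing changes by the expected amount under a single elementary move.

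Next, I would identify the mistake in the calculation at the bottom of page 982 of \cite{chai00}: the step-counting there implicitly uses the functionals arising as sums $\omegaund_j$ over Galois orbits of absolute fundamental weights, whereas Chai writes them as the relative fundamental weights $\omega_{F,j}$. Since $\omegaund_j$ and $\omega_{F,j}$ agree up to a positive scalar (essentially the size of the orbit) and therefore define the same linear functionals on $X_*(T)_\QQ^\Gamma$ up to rescaling, none of Chai's qualitative assertions about $[\nu',\nu]$ or its elementary moves are affected; only the numerical identity at the end of his section~7 changes. Replacing $\omega_{F,j}$ by $\omegaund_j$ throughout and re-running the induction on a minimal chain of elementary moves from $\nu'$ to $\nu$ yields the stated formula.

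The main obstacle is simply the bookkeeping: matching Chai's indexing of the $\omega_{F,j}$ with our $\omegaund_j$, and verifying that in each elementary move exactly one of the pairings $\langle \cdot,\omegaund_j\rangle$ increases by $1$ while the others remain constant, so that the length picks up the contribution $\lceil\langle\nu,\omegaund_j\rangle-\langle\nu',\omegaund_j\rangle\rceil$ from each $j$. Once this is checked, the identity
\[
 \lengthG{\nu'}{\nu} = \sum_{j=1}^{l} \lceil\langle\nu,\omegaund_j\rangle - \langle\nu',\omegaund_j\rangle\rceil
\]
follows immediately from Chai's induction.
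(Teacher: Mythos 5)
Your proposal matches what the paper does: Proposition~\ref{prop chai} is stated as a citation of Chai's Thm.~7.4~(iv) with the single correction of replacing the relative fundamental weights $\omega_{F,j}$ by the Galois-orbit sums $\omegaund_j$, justified exactly as you say by the fact that the two are positive scalar multiples of each other so that all of Chai's qualitative arguments in his section~7 persist while only the final numerical identity (where the non-linear ceiling enters) must be restated. The paper gives no further proof beyond this remark, so your plan of re-running Chai's induction with the substituted functionals is the intended argument.
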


 Now we can reformulate the dimension formula of Theorem~\ref{thm dimension shimura} in a more elegant terms.

 \begin{corollary} \label{cor dimension formulas}
  Let $[b] \in B(G,\mu)$ and $\nu$ its Newton point. Then
  \[
   \langle \mu+\nu, \rho \rangle - \frac{1}{2} \defect_G(b) = 2 \langle \rho, \mu \rangle - \lengthG{\nu}{\mu}
  \]
 \end{corollary}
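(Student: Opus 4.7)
The plan is to substitute the formulas from Proposition~\ref{prop chai} and Proposition~\ref{prop calculation of defect} into both sides of the asserted identity and collapse the resulting expression to a termwise identity about fractional parts. First, I observe that $\rho = \sum_{j=1}^{l}\omegaund_j$: the half-sum of positive roots equals the sum of all absolute fundamental weights, and the Galois orbits partition the set of fundamental weights. Hence $\langle \rho, \mu - \nu\rangle = \sum_j \langle \mu - \nu, \omegaund_j\rangle$.

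After rearranging terms, the equation to prove becomes
\[
 \lengthG{\nu}{\mu} = \langle \rho, \mu - \nu\rangle + \tfrac{1}{2}\defect_G(b).
\]
Substituting $\lengthG{\nu}{\mu} = \sum_j \lceil\langle \mu-\nu, \omegaund_j\rangle\rceil$ from Proposition~\ref{prop chai} and $\tfrac{1}{2}\defect_G(b) = \sum_j \{\langle\nu, \omegaund_j\rangle\}$ from Proposition~\ref{prop calculation of defect}, using the decomposition of $\langle \rho, \mu-\nu\rangle$ above, this is equivalent to
\[
 \sum_{j=1}^{l}\bigl(\lceil\langle \mu-\nu, \omegaund_j\rangle\rceil - \langle \mu-\nu, \omegaund_j\rangle\bigr) = \sum_{j=1}^{l}\{\langle \nu, \omegaund_j\rangle\}.
\]
The elementary identity $\lceil y\rceil - y = \{-y\}$ for $y \in \QQ$ transforms the left-hand side into $\sum_j \{\langle\nu - \mu, \omegaund_j\rangle\}$, so the task reduces to verifying the equality $\sum_j \{\langle \nu - \mu, \omegaund_j\rangle\} = \sum_j \{\langle \nu, \omegaund_j\rangle\}$.

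I would settle this termwise: $\{\langle \nu - \mu, \omegaund_j\rangle\} = \{\langle\nu, \omegaund_j\rangle\}$ holds provided that $\langle \mu, \omegaund_j\rangle \in \ZZ$ for every $j$. This last integrality is the step I expect to require the most care: it holds because each $\omegaund_j$, being a $\Gamma$-invariant combination of absolute fundamental weights, defines a genuine character of $T$ in our setting. Indeed, for the groups arising from PEL-Shimura data the derived group is simply connected, so the absolute fundamental weights already lie in $X^*(T)$. With the integrality in hand, the desired identity follows termwise and the corollary is immediate.
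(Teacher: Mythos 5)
Your proposal is correct and follows essentially the same route as the paper: rearrange the identity to $\lengthG{\nu}{\mu} = \langle\rho,\mu-\nu\rangle + \frac{1}{2}\defect_G(b)$ and verify it by combining Proposition~\ref{prop chai} with Proposition~\ref{prop calculation of defect}. The only difference is that you spell out the arithmetic the paper leaves implicit in its one-line citation of the defect formula, namely $\rho=\sum_j\omegaund_j$, the identity $\lceil y\rceil - y=\{-y\}$, and the integrality $\langle\mu,\omegaund_j\rangle\in\ZZ$, which you correctly pin down as the point requiring the simple connectedness of the derived group in the PEL setting.
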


 \begin{proof}
  \begin{eqnarray*}
   \langle \mu+\nu, \rho \rangle - \frac{1}{2} \defect_G(b) &=& 2 \langle \rho, \mu \rangle - \langle \rho, \mu-\nu \rangle+ \frac{1}{2} \defect_G(b)\\
   &\stackrel{\rm Prop.~\ref{prop calculation of defect}}{=}& 2 \langle \rho, \mu \rangle - \sum_{j=1}^l \lceil \langle\omegaund_j, \mu-\nu \rangle\rceil \\
   &\stackrel{\rm Prop.~\ref{prop chai}}{=}& 2 \langle \rho, \mu \rangle - \lengthG{\nu}{\mu}.
  \end{eqnarray*}
 \end{proof}

 \begin{remark} \label{rem dimension formula}
  In a similar fashion one can rewrite the dimension formula of Theorem~\ref{thm dimension RZ-space}
  \[
   \langle \rho, \mu-\nu \rangle - \frac{1}{2} \defect_G(b) = \sum_{j=1}^l \lfloor \langle \omegaund_j, \mu-\nu\rangle \rfloor.
  \]
  This is almost the same as the formula conjectured by Rapoport in \cite{rapoport05},~p.~296
 \[
  \langle 2\rho, \overline{\mu}-\nu \rangle + \sum_{j=1}^l \lfloor -\langle \omega_{\QQ_p,j}, \bar\mu-\nu\rangle \rfloor,
 \]
 where $\overline\mu$ denotes the average over the $\Gamma$-orbit of $\mu$ and $\omega_{\QQ_p,1},\ldots,\omega_{\QQ_p,l}$ are the relative fundamental weights. One just has to replace the $\omega_{\QQ_p,j}$ in Rapoport's formula by $\omegaund_j$.
 \end{remark}

 \section{\BT s with additional structure} \label{sect BT}

 \subsection{Decomposition of \BTpELs} \label{ss normal forms of BTpELs}
 In this subsection we recall a mechanism that will often allow us to reduce to the case where $B=F$ is a finite unramified field extension of $\QQ_p$. Even though this mechanism is well known (see for example \cite{fargues04}), I could not find a reference for its proof.

 \begin{lemma} \label{lem BT decomposition}
  Let $X$ be a Barsotti-Tate group over a scheme $S$.
  \begin{subenv}
   \item Assume we have a subalgebra $O_1 \times \ldots \times O_r \subset \End X$. Denote by $\epsilon_i$ the multiplicative unit in $O_i$ and let $X_i := \image \epsilon_i$. Then $X= X_1 \times \ldots \times X_r$ and the $X_i$ are \BT s over $S$.
   \item Assume there is a subalgebra $\M_d(O) \subset \End X$. Let
   \[
    \epsilon = \left( \begin{array}{ccc}
                       1 & 0 & \ldots \\
                       0 & 0 & \\
                       \vdots & & \ddots
                      \end{array} \right) \in \M_d(O)
   \]
   and $X' = \image \epsilon$. Then $X'$ is a \BT\ with $O$-action and $X \cong (X')^d$ compatible with $\M_d(O)$-action.
  \end{subenv}
 \end{lemma}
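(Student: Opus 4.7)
The plan is to exploit the fact that idempotent endomorphisms of a Barsotti-Tate group always produce a direct product decomposition into BT groups, and then derive (2) from (1) using the standard matrix-unit trick.

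For part (1), I would first observe that the $\epsilon_i$ are pairwise orthogonal idempotents in $\End X$ with $\sum_i \epsilon_i = \id$. Since $X_i = \image \epsilon_i = \ker(\id - \epsilon_i)$, each $X_i$ is a closed subgroup scheme of $X$. The map $X \to X_1 \times \cdots \times X_r$, $x \mapsto (\epsilon_1 x,\dots,\epsilon_r x)$, with inverse given by summation, is an isomorphism of fppf sheaves of abelian groups, and hence of group schemes over $S$ by Yoneda. In particular each $X_i$ is $S$-flat, being a direct factor of $X$. Because $[p^n]_X = \prod_i [p^n]_{X_i}$ under this decomposition, $\ker[p^n]_{X_i}$ is a direct factor of the finite locally free scheme $\ker[p^n]_X$, hence itself finite locally free, and $[p]$ remains an epimorphism on each $X_i$. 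Thus each $X_i$ is a BT group.

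For part (2), I would apply part (1) to the pairwise orthogonal idempotents $\epsilon_i := e_{ii}$ (the diagonal matrix units of $\M_d(O)$), obtaining $X = X_1 \times \cdots \times X_d$ with each $X_i$ a BT group and $X_1 = X'$. The matrix unit $e_{i1}\in \M_d(O) \subset \End X$ satisfies $e_{ii} e_{i1} = e_{i1} = e_{i1} e_{11}$, so it defines a morphism $X' = X_1 \to X_i$; similarly $e_{1i}$ gives $X_i \to X_1$. The relations $e_{1i}e_{i1} = e_{11}$ and $e_{i1}e_{1i} = e_{ii}$ show these are inverse isomorphisms. Finally, for $\lambda \in O$ embedded diagonally as $\lambda \cdot I$, one checks by direct matrix multiplication that $(\lambda I) e_{i1} = e_{i1}(\lambda I)$ in $\M_d(O)$, so the isomorphisms $X_i \isom X'$ are $O$-equivariant, yielding the desired isomorphism $X \cong (X')^d$ compatible with the $\M_d(O)$-action.

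There is no real obstacle here; the only subtlety is making sure that the direct sum decomposition of fppf sheaves coming from the idempotents genuinely gives a BT decomposition, which reduces to checking flatness and the finite locally freeness of kernels of $p^n$, both of which are preserved under splitting off a direct factor.
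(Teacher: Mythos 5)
Your proposal is correct and follows essentially the same route as the paper: part (1) is proved by observing that the orthogonal idempotents split $X$ as a product of sheaves and that $X_i[p^n]$, being a direct factor (equivalently, the image of a split surjection) of the finite locally free scheme $X[p^n]$, is again finite locally free, while part (2) is the standard Morita argument with matrix units. The only cosmetic difference is that the paper phrases the splitting at the level of the structure sheaves $\Escr \epi \Escr_i$ of $X[p]$, whereas you phrase it directly as splitting off a direct factor of $\ker[p^n]_X$; these are the same argument.
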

 \begin{proof}
  The first assertion of (1) is obvious. The sheaves $X_i$ are $p$-divisible and $p$-torsion because $X$ is. It remains to show that $X_i[p]$ is representable by  a finite locally free group scheme over $S$. Let $\epsilon_i' := \sum_{j \not= i} \epsilon_j$. Then $X_i = \ker \epsilon_i'$, in particular $X_i[p]$ is represented by a closed subgroup scheme of $X[p]$. Let $\Escr, \Escr_i$ be coherent $\Oscr_S$-algebras such that $X[p] = \Specfunc\Escr$ and $X_i[p] = \Specfunc\Escr_i$. Then the closed embedding $X_i[p] \mono X[p]$ induces a surjection $\Escr \epi \Escr_i$ and $\epsilon_{i |X[p]}:X[p] \epi  X_i[p]$ induces a splitting of this surjection. Thus $\Escr_i$ is a direct summand of $\Escr$ as $\Oscr_S$-module, in particular it is again locally free.

  Now the isomorphism $X \cong (X')^d$ is the standard Morita argument and the fact that $X'$ is a \BT\ can be shown by the same argument as above.
 \end{proof}

 \begin{corollary} \label{cor BTEL decomposition}
  \begin{subenv}
   \item Let $(B,O_B)$ be an unramified EL-datum and let $B= \prod B_i$ be a decomposition into simple factors and $O_{B_i} = O_B \cap B_i$. Let $\Xund$ be a \BT\ with $(B,O_B)$-EL structure. Then $\Xund$ decomposes as $\Xund = \prod \Xund_i$ where $\Xund_i$ is a \BT\ with $(B_i,O_{B_i})$-EL structure. This defines an equivalence of categories
   \[
    \left\{ \begin{array}{l}\textnormal{\BT s with} \\ (B,O_B) \textnormal{-EL structure} \end{array}\right\} \cong \prod_i \left\{\begin{array}{l} \textnormal{\BT s with} \\ (B_i,O_{B_i}) \textnormal{-EL structure} \end{array}\right\}
   \]
   that preserves isogenies.
   \item Let $(B,O_B)$ be an unramified EL-datum with $B \cong \M_d(F)$ simple. Let $\Xund$ be a \BT\ with $(B,O_B)$-EL structure. Then $O_B \cong \M_d(O_F)$ and $\Xund \cong (\Xund')^d$ where $\Xund'$ is a \BT\ with $(F,O_F)$-EL structure. This defines an equivalence of categories
   \[
    \left\{\begin{array}{l} \textnormal{\BT s with} \\ (B,O_B) \textnormal{-EL structure} \end{array}\right\} \cong \left\{ \begin{array}{l}\textnormal{\BT s with} \\ (F,O_F) \textnormal{-EL structure} \end{array}\right\}
   \]
   which preserves isogenies.
  \end{subenv}
 \end{corollary}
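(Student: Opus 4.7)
The plan is to derive both assertions essentially directly from Lemma~\ref{lem BT decomposition}; the content of the corollary is to upgrade the decompositions of the underlying $p$-divisible group to decompositions respecting the EL structure, and to promote this to an equivalence of categories preserving isogenies. Nothing deep is needed, but one must check compatibilities carefully.

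For (1), I start from the decomposition $O_B = \prod_i O_{B_i}$, whose multiplicative units $\epsilon_i \in O_{B_i}$ are pairwise orthogonal idempotents in $\End X$ with $\sum_i \epsilon_i = 1$. Lemma~\ref{lem BT decomposition}(1) then yields $X = \prod_i X_i$ where $X_i = \image \epsilon_i$ is a \BT, and since $\epsilon_j X_i = 0$ for $j \neq i$, the $O_B$-action restricts to an $O_{B_i}$-action on $X_i$. Thus $\Xund_i := (X_i, \iota_{|O_{B_i}})$ is a \BT\ with $(B_i, O_{B_i})$-EL structure. The inverse functor sends $(\Xund_i)_i$ to $(\prod_i X_i, \iota)$ with the diagonal $O_B$-action; these functors are mutually inverse essentially by construction. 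For morphisms and isogenies, any $O_B$-equivariant map $f : X \to Y$ must satisfy $f \circ \epsilon_i = \epsilon_i \circ f$, hence decomposes as $f = \prod_i f_i$ with $f_i : X_i \to Y_i$; conversely, a product of morphisms (resp.\ isogenies) is a morphism (resp.\ isogeny) of the product, so the functor is fully faithful and preserves isogenies.

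For (2), the first point is that any maximal $\ZZ_p$-order in $\M_d(F)$ is conjugate to $\M_d(O_F)$, so without loss of generality $O_B = \M_d(O_F)$. I then apply Lemma~\ref{lem BT decomposition}(2) with the idempotent $\epsilon = e_{11}$ to obtain $X' = \image \epsilon$, which carries the induced $O_F$-action through $O_F \cong e_{11} \cdot \M_d(O_F) \cdot e_{11}$, and an isomorphism $X \cong (X')^d$ compatible with the $\M_d(O_F)$-action (the standard Morita isomorphism, given by $(x_1,\ldots,x_d) \mapsto \sum_i e_{i1} x_i$, reading $e_{i1}$ as an element of $\End X$). In the other direction, $\Xund' \mapsto ((X')^d, \text{standard } \M_d(O_F)\text{-action})$ is quasi-inverse. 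Both functors are additive and commute with taking images/cokernels up to isogeny, so they preserve isogenies on the nose.

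The main thing to watch will be the verification that the functors in (1) and (2) are strictly inverse to each other (not just up to isomorphism) in a way that is compatible with morphisms and isogenies simultaneously, and in (2) the initial reduction that conjugates an arbitrary maximal order into $\M_d(O_F)$; one should record this as a (non-canonical) isomorphism of categories rather than a canonical equivalence if one wants to avoid choices. No serious obstacle is expected: the whole corollary is a formal consequence of Lemma~\ref{lem BT decomposition} combined with standard Morita theory over $\ZZ_p$.
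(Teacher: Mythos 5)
Your proposal is correct and follows exactly the route the paper intends: the corollary is stated without a separate proof precisely because it is the immediate translation of Lemma~\ref{lem BT decomposition} (orthogonal idempotents for part (1), the Morita idempotent $e_{11}$ for part (2)) into the EL-structure language, and your verification of equivariance, inverse functors, and preservation of isogenies fills in the routine details the paper omits. The one point worth keeping explicit in a final writeup is the reduction in (2) identifying the maximal order $O_B$ with $\M_d(O_F)$, which you correctly flag as a non-canonical choice.
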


 \begin{definition}
  Let $X$ be a \BT\ with $(B,O_B)$-EL structure. We define the relative height of $X$ as the tuple
  \[
   \relht X = \left(\frac{\height X_1}{[B_1:\QQ_p]^{\frac{1}{2}}}, \ldots , \frac{\height X_r}{[B_r:\QQ_p]^{\frac{1}{2}}}\right)
  \]
  with $X_i, B_i$ as above.
 \end{definition}

 Let $(B,O_B, ^\ast)$ be a PEL-datum. Following \cite{RZ96}~ch.~A, we decompose $(B,^\ast) = \prod B_i$ where $B_i$ is isomorphic to one of the following.
 \begin{enumerate}[topsep = -3pt, parsep = 3pt]
  \item[(I)] $\M_d (F) \times \M_d(F)^{\rm opp}$ where $F$ is an unramified $p$-adic field and $(a,b)^\ast = (b,a)$.
  \item[(II\textsubscript{C})] $\M_d (F)$ with $F$ as above, $a^\ast = a^t$.
  \item[(II\textsubscript{D})] $\M_d (F)$ with $F$ as above, $a^\ast = J^{-1}a^\ast J$ with $J^tJ =-1$.
  \item[(III)] $\M_d (F)$ with $\QQ_p \subset F' \subset F$ finite unramified field extensions, $[F:F'] = 2$ and $a = \abar^t$ where $\overline{\cdot}$ denotes the non-trivial $F'$-automorphism of $F$, acting on $M_d(F)$ componentwise.
 \end{enumerate}
 We call the algebras with involution which are isomorphic to one of the above indecomposable. Note that we also have the analogous decomposition for $O_B$.
 
 \begin{definition}
  A PEL-datum $(B,O_B,\,^\ast)$ is of type (AC) if no factors of type (II\textsubscript{D}) appear in the decomposition of $(B,^\ast)$. 
 \end{definition}

 Now Lemma~\ref{lem BT decomposition} implies the following result.
 \begin{corollary} \label{cor polarised BT decomposition}
  \begin{subenv}
   \item Let $(B,O_B, ^\ast)$ be an unramified PEL-datum of type (AC) and let $(B, ^\ast) = \prod B_i$ be a decomposition into indecomposable factors. Let $\Xund$ be a \BT\ with $(B,O_B, ^\ast)$-PEL structure. Then $\Xund$ decomposes as $\Xund = \prod \Xund_i$ where $\Xund_i$ is a \BT\ with $(B_i,O_{B_i}, ^\ast)$-PEL structure. This defines an equivalence of categories
   \[
    \left\{\begin{array}{l} \textnormal{\BT s with} \\ (B,O_B,\,^\ast) \textnormal{-PEL structure}\end{array}\right\} \cong \prod_i \left\{\begin{array}{l}\textnormal{\BT s with} \\ (B_i,O_{B_i},\,^\ast) \textnormal{-PEL structure} \end{array}\right\}
   \]
   and also a bijection of isogenies on the left hand side with tuples of isogenies on the right hand side which have the same similitude factor.
   \item Let $(B,O_B, ^\ast)$ be an unramified PEL-datum of type (AC) with $(B, ^\ast)$ indecomposable and let $\Xund$ be a \BT\ with $(B,O_B, ^\ast)$-PEL structure. Using the notation above, we may describe $\Xund$ as follows. \smallskip \\
    (I) $\Xund \cong (\Xund')^d \times (\Xund'^\vee)^d$ where $X'$ is a \BT\ with $(F,O_F)$-EL structure and the polarisation is given by $\lambda (a,b) = (b,-a)$. \smallskip \\
    (II\textsubscript{C}) $\Xund \cong (\Xund')^d$ where $X'$ is a \BT\ with $(F,O_F,\id)$-PEL structure. \smallskip  \\
    (III) $\Xund \cong (\Xund')^d$ where $X'$ is a \BT\ with $(F,O_F,\bar\cdot)$-PEL structure. \smallskip \\
   We obtain equivalences of categories
    \begin{flalign*} 
     (I) & \quad \left\{\begin{array}{l} \textnormal{\BT s with}\\ (B,O_B,\,^\ast) \textnormal{-PEL structure}\end{array}\right\} \cong \left\{\begin{array}{l} \textnormal{\BT s with} \\ (F,O_F) \textnormal{-EL structure} \end{array} \right\} & \\
     (II_C) & \quad \left\{ \begin{array}{l} \textnormal{\BT s with} \\ (B,O_B,\,^\ast) \textnormal{-PEL structure}\end{array}\right\} \cong \left\{\begin{array}{l} \textnormal{\BT s with} \\ (F,O_F,\id) \textnormal{-PEL structure}\end{array} \right\} & \\
     (III) & \quad \left\{ \begin{array}{l} \textnormal{\BT s with} \\ (B,O_B,\,^\ast) \textnormal{-PEL structure}\end{array} \right\} \cong \left\{\begin{array}{l} \textnormal{\BT s with} \\ (F,O_F,\bar\cdot) \textnormal{-PEL structure}\end{array} \right\}
    \end{flalign*}
   and in the cases (II\textsubscript{C}) and (III) also a bijection of the sets of isogenies. This is also true in case (I) if one fixes the similitude factor.
  \end{subenv}
 \end{corollary}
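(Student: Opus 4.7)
The plan is to deduce both parts from Lemma~\ref{lem BT decomposition} by tracking how the polarisation interacts with the idempotent decomposition and the Morita reduction. For Part (1), I would start by applying Lemma~\ref{lem BT decomposition}~(1) to $O_B = \prod O_{B_i}$ to obtain $X = \prod X_i$ with $\iota$ restricting to $\iota_i$ on each $X_i$. The crucial point is that, because each $(B_i,{}^\ast)$ is indecomposable \emph{as an algebra with involution}, the involution $^\ast$ preserves each $B_i$; in particular $\epsilon_i^\ast = \epsilon_i$ for the multiplicative unit of $B_i$. Feeding $a = \epsilon_i$ into the compatibility identity $\iota(a) = \lambda^{-1} \circ \iota(a^\ast)^\vee \circ \lambda$ shows that $\lambda$ intertwines $\epsilon_i$ with its dual, so $\lambda(X_i) \subset X_i^\vee$ inside $X^\vee = \prod X_i^\vee$. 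Hence $\lambda$ decomposes as a product of $\ZZ_p^\times$-homogeneous polarisations $\lambda_i \colon X_i \to X_i^\vee$, and the full PEL-compatibility on each component is inherited by restriction. The inverse functor is the obvious product; and because the similitude factor of an isogeny lives in $\QQ_p^\times$ and must be respected globally, isogenies on the left correspond exactly to tuples of isogenies with a common similitude factor.

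For Part (2) I would treat each case by reducing to Lemma~\ref{lem BT decomposition}~(2) while bookkeeping the induced polarisation structure. In case~(I), the two units $e_\pm$ of the two factors $\M_d(F), \M_d(F)^{\rm opp}$ satisfy $e_+^\ast = e_-$, so $\lambda$ swaps the pieces $X_\pm$ of the induced decomposition $X = X_+ \times X_-$; this identifies $X_- \isom X_+^\vee$ as \BT s with $(F,O_F)$-EL structure (using the canonical identification of $(F,O_F)^{\rm opp}$ with $(F,O_F)$ that is dual to $X \mapsto X^\vee$). Applying Lemma~\ref{lem BT decomposition}~(2) to the $\M_d(O_F)$-action on $X_+$ then yields $X_+ = (X')^d$ with $X'$ of the claimed type, reconstructing $\Xund$ from $X'$ alone. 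In cases (II\textsubscript{C}) and (III), the involution preserves the unique simple factor $\M_d(F)$, and the idempotent $\epsilon = E_{11}$ satisfies $\epsilon^\ast = \epsilon$ (since transpose and conjugate-transpose both fix $E_{11}$). Setting $X' = \image\epsilon$ and applying Lemma~\ref{lem BT decomposition}~(2) gives $X \cong (X')^d$; the self-adjointness of $\epsilon$ together with the PEL-compatibility for $\iota$ shows that $\lambda$ restricts to a polarisation $\lambda'$ on $X'$, and the induced $O_F$-action together with $\lambda'$ satisfies the PEL-compatibility for $(F,O_F,\id)$ or $(F,O_F,\bar\cdot)$ as appropriate. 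The inverse functor is the Morita reconstruction $X' \mapsto (X')^d$ endowed with the transported $\M_d(F)$-structure.

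The main obstacle I expect is in case~(I): one must verify that the polarisation really takes the normal form $\lambda(a,b) = (b,-a)$ (up to $\ZZ_p^\times$-scalar) rather than merely identifying $X_-$ with some twist of $X_+^\vee$. The sign is forced by the principal-polarisation condition $\lambda^\vee = -\lambda$ (equivalently, the alternating nature of the underlying form), and translating this symmetry back through the swap $X_+ \leftrightarrow X_-$ produces exactly the minus sign in the second slot; verifying this cleanly, together with checking $O_F$-linearity against the $\rm opp$-structure, is the part that needs careful bookkeeping. Once this is settled, the equivalences of categories and the compatibility of isogenies (with their similitude factors in cases (II\textsubscript{C}) and (III), and freely in case (I) after absorbing the similitude into the EL-side) are automatic from Lemma~\ref{lem BT decomposition} and the fact that the constructions in both directions are evidently functorial and mutually quasi-inverse.
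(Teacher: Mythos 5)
Your proposal follows essentially the same route as the paper's proof: both decompose $X$ via the unit idempotents of Lemma~\ref{lem BT decomposition}, note that the involution fixes the idempotent of each indecomposable factor (and swaps the two units in case (I)), and feed these into the compatibility $\iota(a)=\lambda^{-1}\circ\iota(a^\ast)^\vee\circ\lambda$ to conclude that the polarisation respects, or swaps, the pieces; the Morita step in cases (II\textsubscript{C}) and (III) likewise rests on the self-adjoint diagonal idempotent. Your argument is correct and matches the paper's.
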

 \begin{proof}
  Write $X_i = \image \epsilon_i$ as in Lemma \ref{lem BT decomposition}. Let $\Xund = (X,\iota,\lambda)$. Then
  \[
   \image \lambda_{|X_i} = \image (\lambda \circ \epsilon_i) =  \image (\epsilon_i^\ast \circ \lambda) = \image (\epsilon_i \circ \lambda) = X_i,
  \]
  which proves (1).

  For the second assertion assume first that $(B,\,^\ast)$ is of type (II\textsubscript{C}) or (III). Let $\epsilon_{i,j}$ be the matrix with $1$ as $(i,j)$-th entry and $0$ otherwise and denote $X_i := \image (\epsilon_{i,i})$. Then we have by Lemma~\ref{lem BT decomposition} that $X = \prod X_i$ with $O_F$-equivariant isomorphisms $\epsilon_{i,j}: X_j \isom X_i$. Now the same argument as in (1) shows $\image \lambda_{|X_i} = X_i$ (consider $\epsilon_{i,i}$) and $X_i \cong X_j$ as {\BTPELs} (consider $\epsilon_{i,j}$).  If $(B,\,^\ast)$ is of type (I) let $\epsilon_0$ and $\epsilon_1$ be the units of the $\M_d(O_F)$-factors. Decompose $X = X_0 \oplus X_1$ as in  Corollary~\ref{cor BTEL decomposition}. Then
  \[
   \image \lambda_{|X_i} = \image (\lambda \circ \epsilon_i) = \image (\epsilon_i^\ast \circ \lambda) = \image(\epsilon_{1-i} \circ \lambda) = X_{1-i}
  \]
  Now the claim follows as in cases (II\textsubscript{C}) and (III).
 \end{proof}

 \subsection{Dieudonn\'e theory for {\BT}s with additional structure} \label{ss isocrystals}
 Given an {\BTpEL} $X$ over $k$, we denote by $M(X)$ resp.\ $N(X)$ the Dieudonn\'e module, resp.\ rational Dieudonn\'e module. The (P)EL structure on $X$ induces additional structure on $M(X)$ an d$N(X)$, which induces the following definition.

 \begin{definition}
  \begin{subenv}
   \item Let $\Bscr = (B,O_B,(^\ast))$ be a (P)EL-datum. A $\Bscr$-isocrystal is a pair $(N,\Phi)$ such that
   \begin{itemlist}
    \item $N$ is a finite-dimensional $L_0$-vector space with $B$-action.
    \item In the PEL case $N$ is equipped with  a perfect anti-symmetric $B$-compatible $\QQ_p$-linear pairing $\pair$.
    \item $\Phi:N \to N$ is a $\sigma$-semilinear bijection which commutes with $B$-action and in the PEL-case satisfies
    \begin{equation} \label{term polarisation isocrystal}
     \langle \Phi(v),\Phi(w) \rangle = p^a \langle v,w \rangle
    \end{equation}
    for some fixed integer $a$.
   \end{itemlist}
   \item Let $N = \prod N_i$ be the decomposition of $N$ induced by the decomposition $B = \prod B_i$ into simple algebras. Then the tuple $\relht (N,\Phi) := (\frac{\dim_L N_i}{[B_i:\QQ_p]^{\frac{1}{2}}})_i$ is called relative height of $(N,\Phi)$.
   \item A $\Bscr$-isocrystal $(N,\Phi)$ is called relevant, if there exists an  $O_B$-stable lattice $\Lambda \subset N$ which in the PEL-case is self-dual w.r.t.\ some representative of $\pair$.
  \end{subenv}
 \end{definition}
 
 Now given $\Bscr$-{\BTpEL} $X$ of relative height $\nbf$, the associated isocrystal $N(X)$ is a relevant $\Bscr$-isocrystal and $M(X) \subset N(X)$ is a (self-dual) $O_B$-lattice. In order to associate $\sigma$-conjugacy class to $N(X)$, we need the following lemma.

 \begin{lemma} \label{lem isocrystal}
  \begin{subenv}
   \item Assume $\Bscr$ is an EL-datum and let $(N,\Phi)$ be a $\Bscr$-isocrystal. Then there exists a unique $B$-module $V$ such that $N \cong V \otimes L$. In particular $(N,\Phi)$ is relevant, $\Lambda$ may be chosen to be defined over $\ZZ_p$ and is unique up to $O_B$-linear isomorphism.
   \item Let $\Bscr$ be a PEL-datum of type (AC) and let $(N,\Phi)$ be a relevant $\Bscr$-isocrystal. Then there exists a unique $B$-module $V$ with $\QQ_p^\times$-homogeneous polarisation, depending only on the relative height of $(N,\Phi)$, such that $N \cong V \otimes L$. Furthermore, $\Lambda$ can be chosen to be defined over $\ZZ_p$ and is unique up to $O_B$-linear isometry.
  \end{subenv}
 \end{lemma}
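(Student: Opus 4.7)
The plan is to reduce to the simplest possible cases using the decompositions already established in Corollaries~\ref{cor BTEL decomposition} and \ref{cor polarised BT decomposition}, and then exploit the explicit structure of $F \otimes_{\QQ_p} L$ for $F/\QQ_p$ unramified. Those corollaries give equivalences of categories of \BTpELs, and they immediately extend to the isocrystal level: a $\Bscr$-isocrystal is nothing but the rational Dieudonn\'e module of a \BTpEL\ up to formal properties, so an analogous decomposition holds for $(N,\Phi)$ and for any $O_B$-stable lattice $\Lambda \subset N$. Hence it suffices to treat (1) with $B = F$ a finite unramified extension of $\QQ_p$ and (2) separately in each of the three indecomposable types (I), (II\textsubscript{C}), (III).

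For (1), after reducing to $B=F$, observe that $F \otimes_{\QQ_p} L \cong \prod_{\tau: F \hookrightarrow L} L$, so $N$ decomposes as $\bigoplus_\tau N_\tau$ with each $N_\tau$ an $L$-vector space. The $\sigma$-semilinearity of $\Phi$ and its commutation with the $F$-action force $\Phi$ to restrict to a bijection $N_\tau \isom N_{\sigma\tau}$; in particular all $N_\tau$ have the same $L$-dimension $n$. Thus $N \cong F^n \otimes_{\QQ_p} L$ as $F \otimes L$-modules, which both proves the existence and uniqueness of $V := F^n$ and exhibits the lattice $\Lambda := O_F^n \otimes_{\ZZ_p} O_L$, defined over $\ZZ_p$. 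Uniqueness of $\Lambda$ up to $O_B$-linear isomorphism reduces to the classical fact that finitely generated torsion-free $O_F$-modules of fixed rank are unique up to isomorphism.

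For (2), the case (I) reduces directly to the EL case by Corollary~\ref{cor polarised BT decomposition}(2), since the polarisation is then prescribed by $\lambda(a,b) = (b,-a)$ on $V' \oplus (V')^\vee$ and the similitude factor absorbs the $\QQ_p^\times$-ambiguity. In cases (II\textsubscript{C}) and (III) I would first apply step (1) to produce the underlying $B$-module $V$ with $N \cong V \otimes L$ of the required relative height. The pairing $\pair$ on $N$, after the choice of a representative, then induces a non-degenerate alternating (in II\textsubscript{C}) or $\bar\cdot$-Hermitian (in III) $B$-sesquilinear form on $V$; by the classification of such forms over the unramified $p$-adic field $F$ (standard symplectic basis in the alternating case, Jacobowitz's theorem in the Hermitian case), any two such forms of equal rank are $B$-isometric up to rescaling by $\QQ_p^\times$, which gives the required uniqueness of the $\QQ_p^\times$-homogeneous polarisation. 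For the lattice assertion I would use that relevance supplies a self-dual $O_B$-stable $\Lambda \subset N$; decomposing it along $F \otimes L = \prod_\tau L$ shows $\Lambda$ is the base change of an $O_B$-lattice $\Lambda_0 \subset V$ which is self-dual after a $\ZZ_p^\times$-rescaling, and the classification of self-dual lattices in Hermitian / symplectic spaces over unramified extensions (again Jacobowitz, or the standard symplectic basis theorem for $O_F$-lattices) yields uniqueness up to $O_B$-linear isometry.

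The technical heart of the argument, and the step I expect to be the main obstacle, is the uniqueness part in (2): one must simultaneously pin down the polarised $B$-module $V$ \emph{and} a self-dual $O_B$-lattice inside it purely from the relative height, while carefully tracking the $\QQ_p^\times$-rescaling that is built into the definition of a $\ZZ_p^\times$-homogeneous polarisation and of a $\Bscr$-isocrystal via the exponent $a$ in (\ref{term polarisation isocrystal}). The reductions in the first two paragraphs are straightforward bookkeeping; the real content is invoking the classification of Hermitian forms and their unimodular lattices in the unramified setting, and verifying that the rescaling ambiguities match up coherently across the three types.
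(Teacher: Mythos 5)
Your proposal is correct and follows essentially the same route as the paper: part (1) is proved there exactly as you do, by Morita reduction to $B=F$, the decomposition $F\otimes L \cong \prod_\tau L$, and the $\sigma$-linear bijections $N_\tau \isom N_{\sigma\tau}$. For part (2) the paper simply cites \cite{kottwitz92}, Lemma~7.2 and Remark~7.5, and your case-by-case argument via the classification of symplectic/Hermitian forms and their self-dual lattices over unramified extensions is precisely the content of that citation, so there is no substantive difference.
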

 \begin{proof}
  For the proof of part (1) we may assume that $B=F$ is a finite unramified field extension of $\QQ_p$ by Morita equivalence. Then $F\otimes L \cong \prod_{\tau:F\mono L} L$ inducing a decomposition $N \cong \prod_{\tau:F\mono L} N_\tau$. As $\Phi$ induces a $\sigma$-linear bijection $N_\tau \to N_{\sigma \circ \tau}$, we have that all $N_\tau$ have dimension $\relht (N,\Phi)$ over $L$, thus $N \cong L \otimes F^{\relht (N,\Phi)}$. Obviously $O_L \otimes O_F^{\relht (N,\Phi)}$ is an $O_F$-stable lattice in $N$ and any $O_F$-stable lattice of $N$ is isomorphic to it. Now part (2) follows from part (1) and \cite{kottwitz92}~Lemma~7.2 and Remark~7.5.
 \end{proof}

 In particular the underlying (polarised) $B$-module of a relevant $\Bscr$-isocrystal is uniquely determied by its relative height. Thus fixing a (polarised) left-$B$-module $V$ and a (self-dual) $O_B$-stable lattice $\Lambda \subset V$, we obtain a bijection
 \begin{eqnarray*}
  B(G) &\to& \{ \textnormal{relevant } \Bscr-\textnormal{isocrystals of relative height } \nbf\}/\cong \\
  \left[ b \right] &\mapsto& (V_L,b\sigma),
 \end{eqnarray*}
 where $G$ denotes the group scheme of $O_B$-linear automorphisms (similitudes) of $\Lambda$ and $\nbf = (\frac{\dim_{\QQ_p} V_i}{[B_i:\QQ_p]^{\frac{1}{2}}})_i$.
 We warn the reader that in the PEL-case the above bijection requires twisting $\pair$ by a scalar, which is unique up to $\QQ_p^\times$, so that (\ref{term polarisation isocrystal}) is satisfied (see \cite{RZ96}~1.38). This scalar would not necessarily exist if $k$ was only perfect and not algebraically closed. Combining this bijection with the Dieudonn\'e functor, we obtain an injective map
 \[
  \left\{ \begin{array}{c} \textnormal{\BT s over } k \textnormal{ with } \Bscr-\textnormal{(P)EL structure} \\ \textnormal{of relative height } \nbf  \textnormal{ up to isogeny} \end{array}\right\} \mono B(G).
 \]
 
 In the case $k = \FFbar_p$ denote by $C(G)$ the $G(O_L)$-$\sigma$-conjugacy classes in $G(L)$. By a similar argument one gets an injection
 \[
  \{\textnormal{\BT s over } \FFbar_p \textnormal{ with } \Bscr-\textnormal{(P)EL structure of relative height } \nbf\} \mono C(G)
 \]
 where in the PEL case the polarisation is meant to be defined up to $\ZZ_p^\times$-scalar. For $b \in G(L)$ we denote by $\pot{b}$ its $G(O_L)$-$\sigma$-conjugacy class.

 \subsection{Rapoport-Zink spaces} \label{ss RZ spaces}

 We restrict the definition of a Rapoport-Zink space in \cite{RZ96} to our case, that is with unramified (P)EL-datum and hyperspecial level structure.

 \begin{definition} \label{def RZ datum}
  \begin{subenv}
   \item An unramified Rapoport-Zink datum of type EL is a tuple $(B,O_B,V,\mu_{\QQbar_p},\Lambda,[b])$ where
   \begin{itemlist}
    \item $\Bscr = (B,O_B)$ is an EL-datum.
    \item $V$ is a finite left-$B$-module,
    \item $\Lambda$ is an $O_B$-stable lattice in $V$,
    \item $\mu_{\QQbar_p}$ is a conjugacy class of cocharacters $\mu_{\QQbar_p}: \GG_{m,\QQbar_p} \to G_{\QQbar_p}$, where $G = \underline{\Aut}_{O_B} (\Lambda)$.
    \item $[b] \in B(G,\mu_{\QQbar_p})$
   \end{itemlist}
   which satisfy the conditions given below.
   \item An unramified Rapoport-Zink datum of type PEL is a tuple $\hat\Dscr = (B,O_B,\ast,V,\pair, \mu_{\QQbar_p},\Lambda,[b])$ where
   \begin{itemlist}
    \item $\Bscr = (B,O_B,^\ast)$ is a PEL-datum,
    \item $V$ is a finite left-$B$-module with a $B$-adapted symplectic form $\pair$ on the underlying $\QQ_p$-vector space,
    \item $\Lambda$ is an $O_B$-stable self-dual lattice in $V$,
    \item $\mu_{\QQbar_p}$ is a conjugacy class of cocharacters $\mu_{\QQbar_p}: \GG_{m,\QQbar_p} \to G_{\QQbar_p}$, where $G$ is the linear algebraic group with $R$-valued points (for every $\ZZ_p$-algebra $R$)
    \begin{eqnarray*}
     G(R) &=& \{g\in \underline{\Aut}_{O_B}(\Lambda)(R) \mid \langle g(v_1),g(v_2) \rangle = c(g) \langle v_1,v_2 \rangle \textnormal{ for some } c(g) \in R^\times\}
    \end{eqnarray*}
    \item $[b] \in B(G,\mu_{\QQbar_p})$.
   \end{itemlist}
   which satisfy the conditions below.
   \item The field of definition $E$ of $\mu_{\QQbar_p}$ is called the local Shimura field.
   \item A Rapoport-Zink datum as above is called superbasic if $[b]$ is superbasic.
  \end{subenv}
 \end{definition}

 Let $(V_L,\Phi) := (L \otimes_{\QQ_p} V, b(\sigma \otimes \id))$ denote the $\Bscr$-isocrystal associated to $[b]$.  The additional conditions we impose on the data are the following.
 \begin{assertionlist}
  \item $G$ is connected.
  \item The Newton slopes of $(V_L,\Phi)$ are in $[0,1]$.
  \item The weight decomposition of $V_{L}$ w.r.t.\ $\mu_{\QQbar_p}$ contains only the weights $0$ and $1$,
  \item In the PEL-case, we have $c(\mu(p)) = p$.
 \end{assertionlist}
 The first condition implies that $G$ is a reductive group scheme, in particular $E$ is a finite unramified field extension of $\QQ_p$. We fix a Killing pair $T \subset B$ of $G$. Denote by $\mu$ the dominant element in $\mu_{\QQbar_p}$ and let $V_E = V^0 \oplus V^1$ be the decomposition into weight spaces w.r.t. $\mu$.

 Now the $\Bscr$-isocrystal $(V,\Phi)$ gives rise to an isogeny class of \BTpELs. We fix an element $\underline{\XX}$ of this isogeny class.

 \begin{definition} \label{def RZ space}
  Let $Nilp_{O_L}$ denote the full subcategory of schemes over $O_L$ on which $p$ is locally nilpotent. Then the Rapoport-Zink space is the functor $\Mscr_G(b,\mu): Nilp_{O_L} \rightarrow Sets, S \mapsto \Mscr_G(b,\mu)(S)$ given by the following data up to isomorphism.
  \begin{itemlist}
   \item A {\BTpEL} $\underline{X}$ over $S$ with associated (P)EL-datum $\Bscr$. Moreover the Kottwitz determinant condition holds.
   \item An isogeny $\underline{\XX}_{\overline{S}} \to \underline{X}_{\overline{S}}$, where $\overline{S}$ denotes the closed subscheme of $S$ defined by $p\Oscr_S$.
  \end{itemlist}
 \end{definition}
 
 \begin{example}
  Let $\Dscr= (\Bsf, ^{\ast},\Vsf,\pair,\Osf_\Bsf,\Lambda,h)$ be a PEL-Shimura datum and $[b]$ be an isogeny class of \BTDs. Then $(\Bsf_{\QQ_p},\,^\ast,\Vsf_{\QQ_p},\pair,\Osf_{\Bsf,p},\Lambda,[\mu_h],[b])$ is an unramified Rapoport-Zink datum and $\Mscr_G(b,\mu)$ is the moduli functor parameterizing {\BTDs} inside the isogeny class $[b]$.
 \end{example}

 \begin{theorem}[\cite{RZ96}, Thm.~3.25]
  The functor $\Mscr_G(b,\mu)$ is representable by a formal scheme locally of finite type. 
 \end{theorem}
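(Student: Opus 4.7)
The plan is to follow the strategy of Rapoport--Zink, building up the representability in stages by successively adding the structures (BT-group, endomorphisms, polarisation, determinant condition) and showing that each additional condition cuts out a closed or locally closed subfunctor.

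First I would treat the underlying BT-group problem. Consider the auxiliary functor $\Mscr_{\height}$ on $\mathrm{Nilp}_{O_L}$ parametrising pairs $(X,\rho)$ where $X$ is a Barsotti-Tate group on $S$ and $\rho : \XX_{\bar S} \to X_{\bar S}$ is a quasi-isogeny. By a theorem of Drinfeld (cf.~RZ96 Thm.~2.16), $\Mscr_{\height}$ is representable by a formal scheme, formally locally of finite type and formally smooth over $\Spf O_L$. Its underlying reduced scheme decomposes according to the height of $\rho$, and each piece is locally of finite type. This serves as the ambient space.

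Next I would impose the endomorphism structure. Using the Drinfeld rigidity lemma for quasi-isogenies together with RZ96 Prop.~2.9, the condition that a quasi-isogeny $\rho$ be compatible with a given endomorphism of $\XX$ is representable by a closed formal subscheme. Applying this to each generator of $O_B$ inside $\End^\circ(\XX)$ picks out a closed formal subfunctor of $\Mscr_{\height}$. In the PEL case, the same rigidity result shows that the condition for $\rho$ to respect the polarisation up to a $\QQ_p^\times$-scalar (equivalently, that the pullback of the dual polarisation coincide with the given one up to a unit of $\ZZ_p$ on each height component) is again a closed condition; this is where the normalisation $c(\mu(p)) = p$ enters, ensuring that the similitude factors lie in the correct component. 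I would then observe that on the resulting subfunctor an $O_B$-action $\iota$ and polarisation $\lambda$ on the universal $X$ exist canonically, induced by transport via $\rho$ (they are unique by Drinfeld rigidity and extend from the special fibre).

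Finally I would impose the Kottwitz determinant condition. The condition $\cha(\iota(a)\mid \Lie X) = \cha(a\mid V^1)$, as $a$ ranges over a lattice in $B$, defines a closed condition on the base: both sides are polynomial functions in $a$ with coefficients that are sections of coherent sheaves on the base, and their equality is cut out by the vanishing of finitely many sections. Thus the full Rapoport--Zink functor $\Mscr_G(b,\mu)$ sits as a closed formal subfunctor of the locally closed locus constructed in the previous step. Since each stage either preserved or improved local finite-typeness, the resulting formal scheme is locally formally of finite type; the fact that it is actually locally of finite type (as opposed to only formally so) follows because the formal smoothness of $\Mscr_{\height}$ combined with the closed conditions produces a Noetherian formal scheme adic over $\Spf O_L$, and the Kottwitz condition forces the relative dimension to be bounded by $\langle 2\rho, \mu\rangle$.

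The main technical obstacle is the rigidity input from RZ96 Prop.~2.9 (extending compatibility of a quasi-isogeny with endomorphisms from the special fibre to an infinitesimal neighbourhood), which is what makes each of the ``additional structure'' loci closed rather than merely constructible. Once this is in hand the argument is a sequence of closed/locally closed cuts inside Drinfeld's ambient formal scheme.
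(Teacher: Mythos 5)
The paper does not prove this statement at all --- it is quoted verbatim from Rapoport--Zink (\cite{RZ96}, Thm.~3.25) --- so the only meaningful comparison is with the proof in that reference, and your sketch reproduces its strategy faithfully: Drinfeld-type representability of the bare quasi-isogeny functor as the ambient formal scheme, then closed conditions for the $O_B$-action and the polarisation via rigidity of quasi-isogenies (\cite{RZ96}, Prop.~2.9), then the Kottwitz condition (which is in fact open \emph{and} closed, being a rank condition on the graded pieces of $\Lie X$, cf.\ \cite{RZ96}~\S~3.23(b)). The one loose point is your final remark that local finite-typeness ``follows because\dots the Kottwitz condition forces the relative dimension to be bounded'': the correct mechanism is simply that the ambient space is already formally locally of finite type by Drinfeld's theorem and this property passes to closed formal subschemes; no dimension bound is needed, and the passage from ``formally locally of finite type'' to the scheme-theoretic statement is only about the underlying reduced subscheme, which is all the paper uses afterwards.
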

  
 \begin{notation}
  From now on we denote by $\Mscr_G(b,\mu)$ the \emph{underlying reduced subscheme} of the moduli space above. 
 \end{notation}

 By \cite{RZ96}~\S~3.23(b) the Kottwitz determinant condition can be reformulated as follows. Let $B = \prod B_i = \prod_i \M_{d_i}(F_i)$ be a decomposition of $B$ into simple factors inducing $X = \prod X_i$ and $V^1 = \prod V_{i}^1$. Furthermore we decompose $\Lie X$ and $V_{i}^1$ according to the $O_{F_i}$- resp.\ $F_i$-action.
 \begin{eqnarray*}
  \Lie X_i &=& \prod_{\tau: F_i \mono L} (\Lie X_i)_\tau \\
  V_{i}^1 &=& \prod_{\tau: F_i \mono L} V_{i,\tau}^1 
 \end{eqnarray*}
 Now $X$ satisfies the determinant condition if and only if $(\Lie X_i)_\tau$ has rank $\dim_{\QQ_p} V_{i,\tau}^1$ for all $i,\tau$.

 We obtain that by Dieudonn\'e theory the $k$-valued points of $\Mscr_G(b,\mu)$ correspond to $O_B$-stable lattices in $V_L$ which are self-dual up to $L^\times$-scalar and satisfy $p\Lambda \subset b\sigma(\Lambda) \subset \Lambda, \dim_k (\Lambda/b\sigma(\Lambda))_\tau = \dim V_{i,1,\tau}$. By \cite{kottwitz92}~Cor.~7.3 and Rem.~7.5 the group $G(L)$ acts transitively on the set of $O_B$-stable lattices in $V_L$ which are self-dual up to a constant, thus
 \begin{eqnarray*}
  \Mscr_G(b,\mu)(k) &=& \{g \Lambda \mid g \in G(L), gb\sigma(g)^{-1} \in G(O_L)\mu(p)G(O_L)\} \\
  &=& \{g \in G(L)/G(O_L) \mid  gb\sigma(g)^{-1} \in G(O_L)\mu(p)G(O_L)\}.
 \end{eqnarray*}
 
 \subsection{Comparison of PEL structure and $\Dscr$-structure} \label{ss PEL vs D}
 The aim of this subsection is to show the following proposition.

 \begin{proposition} \label{prop PEL vs D}
  Any {\BTD} is a {\BTPEL} of type $(AC)$ with $\ZZ_p^\times$-homogeneous polarisation. On the other hand, for every {\BTPEL} of type (AC) with $\ZZ_p^\times$-homogeneous polarisation  over a connected $\FFbar_p$-scheme there exists an unramified PEL-Shimura datum $\Dscr'$ such that the PEL structure defines a $\Dscr'$-structure.
 \end{proposition}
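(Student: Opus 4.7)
The plan is to treat the two implications separately.

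For the first direction, I would observe that the axioms of a $\Dscr$-structure (\cite{kottwitz92}~\S~5) directly imply the axioms of a BTPEL upon restriction to $X = A[p^\infty]$: the induced action $\iota|_X$ lands in $\End X$, the induced polarisation $\lambda|_X$ is $\ZZ_p^\times$-homogeneous because $\lambda$ has degree prime to $p$, and the compatibility identity together with the Kottwitz determinant condition transport verbatim. That the resulting PEL-datum is of type (AC) follows from the assumed connectedness of $\Gsf$: decomposing $(\Bsf,\,^\ast)$ globally into indecomposable factors, each factor of type (II\textsubscript{D}) would contribute a disconnected orthogonal similitude group to $\Gsf$, so connectedness of $\Gsf$ forces the absence of such factors both globally and in the local decomposition of $(\Bsf_{\QQ_p},\,^\ast)$.

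For the converse, I would construct $\Dscr'$ by globalising $(B,O_B,\,^\ast)$ factor by factor. Decompose it into indecomposable pieces of types (I), (II\textsubscript{C}) and (III) (no factor of type (II\textsubscript{D}) appears by hypothesis). In each case one selects a number field (respectively a CM pair of number fields) unramified at $p$ whose completion at some chosen place above $p$ realises the prescribed local structure; existence is standard by weak approximation applied to separable polynomials. On the corresponding matrix algebra install the obvious positive involution (swap in (I), transposition in (II\textsubscript{C}), conjugate transposition in (III)) and take products to obtain $(\Bsf,\,^\ast)$. A $\ZZ_{(p)}$-order $\Osf_\Bsf$ with $\Osf_{\Bsf,p} = O_B$ is then obtained by clearing denominators inside a maximal $\ZZ$-order.

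Next, globalise $(V,\pair)$ to a faithful $\Bsf$-module $\Vsf$ with a $\Bsf$-adapted symplectic form whose $p$-adic completion matches the given local form up to a $\QQ_p^\times$-scalar. By the Morita reductions of Corollary~\ref{cor polarised BT decomposition} this reduces case by case to constructing global symplectic or Hermitian forms over number fields with prescribed local invariants at $p$, which is achieved by the Hasse-Minkowski and Landherr local-global principles combined with the freedom to choose signatures at archimedean places. Take $\Lambda$ to be any $O_B$-stable self-dual lattice in $\Vsf_{\QQ_p}$; its existence is provided by the $\ZZ_p^\times$-homogeneous polarisation on $X$ (compare Lemma~\ref{lem isocrystal}).

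Finally, the homomorphism $h$ is read off the Lie algebra of $X$: since $S$ is connected and $\Lie X$ is a locally free $\Oscr_S$-module with $O_B$-action of constant isomorphism type, this determines the conjugacy class of the Hodge cocharacter $[\mu]$, hence (after transport from $\QQbar_p$ to $\CC$ via the fixed embedding $\QQbar \mono \QQbar_p$) the weight decomposition $\Vsf_\CC = \Vsf^0 \oplus \Vsf^1$. Choose $h:\CC \to \End_\Bsf(\Vsf)_\RR$ to be any involution-compatible algebra homomorphism whose associated $\mu_h$ realises this decomposition, and adjust the archimedean signatures of $h$ so that the form $\langle \cdot, h(i)\cdot \rangle$ is positive definite. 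Coherently selecting archimedean signatures while simultaneously realising the prescribed $p$-adic conjugacy class of $\mu$ is the main technical hurdle; it is achievable because $^\ast$ is positive and there is enough flexibility at the archimedean places of the centre of $\Bsf$. The resulting tuple $\Dscr' = (\Bsf,\,^\ast,\Vsf,\pair,\Osf_\Bsf,\Lambda,h)$ is then an unramified PEL-Shimura datum, and the given BTPEL-structure on $X$ coincides by construction with the associated $\Dscr'$-structure.
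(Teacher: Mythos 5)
Your overall route matches the paper's: the forward direction is, as you say, the standard equivalence between connectedness of $\Gsf$ and the absence of type (II\textsubscript{D}) factors, and for the converse the paper likewise first assembles the local tuple $(B,\,^\ast,V,\pair,O_B,\Lambda,[\mu])$ and then globalises it — except that the paper simply cites \cite{VW13}~Lemma~10.4 for the globalisation, whereas you sketch the local--global constructions (choice of number fields, Landherr/Hasse--Minkowski for the forms, archimedean signatures) yourself. That part of your sketch is reasonable in outline.

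There is, however, one genuine gap, at the step where you ``choose $h$ whose associated $\mu_h$ realises this decomposition.'' Any algebra homomorphism $h:\CC\to\End_\Bsf(\Vsf)_\RR$ compatible with the involution automatically produces a cocharacter with similitude factor $c(\mu_h)=1$, i.e.\ the weight spaces $\Vsf^0$ and $\Vsf^1$ must be interchanged by the pairing. So before any $h$ can exist you must prove that the cocharacter $[\mu]$ forced on you by $\Lie X$ satisfies $c(\mu)=1$ (equivalently, $\dim(\Lie X)_\tau + \dim(\Lie X^\vee)_{\tau}^{\vee} = n$ in each $\tau$-component). This is not a matter of ``flexibility at the archimedean places'' — no archimedean adjustment can fix a wrong $p$-adic similitude normalisation — but a constraint that has to be extracted from the existence of the polarised Barsotti--Tate group itself. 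The paper does exactly this in Lemma~\ref{lem mu}, using that the Dieudonn\'e crystal is free over $W(k)\otimes O_F$ (so $\dim(\Lie E(X))_\tau = n$) together with the universal-extension sequence (\ref{term universal extension}). Without this crystalline input your construction of $h$ does not get off the ground; with it, the rest of your argument goes through.
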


 The construction of the PEL-Shimura datum associated to the {\BTPEL} consists of two steps. First we construct a local datum similar to a Rapoport-Zink datum and then we show that it can be obtained by localizing a PEL-Shimura datum. The first step also works for Barsotti-Tate groups with EL structure and will be needed later. Thus we include this case in our construction.

 Let $\Xund$ be a {\BTEL} or with PEL structure of type (AC). As in section \ref{ss isocrystals} we associate $(V,\Lambda)$ and $G$ to $\Xund$. It remains to construct a $G(\QQbar_p)$-conjugacy class of cocharacters such that $\Xund$ satisfies the Kottwitz determinant condition. For this we give an explicit description of the group $G$.
 
 Assume first that we are in the EL-case. Let $B=\prod B_i$ be a decomposition into simple factors, inducing a decomposition $\Lambda = \prod \Lambda_i$ into $O_{B_i}$-modules. Then
 \[
  G = \prod_i \GL_{O_{B_i}} (\Lambda_i),
 \]
 thus it suffices to give a description of $G$ in the case where $B$ is simple. By Morita equivalence we may furthermore assume that $B=F$ is a finite unramified field extension of $\QQ_p$. By choosing an $O_F$-basis $e_1,\ldots,e_n$ of $\Lambda$, we obtain an isomorphism
 \[
  G \cong \Res_{O_F/\ZZ_p} \GL_n.
 \]
 We denote $\GL_{O_F,n} = \Res_{O_F/\ZZ_p} \GL_n$. 

 If $\Xund$ is a {\BTPEL} of type (AC) a similar construction works. Let $(B,\, ^\ast) = \prod B_i$ be the decomposition into indecomposable factors and $\Lambda = \prod_i \Lambda_i$ accordingly. We denote by $G_i$ the group schemes of similitudes of $\Lambda_i$. Then
 \[
  G = (\prod_i G_i)^1 := \{(g_i)_i \in \prod_i G_i \mid c(g_i) = c(g_j) \textnormal{ for all } i,j\}.
 \]
 Hence it suffices to give a description of $G$ in the case where $(B,^\ast)$ is indecomposable. Using the notation of section~\ref{ss normal forms of BTpELs} we may replace $\pair$ by a $\M_d(F)$-hermitian form on $V$ without changing $G$ (\cite{knus91}~Thm.~7.4.1). Furthermore we assume $d=1$ by hermitian Morita equivalence (\cite{knus91}~Thm.~9.3.5), i.e.\ $B =F \times F$ in case (I) and $B=F$ in cases (II\textsubscript{C}) and (III).

 We first consider case (I). We decompose $\Lambda = \Lambda' \oplus \Lambda''$ with the first resp.\ second factor of $O_F \times O_F$ acting on $\Lambda'$ resp.\ $\Lambda''$. Then $\pair$ induces an isomorphism $\Lambda' \cong (\Lambda'')^\vee$ of $O_F$-modules. Now
 \begin{eqnarray*}
  G &=& \{(g',g'') \in \underline{\Aut} (\Lambda') \times \underline{\Aut} (\Lambda''); g'' = c(g)\cdot g'^{-t}\} \\
  &\cong&  \underline{\Aut} (\Lambda') \times \GG_m \\
  &\cong& \GL_{O_F,n} \times \GG_m.
 \end{eqnarray*}
 Here $\GG_m$ parametrizes the similitude factor. By choosing dual bases of $\Lambda'$ and $\Lambda''$, we may also consider $G$ as a closed subgroup of $\GL_{O_F,n} \times \GL_{O_F,n}$.  

 Now assume that $(B,\,^\ast)$ is of the form (II\textsubscript{C}). By the uniqueness assertion of Lemma \ref{lem isocrystal} there exists an $O_F$-basis $x_1, \ldots, x_n$ of $\Lambda$ such that
 \[
  \langle x_i , x_j \rangle = \delta_{i,n+1-j} \textnormal{ for } i \leq j.
 \]
 This identifies $G$ with the closed subgroup $\GSp_{O_F,n}$ of $\GL_{O_F,n}$, which is given by
 \[
  \GSp_{O_F,n}(R) = \{g \in \GL_n(F\otimes R)\mid gJ_a g^t = c(g)J_a \textnormal{ for some } c(g)\in R^\times\},
 \]
 where $J_a$ denotes the matrix
  \begin{center}
   \begin{tikzpicture}
    \node (lower entry) at (0,0) {-1} ;
    \node (lower central entry) at (1,1) {-1} ;
    \node (upper central entry) at (1.4,1.4) {1} ;
    \node (upper entry) at (2.4,2.4) {1} ;

    \node[left delimiter = (, right delimiter = ),fit=(lower entry) (upper entry)] {};

    \draw  (lower entry.north east) -- (lower central entry.south west) ;
    \draw  (upper central entry.north east) -- (upper entry.south west) ;

    \draw[decorate,decoration=brace] let \p1=(lower entry.west), \p2=(lower central entry) in (\x2,-0.5) -- (\x1,-0.5) node[midway,yshift=-10] {\footnotesize $\frac{n}{2}$} ;
    \draw[decorate,decoration=brace] let \p1=(upper central entry), \p2=(upper entry.east) in (\x2,-0.5) -- (\x1,-0.5) node[midway,yshift=-10] {\footnotesize $\frac{n}{2}$} ;
   \end{tikzpicture}
  \end{center}

 If $(B,^\ast)$ is of type (III), we choose a basis $x_1,\ldots,x_n$ of $\Lambda$ such that
 \[
  \langle x_i,x_j\rangle = u\cdot\delta_{i,n+1-j}
 \]
 where $u \in O_F^\times$ with $\sigma_{F'}(u) = -u$. This defines a closed embedding of $G$ into $\GL_{O_F,n}$ with image
 \[
  \GU_{O_F,n}(R) = \{g \in \GL(F\otimes R)\mid gJ \sigma_{F'}(g)^t = c(g)J \textnormal{ for some } c(g)\in R^\times\}.
 \]
 Here $J$ denotes the matrix with ones on the anti-diagonal and zeros anywhere else.
  
 We are interested in the $G(\QQbar_p)$-conjugacy classes of $\Hom(\GG_m,G_{\QQbar_p})$. As $G$ is reductive, the conjugacy classes are in canonical one-to-one correspondence with the subset of dominant elements $X_*(T)_\dom$ of $X_*(T)$ for some choice $T\subset B \subset G$ of a maximal torus $T$ and a Borel subgroup $B$. For $G = \GL_{O_F,n}, G=\GSp_{O_F,n}, G= \GU_{O_F,n}$ we choose $T$ to be the diagonal torus and $B$ to be the subgroup of upper triangular matrices. In case (I) take $T$ and $B$ to be induced by our choice for $\GL_{O_F,n}$ and the isomorphism $G \cong \GL_{O_F,n} \times \GG_m$. With respect to the canonical embedding $G \mono \GL_{O_F,n} \times \GL_{O_F,n}$ this means that $B$ denotes the Borel subgroup of pairs $(g',g'')$ where the first factor is an upper triangular matrix (or equivalently the second factor is a lower triangular matrix).
 
 Using the canonical identification $X_*(T) \cong \prod \ZZ^n$ in the $\GL_{O_F,n}$ case we obtain in the EL-case
 \[
  X_*(T)_{\dom} = \{ \mu \in \prod_{\tau:F\mono\QQ_p} \ZZ \mid \mu_{\tau,1} \geq \ldots \geq \mu_{\tau.n}\} 
 \]
 and in the PEL case
 \begin{eqnarray*}
  \textnormal{(I)} \quad X_*(T)_\dom &=& \{ \mu= (\mu',\mu'') \in (\prod_\tau \ZZ^n)^2\mid \mu'_{\tau,1} \geq \ldots \geq \mu'_{\tau, n}, \mu_i'' = c(\mu) - \mu_{n+1-i}', c(\mu) \in \ZZ\}. \\
  \textnormal{(II\textsubscript{C})} \quad X_*(T)_\dom &=& \{ \mu \in \prod_\tau \ZZ^n\mid \mu_{\tau,1} \geq \ldots \geq \mu_{\tau, n},\; \mu_{\tau,i}+\mu_{\tau,n+1-i} = c(\mu) \textnormal{ with } c(\mu) \in \ZZ\}. \\
  \textnormal{(III)} \quad X_*(T)_\dom &=& \{ \mu \in \prod_\tau \ZZ^n\mid \mu_{\tau,1} \geq \ldots \geq \mu_{\tau, n},\; \mu_{\tau,i}+\mu_{\tau+\sigma_{F'},n+1-i} = c(\mu) \textnormal{ with } c(\mu) \in \ZZ\}.
 \end{eqnarray*}
 For details, see the appendix.

 \begin{lemma} \label{lem mu}
  Let $\Xund$ be a {\BTEL} or PEL structure of type (AC) and $V,\Lambda,G$ be as above. There exists a (unique) $[\mu]$ with weights $0$ and $1$ on $V_{\QQbar_p}$ such that $\Xund$ satisfies the determinant condition, i.e.
  \[
   \cha(\iota(a)|\Lie X) = \cha(a|V^1)
  \]
 \end{lemma}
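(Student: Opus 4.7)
The plan is to reduce to indecomposable EL/PEL data via the decomposition results of section~\ref{ss normal forms of BTpELs}, then read off $\mu$ from the decomposition of $\Lie X$ under the $O_B$-action, and finally verify compatibility with the PEL structure.

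First, I would invoke Corollaries~\ref{cor BTEL decomposition} and \ref{cor polarised BT decomposition} to write $\Xund = \prod_i \Xund_i$ according to the decomposition of $(B, O_B)$ (resp.\ $(B, O_B, {}^\ast)$) into simple (resp.\ indecomposable) factors. The group $G$ decomposes as a product of the corresponding $G_i$, in the PEL case as the fibre product over the similitude character, so a cocharacter of $G$ is a tuple of cocharacters of the $G_i$ with matching similitude, and the determinant condition splits factor-wise. A Morita equivalence then reduces further to the cases $B = F$ (EL case, and cases (II\textsubscript{C}) and (III) of PEL) or $B = F \times F$ (case (I) of PEL), where $F$ is a finite unramified extension of $\QQ_p$.

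In each reduced case, I would decompose $\Lie X = \bigoplus_{\tau \colon F \hookrightarrow L} (\Lie X)_\tau$ and $V_{\QQbar_p} = \bigoplus_{\tau \colon F \hookrightarrow \QQbar_p} V_\tau$ according to the $O_F$-action, and set $d_\tau := \dim_k (\Lie X)_\tau$. Comparing characteristic polynomials of elements of $O_F$ acting factor-wise, the determinant condition $\cha(\iota(a) | \Lie X) = \cha(a | V^1)$ is equivalent to the collection of equalities $\dim V^1_\tau = d_\tau$ for every $\tau$. Under the identifications of $X_*(T)_\dom \subset \prod_\tau \ZZ^n$ recalled just before the lemma, I would take $\mu$ to be the dominant cocharacter whose $\tau$-component is $(1^{(d_\tau)}, 0^{(n - d_\tau)})$. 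This is manifestly the unique conjugacy class of cocharacters with weights in $\{0,1\}$ satisfying the determinant condition, which disposes of uniqueness simultaneously.

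The main obstacle is the final verification that $\mu$ actually lies in $X_*(T)$ of the subgroup $G$, as opposed to only in the ambient general linear group, in the PEL cases; concretely, that the symmetry relations recalled before the lemma are automatic. The input for this is that the polarisation compatibility $\iota(a) = \lambda^{-1} \circ \iota(a^\ast)^\vee \circ \lambda$ descends to an isomorphism $(\Lie X)_\tau \cong (\Lie X^\vee)_{\tau \circ \ast}$ of $k$-vector spaces (after twisting the $O_F$-action through $\ast$), and combining this with the elementary identity $\dim X + \dim X^\vee = \height X$ applied isotypically yields $d_\tau + d_{\tau \circ \ast} = \dim_{\QQ_p} V_\tau$. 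This forces $c(\mu) = 1$ together with the required coordinate symmetry in each of cases~(I), (II\textsubscript{C}) and (III). The exclusion of case~(II\textsubscript{D}) should be viewed as structural at this step: the orthogonal compatibility relations imposed in type~D do not fit the weight-$\{0,1\}$ shape required by the Kottwitz determinant condition, and type~D is precisely the obstruction to the above verification going through uniformly.
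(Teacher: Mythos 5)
Your overall strategy coincides with the paper's: decompose into indecomposable factors, apply Morita equivalence to reduce to $B=F$ (or $F\times F$ in case (I)), observe that the determinant condition pins down the weight multiplicities as $d_\tau=\dim(\Lie X)_\tau$, and then check that the resulting cocharacter actually lands in $X_*(T)$ of $G$ rather than just of the ambient $\GL_{O_F,n}$. You have also correctly located where the real content sits, namely in the symmetry relation $d_\tau+d_{\tau\circ\ast}=n$ forcing $c(\mu)=1$.

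However, your justification of that relation is a genuine gap. You derive it from ``the elementary identity $\dim X+\dim X^\vee=\height X$ applied isotypically,'' but the global identity only controls $\sum_\tau d_\tau+\sum_\tau \dim(\Lie X^\vee)_\tau$; it does not by itself distribute over the individual $\tau$-components, and a priori nothing prevents $d_\tau+d_{\tau\circ\ast}$ from varying with $\tau$. This refinement is exactly what the paper has to prove, and it does so by first reducing to $S=\Spec k$ (using that the condition is open and closed and \'etale-local, as in \cite{RZ96}~\S~3.23(c)), then invoking Lemma~\ref{lem isocrystal} to see that the Dieudonn\'e module, hence $\Lie E(X)$ for the universal extension $E(X)$, is a \emph{free} $k\otimes O_F$-module, so that $\dim(\Lie E(X))_\tau=n$ for \emph{every} $\tau$, and finally using the $O_F$-equivariant exact sequence $0\to(\Lie X^\vee)^\vee\to\Lie E(X)\to\Lie X\to 0$ to split the identity componentwise. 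Without some such input your ``isotypic'' identity is unproven, and the concluding remark that type~D is ``structurally'' excluded is likewise an assertion rather than an argument (in the paper the exclusion of (II\textsubscript{D}) enters earlier, through the classification of indecomposable PEL-data and the connectedness of $G$, not through the weight computation). To repair the proof you should supply the reduction to a geometric point and the universal-extension (or equivalent Dieudonn\'e-theoretic) argument explicitly.
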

 \begin{proof}
  We start with the EL-case. By the virtue of section~\ref{ss normal forms of BTpELs} we may assume without loss of generality that $B=F$ is an unramified field extension of $\QQ_p$. Recall that the determinant condition is equivalent to
  \[
   \dim (\Lie X)_\tau = \dim_{\QQ_p} V_{\tau}^1 = \#\{i\mid \mu_{i,\tau} = 1\}.
  \]
  Thus $\mu = ((\underbrace{1,\ldots,1,}_{\dim (\Lie X)_\tau} 0,\ldots,0))_{\tau:F \mono \QQbar_p}$.

  In the PEL-case assume without loss of generality that $(B,\,^\ast)$ is indecomposable and that $B=F$ resp.\ $B = F \times F$ in case (I). We exclude case (I) for the moment. Denote by $n$ be the relative height of $\Xund$. As in the EL-case, we need $\mu$ to satisfy
  \[
   \dim (\Lie X)_\tau = \#\{i\mid \mu_{i,\tau} = 1\}.
  \]
  Now use the argument given in \cite{RZ96}~\S~3.23(c). The above condition is open and closed on $S$ and local for the \'etale topology. Hence we assume that $S = \Spec k$ is the spectrum of an algebraically closed field of characteristic $p$.  Let $E(X)$ be the universal extension of $X$. By functoriality we obtain an $O_B$-action on $E(X)$ and an isomorphism $E(X) \isom E(X^\vee)$. Now by Lemma~\ref{lem isocrystal} the crystal induced by $X$ over $k$ is a free $W(k) \otimes O_F$-module, thus the value of this crystal at $\Spec k$ is a free $k \otimes O_F$-module. The value at $\Spec k$ is the Lie-algebra of the universal extension $E(X)$ of $X$. Thus
  \[
   \dim (\Lie E(X))_\tau = n.
  \]
  Now we have a short exact sequence of $O_F \otimes k$-modules
  \begin{equation} \label{term universal extension}
   0 \to (\Lie X^\vee)^\vee \to \Lie E(X) \to \Lie X \to 0.
  \end{equation}
  Thus
  \[
   \dim (\Lie X)_\tau = \dim((\Lie X^\vee)^\vee)_\tau = \left\{ \begin{array}{ll}
                                                                 n - \dim (\Lie X)_\tau & \textnormal{ in case (II\textsubscript{C})} \\
								 n - \dim (\Lie X)_{\tau+\sigma_{F'}} & \textnormal{ in case (III)}
                                                                \end{array}\right.,
  \]
  which is equivalent to the constraint $c(\mu) = 1$.

  In case (I) let $V = V' \oplus V''$ and $\Lie X = (\Lie X)' \oplus (\Lie X)$ be induced by the decomposition of $B$ resp.\ $O_B$ into two factors. The relative height of $\Xund$ is of the form $(n,n)$. The determinant condition is equivalent to
  \begin{eqnarray*}
   \dim (\Lie X)'_\tau &=& {V'_{\tau}}^1 \\
   \dim (\Lie X)''_\tau &=& {V''_{\tau}}^1.
  \end{eqnarray*}
  Now the side condition is $\dim {V'_{\tau}}^1 = \dim {V''_{\tau}}^0$, or equivalently $c(\mu) = 1$. Indeed by (\ref{term universal extension}) we have
  \[
   \dim (\Lie X)''_\tau + \dim (\Lie X)'_\tau = n.
  \]
 \end{proof}

 \begin{proof}[Proof of Proposition~\ref{prop PEL vs D}]
  We have associated a datum $(B,\,^\ast,V,\pair,O_B,\Lambda,[\mu])$ to $\Xund$ so far. It remains to show that there exists a PEL-Shimura datum $(\Bsf,\,^\ast,\Vsf,\pair',\Osf_{\Bsf},\Lambda,h)$ such that 
  \begin{itemlist}
   \item $(B,^\ast) \cong (\Bsf_{\QQ_p},^\ast)$, inducing $O_B = \Osf_{\Bsf,p}$.
   \item $(V,\pair) \cong (\Vsf_{\QQ_p},\pair')$ inducing $\Lambda \cong \Lambda'$, $[\mu] = [\mu_h]$.
  \end{itemlist}
  This is proven in   \cite{VW13}~Lemma~10.4. In the other direction, the assertion that the PEL structure induced by $\Dscr$ is of type (AC) is well-known to be equivalent to the condition that $\Gsf$ is connected. 
 \end{proof}

 \section{The Newton stratification} \label{sect newton stratification}

 \subsection{Comparison between the Newton stratifications on $\Ascr_0$ and on $\Sscr_{\Xund}$} \label{ss comparision}
 Let $\Xund$ be a {\BTEL} or PEL structure of type (AC) of fixed relative height over an $\FF_p$-scheme $S$. As in the previous section this yields an unramified reductive group scheme $G$ over $\ZZ_p$  and we choose $T \subset B \subset G$ as above. Now for every geometric point $\sbar$ of $S$ the isogeny class of $\Xund_{\sbar}$ defines a $\sigma$-conjugacy class of $G_{\QQ_p}$ and thus $\Xund$ induces functions
 \begin{eqnarray*}
  b_{\Xund}: \{\textnormal{ geometric points of } S \} &\to& B(G) \\
  \nu_{\Xund}: \{\textnormal{ geometric points of } S \} &\to& X_*(T)_{\QQ,\dom}^\Gamma \\
  \kappa_{\Xund}:  \{\textnormal{ geometric points of } S \} &\to& \pi_1(G)_\Gamma.
 \end{eqnarray*}

 \begin{lemma}[\cite{RR96}, Thm.~3.6]\label{lem RR}
  The map $b_{\Xund}$ is lower semi continuous.
 \end{lemma}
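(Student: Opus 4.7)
The plan is to deduce the lemma from the general semi-continuity theorem \cite{RR96}~Thm.~3.6 for F-isocrystals with $G$-structure over a characteristic-$p$ base. All that is needed is to upgrade the family $\Xund$ to such an object on $S$ and identify its pointwise Newton invariant with $b_{\Xund}$.

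First, by covariant crystalline Dieudonn\'e theory, $\Xund$ produces an F-crystal $\DD(\Xund)$ on the crystalline site of $S$, equipped with an $O_B$-action (and, in the PEL case, a $\ZZ_p^\times$-homogeneous perfect pairing) inherited from $\iota$ and $\lambda$ by functoriality. At each geometric point $\sbar$, evaluation recovers the rational Dieudonn\'e module $N(\Xund_{\sbar})$ together with its additional structure, and by the dictionary recalled in Section~\ref{ss isocrystals}, the induced $\sigma$-conjugacy class is precisely $b_{\Xund}(\sbar) \in B(G)$.

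Next, I would globalize Lemma~\ref{lem isocrystal} to endow $\DD(\Xund)$ with a $G$-structure in the sense of \cite{RR96}: \'etale-locally on $S$, the underlying crystal with $O_B$-action (and pairing in the PEL case) is isomorphic to the trivial one modelled on $\Lambda \otimes_{\ZZ_p} W$, since the sheaf of such trivializations is a torsor under the smooth reductive $\ZZ_p$-group scheme $G$ and becomes trivial after a suitable \'etale cover. The Frobenius then corresponds locally to an element of $G(W(R))[1/p]$ for affine opens $\Spec R \subset S$, well-defined up to $G(W(R))$-$\sigma$-conjugacy; this is exactly the data of an F-isocrystal with $G$-structure.

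With this upgrade in place, \cite{RR96}~Thm.~3.6 applies directly: for any F-isocrystal with $G$-structure, the associated Newton invariant is lower semi-continuous with respect to the partial order on $B(G)$ defined by the Newton and Kottwitz points. Combined with the pointwise identification of the previous paragraph, this yields the claim. The main technical issue is the globalization step, but it is routine given the smoothness of $G$ over $\ZZ_p$ and the pointwise classification of relevant $\Bscr$-isocrystals supplied by Lemma~\ref{lem isocrystal}; the type (AC) hypothesis is used precisely to ensure the uniqueness statement in that lemma in the PEL case, so that the local trivializations form a $G$-torsor.
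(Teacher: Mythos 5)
The paper gives no argument for this lemma beyond the attribution to \cite{RR96}~Thm.~3.6, and your proposal is precisely the standard unwinding of that citation: pass to the Dieudonn\'e crystal with its induced $O_B$-action and pairing, use the dictionary of section~\ref{ss isocrystals} (and Lemma~\ref{lem isocrystal}, where type (AC) enters) to identify the pointwise invariant with $b_{\Xund}$, and then invoke the Rapoport--Richartz semicontinuity theorem for isocrystals with $G$-structure. This is the same approach as the paper's, so your write-up is correct and simply makes the routine reduction explicit.
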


 \begin{definition}
  For $\Xund$ as above and $b \in B(G)$ we consider
  \begin{eqnarray*}
   S^{b} &:=& \{s \in S; b_{\Xund} (\sbar) = b \} \\
   S^{\leq b} &:=& \{s \in S; b_{\Xund} (\sbar) \leq b\}
  \end{eqnarray*}
  as locally closed subschemes with reduced structure. The $S^b$ are called Newton strata and the $S^{\leq b}$ are called closed Newton strata.
 \end{definition}

 Now the isogeny classes of {\BTDs} correspond to $B(G,\mu)$ (see for example \cite{VW13}~ch.~8). Hence $\kappa$ is locally constant on $\Ascr_0$, thus it suffices to consider the Newton points to distinguish the Newton strata. We will also write $\Ascr_0^{\nu(b)}$ instead of $\Ascr_0^b$ whenever it is convenient. 
 
 \begin{proposition} \label{prop relation}
  Assume Theorem 1.1 holds true. Let $\Xund$ be a {\BTEL} or PEL structure of type (AC) over a perfect field $k_0$; denote by $G$ the associated reductive group over $\ZZ_p$ denote by $b_0 \in B(G)$ the isogeny class of $\Xund_{\kbar_0}$. For any $b \in B(G)$ which corresponds to an isogeny class of {\BTpELs} the following assertions hold. 
  \begin{subenv}
   \item $\Sscr_{\Xund}^b$ is non-empty if and only if $b \geq b_0$.
   \item If $\Sscr_{\Xund}^{\leq b}$is non-empty then it is equidimensional of dimension
   \[
    \langle \rho_{G}, \mu+\nu(b) \rangle - \frac{1}{2} \defect (b).
   \]
   \item $\Sscr_{\Xund}^{\leq b}$ is the closure of $\Sscr_{\Xund}^{ b}$ in $\Sscr_{\Xund}$.
  \end{subenv}
 \end{proposition}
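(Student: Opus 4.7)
The plan is to realize $\Sscr_{\Xund}$ as the formal neighborhood of a point on a global PEL Shimura variety and then transfer Theorem~\ref{thm dimension shimura} via Serre--Tate. Since $\Sscr_{\Xund_{\FFbar_p}} = \Sscr_{\Xund} \otimes_{k_0} \FFbar_p$ and Newton stratifications are compatible with extensions of algebraically closed fields, I may assume $k_0 = \FFbar_p$. Applying Proposition~\ref{prop PEL vs D}, I obtain an unramified PEL-Shimura datum $\Dscr'$ with associated reductive group scheme $G$ over $\ZZ_p$ such that the (P)EL structure on $\Xund$ is a $\Dscr'$-structure. Fix a sufficiently small $K^p$ and set $\Ascr_0' := \Ascr_{\Dscr',K^p,0}$.

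The next step is to choose a point $x \in \Ascr_0'(\FFbar_p)$ with $\Aund_x[p^\infty] \cong \Xund$; this amounts to non-emptiness of the central leaf in $\Ascr_0'^{b_0}$ consisting of points whose $p$-divisible group is isomorphic to $\Xund$, which is established in the subsequent sections of the paper. Proposition~\ref{prop serre-tate} then identifies $\Rscr_{\Xund} \cong \widehat\Oscr_{\Ascr_0',x}$ in such a way that the universal $p$-divisible groups correspond; since the Newton point of a geometric point is determined by the isogeny class of its $p$-divisible group, the Newton stratifications on $\Sscr_{\Xund}$ and on $\Spec \widehat\Oscr_{\Ascr_0',x}$ agree under this identification. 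All three assertions then follow from Theorem~\ref{thm dimension shimura} applied to $\Ascr_0'$: for (1), Grothendieck specialisation (Lemma~\ref{lem RR}) forces $b \geq b_0$ on $\Sscr_{\Xund}$, while for $b \geq b_0$ the equality $\overline{\Ascr_0'^b} = \Ascr_0'^{\leq b}$ shows that $x$ lies in the closure of $\Ascr_0'^b$, whence the completion is non-empty; for (2), equidimensionality and the dimension of $\Ascr_0'^{\leq b}$ descend to its formal completion at $x$; for (3), the equality $\overline{\Ascr_0'^b} = \Ascr_0'^{\leq b}$ together with the compatibility of closures of locally closed subsets with formal completion at a point in their closure (valid since $\Ascr_0'$ is of finite type over a field, hence excellent) yields $\overline{\Sscr_{\Xund}^b} = \Sscr_{\Xund}^{\leq b}$.

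The main obstacle is the existence of a point $x$ whose associated $p$-divisible group is \emph{isomorphic} (not merely isogenous) to $\Xund$; this requires the non-emptiness of the relevant central leaf, which is handled by the central-leaf computations carried out in the later sections of the paper. The remaining local-to-global compatibilities of dimension and closure under completion are routine, since $\Ascr_0'$ is a smooth scheme of finite type over $\FFbar_p$.
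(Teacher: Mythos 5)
Your proposal follows the paper's argument essentially step for step: reduce to $k_0$ algebraically closed, realise $\Xund$ as a Barsotti--Tate group with $\Dscr'$-structure via Proposition~\ref{prop PEL vs D}, pick a point $x \in \Ascr_{\Dscr',0}(\FFbar_p)$ whose Barsotti--Tate group is \emph{isomorphic} to $\Xund$ (the paper cites \cite{VW13}~Thm.~10.2 for this -- it is an external input of Viehmann--Wedhorn rather than something proved in the later sections of this paper), and transfer Theorem~\ref{thm dimension shimura} through the Serre--Tate identification of Proposition~\ref{prop serre-tate}. The only step you elide is that Proposition~\ref{prop PEL vs D} applies to PEL structures with $\ZZ_p^\times$-homogeneous polarisation, so in the EL case one must first replace $\Xund$ by $\Xund\times\Xund^\vee$ with its canonical PEL structure (Corollary~\ref{cor polarised BT decomposition}), exactly as the paper does; this is a minor omission, not a gap in the strategy.
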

 \begin{proof}
  As mentioned in section~\ref{ss deformation}, we may assume in the PEL case that the polarisation is $\ZZ_p^\times$-homogeneous. In the EL case we may replace $\Xund$ by $\Xund \times \Xund^\vee$ with the obvious $\ZZ_p^\times$-homogeneous PEL structure by Corollary~\ref{cor polarised BT decomposition}. We note that the similitude factor is constant modulo $\ZZ_p^\times$, as it is determined by the Kottwitz point, which is constant on $\Sscr_{\Xund}$ by \cite{RR96}~Thm.~3.8. Also, we may replace $\Xund$ by $\Xund_{\kbar_0}$ and thus assume that $k_0$ is algebraically closed. 

  Now by Proposition~\ref{prop PEL vs D} $\Xund$ is a {\BT} with $\Dscr'$-structure for a suitable unramified PEL-Shimura datum $\Dscr'$. Thus by \cite{VW13}~Thm.~10.2 there exists $x \in \Ascr_{\Dscr',0}(k_0)$ such that the associated {\BTD} is isomorphic to $\Xund$. Now the assertions follows from Theorem~\ref{thm dimension shimura} by applying Proposition~\ref{prop serre-tate} to $x$.
 \end{proof}

 \subsection{The Newton polygon stratification} 
 Our medium-term goal is to generalize the following improvement of de Jong-Oort's purity theorem by Yang to \BTpELs. It will be our main tool to compare the two assertions of Theorem~\ref{thm dimension shimura}.
 
 \begin{proposition}[cf.~\cite{yang11}~Thm.~1.1] \label{prop yang}
  Let $S$ be a locally noetherian connected $\FF_p$-scheme and $X$ be a \BT\ over $S$. If there exists a neighbourhood $U$ of a point $s\in S$ such that the Newton polygons $\NP(X)(x)$ of $X$ over all points $x \in U\setminus \overline{\{s\}}$ have a common break point, then either $\codim_U(\overline{\{s\}}) \leq 1$ or $NP(X)(s)$ has the same break point.
 \end{proposition}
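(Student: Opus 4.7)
The plan is to reduce this to the original de Jong-Oort purity theorem by exploiting the slope filtration that exists on $U \setminus \overline{\{s\}}$ because of the common break-point hypothesis. First I would shrink $U$ so that it is integral and noetherian; then the generic Newton polygon $\nu_0$ of $X$ on $U$ passes through the assumed common break point $(m_0, n_0)$, and by Grothendieck--Rapoport--Richartz specialization (Lemma~\ref{lem RR}) so does $NP(X)(x)$ for every $x \neq s$. Writing $\lambda = n_0/m_0$ for the corresponding slope, this break separates the polygons at every such $x$ into a lower piece (slopes $\leq \lambda$, total height $m_0$) and an upper piece (slopes $\geq \lambda$).

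Second, on the open $V := U \setminus \overline{\{s\}}$, after possibly passing to a normalization which does not affect codimensions, Zink's slope-filtration theorem produces a short exact sequence of Barsotti--Tate groups
\begin{equation*}
 0 \to X' \to X_V \to X'' \to 0
\end{equation*}
where $X'$ has constant height $m_0$ with Newton slopes $\leq \lambda$ everywhere and $X''$ has slopes $\geq \lambda$ everywhere. The isoclinic part of this filtration (or rather, the quasi-isogeny class of $X'$ inside $X_V[p^\infty]$) is uniquely characterized by the break condition, so the subsheaf $X' \subset X_V$ is intrinsic to the data.

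Third, I would apply de Jong--Oort purity to the problem of extending $X'$ across $s$. The locus in $U$ over which the subgroup $X'$ extends to a BT-subgroup of $X$ is open; crucially, by a purity-type argument (either directly from de Jong--Oort's theorem applied to $X'$ and $X''$ separately, or by observing that the extension problem is controlled by the failure of the Newton polygon of $X$ to factor through $(m_0, n_0)$), the complement of this locus is pure of codimension one when non-empty. Since by assumption this complement is contained in $\overline{\{s\}}$, which has codimension $\geq 2$ in $U$, the complement must be empty; hence $X'$ extends to a BT-subgroup of $X$ over all of $U$, including the fibre at $s$. The existence of such a subgroup of the prescribed height forces $NP(X)(s)$ to pass through $(m_0, n_0)$, which is the desired break point.

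The main obstacle is step three: making the passage from the original de Jong--Oort purity (which is phrased as purity of Newton-polygon jump loci) to purity of the locus where the specific subgroup $X'$ fails to extend. The cleanest route is via crystalline Dieudonné theory — the slope filtration of the Dieudonné crystal extends across codimension $\geq 2$ by a standard extension-of-sub-crystals argument, and Berthelot--Breen--Messing full faithfulness then produces the subgroup — but some care is needed to ensure that the extension across the possibly non-regular boundary is a BT-subgroup and not merely a sub-crystal.
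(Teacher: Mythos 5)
First, a point of comparison: the paper does not prove this proposition at all --- it is quoted verbatim from Yang \cite{yang11}, whose argument works directly with the $F$-crystal, takes $\wedge^{m_0}$ of the Dieudonn\'e crystal so that the break-point hypothesis becomes a statement about the unit-root (slope-$n_0$) part having rank one at every point of $U\setminus\overline{\{s\}}$, and then runs the de Jong--Oort semicontinuity/determinant argument to show that the locus where this rank drops has codimension $\leq 1$. Your route --- produce a slope filtration on $V=U\setminus\overline{\{s\}}$ splitting at the break point and extend it across $\overline{\{s\}}$ --- is genuinely different, and if it worked it would be a nice structural reformulation; but as written it has two serious gaps.

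The first gap is in your second step: Zink's (and Oort--Zink's) slope-filtration theorem requires the Newton polygon to be \emph{constant} on the base, and that is exactly the hypothesis that Yang's improvement removes. On $V$ you only know that all fibral polygons share the vertex $(m_0,n_0)$; they may otherwise vary, so the theorem you cite does not apply, and producing the exact sequence $0\to X'\to X_V\to X''\to 0$ under the weaker hypothesis is itself a nontrivial claim of essentially the same depth as the statement you are proving (one would again have to pass to $\wedge^{m_0}$ and isolate a rank-one unit-root subcrystal, i.e.\ redo de Jong--Oort). The second gap is the ``purity-type argument'' in step three: de Jong--Oort's theorem cannot be ``applied to $X'$ and $X''$ separately'' because those objects are only defined on $V$ and the issue is precisely whether they extend; and the alternative ``standard extension-of-sub-crystals across codimension $\geq 2$'' requires regularity (or at least normality plus a reflexive-hull argument) of the base, whereas $U$ is merely locally noetherian --- the normalization need not even be finite here, which is why de Jong--Oort first reduce to complete local rings. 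Finally, even granting an extension of the subcrystal over $s$, you still must show the extended object is a sub-$F$-crystal whose generic slope bound persists at $s$ and that full faithfulness applies to recover a BT-subgroup; you correctly flag this as the main obstacle, but it is not resolved, and it is where the actual content of the theorem sits. In short, the skeleton is plausible but each load-bearing step currently rests on a theorem that does not apply in the stated generality.
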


 \begin{remark}
  The original formulation of this theorem gives the assertion for arbitrary $F$-crystals. For simplicity, we will work with \BT s instead. However, we remark that our generalisation also works in the setting of $F$-crystals.
 \end{remark}

 We will generalise the above proposition by comparing the Newton stratification on $S$ to a stratification given by a family of Newton polygons. We consider the following stratification.

 \begin{definition}
  Let $\Xund$ be a {\BTpEL} over a connected $\FF_p$-scheme $S$ and $(B,O_B,(^\ast))$ the associated (P)EL-datum. Let $B = \prod B_i$ be the decomposition into simple algebras and $X = \prod \Xund_i$ be as in Lemma \ref{lem BT decomposition}. Denote by $\NP(\Xund) = (\NP(\Xund_i))_i$ the family of (classical) Newton polygons associated to the $\Xund_i$. We call the decomposition of $S$ according to the invariant $\NP(\Xund)$  the Newton polygon stratification.
 \end{definition}
 
 \begin{remark}
  One might also think of considering the stratification given by the Newton polygon of $X$, i.e. the stratification given by of the isogeny class of $X_{\sbar}$ (forgetting the (P)EL-structure) for geometric points $\sbar$ of $S$. We warn the reader that this stratification is in general coarser than the Newton stratification.
 \end{remark}

 Now the aim of this subsection is to show that the Newton polygon stratification and the Newton stratification on $S$ coincide. It follows from the definition that the Newton polygon stratification is at worst coarser than the Newton stratification. We reformulate the invariant $\NP$ in group theoretic terms to show that they are in fact equal.

 Let $\Xund,S$ be as above and assume in the PEL-case that $(B,O_B,\, ^\ast)$ is of type (AC). Let $V,\Lambda$ and $G$ be associated to $\Xund$ as in subsection \ref{ss PEL vs D}. Define $H := \prod \GL_{\ZZ_p} \Lambda_i$ with canonical embedding $i: G \mono H$. Then the Newton polygon stratification is given by the invariant
 \[
  \NP(\sbar) := \nu_H(i(b_{\Xund}(\sbar))).
 \]

 \begin{lemma} \label{lem newton polygon stratification}
  The Newton stratification and the Newton polygon stratification on $S$ coincide.
 \end{lemma}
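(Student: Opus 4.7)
The plan is to observe that the Newton polygon stratification is automatically a coarsening of the Newton stratification, since $\NP = \nu_H \circ i \circ b_{\Xund}$. It therefore suffices to show that the map $B(G) \to X_\ast(T_H)_{\QQ,\dom}^\Gamma$ given by $b \mapsto \nu_H(i(b))$ becomes injective once one fixes the Kottwitz point $\kappa_G$. Combined with the facts that $\kappa_G$ is locally constant on $S$ by \cite{RR96}~Thm.~3.8 and therefore constant on the connected scheme $S$, and that an element of $B(G)$ is determined by the pair $(\nu_G, \kappa_G)$, this will yield the claim.

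To establish this injectivity, I would reduce to the case where $B$ is simple (EL case) or $(B,{}^\ast)$ is indecomposable (PEL case) by means of Corollaries~\ref{cor BTEL decomposition} and \ref{cor polarised BT decomposition}, which are visibly compatible with the formation of Newton points and with the embedding $i$. Morita equivalence then allows one to assume $B = F$ is an unramified field extension of $\QQ_p$ (resp.\ $B = F \times F$ in PEL type (I)), so that $G$ is one of $\GL_{O_F,n}$, $\GL_{O_F,n} \times \GG_m$, $\GSp_{O_F,n}$ or $\GU_{O_F,n}$ and $H$ is an explicit product of copies of $\GL_{nd}$ with $d = [F:\QQ_p]$.

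With these explicit descriptions in hand, injectivity is a direct check using the formulas for $X_\ast(T)_{\dom}$ collected in section~\ref{ss PEL vs D}. In the EL case, Example~\ref{ex isocrystal}(2) shows that the underlying $\sigma$-isocrystal has each refined slope $\lambda_j/d$ repeated $d$ times, so $\nu_H(i(b))$ evidently determines $\nu_G(b)$. In the PEL cases, the involution forces the symmetries $\mu_{\tau,i} + \mu_{\tau,n+1-i} = c(\mu)$ (or the analogue in case (III)) already visible in the explicit description of $X_\ast(T)_{\dom}$; combined with the scalar $c(\mu)$, which is pinned down by the (constant) Kottwitz point, this again recovers $\nu_G(b)$ from $\nu_H(i(b))$.

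The main subtlety, rather than a genuine obstacle, will be the bookkeeping in PEL case (I), where $G \cong \GL_{O_F,n} \times \GG_m$ is embedded diagonally into $\GL_{O_F,n} \times \GL_{O_F,n}$, so that one must track the similitude scalar carefully via $\kappa_G$ in order to recover the second (``dual'') factor of $\nu_G$ from the two factors of $\nu_H(i(b))$.
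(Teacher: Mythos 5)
Your proposal is correct and follows essentially the same route as the paper: reduce to the simple/indecomposable case and then to $B=F$ by Morita equivalence, settle the EL case via Example~\ref{ex isocrystal}(2), and handle the PEL cases by the explicit descriptions of dominant cocharacters from section~\ref{ss PEL vs D} (the paper phrases this as $i_1$ being injective on dominant cocharacters), with the constancy of $\kappa_G$ on connected $S$ supplying the remaining invariant. Your extra care about the similitude scalar in case (I) is harmless but unnecessary, since $H$ contains both $\GL(\Lambda')$ and $\GL(\Lambda'')$ as factors, so both components of $\nu_G(b)$ are read off directly.
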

 \begin{proof}
  We start with the EL-case. Obviously it suffices to show the claim when $B$ is simple, by Morita equivalence we may further assume that $B=F$ is a finite unramified field extension of $\QQ_p$. In this case the assertion was already proven in Example \ref{ex isocrystal}.

  In the PEL-case, $i$ factorizes as  $G \stackrel{i_1}{\mono} \GL_{O_B}(\Lambda) \stackrel{i_2}{\mono} H$. We have already seen that $i_2$ separates Newton points, thus it suffices to prove this for $i_1$. But we have already noted in section~\ref{ss PEL vs D} that $i_1$ induces an injective map on the dominant cocharacters thus in particular separates Newton points.
 \end{proof}

 \subsection{Break points of Newton polygons and Newton points}
 In order to generalize Proposition~\ref{prop yang} to the PEL-case, we need the group theoretic description of break points.

 \begin{definition}
  Let $G$ be a reductive group over $\ZZ_p$, $T\subset B \subset G$ be a maximal torus and an Borel subgroup of $G$ and let $\Delta_{\QQ_p}^+(G)$ denote the set of simple relative roots of $G_{\QQ_p}$. For any $\nu \in \Nscr(G)$ and $\beta\in \Delta_{\QQ_p}^+(G)$ we make the following definitions.
  \begin{subenv}
   \item We say that $\nu$ has a break point at $\beta$ if $\langle \nu, \beta \rangle > 0$. We denote by $J(\nu)$ the set of break points of $\nu$.
   \item Let $\omega_\beta^\vee,$ be the relative fundamental coweight of $G^\ad$ corresponding to $\beta$ and let \\ % !!!
   $pr_\beta:X_*(T)^\Gamma \to X_*(T^\ad)^\Gamma \to \QQ\cdot\omega_\beta$ be the orthogonal projection (cf.~\cite{chai00}~chapter~6). If $j \in J(\nu)$, we call $(\beta, pr_\beta(\nu))$ the break point of $\nu$ at $\beta$.
  \end{subenv}
 \end{definition}

 \begin{example}
  Consider the case $G=GL_h$ with diagonal torus $T$ and Borel of upper triangular matrices $B$. Then the simple roots are
  \[
   \beta_i: \diag(t_1,\ldots,t_n) \mapsto t_{i+1}-t_i.
  \]
  Hence $J(\nu)$ is the set of all $j$ such that $\nu_j >\nu_{j+1}$. In terms of the Newton polygon $P$ associated to $\nu$, this is the set of all places where the slope on the left and the slope on the right do not coincide.

  We use the standard identification $X_*(T^\ad)_\QQ \cong \QQ^n/\QQ$ where $\QQ\mono\QQ^n$ is the diagonal embedding. We write $[\nu_1,\ldots,\nu_n]$ for $(\nu_1, \ldots, \nu_n) \mod \QQ$. Now $\omega_{\beta_j}^\vee = [1,\ldots,1,0,\ldots,0]$ and a short calculation shows
  \begin{eqnarray*}
   pr_\beta(\nu) &=& [\underbrace{\frac{\nu_1+\cdots +\nu_j}{j},\ldots,\frac{\nu_{1}+\cdot+\nu_j}{j}}_{j \textnormal{ times}}, \underbrace{\frac{\nu_{j+1}+\cdots+\nu_n}{n-j},\ldots,\frac{\nu_{j+1}+\cdots+\nu_n}{n-j}}_{n-j \textnormal{ times}}] \\
   &=& (\frac{\nu_1+\ldots+\nu_j}{j} - \frac{\nu_{j+1}+\ldots+\nu_n}{n-j})\cdot \omega_{\beta_j}^\vee \\
   &=& (\frac{n}{j(n-j)} P(j)-\frac{1}{n-j} P(h) ) \cdot \omega_{\beta_j}^\vee.
  \end{eqnarray*}
  In particular if $\nu,\nu' \in \Nscr(G)$ with corresponding Newton polygons $P,P'$ satisfying $P(n) = P'(n)$ we have $pr_{\beta_j}(\nu) = pr_{\beta_j}(\nu')$ if and only if $P(j) = P'(j)$. Thus, under the premise that $P(n) = P'(n)$, the notion of a common break point for $\nu,\nu' \in \Nscr(\GL_h)$ coincides with the classical definition for Newton polygons.
 \end{example}

 \begin{lemma} \label{lem break points}
  In the situation of the previous subsection, let $\beta$ a relative root of $G_{\QQ_p}$. There exists a relative root $\beta'$ of $H_{\QQ_p}$ such that for any subset $\{\nu_i\} \subset \Nscr(G)$ the $\nu_i$ have a common break point at $\beta$ if and only if the $i(\nu_i)$ have a common break point at $\beta'$.
 \end{lemma}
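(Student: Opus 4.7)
My plan is to reduce to the case where $(B,O_B,(^{\ast}))$ is simple (resp.\ indecomposable), then in each of the four resulting types exhibit the correspondence $\beta \mapsto \beta'$ by a direct calculation, using the explicit description of $G$ and its standard Killing pair given in section~\ref{ss PEL vs D}.

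First I would use Corollary~\ref{cor BTEL decomposition} (resp.\ Corollary~\ref{cor polarised BT decomposition}) to reduce to the case where $(B,O_B)$ is simple, resp.\ $(B,O_B,\ast)$ is indecomposable. Under such a decomposition both $G$ and $H$ split as products of the corresponding factors $G_i \mono H_i$ (in the PEL type~(I) setting, up to the similitude constraint, which only affects the central torus and therefore contributes no relative roots), and the sets $\Delta_{\QQ_p}^+(G)$ and $\Delta_{\QQ_p}^+(H)$ decompose accordingly, so it suffices to find $\beta' \in \Delta_{\QQ_p}^+(H_i)$ for each $\beta \in \Delta_{\QQ_p}^+(G_i)$. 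By Morita equivalence I can then further assume $B = F$ (or $B = F \times F$ in type~(I)) with $F/\QQ_p$ a finite unramified extension.

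In each of the remaining cases the groups $G$ and $H$ and their standard Killing pairs are described explicitly in section~\ref{ss PEL vs D}, and the embedding $i : G \mono H$ induces an explicit map $i_\ast$ on $X_\ast(T)_\QQ^\Gamma$. In the EL case $i_\ast$ takes $\nu = (\nu_1,\dots,\nu_n)$ to the Newton polygon with each slope $\nu_j$ repeated $d = [F:\QQ_p]$ times; thus the $j$-th simple relative root of $G$ corresponds to the simple relative root of $H=\GL_{nd}$ indexed by the position $jd$, and the example preceding the lemma shows that $\langle \nu,\beta\rangle$ and $\langle i(\nu),\beta'\rangle$ vanish simultaneously while $pr_{\beta'}(i(\nu))$ is a positive rational multiple of $i_\ast(pr_\beta(\nu))$. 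For PEL types~(I), (II\textsubscript{C}) and (III) I would carry out the same calculation using the descriptions $G \cong \GL_{O_F,n} \times \GG_m$, $G \cong \GSp_{O_F,n}$ and $G \cong \GU_{O_F,n}$ of section~\ref{ss PEL vs D}, together with the explicit form of $X_\ast(T)_\dom$ listed there; in each case the simple relative roots of $G$ pair off with relative roots of $H$ under $i_\ast$ in a manner fully determined by the symmetry constraints on $\nu$.

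The main obstacle will be the bookkeeping of the Galois action in type~(III), where the Galois orbits of absolute roots of $G$ have length two and the break points of $i(\nu)$ come in symmetric pairs about the midpoint of the corresponding Newton polygon, so one must verify that $pr_{\beta'}$ on the $H$-side restricts to a positive scalar multiple of $pr_\beta$ on the $G$-side. Once this is checked, the equivalence of having a common break point at $\beta$ for $\{\nu_i\}$ and at $\beta'$ for $\{i(\nu_i)\}$ follows immediately from the definition, since ``common break point at~$\beta$'' is a conjunction of the non-vanishing of the pairings and the equality of the projections.
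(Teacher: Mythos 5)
Your plan is sound and would prove the lemma, but it is organized differently from the paper. After the common first step (Morita reduction to $B=F$, with the EL case read off from Example~\ref{ex isocrystal}), the paper does \emph{not} run through the types (I), (II\textsubscript{C}), (III) against $H$ directly; instead it factorizes the embedding as $G \stackrel{i_1}{\mono} \GL_{O_B}(\Lambda) \stackrel{i_2}{\mono} H$, disposes of $i_2$ by the already-settled EL comparison, and then handles $i_1$ by a single uniform root-theoretic observation: every simple relative root $\beta$ of $G$ is the restriction of the simple relative roots in a set $R'(\beta)$ of $\GL_{O_F}\Lambda$, so $\langle\nu,\beta\rangle = \langle i_1(\nu),\beta'\rangle$ for any $\beta'\in R'(\beta)$, and since $\alpha^\vee$ is a scalar multiple of the sum of the coroots in $R'(\alpha)$, linearity reduces the matching of projections to $pr_\beta(\nu)=0 \Lra pr_{\beta'}(i_1(\nu))=0$, which is checked once from the appendix. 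Your version replaces this with an explicit computation of $i_*$ and of $pr_{\beta'}\circ i$ in each of the four cases; this works (and you correctly flag type~(III), where the non-reduced relative root system for odd $n$ and the length-two Galois orbits make the bookkeeping heaviest), but it triples the amount of case analysis that the paper's intermediate step $\GL_{O_B}(\Lambda)$ avoids. One small point of care in your write-up: what is actually needed is the equivalence $pr_\beta(\nu_1)=pr_\beta(\nu_2) \Lra pr_{\beta'}(i(\nu_1))=pr_{\beta'}(i(\nu_2))$, so your claim that $pr_{\beta'}(i(\nu))$ is a fixed nonzero multiple of $i_*(pr_\beta(\nu))$ suffices only together with the injectivity of $i_*$ on the line $\QQ\,\omega_\beta^\vee$ — worth stating explicitly, as the paper's kernel argument does.
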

 \begin{proof}
  We proceed similarly as in Lemma~\ref{lem newton polygon stratification}. In the EL-case we reduce to the case $B=F$ which follows from the explicit description given in Example~\ref{ex isocrystal}. In the PEL-case it thus suffices to consider the embedding $i_1:G \mono \GL_{O_F} \Lambda$. We note that the simple roots of $G$ are precisely the restriction of the simple roots of $\GL_{O_F} \Lambda$. Denote by $R'(\beta)$ the set of simple roots of $\GL_{O_F} \Lambda$ above $\beta$. Obviously we have for any $\beta' \in R'(\beta)$
  \[
   \langle \nu, \beta \rangle = \langle i_1(\nu), \beta'\rangle.
  \]
  Thus $\nu$ has a break point at $\beta$ if and only if $i_1(\nu)$ has a breakpoint at some (or equivalently every) element of $R'(\beta)$. Now it remains to show that for $\nu_1,\nu_2 \in \Nscr(\beta)$ we have $pr_\beta (\nu_1) = pr_\beta (\nu_2) \Lra pr_{\beta'} (i_{1}(\nu_1)) = pr_{\beta'} (i_{1}(\nu_2))$. As all functions appearing in the term are linear, it suffices to show
  \[
   pr_\beta (\nu) = 0 \Lra pr_{\beta'}(i_{1}(\nu)) = 0.
  \]
  By the definition of the projections $pr_\beta, pr_{\beta'}$ this is equivalent to
  \[
   \nu \in \sum_{\alpha \in \Delta_{\QQ_p}^+(G) \setminus \beta} \RR\alpha^\vee \Lra i_{1}(\nu) \in \sum_{\alpha' \in \Delta_{\QQ_p}^+(\GL_{O_F}\Lambda) \setminus \beta'} \RR\alpha'^\vee.
  \]
  This follows from the fact that $\alpha^\vee$ is a scalar multiple of the sum of the coroots corresponding to elements in $R'(\alpha)$, which is easily checked using the data given in the appendix.
 \end{proof}

 \subsection{The relationship between the dimension of Newton strata and their position relative to each other}
 Now we can finally formulate and prove the generalisation of Proposition~\ref{prop yang}.

 \begin{proposition}\label{prop yang generalised}
  Let $S$ be a connected locally noetherian $\FF_p$-scheme and $\Xund$ be a \BT\ over $S$ with EL structure or PEL structure of type (AC). If there exists a neighbourhood $U$ of a point $s\in S$ such that the Newton polygons $\nu_{\Xund}(x)$ of $\Xund$ over all points $x \in U\setminus \overline{\{s\}}$ have a common break point, then either $\codim_U(\overline{\{s\}}) \leq 1$ or $\nu_{\Xund}(s)$ has the same break point.
 \end{proposition}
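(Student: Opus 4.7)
The plan is to reduce the assertion to the classical theorem of Yang (Proposition~\ref{prop yang}) via the canonical embedding $i : G \mono H = \prod_i \GL_{\ZZ_p} \Lambda_i$ and the translation of break points supplied by Lemma~\ref{lem break points}. The moral picture is that the Newton point of a $(P)EL$-BT and the tuple of Newton polygons of its simple components carry the same combinatorial information about break points, so that purity for the latter (which is known) propagates to the former.

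First I would translate the hypothesis. Let $\beta \in \Delta_{\QQ_p}^+(G)$ be a simple relative root at which all $\nu_{\Xund}(x)$, $x \in U \setminus \overline{\{s\}}$, share a common break point. By Lemma~\ref{lem break points} there is a simple relative root $\beta'$ of $H_{\QQ_p}$ such that the Newton points $i(\nu_{\Xund}(x)) = \nu_H(i(b_{\Xund}(x)))$ all share a common break point at $\beta'$ over $U \setminus \overline{\{s\}}$. Since $H$ is a product, $\beta'$ lies in a unique factor $\GL_{\ZZ_p} \Lambda_{i_0}$ and under the identification of Example~\ref{ex isocrystal}(1) it corresponds to an integer $j$; ``common break point at $\beta'$'' then becomes the statement that the classical Newton polygons $\NP(\Xund_{i_0})(x)$ all pass through one and the same point with abscissa $j$, for $x \in U \setminus \overline{\{s\}}$.

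Next I would invoke the classical theorem. By Lemma~\ref{lem BT decomposition}, $\Xund_{i_0}$ is a genuine Barsotti-Tate group over the connected locally noetherian $\FF_p$-scheme $S$, so Proposition~\ref{prop yang} applies to $\Xund_{i_0}$ on the neighbourhood $U$ of $s$. The dichotomy yields either $\codim_U \overline{\{s\}} \leq 1$ (in which case the proposition is proven), or that $\NP(\Xund_{i_0})(s)$ passes through the \emph{same} point at abscissa $j$.

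Finally I would transfer back. In the second case the above translation shows that $\nu_H(i(b_{\Xund}(s)))$ has the same break point at $\beta'$ as all the $\nu_H(i(b_{\Xund}(x)))$ for $x \in U \setminus \overline{\{s\}}$. Applying Lemma~\ref{lem break points} in the reverse direction (the ``iff'' is formulated for arbitrary subsets of $\Nscr(G)$, so we may include $s$ among the tested points), we conclude that $\nu_{\Xund}(s)$ has the same break point at $\beta$ as the $\nu_{\Xund}(x)$ for $x \in U \setminus \overline{\{s\}}$. The only real point of care is checking that the characterization of common break points in Lemma~\ref{lem break points} is symmetric in the two directions — this is clear from its statement — and that $\Xund_{i_0}$ remains defined on all of $U$ (which is immediate from Lemma~\ref{lem BT decomposition}). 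There is no serious obstacle: the entire argument is a reduction, and the hard work is already done in Lemma~\ref{lem break points} and in Yang's purity theorem.
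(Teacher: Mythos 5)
Your proposal is correct and takes essentially the same route as the paper's (very terse) proof: reduce via Lemma~\ref{lem break points} and the decomposition of Lemma~\ref{lem BT decomposition} to a single Barsotti--Tate group without additional structure and invoke Yang's purity theorem. The one hypothesis you use tacitly when matching the group-theoretic break point with the classical one (via the $\GL_h$ example) is that the Newton polygons $\NP(\Xund_{i_0})(x)$ have a common endpoint over $U$, which holds since height and dimension of $X_{i_0}$ are locally constant on the connected scheme $S$.
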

 \begin{proof}
  By Lemma~\ref{lem newton polygon stratification} and \ref{lem break points} it suffices to prove the assertion for a family $(X_i)$ of Barsotti-Tate groups without additional structure. But this assertion obviously follows from Proposition~\ref{prop yang}.
 \end{proof}

 We now proceed to one of the central pieces of the proof of the main theorem.

 \begin{proposition} \label{prop stratification}
  We fix an unramified PEL-Shimura datum $\Dscr$ and assume that we have that for any $b \in B(G,\mu)$
  \begin{equation} \label{term blah} 
   \dim \Ascr_0^{\nu} \leq \langle \rho,\mu+\nu\rangle - \frac{1}{2} \defect_G (b).
  \end{equation}
  Then Theorem~\ref{thm dimension shimura} holds true.
 \end{proposition}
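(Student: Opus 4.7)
The strategy is descending induction on $b\in B(\Gsf_{\QQ_p},\mu)$, with the key ingredient being the generalised Yang purity of Proposition~\ref{prop yang generalised}. Write $d(b):= \langle\rho,\mu+\nu(b)\rangle - \frac{1}{2}\defect_G(b)$ for the candidate dimension; by Corollary~\ref{cor dimension formulas}, $d(b) = 2\langle\rho,\mu\rangle - \lengthG{\nu(b)}{\mu}$. As a preliminary step I would verify that the length function $\ell$ is additive on $\Nscr(\Gsf_{\QQ_p})_{\leq\mu}$ --- one direction being $\lceil a+b\rceil\leq\lceil a\rceil+\lceil b\rceil$ applied to Proposition~\ref{prop chai}, the other being concatenation of chains --- so that $d$ is a strictly increasing integer-valued function on $B(\Gsf_{\QQ_p},\mu)$ with $d(b')-d(b)=1$ whenever $b'$ covers $b$.

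The base case is $b=[p^\mu]$, the $\mu$-ordinary class: $\Ascr_0^{\leq b}=\Ascr_0$ by the generalised Mazur inequality, which is smooth of dimension $\langle 2\rho,\mu\rangle = d([p^\mu])$, and the $\mu$-ordinary stratum is known to be open dense. For the inductive step, assume Theorem~\ref{thm dimension shimura} holds for every $b'>b$. The upper bound $\dim\Ascr_0^{\leq b}\leq d(b)$ is immediate from $\Ascr_0^{\leq b}=\bigsqcup_{b''\leq b}\Ascr_0^{b''}$ together with the hypothesis and monotonicity of $d$. The lower bound is the core of the argument: I would pick any cover $b'$ of $b$ in $B(\Gsf_{\QQ_p},\mu)$ and apply Proposition~\ref{prop yang generalised} to the restriction of $\Aund^{\univ}[p^\infty]$ to $\Ascr_0^{\leq b'}$. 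By induction $\Ascr_0^{\leq b'}=\overline{\Ascr_0^{b'}}$ is equidimensional of dimension $d(b')$, with the open dense subset $\Ascr_0^{b'}$ carrying constant Newton polygon $\nu(b')$. At a generic point $s$ of a component $Z$ of $\Ascr_0^{\leq b}$ viewed inside $\Ascr_0^{\leq b'}$, one can choose a break point of $\nu(b')$ not shared by $\nu(s)$; concavity of Newton polygons shows this break point is common to the Newton polygons of all points in some punctured neighbourhood of $s$ in $\Ascr_0^{\leq b'}$. Purity then forces $\codim_{\Ascr_0^{\leq b'}}\overline{\{s\}}\leq 1$, whence $\dim Z\geq d(b')-1=d(b)$.

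Combining the two bounds shows every component of $\Ascr_0^{\leq b}$ has dimension exactly $d(b)$, giving equidimensionality. For the closure assertion, any component's generic point lies in some stratum $\Ascr_0^{b_0}$ with $b_0\leq b$, and the dimension constraint $\dim\Ascr_0^{b_0} = d(b)$ combined with strict monotonicity of $d$ forces $b_0 = b$; hence $\Ascr_0^b$ is dense in $\Ascr_0^{\leq b}$. The main technical obstacle I anticipate is the purity step: one must verify not only the existence of a break point of $\nu(b')$ absent from $\nu(s)$, but also that this break point is common to \emph{all}, not merely the generic, Newton polygons on a punctured neighbourhood of $s$. The latter uses that specialisation of a Newton polygon bounded above by $\nu(b')$ can only introduce new break points below it, without destroying the existing ones.
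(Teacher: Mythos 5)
Your overall architecture---induction along the Newton poset, the upper bound read off from the hypothesis, the lower bound via generalised purity applied at the generic point of a component of $\Ascr_0^{\leq b}$ sitting inside $\Ascr_0^{\leq b'}$, and density by a dimension count---is exactly the paper's (which inducts on $\lengthG{\nu}{\mu}$ rather than on covers; an immaterial difference, and your gradedness check for $\lengthG{\cdot}{\cdot}$ is fine). The gap is precisely in the step you flag as the main technical obstacle, and the justification you offer there is false. You claim that a break point of $\nu(b')$ absent from $\nu(s)$ is common to all Newton polygons on a punctured neighbourhood because ``specialisation of a Newton polygon bounded above by $\nu(b')$ can only introduce new break points below it, without destroying the existing ones.'' Specialisation can destroy break points: for $\GL_2$ the polygon with slopes $(1,0)$ has a break point at $x=1$, while its specialisation $(\tfrac12,\tfrac12)$ does not. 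Moreover, the Newton points occurring on the punctured local scheme $\Spec \Oscr_{\Ascr_0^{\leq b'},s}\setminus\{s\}$ are exactly the $\xi$ with $\xi\leq\nu(b')$ and $\xi\not\leq\nu(b)$ (and you must work with the local scheme rather than an open neighbourhood, so that every other point is a generization of $s$ and hence lies outside $\Ascr_0^{\leq b}$; otherwise the rest of $Z$ and the lower strata destroy any common break point). Such $\xi$ need not be comparable to $\nu(b)$ or to $\nu(s)$, so no monotonicity or concavity argument applies.

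What is actually needed, and what the paper invokes, is \cite{viehmann13}, Lemma 5: since $\lengthG{\nu(b)}{\nu(b')}=1$, there is a single break point $(\beta,\lambda)$ of $\nu(b')$ with
\[
 \Nscr_{\leq\nu(b)}=\{\xi\in\Nscr_{\leq\nu(b')}\mid pr_\beta(\xi)<\lambda\}.
\]
Consequently every Newton point on the punctured local scheme has the break point $(\beta,\lambda)$, while $\nu(s)\in\Nscr_{\leq\nu(b)}$ does not, and Proposition~\ref{prop yang generalised} then yields $\codim\leq 1$ as you intend. This is a genuine combinatorial input about the Newton poset (that the complement of the down-set of $\nu(b)$ inside that of $\nu(b')$ is cut out by a single break-point condition), not a consequence of concavity. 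With this lemma substituted for your false principle, and the punctured neighbourhood interpreted as the punctured local scheme at $s$, your argument becomes the paper's proof.
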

 \begin{proof}
  This proof is the same as the proof of the analogous assertion in the equal characteristic case considered in \cite{viehmann13}.Due to the importance of this assertion, we give the full proof anyway.

  Note that by Corollary~\ref{cor dimension formulas} the inequality on the dimension is equivalent to $\codim \Ascr_0^{\nu} \geq \lengthG{\nu}{\mu}$. We prove the claim by induction on $\lengthG{\nu}{\mu}$. For $\lengthG{\nu}{\mu} = 0$, i.e.\ $\nu=\mu$ the dimension formula of Theorem~\ref{thm dimension shimura} certainly holds true and the density of $\Ascr_0^{\mu}$ is known by Wedhorn \cite{wedhorn99}. Now fix an integer $i$ and assume the statement holds true for all $\nu\leq\mu$ and with $\lengthG{\nu}{\mu} < i$. Let $\nu'$ with $\lengthG{\nu'}{\mu} = i$. We fix an element $\nu \in [\nu',\mu]$ such that $\lengthG{\nu'}{\nu} = 1$. Then $\lengthG{\nu}{\mu} = i-1$ and by \cite{viehmann13} Lemma 5 (or more precisely the remark after this lemma), we have 
  \[
   \Nscr_{\leq\nu'} = \{\xi \in \Nscr_{\leq\nu}\mid pr_\beta(\xi) < \lambda\}
  \]
  for some break point $(\beta,\lambda)$ of $\nu$. In particular, $pr_\beta(\nu') < \lambda$.

  Let $\eta$ be a maximal point of $\Ascr_0^{\leq\nu'}$ (i.e.~the generic point of an irreducible component). By applying Proposition~\ref{prop yang generalised} to $S=\Spec \Oscr_{\Ascr_0^{\leq\nu},\eta}$ and $s = \eta$ we see that every irreducible component of $\Ascr_0^{\leq\nu'}$ has at most codimension 1 in $\Ascr_0^{\leq\nu}$. By (\ref{term blah}) and induction hypothesis we have $\dim \Ascr_0^{\leq\nu'} < \dim \Ascr_0^\nu$, thus $\Ascr_0^{\leq\nu'}$  is of pure codimension $\lengthG{\nu'}{\mu}$ in $\Ascr_0$. By (\ref{term blah}) we know that for any $\xi < \nu'$ that $\codim \Ascr_0^\xi \geq \lengthG{\xi}{\mu} > \lengthG{\nu'}{\mu}$, thus $\Ascr_0^{\nu'}$ is dense in $\Ascr_0^{\leq\nu'}$ and Theorem~\ref{thm dimension shimura} follows.
\end{proof}
 
 \section{Central leaves of $\Ascr_0$} \label{sect central leaves}

 In the previous proposition we reduced Theorem~\ref{thm dimension shimura} to an estimate of the dimension of Newton strata in $\Ascr_0$. In the special case of the Siegel moduli variety, Oort has calculated the dimension of Newton strata by writing irreducible components ``almost'' as a product of so-called central leaves and isogeny leaves and calculating the dimension of these. We will use a similar approach to prove the estimate, applying Mantovan's theorem which implies that a Newton stratum of a PEL-Shimura variety is in finite-to-finite correspondence with the product of a central leaf with a truncated Rapoport-Zink space (cf.\ Proposition~\ref{prop almost decomposition}). First, let us recall the definition and the basic properties of central leaves.

 \subsection{The geometry of central leaves}
  
 \begin{definition}
  Let $\Xund$ be a completely slope divisible {\BTD} over $k$ with isogeny class $b$ (Recall that by \cite{OZ02}~Lemma~1.4 a \BT $X$ over $k$ is slope divisible if and only if it is isomorphic to a direct sum of isoclinic \BT s which are defined over a finite field). The corresponding central leaf is defined as
  \[
   C_{\Xund} := \{x \in \Ascr_0 \mid \Aund^{\univ}_{\overline{x}}[p^\infty] \cong \Xund_{\overline{k(x)}}\}
  \]
 \end{definition}
 The central leaf is a smooth closed subscheme of $\Ascr_0^{b}$ by \cite{mantovan05}~Prop.~1. The fact that $C_{\Xund}$ is closed in the Newton stratum is proven by writing $C_{\Xund}$ as a union of irreducible components of
 \[
  C_{X} := \{x \in \Ascr_0 \mid A^{\univ}_{\overline{x}}[p^\infty] \cong X_{\overline{k(x)}}\},
 \]
 which is closed in $\Ascr_0^{b}$ by a result of Oort. Viehmann and Wedhorn showed that every central leaf is non-empty (\cite{VW13}~Thm.~10.2). Furthermore, the dimension of $C_{\Xund}$ only depends on $b$.

 \begin{proposition} \label{prop dimension central leaves}
  Let $\Dscr' = (B,\ast,V,\langle\, , \rangle, O_B,\Lambda,h)$ be a second unramified Shimura datum which agrees with $\Dscr$ except for $h$ and let ${K^p}' \subset G(\AA^p)$ be a sufficiently small open compact subgroup. Denote by $\Ascr_0'$ the special fibre of the associated moduli space. We consider two \BT s $\Xund = (X,\iota,\lambda)$ and $\Xund' = (X',\iota',\lambda')$ over $k$ equipped with $\Dscr$- resp.\ $\Dscr'$-structure and assume that we have an isogeny $\rho: \Xund \to \Xund'$. Denote by $C_X$ and $C_{X'}$ the associated central leaves in $\Ascr_0$ resp.\ $\Ascr'_0$. Then
  \[
   \dim C_{\Xund} = \dim C_{\Xund'}.
  \]
 \end{proposition}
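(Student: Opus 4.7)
The strategy is to construct an isogeny correspondence between $C_{\Xund}$ and $C_{\Xund'}$ whose projections to both central leaves are finite and surjective, forcing the two dimensions to agree.

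Concretely, I would consider the scheme $\Iscr_\rho$ parametrizing triples $(x, x', \tilde\rho)$ where $x = (A, \lambda, \iota, \eta) \in C_{\Xund} \subset \Ascr_0$, $x' = (A', \lambda', \iota', \eta') \in \Ascr_0'$, and $\tilde\rho: A \to A'$ is an isogeny compatible with the $\Osf_\Bsf$-action and with the polarizations up to $\QQ_p^\times$-scalar, which under some identifications $A[p^\infty] \cong \Xund$ and $A'[p^\infty] \cong \Xund'$ restricts to $\rho$ on $p$-divisible groups. There are natural projections $\pi: \Iscr_\rho \to C_{\Xund}$ and $\pi': \Iscr_\rho \to \Ascr_0'$; the image of $\pi'$ lies in $C_{\Xund'}$, since $A' \cong A/\ker\tilde\rho$ has $p$-divisible group isomorphic to $\Xund'$ by construction. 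The projection $\pi$ is surjective by the quotient construction (given $x \in C_{\Xund}$ with $A_x[p^\infty] \cong \Xund$, quotienting $A_x$ by the image of $\ker\rho$ produces a preimage in $\Iscr_\rho$) and has finite fibres, since for fixed $x$ the isogeny $\tilde\rho$ is determined up to the action of $\Aut_\Dscr(\Xund[p^n])$ with $n$ chosen so that $\ker\rho \subset \Xund[p^n]$, and this is a finite group scheme. The symmetric construction with a multiple $p^N\rho^{-1}$ of the quasi-inverse shows that $\pi'$ is also finite and surjective onto $C_{\Xund'}$. Hence $\dim C_{\Xund} = \dim \Iscr_\rho = \dim C_{\Xund'}$.

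The principal obstacle is the bookkeeping of the polarization under the quotient construction. The map $A/\ker\rho \to (A/\ker\rho)^\vee$ inherited from $\lambda$ is generally not a polarization of degree prime to $p$; it differs from one by a $p$-power factor controlled by the similitude factor $c(\rho) \in \QQ_p^\times$ of $\rho$. This is most cleanly handled by working throughout in the $\ZZ_p^\times$-homogeneous polarization framework of \S\ref{ss deformation} (which yields canonically isomorphic moduli problems), so that the polarization transfers unambiguously modulo $\ZZ_p^\times$ and the Rosati compatibility for $\Dscr'$ follows from the assumption that $\rho$ is a morphism of (P)EL-structures. The Kottwitz determinant condition for $\Dscr'$ at $x'$ is then automatic, as $A'[p^\infty] \cong \Xund'$ satisfies the determinant condition associated to $\Dscr'$ by hypothesis on $\Xund'$.
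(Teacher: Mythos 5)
Your overall strategy --- an isogeny correspondence between $C_{\Xund}$ and $C_{\Xund'}$ with (quasi-)finite surjective projections, concluded by symmetry --- is exactly the strategy of the paper's proof. The gap is that you never establish that your correspondence $\Iscr_\rho$ is a scheme of finite type, and this is where essentially all of the work lies. Your defining condition, that $\tilde\rho[p^\infty]$ agree with $\rho$ \emph{under some identifications} $A[p^\infty]\cong\Xund$ and $A'[p^\infty]\cong\Xund'$, refers to the full Barsotti--Tate groups and is not visibly an algebraic condition in families; without a scheme structure on $\Iscr_\rho$, neither the equality $\dim\Iscr_\rho=\dim C_{\Xund}$ nor your fibre counts (which are carried out only on $k$-points) have any content. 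The paper resolves precisely this point: a variant of Oort's Lemma~1.4 in \cite{oort04} produces a finite surjective cover $T\to C_{\Xund}$ over which the \emph{truncated} group $\Mund[p^n]_T$ is constant, so that $\ker\rho$ transports to an honest finite flat subgroup scheme $H_T$ and the quotient $\Mund_T/H_T$ is an algebraic family defining a morphism $f\colon T\to\Ascr_0'$; this $T$ plays the role of your $\Iscr_\rho$.

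Two further consequences of working with truncations, which your write-up skips, then have to be addressed. First, since $H_T$ is only defined via an identification of $p^n$-torsion and not of the whole Barsotti--Tate group, it is \emph{not} automatic that $(\Mund_T/H_T)[p^\infty]$ has fibres isomorphic to $\Xund'$; the paper takes $n=N(n')+\deg\rho$ and invokes Oort's Cor.~1.7 (a bound $N(n')$ beyond which the truncation determines the group) to see that $f$ lands in $C_{\Xund'}$. Your claim that this is ``automatic by construction'' holds only for the non-algebraic version of the construction. Second, the quasi-finiteness of $f$ is proved in the paper not by counting isogenies modulo $\Aut(\Xund[p^n])$ but by showing that a positive-dimensional irreducible $S$ inside a fibre of $f$ would force $\Mund_S$ to be constant (the argument of \S 1.11 of \cite{oort04}, using constancy of the kernel of the dual isogeny $\psi_S$), contradicting the finiteness of $T\to C_{\Xund}$. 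Your counting argument would yield quasi-finiteness only after the scheme structure is in place; and note that for the conclusion one only needs $\pi$ finite surjective and $\pi'$ quasi-finite, the reverse inequality following by symmetry, so the asserted surjectivity of $\pi'$ is not needed.
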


 \begin{remark}
  \begin{subenv}
   \item Oort proved the analogous assertion for moduli spaces of (not necessarily principally) polarised abelian varieties (\cite{oort04}~Thm.~3.13). Our proof is a generalisation of his proof.
   \item In the case $\Dscr = \Dscr'$ the assertion was already proved by Mantovan. In the proof of \cite{mantovan04}~Prop.~4.7 she showed the proposition only for some special cases of PEL-Shimura data, but her proof can be generalised to arbitrary unramified PEL-Shimura data using \cite{mantovan05}~ch.4 and 5.
  \end{subenv}
 \end{remark}

 \begin{proof}
  First we fix some notation. Let $n' = \dim_\QQ V$. Oort showed in \cite{oort04}~Cor.~1.7 that there exists an integer $N(n')$ such that any two \BT s $X,X'$ of height $n'$ over an algebraically closed field with $X[p^{N(n')}] \cong X'[p^{N(n')}]$ are isomorphic. Denote $H := \ker \rho$, furthermore $i := \deg \rho$ and $n = N(n')+i$. Choose $x =(A,\iota,\lambda,\eta) \in C_{\Xund}$ and let $x' = (A/G,\iota',\lambda',\eta')$. We denote by $C_x$ and $C_{x'}$ the connected components of the central leaves containing $x$ resp.\ $x'$. We denote the corresponding universal abelian varieties with additional structure by
  \begin{eqnarray*}
   \Mund &\rightarrow& C_x \\
   \Mund'&\rightarrow& C_{x'}
  \end{eqnarray*}
  and
  \begin{eqnarray*}
   \Yund &=& \Mund[p^\infty] \\
   \Yund' &=& \Mund'[p^\infty].
  \end{eqnarray*}

  Using a slight generalisation of \cite{oort04}~Lemma~1.4 (the same proof still works) there exists a scheme $T \to C_{x}$ finite and surjective such that $\Mund [p^n]_T$ is constant. We assume that $T$ is irreducible. The abelian variety with additional structure $\Mund_T/H_T$ defines a morphism $f:T \rightarrow \Ascr_0'$. Using \cite{oort04}~Cor.~1.7, we see that $f$ factorises over $C_X$; as $T$ is irreducible it thus factors over $C_{x'}$.

  We now show that $f$ is quasi-finite. We denote by $\varphi: \Yund_T \rightarrow (\Mund_T/H_T)[p^\infty]$ the isogeny constructed above and choose $\psi$ such that $\varphi \circ \psi = \psi\circ\varphi = p^n$. Let $u\in C_x$ and $S \subset T_u$ a reduced and irreducible subscheme. Then
  \[
   \psi_S: (\Yund'_u)_S = (\Mund_T/H_T)[p^\infty]_S \rightarrow X_S.
  \]
  By arguing as in \cite{oort04}~\S~1.11 one checks that the kernel of $\psi_S$ is constant. Thus
  \[
   \Mund_S \cong \Mund'_S/\ker \psi_S
  \]
  is also constant, i.e.\ the image of $S$ in $C_x$ is a single point. As $T$ is finite over $C_x$, this implies that $S$ is a single point.

  So we get $\dim C_x = \dim T \leq \dim C_{x'}$. As the assertion of the proposition is symmetric in $X$ and $X'$, the claim follows.
 \end{proof}

 \begin{proposition} \label{prop almost decomposition}
  Let $\nu \in B(G,\mu)$ and let $\XXund$ be a slope divisible \BT\ over $\FFbar_p$ with $\Dscr$-structure in the isogeny class $b$. Denote by $\Mscr_G(b,\mu)$ the Rapoport-Zink space associated to $\XXund$. Then
  \[
   \dim \Ascr_0^b = \dim C_{\XXund} + \dim \Mscr_G(b,\mu)
  \]
 \end{proposition}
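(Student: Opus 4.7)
The plan is to deduce this dimension equality from Mantovan's almost-product decomposition of the Newton stratum \cite{mantovan05}, combined with the observation that Igusa varieties over a central leaf have the same dimension as the leaf itself.

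First, I would introduce the Igusa variety tower $\mathrm{Ig}_{\XXund,m} \to C_{\XXund}$ parameterising $\Dscr$-structure preserving isomorphisms $\XXund[p^m] \isom \Aund^{\univ}[p^m]$. Because $\XXund$ is completely slope divisible, Mantovan's construction shows these are finite étale covers of $C_{\XXund}$; in particular $\dim \mathrm{Ig}_{\XXund,m} = \dim C_{\XXund}$ for every $m$.

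Next, for each $m$ one constructs a morphism
\[
 \pi_m : \mathrm{Ig}_{\XXund,m} \times \Mscr_G(b,\mu)^{(m)} \to \Ascr_0^b
\]
by using the trivialisation of $\XXund[p^m]$ over the Igusa variety together with the universal quasi-isogeny over the Rapoport-Zink space to glue/deform the abelian scheme with $\Dscr$-structure. Here $\Mscr_G(b,\mu)^{(m)}$ is the quasi-compact open in the Rapoport-Zink space where the quasi-isogeny has $p$-adic height bounded in terms of $m$. The two properties I would invoke from \cite{mantovan05} are: (a) each $\pi_m$, after quotienting by the natural action of a compact open subgroup of $J_b(\QQ_p)$, is finite onto its image; and (b) the images $\image(\pi_m)$ exhaust $\Ascr_0^b$ as $m \to \infty$. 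Property (a) gives $\dim \image(\pi_m) = \dim \mathrm{Ig}_{\XXund,m} + \dim \Mscr_G(b,\mu)^{(m)}$, while (b) forces $\dim \Ascr_0^b$ to equal the supremum over $m$.

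Combining these facts yields
\[
 \dim \Ascr_0^b \;=\; \dim C_{\XXund} + \sup_m \dim \Mscr_G(b,\mu)^{(m)} \;=\; \dim C_{\XXund} + \dim \Mscr_G(b,\mu),
\]
where the last equality uses that $\Mscr_G(b,\mu)$ is locally of finite type and exhausted by the quasi-compact opens $\Mscr_G(b,\mu)^{(m)}$. The independence of the right-hand side from the particular choice of $\XXund$ in the isogeny class $b$ is already guaranteed by Proposition~\ref{prop dimension central leaves}. The main difficulty is entirely in citing the almost-product structure correctly: the delicate step is Mantovan's construction of $\pi_m$ together with the finite-to-finite property modulo the $J_b(\QQ_p)$-action, which for unramified PEL data of the type considered here is provided by \cite{mantovan05}; once that input is taken on board the dimension bookkeeping above is routine.
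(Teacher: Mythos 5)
Your proposal is correct and follows essentially the same route as the paper: both invoke Mantovan's almost-product structure from \cite{mantovan05}~ch.~5 (a finite surjective map from an Igusa variety times a truncated Rapoport--Zink space onto the Newton stratum, for large truncation parameters), use that the Igusa varieties are finite \'etale over $C_{\XXund}$, and conclude by noting that the truncated Rapoport--Zink spaces are quasi-compact of the same dimension as $\Mscr_G(b,\mu)$ for large parameters. The only cosmetic difference is that you phrase the conclusion via an exhaustion argument over $m$, whereas the paper simply takes a single finite surjective $\pi_N$ with $m,n,d\gg 0$.
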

 \begin{proof}
  By \cite{mantovan05}~ch.~5 there exists a finite surjective map $\pi_N: J_{m,b} \times \Mscr_b^{n,d} \to \Ascr_0^\nu$ ($m,n,d\gg 0$). Here $J_{m,b}$ is an Igusa variety over $C_\XXund$ (cf.~\cite{mantovan05}~ch.~4), which in particular means finite \'etale over $C_\XXund$, and $\Mscr_b^{n,d}$ are truncated Rapoport-Zink spaces (cf.~ \cite{mantovan05}~ch.~5), which are quasi-compact and of the same dimension of $\Mscr_G(b,\mu)$ for $n,d \gg 0$. Hence
  \[
   \dim \Ascr_0^b = \dim J_{m,b} + \dim \Mscr_b^{n,d} = \dim C_\XXund + \dim \Mscr(b,\mu).
  \]
 \end{proof}

 \subsection{Results on Ekedahl-Oort-strata}
 Proposition \ref{prop almost decomposition} reduces the dimension formula of Theorem \ref{thm dimension shimura} to computing the dimension of the central leaves and of Rapoport-Zink spaces. Because of Proposition~\ref{prop dimension central leaves} it suffices to calculate the dimension of the central leaf for one representative of each isogeny class.
 In order to find a suitable central leaf we consider the Ekedahl-Oort stratification, which is studied in great detail in the paper \cite{VW13} of Viehmann and Wedhorn. We recall some of their notions and results.

 For a \BTD\ $\Xund$ over $k$ we denote by $\tc(\Xund)$ the isomorphism class of $\Xund[p]$. The map $\tc$ has the following group theoretic interpretation (cf.~\cite{VW13}~ch.~8). Recall that an isomorphism class of \BTDs\ corresponds to an element of $C(G)$, the set of $G(O_L)$-$\sigma$-conjugacy classes in $G(L)$. By Dieudonn\'e theory the isomorphism classes of truncated \BTDs\ correspond to $G(O_L)$-$\sigma$-conjugacy classes in $G_1 \backslash G(L) / G_1$ where $G_1 := \ker(G(O_L) \epi G(k))$. Now the truncation map $\tc$ is given by the canonical projection.
 \[
  C(G) \epi \{ \sigma-\textnormal{conjugacy classes in } G_1 \backslash G(L) / G_1 \}
 \]

 The Ekedahl-Oort stratification is the decomposition of $\Ascr_0(\FFbar_p)$ given by the invariant $\tc$. The Ekedahl-Oort strata are locally closed subsets of $\Ascr_0(\FFbar_p)$ and are indexed by elements $w\in W$ which are the shortest element in the coset $W_{\sigma^{-1}(\mu)}w$. Here $W_{\sigma^{-1}(\mu)}$ denotes the Weyl group of the centralizer of $\sigma^{-1}(\mu)$. We denote the Ekedahl-Oort stratum corresponding to $w$ by $\Ascr_{0,w}$.

 \begin{proposition}[\cite{VW13}~Thm.~11.1 and Prop.~11.3]
  $\Ascr_{0,w}$ is non-empty and of pure dimension $\ell(w)$.
 \end{proposition}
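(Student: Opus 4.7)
The plan is to realize the Ekedahl--Oort stratification on $\Ascr_0$ as the pullback of a stratification on a suitable quotient stack via a smooth surjective morphism. Concretely, I would construct a morphism of stacks
\[
 \zeta : \Ascr_0 \longrightarrow [G\text{-Zip}^\mu]
\]
to the stack of $G$-zips of type $\mu$ in the sense of Pink--Wedhorn--Ziegler, by assigning to a geometric point $\Aund$ the BT$_1$ with $\Dscr$-structure $\Aund[p]$ and reinterpreting its filtered Dieudonn\'e module (together with Frobenius and Verschiebung) as a $G$-zip of type $\mu$. The extension of the Moonen--Wedhorn classification to the PEL setting guarantees that $\zeta$ is well defined and that its fibers are precisely the Ekedahl--Oort strata, since the group-theoretic invariant $\tc$ corresponds under the equivalence of categories to the isomorphism class of the associated $G$-zip.

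Next I would show that $\zeta$ is smooth and surjective. Smoothness is a deformation-theoretic assertion: by the Serre--Tate theorem (Proposition~\ref{prop serre-tate}) and Grothendieck--Messing theory, deformations of $(A,\iota,\lambda,\eta)$ are controlled by lifts of the Hodge filtration on the crystal of $\Aund[p]$ compatibly with the $\Dscr$-structure, and the same data governs deformations of the corresponding $G$-zip. Matching the two tangent spaces gives formal smoothness; a dimension count via $\dim \Ascr_0 = \langle 2\rho, \mu \rangle$ confirms smoothness of the expected relative dimension. Surjectivity of $\zeta$ on the $w$-orbit is equivalent to the non-emptiness assertion: for each $w \in {}^J W$ one exhibits a BT$_1$ with $\Dscr$-structure realizing $w$, lifts it to a full {\BTD} (using Illusie's theory and the fact that $\Def(\Xund)$ is formally smooth), and then algebraizes it as the $p$-divisible group of an abelian variety with PEL-structure via a polarized lift to characteristic zero. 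In the difficult cases one reduces to the Siegel situation by an auxiliary symplectic embedding of $\Dscr$.

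Finally, the topological description of $[G\text{-Zip}^\mu]$ due to Pink--Wedhorn--Ziegler identifies its underlying space with the finite poset ${}^J W$ of shortest-length representatives in $W_J \backslash W$ (where $J$ is the type of the standard parabolic stabilizing $\mu$), and each locally closed stratum $[G\text{-Zip}^\mu]_w$ has codimension $\dim \Ascr_0 - \ell(w)$ in the dense ordinary stratum. Pulling back via the smooth surjective morphism $\zeta$ yields
\[
 \dim \Ascr_{0,w} \;=\; \dim \Ascr_0 - \bigl(\dim \Ascr_0 - \ell(w)\bigr) \;=\; \ell(w),
\]
and each $\Ascr_{0,w}$ is non-empty and of pure dimension $\ell(w)$, as smooth pullback preserves both surjective non-emptiness and codimensions of locally closed strata.

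The main obstacle is the construction of $\zeta$ and the proof of its smoothness, which requires the integral theory of $G$-zips adapted to the unramified PEL setting together with a careful translation of Dieudonn\'e theory with additional structure. In particular, the PEL compatibility (including tracking the similitude factor and dealing with types I--III separately as in Corollary~\ref{cor polarised BT decomposition}) must be verified throughout so that the fibers of $\zeta$ coincide with the EO strata defined group-theoretically via $G_1 \backslash G(L) / G_1$.
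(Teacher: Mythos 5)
This statement is not proved in the paper at all --- it is quoted directly from Viehmann--Wedhorn \cite{VW13} (Thm.~11.1 and Prop.~11.3) --- and your sketch reproduces essentially the argument of that source: the smooth classifying morphism $\zeta$ to the stack of $G$-zips, its surjectivity, and the Pink--Wedhorn--Ziegler dimension count on the target. The only caveat is that the step you compress the most, namely realizing an arbitrary truncated \BTD\ as $\Aund[p]$ for an actual point of $\Ascr_0$, is the deep input (\cite{VW13}~Thm.~10.2, resting on non-emptiness of the basic locus) rather than a routine lifting-and-algebraization, and an ``auxiliary symplectic embedding into the Siegel case'' does not by itself carry the $\Dscr$-structure along.
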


 As we want to calculate the dimension of central leaves, we are interested in Ekedahl-Oort strata corresponding to $p$-torsion subgroups (with additional structure) which determine their {\BTD} uniquely.

 \begin{definition}
  \begin{subenv}
   \item A \BTD\ $\Xund$ over $k$ is called minimal if every \BTD\ $\Yund$ with $\Xund[p] \cong \Yund[p]$ is isomorphic to $\Xund$.
   \item An Ekedahl-Oort stratum is called minimal if the fibre of $\Aund^{\univ}[p^\infty]$ over some (or equivalently every) point of it is minimal.
  \end{subenv}
 \end{definition}

 Certainly an Ekedahl-Oort stratum is a central leaf if and only if it is minimal and the corresponding {\BT} over $\FFbar_p$ is slope-divisible. Viehmann and Wedhorn show in their paper that every isogeny class contains a minimal \BTD, more precisely they show that this is true for the stronger notion of fundamental elements. We recall their definition.

 \begin{definition}
  \begin{subenv}
   \item Let $P$ be a semistandard parabolic subgroup of $G_{O_F}$, denote by $N$ its unipotent radical and let $M$ be the Levi factor containing $T_{O_F}$. Furthermore denote by $\Nbar$ the unipotent radical of the opposite parabolic. We denote
   \[
    \Iscr_M = \Iscr \cap M(O_L),\quad \Iscr_N = \Iscr \cap N(O_L),\quad \Iscr_{\Nbar} = \Iscr\cap \Nbar(O_L).
   \]
   Then an element $x \in \Wtilde$ is called $P$-fundamental if
   \begin{eqnarray*}
    \sigma(x \Iscr_M x^{-1}) &=& \Iscr_M, \\
    \sigma(x \Iscr_N x^{-1}) &\subseteq& \Iscr_N, \\
    \sigma(x \Iscr_{\Nbar} x^{-1}) &\supseteq& \Iscr_{\Nbar}.
   \end{eqnarray*}
   \item We call an element $x \in \Wtilde$ fundamental if it is $P$-fundamental for some semi-standard parabolic subgroup $P \subset G_{O_F}$.
   \item We call $\pot{c} \in C(G)$ fundamental if it contains a fundamental element of $\Wtilde$. A {\BTD} over $\FFbar_p$ is called fundamental if the corresponding element of $C(G)$ is fundamental.
   \item An Ekedahl-Oort stratum is called fundamental if the fibre of $\Aund^{\univ}[p^\infty]$ over some point of it is fundamental.
  \end{subenv}
 \end{definition}

 By \cite{VW13}~Rem.~9.8 every fundamental {\BTD} is also minimal. As mentioned above they also show the following assertion.

 \begin{proposition}[\cite{VW13}~Thm.~9.18] \label{prop VW}
  Let $G$ and $\mu$ be the reductive group scheme and the cocharacter associated to an unramified PEL-Shimura datum. Then for every $b \in B(G,\mu)$ there exists a fundamental element $x \in \Wtilde$ such that $x \in b$ and $x \in W\mu'$ for some minuscule cocharacter $\mu'$. Furthermore the {\BTD} associated with $\pot{x}$ is completely slope divisible.
 \end{proposition}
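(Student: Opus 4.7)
The plan is to reduce to finding a length-zero representative inside a suitable Levi subgroup. Let $\nu = \nu_G(b) \in X_*(T)_{\QQ,\dom}^\Gamma$ and set $M := \Cent_G(\nu)$. Since $\nu$ is $\Gamma$-invariant and rational, $M$ descends to a standard Levi subgroup defined over $\ZZ_p$. The class $[b]$ meets $M(L)$ (a theorem of Kottwitz), and viewed there it is basic because $\nu$ is central in $M$. Kottwitz's classification identifies basic $\sigma$-conjugacy classes in $M$ with $\pi_1(M)_\Gamma$; under the canonical splitting $\Omega_M \hookrightarrow \Wtilde_M$, each such class contains a lift $x := \dot\tau \in N_M(T)(L)$ of an element $\tau \in \Omega_M$ whose image in $\pi_1(M)_\Gamma$ equals $\kappa_M([b])$. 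This gives the candidate representative.

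Next I would verify that $x$ is $P$-fundamental for the standard parabolic $P = MN$ with Levi $M$. The equality $\sigma(x \Iscr_M x^{-1}) = \Iscr_M$ is immediate: since $\tau$ has length zero in $\Wtilde_M$, conjugation by $\dot\tau$ permutes the simple affine roots of $M$, and $\sigma$ acts as an automorphism of the based root datum of $M$. The inclusions $\sigma(x \Iscr_N x^{-1}) \subseteq \Iscr_N$ and $\sigma(x \Iscr_{\Nbar} x^{-1}) \supseteq \Iscr_{\Nbar}$ I would check on individual affine root subgroups: writing $\tau = w_\tau \cdot p^{\lambda_\tau}$ with $\lambda_\tau$ representing $\kappa_M([b])$, the combined action of $\dot\tau\sigma$ shifts an affine root $\alpha + k$ by an integer amount governed by the pairing with $\nu$, which is strictly positive on the roots of $N$ by definition of $M = \Cent_G(\nu)$.

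The minuscule constraint is the most delicate point. Under the decomposition $\Wtilde = W \ltimes X_*(T)$, write $\tau = w_\tau \cdot p^{\mu'}$; then $\mu'$ and $\mu$ have the same image in $\pi_1(G)_\Gamma$ by construction. Within its $W_a$-coset, I would select $\tau$ so that $\mu'$ is a $W$-conjugate of a minuscule cocharacter, exploiting that $\mu$ itself is minuscule (automatic for PEL Shimura data) and that the basic condition inside $M$ together with the fixed Kottwitz invariant pins $\mu'$ down up to a bounded choice. Concretely, among all length-zero elements of $\Wtilde_M$ with Kottwitz invariant $\kappa_G([b])$ viewed in $G$, the minuscule ones can be identified via their pairings with absolute fundamental weights using the classification in Bourbaki's tables, and one checks that at least one such choice is always available in each coset.

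Finally, for complete slope divisibility, I would argue directly from the structure of $\dot\tau\sigma$: it preserves the $M$-isotypic decomposition of the standard lattice indexed by the distinct slopes of $\nu$, and on the slope-$\lambda$ summand some common power $(\dot\tau\sigma)^n$ coincides with $p^{n\lambda} \cdot \sigma^n$, which via Dieudonné theory is exactly the criterion for the corresponding isoclinic BTD to be slope divisible and defined over $\FF_{p^n}$. The main obstacle I foresee is the third step: matching the minuscule condition with the prescribed Kottwitz invariant and Newton point simultaneously is a nontrivial combinatorial assertion about how minuscule coweights of $G$ restrict along the Levi embedding $M \hookrightarrow G$ compatibly with the Galois action, and this is where the bulk of the work in Viehmann-Wedhorn's argument actually lies.
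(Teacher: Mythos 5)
The paper does not actually prove this proposition: it is imported wholesale from \cite{VW13}, Thm.~9.18, and the only original content Hamacher adds is the one-line remark that complete slope divisibility follows from Viehmann--Wedhorn's construction (your final paragraph reproduces essentially that observation, correctly, via the decency equation $(x\sigma)^s = \nu(p^s)\sigma^s$ on each isoclinic summand). So what you are really offering is a proof of the cited theorem, and judged on those terms it has two genuine gaps. First, your verification of $P$-fundamentality is wrong as stated: the shift that conjugation by $\dot\tau\sigma$ applies to an affine root $\alpha+k$ of $\Iscr_N$ is governed by the pairing of $\alpha$ with the \emph{translation part} $\lambda_\tau$ of $\tau$ (after applying the finite Weyl part and $\sigma$), not by $\langle\alpha,\nu\rangle$. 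Only the $s$-fold iterate is controlled by $\nu$; for a single conjugation the pairing $\langle\alpha,\lambda_\tau\rangle$ can be negative for individual roots of $N$ even though its average over the $w_\tau\sigma$-orbit, namely $\langle\alpha,\nu\rangle$, is positive. This is precisely why not every basic representative in $M(L)$ is fundamental and why one must choose the representative (equivalently, the alcove) carefully; the condition $\sigma(x\Iscr_Nx^{-1})\subseteq\Iscr_N$ has to be arranged, not observed.

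Second, the step you yourself flag as delicate --- producing an element that is simultaneously length zero in $\Wtilde_M$ with the prescribed Kottwitz invariant, $P$-fundamental, \emph{and} of the form $wp^{\mu'}$ with $\mu'$ minuscule --- is the entire content of \cite{VW13}~Thm.~9.18, and "one checks that at least one such choice is always available in each coset" is not an argument. Note that $\Omega_M\cong\pi_1(M)$, so there is exactly one length-zero element per class with respect to a fixed base alcove of $M$; you have no freedom left to "select $\tau$ so that $\mu'$ is a $W$-conjugate of a minuscule cocharacter" unless you also vary the alcove, and showing that some choice makes everything work at once is done in \cite{VW13} by an explicit case-by-case construction for the groups $\Res_{F/\QQ_p}\GL_n$, $\GSp$, $\GU$ arising from PEL data. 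Your outline identifies the right objects ($M=\Cent_G(\nu)$, basic classes, $\Omega_M$), but the two assertions that carry the proof are left unestablished.
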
 

 The slope divisibility is not mentioned in \cite{VW13}, but following their construction of $x$ one easily checks that the induced {\BTD} is completely slope divisible.

 \section{The dimension of certain Ekedahl-Oort-strata} \label{sect EO strata}
 
 In order to apply the formula $\dim \Ascr_{0,w} = \ell(w)$ to fundamental Ekedahl-Oort strata, we have to explain how to compute $w$, or rather $\ell(w)$, if one is given a fundamental element $x \in \Wtilde$. This can be done by the algorithm provided by Viehmann in the proof of \cite{viehmann}~Thm.~1.1. Before we apply this algorithm, we recall some combinatorics of the Weyl group which it uses.

 \subsection{Shortest elements in cosets of the extended affine Weyl group}
 We denote by $S$ resp.\ $S_a$ the set of simple reflections in $W$ resp.\ $W_a$. For $J \subset S_a$ let $W_J \subset \Wtilde$ be the subgroup generated by $J$. Then every right (resp.\ left) $W_J$-coset of $\Wtilde$ contains a unique shortest element. We denote by $\Wtilde^J$ (resp. ${^J\Wtilde}$) the set of those elements. Then every $x \in \Wtilde$ can be written uniquely as $x = x^J \cdot w_J = {_Jw} \cdot {^Jx}$ with $x^J \in \Wtilde^J, w_J \in   W_J, {_Jw} \in {_JW}, {^Jx} \in {^J\Wtilde}$. We have $\ell(x^J) + \ell(w_J) = \ell(x) = \ell(_Jw) + \ell(^Jx)$. (cf.\ \cite{DDPW}~Prop.~4.16)

 Moreover there exists a unique shortest element for every double coset $W_J\backslash \Wtilde/W_K$. we denote the set of all shortest elements in their respective double coset by ${^J\Wtilde^K}$. Let $u \in {^J\Wtilde^K}$ and $K' := K \cap u^{-1}Ju$. Then $W_{K'} = W_K \cap u^{-1}W_Ju$ and every element $x \in W_JuW_K$ can be written uniquely as $x = w_Juw_{K'}$ with $w_J \in W_J$ and $w_{K'}\in W_{K'}$. Moreover, we have $\ell(x) = \ell(w_J) + \ell(u) + \ell(w_{K'})$. (cf.\ \cite{DDPW}~Lemma~4.17 and Thm.~4.18)

 Of course the above statements also hold for $J,K \subset S$ and $W$ instead of $\Wtilde$. We denote the respective sets of shortest elements by $W^J, {^JW}$ resp.\ ${^JW^K}$. 

 For $\mu \in X_*(T)_{\dom}$ denote by $\tau_\mu$ the shortest element of $Wp^\mu W$. Then $\tau_\mu$ is of the form $x_\mu \cdot p^\mu$ with $x_\mu \in W$. Let $M_\mu$ be the centralizer of $\mu$ and $W_\mu = \{w\in W\mid w(\mu) = \mu\}$ the Weyl group of $M_\mu$. We have the following useful lemmas.

 \begin{lemma}
  $W_\mu = W_{S \cap \tau_\mu^{-1}S\tau_\mu} = W \cap \tau_\mu^{-1} W \tau_\mu$.
 \end{lemma}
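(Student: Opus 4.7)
The plan is to deduce both equalities more-or-less formally, using the double coset machinery recalled earlier in the subsection together with one direct calculation in $\Wtilde$.

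First I would verify that $\tau_\mu$ is a shortest double coset representative, i.e.\ $\tau_\mu \in {^S\Wtilde^S}$. Writing $\tau_\mu = x_\mu \cdot p^\mu$ with $x_\mu \in W$, we get $W\tau_\mu = W x_\mu p^\mu = W p^\mu \subseteq W p^\mu W$ and $\tau_\mu W = x_\mu p^\mu W \subseteq W p^\mu W$. Since $\tau_\mu$ is by definition the shortest element of $W p^\mu W$, it is then also shortest in each of the smaller sets $W\tau_\mu$ and $\tau_\mu W$, which means $\tau_\mu \in {^S\Wtilde} \cap \Wtilde^S = {^S\Wtilde^S}$. The second equality of the lemma then follows by applying the identity $W_{K'} = W_K \cap u^{-1} W_J u$ (with $K' = K \cap u^{-1}Ju$) recalled from \cite{DDPW} above, taking $J = K = S$ and $u = \tau_\mu$.

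It remains to show $W \cap \tau_\mu^{-1} W \tau_\mu = W_\mu$. Because $x_\mu \in W$, the outer conjugation by $x_\mu$ is absorbed:
\[
 \tau_\mu^{-1} W \tau_\mu = p^{-\mu} x_\mu^{-1} W x_\mu p^\mu = p^{-\mu} W p^\mu.
\]
So for $v \in W$ the condition $v \in \tau_\mu^{-1} W \tau_\mu$ is equivalent to $p^\mu v p^{-\mu} \in W$. Using the identity $p^\mu v = v \cdot p^{v^{-1}\mu}$ in the extended affine Weyl group, we get
\[
 p^\mu v p^{-\mu} = v \cdot p^{v^{-1}\mu - \mu},
\]
whose translation part vanishes iff $v^{-1}\mu = \mu$, i.e.\ iff $v \in W_\mu$.

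There is no real obstacle here: the only structural input is that $\tau_\mu$, defined as the shortest element of $W p^\mu W$, automatically belongs to ${^S\Wtilde^S}$, after which the first equality is purely formal, and the second equality is a one-line calculation in $\Wtilde$ exploiting that $x_\mu$ lies in $W$ so conjugation by it preserves $W$.
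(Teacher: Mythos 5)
Your proof is correct, and it reaches the two equalities in a slightly different order and by slightly different means than the paper. The paper argues entirely at the level of simple reflections: it shows $S_\mu = S \cap \tau_\mu^{-1}S\tau_\mu$, where the forward direction requires a length computation ($\ell(\tau_\mu s \tau_\mu^{-1}) = 1$, using $\ell(\tau_\mu s) = \ell(\tau_\mu) + \ell(s)$) to see that the conjugate of a $\mu$-fixing simple reflection is again simple, and the backward direction uses the same semidirect-product identity $\tau_\mu s \tau_\mu^{-1} = x_\mu s x_\mu^{-1} p^{x_\mu(s(\mu)-\mu)}$ that you use; the remaining equalities are then left implicit (the fact that $W_\mu$ is generated by the simple reflections fixing $\mu$, and the identity $W_{K\cap u^{-1}Ju} = W_K \cap u^{-1}W_J u$ from \cite{DDPW}). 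You instead prove the outer identification $W \cap \tau_\mu^{-1}W\tau_\mu = W_\mu$ directly for arbitrary $v \in W$, which needs no length argument, and obtain the middle term from \cite{DDPW}. Your route is marginally cleaner: it avoids the length computation and re-derives, rather than assumes, the standard fact that the stabilizer of a dominant cocharacter is a standard parabolic subgroup of $W$. One cosmetic remark: your opening verification that $\tau_\mu \in {^S\Wtilde^S}$ is superfluous, since $\tau_\mu$ is by definition the shortest element of $Wp^\mu W$ and ${^S\Wtilde^S}$ is by definition the set of shortest double-coset representatives.
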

 \begin{proof}
  It suffices to show that for $s\in S$ we have $s(\mu) = \mu$ if and only if $s \in \tau_\mu^{-1}S\tau_\mu$. If $s(\mu)=\mu$ then $\tau_\mu s \tau_\mu^{-1} \in W$. Thus
  \[
   \ell(\tau_\mu s \tau_\mu^{-1}) = \ell(\tau_\mu s \tau_\mu^{-1} \tau_\mu ) - \ell(\tau_\mu) = \ell(\tau_\mu s) - \ell(\tau_\mu) = \ell(\tau_\mu) + \ell(s) - \ell(\tau_\mu) = \ell(s) = 1
  \]
  and hence $\tau_\mu s \tau_\mu^{-1} \in S$. On the other hand we have
  \[
   \tau_\mu s \tau_\mu^{-1} = x_\mu p^\mu s p^{-\mu} x_{\mu}^{-1} = x_\mu s x_{\mu}^{-1} p^{x_\mu(s(\mu)-\mu)}.
  \]
  Thus $\tau_\mu s \tau_{\mu}^{-1} \in S$ implies $s(\mu)-\mu = 0$.
 \end{proof}

 We denote $S_\mu := S \cap \tau_\mu^{-1}S\tau_\mu$.

 \begin{lemma} \label{lem length preservation} 
  Let $J,K \subset S_a, u \in {^J\Wtilde^K}$. Denote by $K' := K \cap u^{-1}Ju$ and let $w \in W_{K'}$. Then
  \[
   \ell(uwu^{-1}) = \ell(w).
  \]
 \end{lemma}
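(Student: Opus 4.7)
The plan is to exploit the characterization of a minimal double coset representative $u \in {^J\Wtilde^K}$ as simultaneously the unique shortest element in its right coset $uW_K$ and in its left coset $W_J u$. This characterization (which is essentially what the preceding paragraphs of the paper record for ordinary Weyl groups and which carries over verbatim to $\Wtilde$) supplies two standard length-additivity identities that will serve as the whole engine of the proof:
\begin{equation*}
\ell(uv) = \ell(u) + \ell(v) \text{ for all } v \in W_K, \qquad \ell(v' u) = \ell(v') + \ell(u) \text{ for all } v' \in W_J.
\end{equation*}

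Given $w \in W_{K'}$, I would first observe two membership facts: $w \in W_K$ by definition of $K'$, and $uwu^{-1} \in W_J$, because the identity $W_{K'} = W_K \cap u^{-1}W_J u$ recalled just before the lemma gives $uW_{K'}u^{-1} \subset W_J$. Applying the first length identity above with $v = w$ produces $\ell(uw) = \ell(u) + \ell(w)$, while applying the second with $v' = uwu^{-1}$ produces $\ell\bigl((uwu^{-1})\,u\bigr) = \ell(uwu^{-1}) + \ell(u)$, whose left-hand side is just $\ell(uw)$. Equating these two expressions for $\ell(uw)$ and cancelling $\ell(u)$ immediately yields $\ell(uwu^{-1}) = \ell(w)$, which is the desired conclusion. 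There is no real obstacle: the entire content of the lemma is that the minimality of $u$ in its double coset is strong enough to force conjugation by $u$ to preserve length on the distinguished subgroup $W_{K'}$, and this falls out of the two coset characterizations without any further combinatorial input.
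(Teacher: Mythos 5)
Your argument is correct and is essentially the paper's own proof: both rest on the two length-additivity identities coming from $u$ being the minimal element of its double coset (equivalently, of $uW_K$ and of $W_Ju$), applied to the factorizations $uw$ and $(uwu^{-1})u$ of the same element, followed by cancellation of $\ell(u)$. Nothing further is needed.
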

 \begin{proof}
  We have
  \begin{eqnarray*}
   l(w) &=& \ell(uw) - \ell(u) \\
        &=& \ell((uwu^{-1})u) - \ell(u) \\
        &=& \ell(uwu^{-1}) + \ell(u) - \ell(u) \\
        &=& \ell(uwu^{-1})
  \end{eqnarray*}
 \end{proof}

 \begin{corollary} \label{cor length preservation}
  For $x \in \Wtilde_\mu$ we have
  \[
   \ell(x_\mu x x_\mu^{-1}) = \ell(x)
  \]
 \end{corollary}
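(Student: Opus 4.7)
The argument proceeds in two logical steps. First, I would establish the identity
\[
 \tau_\mu x \tau_\mu^{-1} = x_\mu x x_\mu^{-1} \quad \text{for every } x \in \Wtilde_\mu.
\]
Decomposing $x = w \cdot p^\lambda$ with $w \in W_\mu$ and $\lambda \in X_*(T)$, and substituting $\tau_\mu = x_\mu p^\mu$ (so $\tau_\mu^{-1} = p^{-\mu} x_\mu^{-1}$), the two copies of $p^\mu$ cancel because $w \in W_\mu$ stabilises $\mu$, hence commutes with $p^\mu$ (using $w p^\mu w^{-1} = p^{w(\mu)} = p^\mu$). This reduces the Corollary to showing $\ell(\tau_\mu x \tau_\mu^{-1}) = \ell(x)$ for $x \in \Wtilde_\mu$.

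Second, I would apply Lemma~\ref{lem length preservation} with $u = \tau_\mu$ and $J = K = S$. Since $\tau_\mu$ is the shortest element of $Wp^\mu W = W_S \tau_\mu W_S$, it lies in ${^S\Wtilde^S}$; the preceding lemma identifies $K' = S \cap \tau_\mu^{-1} S \tau_\mu$ with $S_\mu$, so that $W_{K'} = W_\mu$. Thus the length-preservation statement is immediate for $x \in W_\mu$. To extend to arbitrary $x \in \Wtilde_\mu$, I would reproduce the chain of equalities from the proof of the previous lemma,
\[
 \ell(x) = \ell(\tau_\mu x) - \ell(\tau_\mu) = \ell\bigl((\tau_\mu x \tau_\mu^{-1})\tau_\mu\bigr) - \ell(\tau_\mu) = \ell(\tau_\mu x \tau_\mu^{-1}),
\]
after verifying the two length-additivity relations $\ell(\tau_\mu x) = \ell(\tau_\mu) + \ell(x)$ and $\ell((\tau_\mu x \tau_\mu^{-1})\tau_\mu) = \ell(\tau_\mu x \tau_\mu^{-1}) + \ell(\tau_\mu)$.

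\emph{Main obstacle.} The length-additivity statements required above are the standard consequence of $\tau_\mu$ being a shortest double-coset representative \emph{only} for the finite factor $W_\mu \subset W_{K'}$. Extending them to the translations $p^\lambda \in \Wtilde_\mu$ is the real content of the corollary: one has to rule out cancellations in the products $\tau_\mu \cdot w p^\lambda$ and $(\tau_\mu w p^\lambda \tau_\mu^{-1}) \cdot \tau_\mu$ inside $\Wtilde$. I would attack this by the Iwahori--Matsumoto length formula $\ell(w' p^{\lambda'}) = \sum_{\alpha \in R^+} |\langle \alpha,\lambda'\rangle - \chi_{w'}(\alpha)|$, substituting $\alpha \mapsto x_\mu^{-1}\alpha$ in the sum computed for $x_\mu x x_\mu^{-1}$; the invariance then follows from the pairing $\alpha \leftrightarrow -\alpha$ on $R^+ \cap x_\mu^{-1}R^-$ together with the fact that $w \in W_\mu$ fixes $\mu$, so the correction terms indexed by $N(x_\mu w x_\mu^{-1})$ match those indexed by $N(w)$ under the bijection induced by $x_\mu$. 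This is essentially a root-theoretic reshuffling argument parallel to the one used in the proof of Lemma~\ref{lem length preservation}.
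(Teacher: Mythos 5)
Your first two steps reproduce the paper's proof exactly: the paper disposes of the corollary in one line, as a consequence of the identity $x_\mu x x_\mu^{-1} = \tau_\mu x \tau_\mu^{-1}$ (valid on $\Wtilde_\mu$ because elements of $W_\mu$ commute with $p^\mu$) together with the two preceding lemmas, i.e.\ Lemma~\ref{lem length preservation} applied with $J=K=S$, $u=\tau_\mu \in {^S\Wtilde^S}$, so that $K'=S_\mu$ and $W_{K'}=W_\mu$. For $x$ in the finite group $W_\mu$ your argument therefore coincides with the paper's and is complete.

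The ``main obstacle'' you raise is a fair criticism of the statement rather than a defect of your argument for what is actually needed: the two cited lemmas literally give length preservation only for $x\in W_\mu$, not for elements of $\Wtilde_\mu$ with nontrivial translation part, and the paper does not address this either. Note, however, that the corollary is invoked only in the proof of Proposition~\ref{prop length tr}, where it is applied to $x_\mu\sigma(u_{i-1}w_iu_{i-1}^{-1})x_\mu^{-1}$ with $\sigma(u_{i-1}w_iu_{i-1}^{-1})$ a finite Weyl group element of $W_\mu$; so the case you worry about is never used. For pure translations the claim is in any case immediate, since $\ell(p^{x_\mu(\lambda)})=\langle 2\rho,(x_\mu\lambda)_{\dom}\rangle=\ell(p^\lambda)$. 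For mixed elements $wp^\lambda$ your Iwahori--Matsumoto computation remains a sketch: the matching of the $\pm1$ correction terms under $\alpha\mapsto x_\mu^{-1}\alpha$ is exactly the delicate point, because $x_\mu$ does not preserve $R^+$, and you have not verified it. If the corollary is wanted in the generality stated, that verification must be supplied; for the paper's purposes your first two steps already suffice.
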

 \begin{proof}
  This is a consequence of $x_\mu x x_\mu^{-1} = \tau_\mu x \tau_\mu^{-1}$ and the preceding two lemmas.
 \end{proof}

 \subsection{$G(O_L)$-$\sigma$-conjugacy classes in $\widetilde{W}$}
 We have the following result on $G(O_L)$-$\sigma$-conjugacy classes of $G_1\backslash G(L) / G_1$.

 \begin{proposition}[\cite{viehmann}~Thm.~1.1]
  Let $\Tscr = \{(w,\mu) \in W \times X_*(T)_\dom \mid w \in {^{M_{\sigma^{-1}(\mu)}}W}\}$. Then the map assigning to $(w,\mu)$ the $G(O_L)$-$\sigma$-conjugacy class of $G_1w\tau_\mu G_1$ is a bijection between $\Tscr$ and the $G(O_L)$-$\sigma$-conjugacy classes in $G_1\backslash G(L) / G_1$.
 \end{proposition}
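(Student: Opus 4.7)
The plan is to prove existence and uniqueness of a representative of the form $G_1 w\tau_\mu G_1$ separately, reducing from the loop group to the finite Weyl group in two stages: first by the Cartan decomposition to fix $\mu$, then by a Bruhat-plus-Lang argument to cut down to $W$.

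First I would invoke the Cartan decomposition $G(L) = \bigsqcup_{\mu \in X_*(T)_\dom} G(O_L) p^\mu G(O_L)$, which is stable under $G(O_L)$-$\sigma$-conjugation because $\sigma(G(O_L)) = G(O_L)$. Hence $\mu$ is a well-defined invariant of the class, and it suffices to analyse one cell $G(O_L) p^\mu G(O_L)$ at a time. Writing $\tau_\mu = x_\mu p^\mu$ and using that the stabiliser of $\tau_\mu$ under $(g_1,g_2)\cdot \tau_\mu = g_1\tau_\mu g_2$ is $\{(g, \tau_\mu^{-1}g^{-1}\tau_\mu)\mid g\in G(O_L)\cap \tau_\mu G(O_L)\tau_\mu^{-1}\}$, I would identify the quotient $G_1\backslash G(O_L)p^\mu G(O_L)/G_1$ with a homogeneous space $G(k)\times^{Q(k)} G(k)$ for the parabolic $Q\subset G_k$ that is the reduction of $G(O_L)\cap \tau_\mu G(O_L)\tau_\mu^{-1}$ (its Levi is $M_\mu$). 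The $G(O_L)$-$\sigma$-conjugation action then descends to $\bar h\cdot [\bar g_1,\bar g_2] = [\bar h\bar g_1,\sigma(\bar h)\bar g_2]$.

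The core reduction is to show that every $G(k)$-orbit on this space meets the set $\{[\dot w, 1] : w\in W\}$, i.e.\ that every class contains an element of the form $w\tau_\mu$. This is where I would use the Bruhat decomposition $G(k) = \bigsqcup_{w\in W} B(k) w B(k)$ and Lang's theorem: the connected unipotent pieces arising from $B(k)$ and from the unipotent radical of $Q$ can be absorbed into the twisted action $\bar g\mapsto \bar h\bar g\sigma(\bar h)^{-1}$ because Lang's theorem guarantees surjectivity of $u\mapsto u\sigma(u)^{-1}$ on connected groups over $k=\FFbar_p$. After this absorption, the remaining ambiguity in $w$ comes from the stabilizer of $[\dot w, 1]$ inside $W$ under the twisted action. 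A direct computation, using that moving $\sigma(\bar h)$ across $\tau_\mu = x_\mu p^\mu$ replaces the weight $\mu$ by $\sigma^{-1}(\mu)$, shows this stabilizer group is exactly $W_{\sigma^{-1}(\mu)}$ acting on $W$ by $v\cdot w = v w \sigma(v)^{-1}$.

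The hardest step is uniqueness: showing that every $W_{\sigma^{-1}(\mu)}$-$\sigma$-twisted orbit on $W$ contains a unique element of ${^{M_{\sigma^{-1}(\mu)}}W}$. I would approach this via a length argument: Corollary~\ref{cor length preservation} and Lemma~\ref{lem length preservation} imply that $\ell(v w \sigma(v)^{-1})$ behaves compatibly with the decomposition $w = {_J v}\cdot{^J w}$ with $J=S_{\sigma^{-1}(\mu)}$, so that the shortest element in the orbit necessarily lies in ${^{M_{\sigma^{-1}(\mu)}}W}$ and is unique within its orbit. Combining the existence step with this uniqueness assertion yields the claimed bijection between $\Tscr$ and the set of $G(O_L)$-$\sigma$-conjugacy classes in $G_1\backslash G(L)/G_1$.
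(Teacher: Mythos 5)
This proposition is not proved in the paper: it is quoted verbatim from Viehmann's article \cite{viehmann}, Thm.~1.1, and the paper only reproduces the normal-form \emph{algorithm} extracted from her proof (the numbered steps immediately following the statement). So there is no in-paper argument to match your sketch against; what I can assess is whether your sketch would actually work.

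Your first two reductions (Cartan decomposition to fix $\mu$; passing to the residue field and absorbing unipotent pieces via Bruhat plus Lang) are the right family of ideas, but the uniqueness step rests on a false combinatorial claim. You assert that the ambiguity among the elements $w\tau_\mu$ is exactly the twisted conjugation action $v\cdot w = vw\sigma(v)^{-1}$ of $W_{\sigma^{-1}(\mu)}$, and that each such orbit meets ${}^{M_{\sigma^{-1}(\mu)}}W$ in a single point. Take $\mu$ central: then $W_{\sigma^{-1}(\mu)}=W$, the twisted orbits are the $\sigma$-conjugacy classes of $W$ (more than one for any nontrivial $W$), while ${}^{M_{\sigma^{-1}(\mu)}}W=\{1\}$ — and indeed the proposition correctly predicts a single class, because for central $\mu$ Lang's theorem on all of $G(k)$ identifies every $w\tau_\mu$ with $\tau_\mu$. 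The true equivalence relation on $\{w\tau_\mu\}$ is therefore strictly coarser than $W_{\sigma^{-1}(\mu)}$-twisted conjugation, and the extra identifications are implemented by genuine elements of $G(k)$, not by Weyl group elements. The source of the discrepancy is already in your step 2: conjugation by $\tau_\mu$ does not preserve $G_1$ inside $H=G(O_L)\cap\tau_\mu G(O_L)\tau_\mu^{-1}$ (it shifts congruence levels by $\langle\alpha,\mu\rangle$), so $G_1\backslash G(O_L)\tau_\mu G(O_L)/G_1$ is not $G(k)\times^{Q(k)}G(k)$; the group acting is the ``zip group'' pairing $Q$ with the \emph{opposite} parabolic, and a single application of Bruhat plus Lang does not normalize $w$. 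One must iterate the absorption through the descending chains of parabolics $J_i\supset J_{i+1}$ — which is precisely what Viehmann's algorithm, reproduced in the paper, does — and the uniqueness of the resulting $(w,\mu)$ requires a separate, genuinely harder argument (in the spirit of He's partial conjugation / the Pink--Wedhorn--Ziegler classification), not the coset-representative lemma you invoke.
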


 In the case where $G$ is given by an unramified PEL-Shimura datum, the stratum $\Ascr_{0,w}$ corresponds to the $G(O_L)$-$\sigma$-conjugacy class of $G_1 w\tau_{\mu_h} G_1$. The proof of the above theorem provides an algorithm which determines the pair $(w,\mu)$ associated to the $G(O_L)$-$\sigma$-conjugacy class of $G_1bG_1$ for any $b \in G(L)$. For the special case that $b \in \Wtilde$ the algorithm simplifies as follows.
 \begin{enumerate}
  \item Denote by $\mu'$ the image of $b$ under the canonical projection $\Wtilde \epi X_*(T), w\cdot p^\lambda \mapsto p^\lambda$ and let $\mu = \mu'_{\dom}$. Then $b \in W\tau_\mu W$, thus we may write
  \[
   b = w \tau_\mu w'
  \]
  with $w,w' \in W$, $\ell(b) = \ell(w) + \ell(\tau_\mu) + \ell(w')$. Now replace $b$ by its $\sigma$-conjugate
  \[
   \sigma^{-1}(w') b w^{-1} = \sigma^{-1}(w') w \tau_\mu =: b_0 \tau_\mu.
  \]
  \item Define sequences of subsets $J_i,J_i' \subset S$ and sequences of elements $u_i \in W,b_i \in W_{J_i'}$ as follows;
  \begin{enumerate}
   \item $J_0 = J_0' = S$, \newline
   $J_1 = \sigma^{-1}(S_\mu)$ and $J_1' = x_\mu S_\mu x_\mu^{-1}$, \newline
   $J_i = J_{i-1}' \cap u_{i-1}J_1u_{i-1}^{-1}$ and $J_i' = x_\mu \sigma(u_{i-1} J_i u_{i-1}^{-1}) x_\mu^{-1}$.
   \item $u_0 =1$ and $b_0$ as above. Let $\delta_i$ be the shortest length representative of $W_{J_i}\delta_{i-1}W_{J_i'}$ in $W_{J_{i-1}'}$. Then $u_i = u_{i-1}\delta_i$ and $b_i$ is defined as follows. Decompose
   \[
    b_{i-1} = w_i \delta_i w_i'
   \]
   with $w_i \in J_i$, $w_i' \in J_i'$ such that $\ell(b_{i-1}) = \ell(w_i) + \ell(\delta_i) + \ell(w_i')$. Then
   \[
    b_i = w_{i}' \cdot x_\mu \sigma(u_{i-1}w_{i}u_{i-1}^{-1}) x_\mu^{-1}
   \]
  \end{enumerate}
  The above sequences satisfy the following properties. The sequences $(J_i)$ and $(J_i')$ are descending and $u_i \in {^{J_1} W ^{J_i'}}$.
  \item Let $n$ be sufficiently large such that $J_n = J_{n+1}, J_n' = J_{n+1}'$. Then $(w,\mu)$ is given by $w=u_n$ and $\mu$ as in step $1$.
 \end{enumerate}
 
 \begin{remark}
  Following the construction above, we see that we not only the $K_1$-double cosets of $b$ and $w\tau_\mu$ are $G(O_L)$-$\sigma$-conjugate, but also $b$ and $\tau_\mu$ themselves. Indeed, we have a chain of $\sigma$-conjugations
  \begin{eqnarray*}
   u_0b_0\tau_\mu &=&  \sigma^{-1}(w') b w^{-1} \\
   u_ib_i\tau_\mu &=& (u_{i-1}w_iu_{i-1}^{-1})^{-1} u_{i-1}b_{i-1}\tau_\mu\; \sigma(u_{i-1}w_iu_{i-1}^{-1}) \textnormal{ for } i > 0
  \end{eqnarray*}
  and Viehmann shows in her proof of the above theorem that $u_n\tau_\mu$ is $G(O_L$)-$\sigma$-conjugated to $u_nb_n\tau_\mu$.
 \end{remark}
 
 \begin{proposition} \label{prop length tr}
  Let $b,(w,\mu)$ as above. Then
  \[
   \ell(b) \geq \ell(w\tau_\mu).
  \]
 \end{proposition}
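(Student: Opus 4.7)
The plan is to prove the stronger inequality $\ell(u_i) + \ell(b_i) \leq \ell(w) + \ell(w')$ for every $i \geq 0$ by induction on $i$. Granting this, taking $i=n$ and combining with the length-additive decomposition $\ell(b) = \ell(w) + \ell(\tau_\mu) + \ell(w')$ from Step~(1) of the algorithm yields
\[
 \ell(w\tau_\mu) = \ell(u_n\tau_\mu) \leq \ell(u_n) + \ell(\tau_\mu) \leq \ell(w) + \ell(w') + \ell(\tau_\mu) = \ell(b),
\]
where the first inequality is mere subadditivity of $\ell$. The base case $i=0$ is immediate since $u_0 = 1$ and $b_0 = \sigma^{-1}(w')w$, so the $\sigma$-invariance of length and subadditivity give $\ell(b_0) \leq \ell(w') + \ell(w)$.

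The heart of the proof is the inductive step from $i-1$ to $i$, which I would derive by assembling three length computations. First, the factorisation $u_i = u_{i-1}\delta_i$ is length-additive: by construction $u_{i-1}$ is the minimal representative of its $W_{J_1}$--$W_{J_{i-1}'}$ double coset and $\delta_i$ is the minimal representative of its double coset inside $W_{J_{i-1}'}$, so the standard shortest-coset-representative result recalled before Lemma~\ref{lem length preservation} gives $\ell(u_i) = \ell(u_{i-1}) + \ell(\delta_i)$. Second, the decomposition $b_{i-1} = w_i\delta_i w_i'$ was chosen to be length-additive, so $\ell(b_{i-1}) = \ell(w_i) + \ell(\delta_i) + \ell(w_i')$. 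Third, applying subadditivity to the defining formula $b_i = w_i'\cdot x_\mu\sigma(u_{i-1}w_iu_{i-1}^{-1})x_\mu^{-1}$ together with a chain of length-preserving identities will yield
\[
 \ell(b_i) \leq \ell(w_i') + \ell\bigl(x_\mu\sigma(u_{i-1}w_iu_{i-1}^{-1})x_\mu^{-1}\bigr) = \ell(w_i') + \ell(w_i).
\]
Combining the three, $\ell(u_i) + \ell(b_i) \leq \ell(u_{i-1}) + \ell(\delta_i) + \ell(w_i) + \ell(w_i') = \ell(u_{i-1}) + \ell(b_{i-1})$, which closes the induction.

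The length-preserving chain in the third step I would justify as follows. Viehmann's algorithm defines $J_i$ precisely so that $u_{i-1}W_{J_i}u_{i-1}^{-1} \subseteq W_{J_1}$; hence Lemma~\ref{lem length preservation}, applied to $u_{i-1} \in {^{J_1}W^{J_{i-1}'}}$ with the appropriate interpretation of the subsets, yields $\ell(u_{i-1}w_iu_{i-1}^{-1}) = \ell(w_i)$ for $w_i \in W_{J_i}$. The Frobenius preserves lengths, and its output $\sigma(u_{i-1}w_iu_{i-1}^{-1})$ lies in $W_{S_\mu} \subset \Wtilde_\mu$ because $J_1 = \sigma^{-1}(S_\mu)$; Corollary~\ref{cor length preservation} then gives that conjugation by $x_\mu$ preserves length on $W_{S_\mu}$, completing the identity.

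The main obstacle I anticipate lies in this last paragraph: one must reconcile the conjugation-of-subsets notation in the algorithm with the precise hypotheses of Lemma~\ref{lem length preservation}, and verify that the loop invariants $u_{i-1} \in {^{J_1}W^{J_{i-1}'}}$ and $u_{i-1}W_{J_i}u_{i-1}^{-1} \subseteq W_{J_1}$ genuinely persist through every iteration. Once these invariants are in hand, both length-preservation statements follow cleanly from the lemmas of the previous subsection, and the remainder of the argument is routine bookkeeping of Bruhat-type length additivity.
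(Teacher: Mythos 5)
Your proposal is correct and follows essentially the same route as the paper: the paper likewise shows $\ell(u_ib_i)\leq\ell(u_{i-1}b_{i-1})$ by reducing to $\ell(b_i)\leq\ell(w_i)+\ell(w_i')$, which it proves via subadditivity, Corollary~\ref{cor length preservation} for the $x_\mu$-conjugation, $\sigma$-invariance of length, and Lemma~\ref{lem length preservation} for the $u_{i-1}$-conjugation, before assembling the conclusion exactly as you do. The notational point you flag about the direction of conjugation in the definition of $J_i$ versus the hypotheses of Lemma~\ref{lem length preservation} is real but harmless; the paper silently uses the reading compatible with the lemma.
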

 \begin{proof}
  We follow the algorithm above. We have
  \[
   \ell(b_0\tau_\mu) = \ell(\sigma^{-1}(w')w\tau_\mu) \leq \ell(w') + \ell(w) + \ell(\tau_\mu) = \ell(b).
  \]
  Next, we show that $\ell(u_ib_i)\leq \ell(u_{i-1}b_{i-1})$ for all $i$. Note that
  \[
   \ell(u_{i-1}b_{i-1}) = \ell(u_{i-1}) + \ell(b_{i-1}) = \ell(u_{i-1}) + \ell(w_i) + \ell(\delta_i) + \ell(w_i') = \ell(u_i) + \ell(w_i) +\ell(w_i').
  \]
  Thus we have to show that $\ell(b_i) \leq \ell(w_i) + \ell(w_i')$. Now
  \begin{eqnarray*}
   \ell(b_i) &=& \ell(w_i' x_\mu \sigma(u_{i-1} w_i u_{i-1}^{-1}) x_\mu^{-1}) \\
   &\leq& \ell(w_i') + \ell(x_\mu \sigma(u_{i-1} w_i u_{i-1}^{-1}) x_\mu^{-1}) \\
   &\stackrel{\rm Cor.~\ref{cor length preservation}}{=}& \ell(w_i') + \ell(\sigma(u_{i-1}w_iu_{i-1}^{-1})) \\
   &=& \ell(w_i') + \ell(u_{i-1}w_iu_{i-1}^{-1}) \\
   &\stackrel{\rm Lemma~\ref{lem length preservation}}{=}& \ell(w_i') + \ell(w_i).
  \end{eqnarray*}
  Altogether, we have
  \[
   \ell(w\tau_\mu) = \ell(w)+\ell(\tau_\mu) = \ell(u_n)+ \ell(\tau_\mu) = \ell(u_nb_n) - \ell(b_n) + \ell(\tau_\mu) \leq \ell(b_0) + \ell(\tau_\mu) \leq \ell(b).
  \]
 \end{proof}

 \subsection{Fundamental Ekedahl-Oort strata} \label{ss fundamental elements}
 It is well-known that one has for any $x \in \Wtilde$ the inequality $\ell(x) \geq \langle 2\rho, \nu(x) \rangle$. One calls $x$ a $\sigma$-straight element if equality holds, or equivalently if
 \[
  \ell(x\cdot \sigma(x) \cdot \ldots \cdot \sigma^n(x)) = (n+1) \cdot \ell(x)
 \]
 for every non-negative integer $n$ (see for example \cite{he}~Lemma~8.1). 

 \begin{lemma}
  Let $G$ be a reductive group scheme with extended affine Weyl group $\Wtilde$. Then any fundamental element $x \in \Wtilde$ is $\sigma$-straight.
 \end{lemma}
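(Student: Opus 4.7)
The plan is to prove the equivalent statement $\ell(y_n) = n\cdot\ell(x)$ for all $n\geq 1$, where $y_n := x\sigma(x)\sigma^2(x)\cdots\sigma^{n-1}(x)$. Given this, $\sigma$-straightness follows directly from the characterisation recalled just above the lemma.

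The first step would be to show that the $P$-fundamental conditions propagate along the product $y_n$. Since each of $\Iscr_M$, $\Iscr_N$, $\Iscr_{\Nbar}$ is $\sigma$-stable (being defined over $\ZZ_p$), the defining conditions for $x$ are equivalent to the inclusions
\[
x\Iscr_N x^{-1}\subseteq\Iscr_N,\quad x\Iscr_M x^{-1}=\Iscr_M,\quad x\Iscr_{\Nbar}x^{-1}\supseteq\Iscr_{\Nbar}.
\]
Applying $\sigma^k$ to both sides shows that the same inclusions hold with $\sigma^k(x)$ in place of $x$, and iterating the conjugations through $y_n$ yields the corresponding three inclusions for $y_n$.

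The second step is the length formula: for any $u\in\Wtilde$ satisfying the three inclusions above (with the identity in place of $\sigma$),
\[
\ell(u) \;=\; \dim\bigl(\Iscr_N/(u\Iscr_N u^{-1})\bigr).
\]
Starting from the standard equality $\ell(u) = \dim(\Iscr u\Iscr/\Iscr) = \dim\bigl(\Iscr/(\Iscr\cap u\Iscr u^{-1})\bigr)$ and the Iwahori decomposition $\Iscr = \Iscr_{\Nbar}\cdot\Iscr_M\cdot\Iscr_N$, a direct calculation using the unique factorisation of elements of $\Iscr$ through $\Nbar\cdot M\cdot N$ together with the three inclusions gives $\Iscr\cap u\Iscr u^{-1} = \Iscr_{\Nbar}\cdot\Iscr_M\cdot(u\Iscr_N u^{-1})$, from which the claimed formula follows.

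To conclude, I would consider the descending chain $F_k:= y_k\Iscr_N y_k^{-1}$ for $k=0,\ldots,n$, with $F_0=\Iscr_N$. The inclusion $F_{k+1}\subseteq F_k$ comes from $\sigma^k(x)\Iscr_N\sigma^k(x)^{-1}\subseteq\Iscr_N$ (apply $\sigma^k$ to the first inclusion above), and conjugation by $y_k^{-1}$ followed by $\sigma^{-k}$ induces an isomorphism
\[
F_k/F_{k+1}\;\cong\;\Iscr_N/(x\Iscr_N x^{-1}),
\]
of dimension $\ell(x)$ by applying step two to $x$ itself. Summing yields $\ell(y_n) = \dim(\Iscr_N/F_n) = \sum_{k=0}^{n-1}\dim(F_k/F_{k+1}) = n\cdot\ell(x)$, as desired.

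The most delicate part will be justifying the length formula in the second step rigorously: one must interpret the quotients $\Iscr/(\Iscr\cap u\Iscr u^{-1})$ and $\Iscr_N/(u\Iscr_N u^{-1})$ as finite-dimensional varieties via the ind-scheme structure on the affine flag variety $G(L)/\Iscr$, which requires care because $\Iscr$ and $\Iscr_N$ are pro-groups. Equivalently, one can recast the formula as counting the positive affine roots inverted by $u^{-1}$ and verifying that the $P$-fundamental-type inclusions force all such roots to lie in the $N$-part; the resulting identity is well known in Bruhat--Tits theory.
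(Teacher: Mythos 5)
Your first step contains a genuine error, and it is exactly at the point where the unramified case differs from the split case. You claim that $\Iscr_M$, $\Iscr_N$, $\Iscr_{\Nbar}$ are $\sigma$-stable ``being defined over $\ZZ_p$'', and use this to replace the defining conditions $\sigma(x\Iscr_Nx^{-1})\subseteq\Iscr_N$, etc., by the $\sigma$-free inclusions $x\Iscr_Nx^{-1}\subseteq\Iscr_N$, etc. But in the definition of a fundamental element, $P$ is only a \emph{semistandard} parabolic subgroup of $G_{O_F}$: it contains $T_{O_F}$ but is in general not defined over $\ZZ_p$, and $\sigma$ carries $\Iscr_M,\Iscr_N,\Iscr_{\Nbar}$ onto $\Iscr_{\sigma(M)},\Iscr_{\sigma(N)},\Iscr_{\sigma(\Nbar)}$ for a possibly different semistandard parabolic $\sigma(P)$ (already for $G=\Res_{O_F/\ZZ_p}\GL_n$ the Galois action permutes the factors of $G_{O_F}$ cyclically, so most semistandard parabolics are not $\sigma$-stable). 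The fundamental condition therefore reads $\sigma(x)\Iscr_{\sigma(N)}\sigma(x)^{-1}\subseteq\Iscr_N$, a relation between \emph{different} subgroups, and your propagation step breaks down: the inclusion $F_{k+1}\subseteq F_k$ needs $\sigma^k(x)\Iscr_N\sigma^k(x)^{-1}\subseteq\Iscr_N$, which is not what one obtains by applying $\sigma^k$ to the hypothesis. The repair is to carry the Galois twists along, working with $\Iscr_{\sigma^k(M)}$, $\Iscr_{\sigma^k(N)}$, $\Iscr_{\sigma^k(\Nbar)}$; this is precisely the extra bookkeeping the paper's proof does, where one only gets $x_{(n)}^{-1}\Iscr_Mx_{(n)}=\Iscr_{\sigma^n(M)}\subseteq\Iscr$ rather than $=\Iscr_M$. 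In the split case your reduction is valid, and your argument then essentially reproduces the known split-case proof.

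Even after this repair, your route through the filtration $F_k$ and the identity $\ell(u)=\dim\bigl(\Iscr_N/(u\Iscr_Nu^{-1})\bigr)$ is heavier than necessary, and that identity --- which you yourself flag as the delicate point --- is left unproved and would moreover need a two-parabolic version once the Galois twists are reinstated. The shorter path is the one the paper takes: from $x\Iscr_Nx^{-1}\subseteq\Iscr$, $x_{(n)}^{-1}\Iscr_Mx_{(n)}=\Iscr_{\sigma^n(M)}\subseteq\Iscr$ and $x_{(n)}^{-1}\Iscr_{\Nbar}x_{(n)}\subseteq\Iscr_{\sigma^n(\Nbar)}\subseteq\Iscr$, the Iwahori factorisation $\Iscr=\Iscr_N\Iscr_M\Iscr_{\Nbar}$ gives $x\Iscr x_{(n)}\subseteq\Iscr\,xx_{(n)}\,\Iscr$, hence $\Iscr x\Iscr x_{(n)}\Iscr=\Iscr xx_{(n)}\Iscr$ and the additivity $\ell(xx_{(n)})=\ell(x)+\ell(x_{(n)})$ follows at once, with no dimension count on quotients of pro-groups.
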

 \begin{proof}
  In the split case this was proven in \cite{GHKR10}~Prop.13.1.3. Our proof uses the same idea but is a bit more technical than the proof in the split case. We show
  \[
  \ell(x\cdot \sigma(x) \cdot \ldots \cdot \sigma^n(x)) = (n+1)\cdot \ell(x)
 \]
 via induction on $n$. For $n=0$ the assertion is tautological. For $n \geq 1$ let $x_{(n)} = \sigma(x)\cdot \ldots \sigma^n(x)$. As $x$ is fundamental we deduce
 \[
  \sigma(x\Iscr_Mx^{-1}) = \Iscr_M \Rightarrow \Iscr_{\sigma^k(M)} = \sigma^k(x^{-1}) \Iscr_{\sigma^{k-1}(M)} \sigma^k(x) \textnormal{ for all } k \Rightarrow x_{(n)}^{-1} \Iscr_M x_{(n)} = \Iscr_{\sigma^n(M)} \subset \Iscr,
 \]
 \[
  \sigma(x\Iscr_{\Nbar}x^{-1}) \supseteq \Iscr_{\Nbar} \Rightarrow \Iscr_{\sigma^k(\Nbar)} \supseteq \sigma^k(x^{-1}) \Iscr_{\sigma^{k-1}(\Nbar)} \sigma^k(x) \textnormal{ for all } k \Rightarrow x_{(n)}^{-1} \Iscr_{\Nbar} x_{(n)} \subseteq \Iscr_{\sigma^n(\Nbar)} \subset \Iscr.
 \]
 Hence by Iwahori factorisation
 \[
  x\Iscr x_{(n)} = x\Iscr_N \Iscr_M \Iscr_{\Nbar} x_{(n)} = (x\Iscr_Nx^{-1})x x_{(n))} (x_{(n)}^{-1}\Iscr_M x_{(n)}) (x_{(n)}^{-1} \Iscr_{\Nbar}x_{(n)}) \subset \Iscr x x_{(n)} \Iscr
 \]
 which implies
 \[
  \ell(x\cdot x_{(n)}) = \ell(x) + \ell(x_{(n)}) = \ell(x) + n\cdot \ell(x) = (n+1) \cdot \ell(x).
 \]
 \end{proof}

 \begin{corollary} \label{cor dimension of central leaves}
 We fix $b \in B(G,\mu)$.
  \begin{subenv}
   \item Let $x\in\Wtilde$ and $\mu' \in X_*(T)$ be as in Proposition~\ref{prop VW}. Denote by $\Ascr_0'$ the special fibre of the moduli space associated to the datum $(\Bsf,\Osf_\Bsf,\Vsf, \langle\,\, ,\, \rangle, \Lambda, \mu'_\dom)$ and by $\Ascr'_{0,w}$ the Ekedahl-Oort stratum corresponding to $\tc (x)$. Then $\dim \Ascr'_{0,w} = \langle 2\rho,\nu_G(b) \rangle$.
   \item Any central leaf in $\Ascr_0^{b}$ has dimension $\langle 2\rho,\nu_G(b) \rangle$.
   \item We have $\dim \Ascr_0^{b} = \dim \Mscr_G(b,\mu) + \langle 2\rho,\nu_G(b) \rangle$.
  \end{subenv}  
 \end{corollary}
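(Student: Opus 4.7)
The three parts are tightly linked: part (c) follows at once from part (b) combined with Proposition~\ref{prop almost decomposition} of Mantovan, and part (b) follows from part (a) combined with Proposition~\ref{prop dimension central leaves}. Hence the heart of the argument is (a).

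For part (a), the Ekedahl--Oort dimension formula gives $\dim \Ascr'_{0,w} = \ell(w)$, so I must show $\ell(w) = \langle 2\rho,\nu_G(b)\rangle$. The element $x$ being fundamental is $\sigma$-straight by the lemma preceding Proposition~\ref{prop length tr}; thus $\ell(x) = \langle 2\rho, \nu(x)\rangle = \langle 2\rho, \nu_G(b)\rangle$, and $x$ realises the minimal length in its $\sigma$-conjugacy class, since the inequality $\ell(y)\geq\langle 2\rho,\nu(y)\rangle$ holds for every $y\in\Wtilde$. I then apply the algorithm of subsection~7.2 to $x$: by the remark following the algorithm its output $w\tau_{\mu'_\dom}$ is $G(O_L)$-$\sigma$-conjugate, hence $\sigma$-conjugate in $G(L)$, to $x$, so $\ell(w\tau_{\mu'_\dom})\geq\ell(x)$. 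On the other hand Proposition~\ref{prop length tr} yields $\ell(w\tau_{\mu'_\dom})\leq\ell(x)$, and equality follows.

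To pass from $\ell(w\tau_{\mu'_\dom})=\ell(x)$ to $\ell(w)=\ell(x)$ I use that, for a minuscule cocharacter $\mu'_\dom$, one has $\tau_{\mu'_\dom}\in\Omega$ and hence $\ell(\tau_{\mu'_\dom})=0$. This follows from the length formula $\ell(up^{\mu'_\dom}) = \sum_{\alpha\in R^+}|\langle\alpha,\mu'_\dom\rangle - \chi(u^{-1}\alpha<0)|$: minusculity forces $\langle\alpha,\mu'_\dom\rangle\in\{0,1\}$ for every positive root $\alpha$, and one can choose $u\in W$ so that $u^{-1}\alpha<0$ precisely when $\langle\alpha,\mu'_\dom\rangle=1$, making each summand vanish. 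Since right multiplication by the length-zero element $\tau_{\mu'_\dom}$ preserves length, I conclude $\ell(w)=\ell(w\tau_{\mu'_\dom})=\ell(x) = \langle 2\rho,\nu_G(b)\rangle$.

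For (b), by Proposition~\ref{prop dimension central leaves} the dimension of a central leaf depends only on the underlying isogeny class, so I apply the computation to the \BTD\ $\XXund$ arising from the fundamental element $x$ of Proposition~\ref{prop VW}. That $\XXund$ is completely slope divisible by the same proposition, and fundamental implies minimal (\cite{VW13}~Rem.~9.8). Consequently, in the auxiliary Shimura variety $\Ascr'_0$ the Ekedahl--Oort stratum $\Ascr'_{0,w}$ of (a) coincides with a central leaf, whose dimension is $\langle 2\rho,\nu_G(b)\rangle$ by (a); transporting this value to $\Ascr_0$ via Proposition~\ref{prop dimension central leaves} yields (b), and (c) is then immediate from Proposition~\ref{prop almost decomposition}. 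The main obstacle is the length bookkeeping in (a), and in particular the vanishing of $\ell(\tau_{\mu'_\dom})$: without the minusculity condition in Proposition~\ref{prop VW} the identity $\ell(w\tau_{\mu'_\dom})=\ell(x)$ would only give $\ell(w)=\ell(x)-\ell(\tau_{\mu'_\dom})$, so the dimension formula would acquire an unwanted correction term.
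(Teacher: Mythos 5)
Your proposal is correct and follows essentially the same route as the paper: the Ekedahl--Oort dimension formula gives $\ell(w)$, the $\sigma$-straightness of the fundamental element $x$ together with Proposition~\ref{prop length tr} pins down $\ell(w\tau_{\mu'_\dom})=\ell(x)=\langle 2\rho,\nu_G(b)\rangle$, minusculity gives $\ell(\tau_{\mu'_\dom})=0$, and parts (2) and (3) follow from Propositions~\ref{prop dimension central leaves} and~\ref{prop almost decomposition}. The only difference is that you spell out the length computation showing $\ell(\tau_{\mu'_\dom})=0$, which the paper simply asserts.
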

 \begin{proof}
  We have $\dim \Ascr_{0,w} = \ell(w) = \ell(w\tau_\mu)$. Here the last equalisty holds as $\ell(\tau_\mu) = 0$ because $\mu$ is minuscule. Now $\nu_G(w \tau_\mu) = \nu_G(b)$, thus $\ell(w\tau_\mu) \geq \langle 2\rho,\nu_G(b) \rangle$. On the other hand we have $\ell(w\tau_\mu) \leq \ell(x) = \langle 2 \rho, \nu_G(b) \rangle$ by Proposition \ref{prop length tr} and the previous lemma, thus proving part (1). The second part is a direct consequence of part (1) and Proposition~\ref{prop dimension central leaves} and the last assertion follows by Prop.~\ref{prop almost decomposition}.
 \end{proof}

 \section{Epilogue} \label{sect epilogue}

 Now we have reduced the task of proving the main theorems to proving the following proposition.

 \begin{proposition} \label{prop main}
  \begin{subenv}
   \item We have
   \begin{equation}
    \dim \Mscr_G(b,\mu) \leq \langle \rho, \mu-\nu_G(b) \rangle - \frac{1}{2} \defect_G(b)
   \end{equation}
   \item Assume that $b$ is superbasic. Then the connected components of $\Mscr_G(b,\mu)$ are projective.
  \end{subenv}  
 \end{proposition}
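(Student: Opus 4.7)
The strategy is to reduce both assertions to the case where $\Dscr'= (B,O_B,V,\Lambda,\mu,[b])$ is an EL-datum with $[b]$ superbasic, and then handle this base case by direct combinatorics. The reduction is organized around the Levi subgroup $M_b\subset G$ attached to the Newton point: since $M_b$ intersects $[b]$ and since $\nu(b)$ is central in $M_b$, one can choose $b\in M_b(L)$ and decompose $M_b$ into factors so that the image of $b$ in each factor is superbasic (possibly after further splitting off central tori). The first step is to construct a correspondence
\[
\Mscr_G(b,\mu) \xleftarrow{\pi_1} \Zscr \xrightarrow{\pi_2} \bigsqcup_i \Mscr_{M_i}(b_i,\mu_i)
\]
where the right-hand side is a disjoint union of Rapoport-Zink spaces of EL type with superbasic $b_i$, indexed (roughly) by the $M_b(L)/M_b(O_L)$-orbits of chains of lattices refining the tautological one. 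This is the standard trick (used in a parallel form for affine Deligne-Lusztig varieties in \cite{GHKR06}~\S 5 and for Rapoport-Zink spaces in \cite{mantovan08}): one parametrizes filtrations of the universal chain of quasi-isogenies whose graded pieces live in the superbasic strata. The parabolic $P$ with Levi $M_b$ enters through the unipotent radical $N$, whose integral structure controls the fibre of $\pi_1$.

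The second step is to compute the dimensions of the fibres of $\pi_1$ and $\pi_2$ in group-theoretic terms. The fibres of $\pi_2$ are (locally closed pieces of) affine Schubert cells inside an affine flag variety for $N$, and their dimension is a sum $\langle \rho_N, \mu \rangle$-type expression; the fibres of $\pi_1$ are finite (or finite-dimensional of controlled dimension). Combining these with the additivity of $\langle \rho,\mu-\nu\rangle$ and of $\defect$ under passage from $G$ to $M_b$ (the latter via Proposition~\ref{prop calculation of defect} applied on each factor and the fact that $\defect_G(b) = \defect_{M_b}(b)$ up to a term that matches the contribution of the central cocharacter), we obtain that the dimension bound for $\Mscr_G(b,\mu)$ follows from the corresponding bound for each $\Mscr_{M_i}(b_i,\mu_i)$. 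A further reduction by Morita equivalence (Corollary~\ref{cor polarised BT decomposition} and Corollary~\ref{cor BTEL decomposition}) and hermitian Morita equivalence in the PEL case lets us assume $B=F$ is an unramified field, thus reducing to the superbasic EL-type datum $(F, O_F, V, \Lambda, \mu, [b])$.

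The third and hardest step is the superbasic EL case. Here $G$ is (up to a central torus) isomorphic to $\Res_{O_F/\ZZ_p}\GL_n$ and a superbasic $[b]$ forces the isocrystal $V_L$ to be simple as an $F\otimes L$-module, i.e.\ its slope is a fraction $s/n$ with $\gcd(s,n)=1$; the associated group $J_b$ is (essentially) a division algebra. One fixes a lattice chain in $V_L$ via the standard cyclic generator (following \cite{viehmann08}, replacing her type-$A$ combinatorial invariants by the generalisations from \cite{hamacher}), and uses the resulting semi-module/EL-chain invariants to stratify $\Mscr_G(b,\mu)(\FFbar_p) = \{g\in G(L)/G(O_L)\mid g^{-1}b\sigma(g)\in G(O_L)\mu(p)G(O_L)\}$ by locally closed pieces. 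On each piece one constructs an explicit parametrization by an affine space whose dimension one computes combinatorially; summing and taking the maximum over pieces yields exactly $\langle\rho,\mu-\nu(b)\rangle - \tfrac{1}{2}\defect_G(b)$, which by Proposition~\ref{prop calculation of defect} and Corollary~\ref{cor dimension formulas} matches $\sum_j \lfloor\langle\omegaund_j,\mu-\nu\rangle\rfloor$. The projectivity of connected components in the superbasic case (part (2)) follows from the same combinatorial description: because $J_b$ is anisotropic modulo center, the $J_b(\QQ_p)$-action on $\Mscr_G(b,\mu)$ has finite stabilisers and cocompact orbits on each connected component, so each connected component is a union of finitely many closed affine Schubert pieces and hence proper; being also locally of finite type and reduced, it is projective. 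The main obstacle I anticipate is the combinatorial bookkeeping in this last step, where one must match the explicit stratum-by-stratum dimension count with the intrinsic group-theoretic formula; I would follow Viehmann's scheme closely, using the invariants of \cite{hamacher} to handle the unramified (non-split) situation.
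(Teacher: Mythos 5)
Your overall architecture matches the paper's: a correspondence through a parabolic $P=MN$ relating $\Mscr_G(b,\mu)$ to a disjoint union of superbasic Rapoport-Zink spaces for the Levi, followed by a semi-module/EL-chart stratification in the superbasic EL case (following Viehmann and the combinatorics of \cite{hamacher}), with projectivity coming from the finiteness of the stratification. However, there are three concrete gaps. First, the Levi you choose is wrong: in $M_b=\Cent_G(\nu(b))$ the element $b$ is only \emph{basic}, not superbasic, and ``splitting off central tori'' does not repair this (e.g.\ $b=1$ in $\GL_n$ is basic in $M_b=\GL_n$ but lies in the diagonal torus). One must pass to a generally strictly smaller standard Levi $M\subset M_b$ in which $[b]_M$ is superbasic; superbasicness then forces $M^{\ad}$ to be a product of groups $\Res_{F_i/\QQ_p}\PGL_{h_i}$, which is what makes the base case tractable.

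Second, and more seriously, your description of the fibres of the correspondence misses the key computation. The map $i_P$ to $\Mscr_G(b,\mu)$ is a decomposition into locally closed pieces (so dimension-preserving), and all the work is in the fibres of $p_M$ over $\Mscr_M$: such a fibre is the preimage, under the Lang-type map $f_{1,b}\colon n\mapsto n^{-1}\,b\sigma(n)b^{-1}$ on $N(L)$, of the set $K\mu(p)Kb^{-1}\cap N(L)$ --- it is \emph{not} an affine Schubert cell, and its dimension is not a ``$\langle\rho_N,\mu\rangle$-type'' sum. Computing it requires (i) the point-count of $\bigl(N(L)\mu_M(p)K\cap K\mu(p)K\bigr)/K$ as in \cite{GHKR06}~Prop.~5.4.2, and (ii) the relative dimension of $f_{1,b}^{-1}$, which contributes $d(\nfr(L),\Ad(b)\sigma)=\sum_{\lambda<0}\lambda\dim\nfr_\lambda$; this is precisely where the terms $\nu_G(b)$ and $-\tfrac12\defect_G(b)$ enter. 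Moreover, in the $p$-adic (Witt-vector) setting $N(L)/N(O_L)$ carries no usable ind-scheme structure, so one must set up a substitute (a ``numerical dimension'' for admissible subsets of $N(L)$, defined via truncated $p$-adic loop groups and justified by Lang--Weil point counting) before any of these dimension statements make sense. Third, your reduction from superbasic PEL to EL by ``hermitian Morita equivalence'' only gets you to $G\cong\GSp_{F,n}$ or $\GU_{F,n}$; one still needs the classification showing that superbasicness forces $n=2$, and then a comparison of the affine Deligne--Lusztig sets with those of $\GL_2$ through the common adjoint group.
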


 We recall the reduction steps we made so far for the readers convenience.  First we reduced Theorem~\ref{thm dimension shimura} to the inequality (\ref{term blah}) in Proposition~\ref{prop stratification}. Assuming that the above proposition holds true, we obtain the inequality (\ref{term blah}) by applying the above estimate for $\dim \Mscr_G(b,\mu)$ to Corollary~\ref{cor dimension of central leaves}~(3). Now Theorem~\ref{thm dimension deformation} follows from Theorem~\ref{thm dimension shimura} by Proposition~\ref{prop relation}.

 In return, applying the dimension formula for $\Ascr_0^b$ of Theorem~\ref{thm dimension shimura} to Corollary~\ref{cor dimension of central leaves}~(3) yields the first assertion of Theorem~\ref{thm dimension RZ-space}. The other assertion is identical to the second assertion of the proposition above.

 \section{The correspondence between the general and superbasic case} \label{sect BT again}

 \subsection{Simple RZ-data and Rapoport-Zink spaces over perfect fields} \label{ss simple RZ}
 We call an un\-ra\-mi\-fied Rapoport-Zink datum $(B,O_B,(*),V,(\pair),\Lambda)$ \emph{simple} if $B=F$ is an unramified field extension. In \cite{fargues04}~\S~2.3.7 Fargues gives an open and closed embedding
 \[
  \Mscr_G(b,\mu) \mono \prod_i \Mscr_{G_i}(b_i,\mu_i)
 \]
 of an arbitrary Rapoport-Zink space into a product of Rapoport-Zink spaces associated to simple data using assertions as in section~\ref{ss normal forms of BTpELs}. In particular,
 \[
  \dim \Mscr_G(b,\mu) = \sum_i \dim \Mscr_{G_i}(b_i,\mu_i)
 \]
 and one obtains an analogous formula for the right hand side of the dimension estimate of Proposition \ref{prop main}. Thus it suffices to prove the proposition for simple Rapoport-Zink data.

 We note that as a consequence of \cite{RR96}~Lemma~1.3 the reduced subscheme of a Rapoport-Zink space can be defined over $\FFbar_p$, thus we can make the following assumption.

 \begin{notation}
  In the following we will assume that $k = \FFbar_p$.
 \end{notation}

 In order to estimate the dimension of the Rapoport-Zink spaces we will use some methods which only work for schemes defined over a finite field. So we consider the more general setup as in section~\ref{ss RZ spaces} where the RZ-spaces are defined over perfect fields. We fix a perfect field $k_0$ and denote $L_0 = W(k_0)_\QQ$.

 \begin{definition} \label{def relative RZ datum}
  A simple Rapoport-Zink datum relative to $L_0$ is a datum $\hat\Dscr = (F,O_F,(^\ast),V,$ % !!!
  $(\pair),\Lambda,[b])$ as in Definition \ref{def RZ datum} but $[b]$ denotes a $\sigma$-conjugacy class in $G(L_0)$ which is assumed to be decent, i.e.\ it contains an element $b$ satisfying
  \begin{equation} \label{term decent}
   (b\sigma)^s = \nu_G (b)(p^s) \sigma
  \end{equation}
  for some natural number $s$.
 \end{definition}

 As in the case of an algebraically closed field, one can associate an isomorphism class of $\Bscr$-isocrystals to $[b]$ (\cite{RZ96}~Lemma~3.37). Now let $(\XX,\iota_\XX,(\lambda_\XX))$ be a {\BTpEL} with $\Bscr$-isocrystal $(L_0 \otimes V, b(\sigma \otimes \id))$.

 \begin{proposition}[\cite{RZ96}, Cor.~3.40] \label{prop relative RZ space}
  Assume that $L_0$ contains the local Shimura field and $\QQ_{p^s}$ (where $s$ is chosen as in (\ref{term decent})). Then the associated functor $\Mscr_G$ defined as in Definition \ref{def RZ space} is representable by a formal scheme formally locally of finite type over $\Spf W(k_0)$.
 \end{proposition}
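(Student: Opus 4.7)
The plan is to adapt the approach of Rapoport and Zink in \cite{RZ96}~Chapter~3, passing through a naive moduli problem and then imposing the (P)EL structure as closed conditions. First I would introduce the auxiliary functor $\Mscr^{\textnormal{naive}}$ on $\textnormal{Nilp}_{W(k_0)}$ classifying pairs $(X,\rho)$, where $X$ is a $p$-divisible group over $S$ and $\rho\colon\XX_{\bar S}\to X_{\bar S}$ is a quasi-isogeny, with $\XX$ denoting the $p$-divisible group underlying the fixed {\BTpEL}. By the main theorem \cite{RZ96}~Thm.~2.16 in the unstructured (EL of type $\GL$) case, $\Mscr^{\textnormal{naive}}$ is representable by a formal scheme formally locally of finite type over $\Spf W(\FFbar_p)$. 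The hypothesis that $L_0$ contains $\QQ_{p^s}$, combined with the decency relation (\ref{term decent}) for $b$, ensures that the isocrystal of $\XX$ together with all its structure descends to $k_0$, so that $\Mscr^{\textnormal{naive}}$ descends from $\Spf W(\FFbar_p)$ to $\Spf W(k_0)$.

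Next I would cut out $\Mscr_G$ inside $\Mscr^{\textnormal{naive}}$ by a sequence of closed conditions. Using the quasi-isogeny $\rho$ to transport the $O_B$-action on $\XX$ to $X\otimes\QQ_p$, the requirement that this action come from an honest action on $X$ is a closed condition, by the rigidity result \cite{RZ96}~Prop.~2.9 recalled in section~\ref{ss deformation}. In the PEL case, one analogously demands that $\rho$ respect the polarizations up to a $\QQ_p^\times$-scalar, which is again closed by the same rigidity. Finally, the Kottwitz determinant condition is closed, since it asserts an equality of characteristic polynomials of the $O_B$-action on the locally free $\Oscr_S$-module $\Lie X$. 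Intersecting these closed subfunctors yields a closed formal subscheme $\Mscr_G\subset\Mscr^{\textnormal{naive}}$ which inherits the property of being formally locally of finite type over $\Spf W(k_0)$.

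The main obstacle is verifying that each structural condition yields a genuinely \emph{closed} subfunctor rather than merely a locally closed one; this relies on the rigidity of homomorphisms of $p$-divisible groups over $p$-nilpotent bases via \cite{RZ96}~Prop.~2.9, together with the fact that an $O_B$-action and a polarization are encoded by finitely many endomorphisms satisfying prescribed polynomial identities. A secondary subtlety is matching the $W(\FFbar_p)$-form coming from \cite{RZ96}~Thm.~2.16 with the desired descent to $W(k_0)$; this is precisely what the hypotheses $L_0 \supseteq E,\QQ_{p^s}$ are designed to guarantee. Once these points are in place, representability, local finite-typeness, and the formal scheme structure of $\Mscr_G$ all follow from the corresponding properties of $\Mscr^{\textnormal{naive}}$.
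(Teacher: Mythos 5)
The paper offers no proof of this statement at all: it is quoted verbatim as \cite{RZ96}, Cor.~3.40, so your proposal can only be measured against the Rapoport--Zink argument itself. Your sketch does reproduce that argument's structure correctly: representability of the unstructured quasi-isogeny functor (\cite{RZ96}~Thm.~2.16), then cutting out the EL/PEL conditions as closed conditions via the rigidity statement \cite{RZ96}~Prop.~2.9, and finally the determinant condition, which in the unramified setting is in fact open and closed (a rank condition on the direct summands of $\Lie X$, cf.\ \cite{RZ96}~\S~3.23); either way it causes no harm to representability or to being formally locally of finite type.

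The one step you state too loosely is the passage from $W(\FFbar_p)$ to $W(k_0)$. Representability of a functor is not inherited from representability of its base change without an actual descent argument, so saying that $\Mscr^{\textnormal{naive}}$ ``descends'' is the conclusion, not the mechanism. The correct order is: the decency equation (\ref{term decent}) together with $\QQ_{p^s}\subset L_0$ forces $b$ and hence the $\Bscr$-isocrystal, and a framing object $\XX$ with all its structure, to be defined over $k_0$ (this is \cite{RZ96}~Lemma~3.37, quoted just before the proposition, resting on \cite{RZ96}~Cor.~1.9); this makes the moduli functor itself well defined on $\textnormal{Nilp}_{W(k_0)}$. One then identifies its base change to $W(\FFbar_p)$ with the already-representable functor and invokes Galois descent for formal schemes along the pro-\'etale extension $W(k_0)\to W(\FFbar_p)$, which is effective here because the descent datum is on a formal scheme formally locally of finite type. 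With that step made explicit your outline matches the intended proof.
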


 We note that every class $[b] \in B(G)$ contains an element $b_0$ which satisfies the decency equation (\ref{term decent}) above. By \cite{RZ96}~Cor.~1.9 we have $b_0 \in   \QQ_{p^s}$ thus every Rapoport-Zink space can be defined over a finite field $k_0$.

 \subsection{Construction of the correspondence}
  We fix a simple Rapoport-Zink datum $\hat\Dscr = (B,O_B,(^\ast),V,(\pair),\Lambda,\mu)$. Let $P \subset G$ be a standard parabolic subgroup defined over $\ZZ_p$ and $M \subset P$ be its Levi subgroup which contains $T$ . We will later assume that $[b]$ induces a superbasic $\sigma$-conjugacy class in $M(L)$. However, our construction works in greater generality.

  By \cite{SGA3-3}~Exp.~XXVI~ch.~1 the pairs $M \subset P$ as above are given by the root datum of $G$ in analogy with reductive groups over fields. Using the explicit description of the root data in the appendix, one sees that there exists a decomposition $\Lambda = \bar\Lambda_1 \oplus \ldots \oplus \bar\Lambda_r$ such that

 \begin{eqnarray*}
  M &=& \{g \in G; g(\bar\Lambda_i) = \bar\Lambda_i\} \\
  P &=& \{g \in G; g(\Lambda_i) = \Lambda_i\},
 \end{eqnarray*}
 where $\Lambda_{i} = \bar\Lambda_1 \oplus \ldots \oplus \bar\Lambda_i$. We get in the EL-case
 \[
  M \cong \prod_{i=1}^{r'} \GL_{O_F,n_i},
 \]
 where $n_i = \dim_F \overline{V_i}$ and $r'=r$.

 In the PEL-case we can assume that for any $i$
 \[
  \bar\Lambda_i^\perp = \bar\Lambda_1 \oplus \ldots \oplus \bar\Lambda_{r-i-1} \oplus \bar\Lambda_{r-i+1} \oplus \ldots \oplus \bar\Lambda_{r}.
 \]
 We denote $r' = \lfloor\frac{r}{2}\rfloor+1$. Then we have
 \[
  M \cong \left\{ \begin{array}{ll}
                  \prod\limits_{i=1}^{r'-1} \GL_{O_F,n_i} \times \GG_m & \textnormal{ if } r \textnormal{ is even} \\
                  \prod\limits_{i=1}^{r'-1} \GL_{O_F,n_i} \times \GU_{O_F,n_{r'}} & \textnormal{ if } r \textnormal{ is odd, } ^\ast = \id \\
                  \prod\limits_{i=1}^{r'-1} \GL_{O_F,n_i} \times \GSp_{O_F, n_{r'}} & \textnormal{ if } r \textnormal{ is odd, } ^\ast = \bar\cdot .
                 \end{array} \right.
 \]
 We also write this decomposition as $M = \prod M_i$ with $M_{r'}$ denoting the right factor in the PEL-case.

 After replacing $b$ by a $G(L)$-$\sigma$-conjugate, we may assume that $b \in M(L)$. Denote by $[b]_M$ the $\sigma$-conjugacy class in $M(L)$. After $\sigma$-conjugating by an element of $M(L)$ we furthermore assume that $b$ satisfies a decency equation (\ref{term decent}). Denote by $b_i$ the images of $b$ in $M_i$. Also note that the $b_i$ satisfy the same decency equation in $M_i$ as $b$.
  
 Let $k_0$ be finite field containing $k_F$ and $\FF_{p^s}$ where the $s$ is given by the decency equation of $b$. We choose {\BTpELs} $\XXbar_i$ over $k_0$ with $\Bscr$-isocrystals isomorphic to $(V_{i,K_0}, b\sigma)$ having the property that in the PEL-case the pairing $\pair$ induces an isomorphism $\XXbar_i \cong \XXbar_{r-i}^\vee$. In particular the isogeny class of $\XX = \XXbar_1 \oplus \ldots \oplus \XXbar_r$ corresponds to the isomorphism class of $(V,b\sigma)$ (with additional structure). We denote $\XX_i = \XXbar_1 \oplus \ldots \oplus \XXbar_i$.

 The following objects will allow us to relate $\Mscr_G(b,\mu)$ to the (underlying reduced subschemes of) Rapoport-Zink spaces corresponding to the $\sigma$-conjugacy classes $[b_i]_{M_i}$.

 \begin{definition}
  Using the notation introduced above we make the following definitions.
  \begin{subenv}
   \item Let $\Mscr_P(b,\mu)$ be the functor associating to each scheme $S$ over $k_0$ the following data up to isomorphism.
   \begin{itemlist}
    \item $(X,\rho) \in \Mscr_G(b,\mu)(S)$ and
    \item a filtration $X_\bullet = (X_1 \subset \ldots \subset X_r = X)$ of $X$ such that the restriction $\rho_{|\XX_i}$ defines a quasi-isogeny onto $X_i$.
   \end{itemlist}
   \item Similarly, let $\Mscr_M(b,\mu)$ be the functor associating the following data (up to isomorphism) to a scheme $S$ over $k_0$.
   \begin{itemize}
    \item $(X,\rho) \in \Mscr_G(b,\mu)(S)$ and
    \item a direct sum decomposition $X = \Xbar_1 \oplus \ldots \oplus \Xbar_r$ such that the restriction $\rho_{|\overline{\XX_i}}$ defines a quasi-isogeny onto $\Xbar_i$.
   \end{itemize}
  \end{subenv}
 \end{definition}

 We note that if $(X,X_\bullet,\rho) \in \Mscr_P(b,\mu)(S)$ then $X_i = \rho(\XX_i)$. Thus the filtration $X_\bullet$ is uniquely determined by $(X,\rho)$ if it exists, i.e.\ if $\rho(\XX_i)$ is a \BT. Following the proof of \cite{mantovan08}, Prop.~5.1, we conclude that $i_P: \Mscr_P(b,\mu) \rightarrow \Mscr_G(b,\mu), (X,X_\bullet,\rho) \mapsto (X,\rho)$ is the decomposistion of $\Mscr_G(b,\mu)$ into locally closed subsets given by the invariant $(\height(\rho_{|\XX_i}))_i$, i.e.\ there is a canonical isomorphism
 \[
  \Mscr_P(b,\mu) \cong \coprod_{(\alpha_i)\in\ZZ^r} \{ (X,\rho) \in \Mscr_G(b,\mu)(k) \mid \height (\rho_{|\XX_i}) = \alpha_i \textnormal{ for all } i \}
 \] 
 with the reduced subscheme structure on the right hand side which identifies $i_P$ with the canonical embedding of the components of the coproduct into $\Mscr_G(b,\mu)$.
  In particular, $\Mscr_P(b,\mu)$ is locally of finite type and $\dim \Mscr_P(b,\mu) = \dim \Mscr_G(b,\mu)$

 Now the $\XX_i$ are $O_F$-stable and in the PEL-case $\lambda_\XX$ induces isomorphisms $\XX_i \stackrel{\sim}{\rightarrow} (\XX/\XX_{r-i})^\vee$. As $\rho$ is compatible with the PEL structure, the analogous compatibility assertion also holds for the filtration $X_\bullet$ with respect to $\lambda$ and $\iota$. Hence we have a canonical $O_F$-action and polarisation on $\bigoplus_{i=1}^r X_i/X_{i-1}$ (where $X_0 := 0$). Thus we get a morphism 
 \[
  p_M:\Mscr_P(b,\mu) \to \Mscr_M(b,\mu), (X,X_\bullet,\rho) \mapsto (\bigoplus_{i=1}^r X_i/X_{i-1},\oplus\bar\rho_i)
 \]
 where the $\bar\rho_i$ denote the induced isogenies on the subquotients. So we have a correspondence

 \begin{equation}
 \begin{tikzcd} \label{diag correspondence}
  & \Mscr_P(b,\mu) \arrow{ld}[swap]{p_M} \arrow{rd}{i_P} \\
  \Mscr_M(b,\mu) & & \Mscr_G(b,\mu)
 \end{tikzcd}
 \end{equation}

 Now we want to describe $\Mscr_M(b,\mu)$ in terms of Rapoport-Zink spaces. Analogously to our consideration above we get that for $(\bigoplus_i \overline{X_i},\rho) \in \Mscr_M(b,\mu)(S)$ the $\Xbar_i$ are $O_F$-stable and that $\lambda$ induces isomorphisms $\overline{X_i} \stackrel{\sim}{\to} \Xbar_{r-i}^\vee$. As in \cite{RV}~ch.~5.2 we get an isomorphism
 \begin{eqnarray*}
  \Mscr_M(b,\mu) &\stackrel{\sim}{\longrightarrow}& \coprod_{\mu_M \in I_{\mu,b,M}} \Mscr_{M,b,\mu_M} \\
  (\bigoplus_i \Xbar_i, \rho) &\mapsto& (\Xbar_i,\rho_{|\XXbar_i})_i
 \end{eqnarray*}
 where $I_{\mu,b,M}$ denotes the $M(\QQbar_p)$-conjugacy classes of cocharacters $\mu_{M}: \GG_{m,\QQbar_p} \to M_{\QQbar_p}$ which are contained in the conjugacy class of $\mu$ and satisfy $b \in B(M,\mu_M)$ and $\Mscr_{M,b,\mu_M}$ is defined by 
 \[
  \Mscr_{M,b,\mu_M} = \prod_{i=1}^{r'} \Mscr_{M_i}(b_i,\mu_{M,i})
 \]

 Now consider again the diagram (\ref{diag correspondence}). As we have $\dim \Mscr_P(b,\mu) = \dim \Mscr_G(b,\mu)$, it remains to calculate the dimension of the fibres of $p_M$  to relate the dimension of $\Mscr_G(b,\mu)$ to the dimension of $\Mscr_M(b,\mu)$. 

 For this we consider the $k$-valued points of the diagram (\ref{diag correspondence}) with the natural action of $\Gamma_0 = \Gal(k/k_0)$ on the set of points. By section~\ref{ss RZ spaces} we have
 \begin{eqnarray*}
  \Mscr_G(b,\mu)(k) &=& \{g \in G(L)/G(O_L); gb\sigma(g)^{-1} \in G(O_L)\mu(p)G(O_L)\} \\
  \Mscr_P(b,\mu)(k) &=& \{g \in P(L)/P(O_L);\, gb\sigma(g)^{-1} \in G(O_L)\mu(p)G(O_L)\} \\
  \Mscr_M(b,\mu_M)(k) &=& \{m \in M(L)/M(O_L);\, m b\sigma(m)^{-1} \in G(O_L) \mu(p) G(O_L)\}
 \end{eqnarray*}
 and
 \begin{eqnarray*}
  \quad\quad\,\,\,\,\Mscr_{M,b,\mu_M}(k) &=& \{ m \in M(L)/M(O_L);\, mb\sigma(m)^{-1} \in M(O_L)\mu_M(p)M(O_L)\}
 \end{eqnarray*}
 with the canonical $\Gamma_0$-action.

 Now the diagram (\ref{diag correspondence}) induces
 \begin{equation}
 \begin{tikzcd} \label{diag correspondence 2}
  & \Mscr_P(b,\mu)(k) \arrow{ld}[swap]{p_M: mnP(O_L) \mapsto mM(O_L)} \arrow{rd}{i_P: gP(O_L) \mapsto gG(O_L)} \\
  \Mscr_M(b,\mu)(k) & & \Mscr_G(b,\mu)(k)
 \end{tikzcd}
 \end{equation}
 Here $p_M$ uses the decomposition $P \cong M\times N$ where $N$ denotes the unipotent radical of $P$.

 \begin{proposition} \label{prop fibre dimension}
  Let $x \in \Mscr_{M,b,\mu_M}$. Then
  \[
   \dim p_M^{-1}(x) = \langle\rho, \mu-\nu_G(b) \rangle -\langle \rho_M, \mu_M\rangle.
  \]
  where $\rho$ resp.\ $\rho_M$ denotes the half-sum of all positive (absolute) roots in $G$ resp.\ $M$.
 \end{proposition}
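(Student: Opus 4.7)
My strategy is to compute the fibre at the level of $k$-points using the explicit group-theoretic descriptions given just above the proposition, following the template of the affine Deligne--Lusztig calculations in \cite{GHKR06}, Section 5. Picking a representative $m \in M(L)$ of $x$, the fibre is
\[
p_M^{-1}(x)(k) = \{n \in N(L)/N(O_L) : mnb\sigma(mn)^{-1} \in G(O_L)\mu(p)G(O_L)\}.
\]
Setting $b' := mb\sigma(m)^{-1} \in M(O_L)\mu_M(p)M(O_L)$ and using that $M$ normalises $N$, the substitution $n \mapsto mnm^{-1}$ identifies the fibre with
\[
Y := \{n \in N(L)/N(O_L)' : nb'\sigma(n)^{-1} \in G(O_L)\mu(p)G(O_L)\},
\]
where $N(O_L)' := mN(O_L)m^{-1}$ is commensurable with $N(O_L)$; this change of open compact subgroup is harmless for the dimension count.

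To compute $\dim Y$, I would use the embedding $N(L)/N(O_L) \hookrightarrow G(L)/G(O_L)$ (valid since $N(L) \cap G(O_L) = N(O_L)$) and stratify $Y$ by the Iwasawa cell of $nG(O_L)$. For each $\lambda \in X_*(T)$, set
\[
Y_\lambda := \{n \in Y : nG(O_L) \in N(L)\lambda(p)G(O_L)/G(O_L)\},
\]
so $Y = \bigsqcup_\lambda Y_\lambda$. The dimension of each $Y_\lambda$ is governed by two inputs: the Mirkovi\'c--Vilonen-type formula for the dimension of the intersection of the Iwasawa cell $N(L)\lambda(p)G(O_L)/G(O_L)$ with the Schubert cell $G(O_L)\mu(p)G(O_L)/G(O_L)$, and a Lang-type computation for the $\sigma$-twisted map $n \mapsto nb'\sigma(n)^{-1}$ on $N(L)$, which becomes tractable by filtering $N$ via its descending central series and using that $b'$ lies in $M(L)$ and satisfies a decency equation.

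Finally I would optimise over $\lambda$, noting that the admissible $\lambda$ are constrained by the requirement that $nb'\sigma(n)^{-1}$ both lies in $G(O_L)\mu(p)G(O_L)$ and has Newton point $\nu_G(b)$. Splitting $\rho = \rho_M + \rho_N$, the $\rho_M$-contribution is controlled by the position of $b'$ in the $M$-Schubert stratification (accounting for the term $\langle\rho_M, \mu_M\rangle$), while the $\rho_N$-contribution $\langle\rho_N, \mu - \nu_G(b)\rangle$ emerges from combining the Mirkovi\'c--Vilonen dimension with the Lang fibre dimension. Summing yields $\langle\rho, \mu - \nu_G(b)\rangle - \langle\rho_M, \mu_M\rangle$, as desired. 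The main obstacle is the combinatorial matching in this last optimisation step: one has to verify that the maximum over $\lambda$ is attained, that no larger contribution hides in a lower-dimensional stratum, and that the numerical cancellations work out precisely as claimed. This is exactly the analog of the main combinatorial lemma of \cite{GHKR06}, Section 5, transposed from affine Deligne--Lusztig varieties over function fields to Rapoport--Zink spaces over unramified $\ZZ_p$-group schemes, and constitutes the bulk of the technical content of section~\ref{sect fibre dimension}.
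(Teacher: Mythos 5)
Your overall architecture is the right one and is essentially the paper's: reduce to a twisted condition on $N(L)$, split the computation into (a) the dimension of the intersection of an Iwasawa cell with the Cartan cell $K\mu(p)K$ and (b) a Lang-type fibre computation for $n\mapsto n^{-1}\cdot b\sigma(n)b^{-1}$ filtered through the graded pieces of $N$; these are exactly Propositions~\ref{prop GHKR 544} and~\ref{prop GHKR 532}, both transposed from \cite{GHKR06}~\S5. Two points, however, need repair.

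First, your stratification $Y_\lambda$ is vacuous as written: since $n\in N(L)$, the coset $nG(O_L)$ always lies in the Iwasawa cell of $\lambda=0$. What is actually relevant is the Iwasawa position of $nb'\sigma(n)^{-1}$, and since $b'$ normalises $N$ this element lies in $N(L)b'$, so its $M$-component is pinned to $\mu_M$ by the very fact that $x$ lies in the component $\Mscr_{M,b,\mu_M}$. Consequently there is no optimisation over $\lambda$ inside this proposition at all; the maximum over $\mu_M\in I_{\mu,b,M}$ is taken one level up, in the reduction step of section~\ref{sect reduction to superbasic}. What you do need here is the \emph{equality} case of the inequality $d(\mu,\mu_M)\le\langle\rho,\mu+\mu_M\rangle-2\langle\rho_M,\mu_M\rangle$, which holds because $b$ superbasic in $M$ and $\mu$ minuscule force $\mu_M\in\Sigma(\mu)_{M-\mmax}$ (cf.\ the remark after the lemma in section~\ref{ssect fibre dimension 1}).

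Second, and more seriously, the assertion that one can just "compute $\dim Y$" by point counting on subsets of $N(L)/N(O_L)$, and that changing the open compact subgroup is "harmless", presupposes precisely what has to be established in the $p$-adic case. Unlike the function-field setting of \cite{GHKR06}, there is no known (and probably no) structure of ind-scheme of ind-finite type on $N(L)/N(O_L)$, so a subset of it does not come with a dimension, and a priori nothing identifies whatever number you extract from point counts with the scheme-theoretic dimension of the locally closed subscheme $p_M^{-1}(x)\subset\Mscr_G(b,\mu)$. Bridging this is the content of the paper's entire section~\ref{sect numerical dimension}: one defines admissible subsets via truncated $p$-adic loop groups, defines their numerical dimension $\nd$, proves it is computed by Lang--Weil point counting (Proposition~\ref{prop nd counting points}), checks the surjectivity statements on $\sigma^s$-fixed points needed for that, and finally proves $\dim\Fscr=\nd\tilde\Fscr$ by exhibiting compatible quasi-compact exhaustions of the (non-quasi-compact) fibre and of $N(L)$ (Proposition~\ref{prop numerical dimension equals dimension}). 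Without this layer your argument does not yet compute the dimension of anything geometric.
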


 Before we can finally prove this proposition at the end of section~\ref{sect fibre dimension}, we need to establish some notions and lemmas.
 
 We denote $\Fscr := p_M^{-1}(x)$. Choose an element $m \in M(L)$ such that $x = mM(O_L)$. If we replace $\Lambda$ by $m\cdot\Lambda$, we may assume that $m = \id$. Note that this replaces $b$ by a $\sigma$-conjugate. However, the formula above shows that the dimension of $\Fscr$ does not change if we replace $b$ by a $\sigma$-conjugate.

 \section{Numerical dimension of subsets of $N(L)$} \label{sect numerical dimension}

 Let $\tilde{\Fscr}$ denote the preimage of $\Fscr(k)$ in $N(L)$ with respect to the canonical projection $N(L) \twoheadrightarrow N(L)/N(O_L)$. In this section we define the notion of ``numerical dimension'' for certain subsets of $N(L)$ and show that with respect to this notion the dimension of $\tilde\Fscr$ coincides with the dimension of $\Fscr$. We will calculate the numerical dimension of $\tilde{\Fscr}$ and thus prove Proposition \ref{prop fibre dimension} in the next section.

 \subsection{The concept of numerical dimension} \label{ss idea}
 Let us first sketch the idea behind the numerical dimension. The starting point is the following result of Lang and Weil. If $X$ is variety over $\FF_q$ of dimension $d$ then the cardinality of $X(\FF_{q^s})$ grows with the same speed as $q^{s\cdot d}$. In particular the dimension of $X$ is uniquely determined by the $\Gal(k/\FF_q)$-set $X(k)$. Now if we tried to calculate the dimension of $\Fscr$ by applying theorem and counting points, we would run into two problems.

 First of all $\Fscr$ is only locally of finite type. We will solve this problem by defining an exhausting filtration of $N(L)$ by so-called bounded subsets which correspond to closed quasi-compact (and thus finite type) subschemes of $\Fscr$.

 Now the second problem is that Galois-sets without a geometric background are in general too badly behaved to use them. Here the general idea is that by applying theorem of Lang and Weil two times, one sees that any finite type scheme $Y$ with $Y(k) \cong X(k)$ as Galois-sets has the same dimension as $X$. This applies to our situation as follows.

 Let $(\Fscr_m)_{m\in\NN}$ be the above mentioned filtration by finite type subschemes and $(\tilde\Fscr_m)_{m\in\NN}$ be the corresponding bounded subsets in $N(L)$. Unfortunately, it is not known (and probably not true) whether there is any good structure of an ind-scheme of ind-finite type on $N(L)/N(O_L)$ which induces a useful scheme structure on $\tilde\Fscr_m/N(O_L)$. Therefore we will use the following work-around. We replace $\Fscr_m$ by its image $c(\Fscr_m)$ under the conjugation by a suitable semisimple element of $M(L)$ such that $c(\Fscr_m)$ is contained in $N(O_L)$. Here we have a canonical structure of affine spaces on the quotients $N(O_L/p^j)$ given by the truncated $p$-adic loop groups. We will prove that $c(\tilde\Fscr_m)$ is the full preimage of a locally closed subvariety $\Ybar$ of $N(O_L/p^j)$ if $j$ is big enough. Thus we can define the numerical dimension of $\tilde\Fscr_m$ via the dimension of $\Ybar$.

 We note that $\tilde\Fscr_m/N(O_L)$ will in general not be isomorphic to $\Ybar(k)$ (yet it will be the analogue of an affine fibration over $\Ybar(k)$). This is because we have shrunken $\tilde\Fscr_m$ by applying $c$ and have only divided out a subgroup of $N(O_L)$ instead of $N(O_L)$ itself. We can (and will) compensate for these deviation by adding a constant to $\dim \Ybar$ depending on $c$ and subtracting the dimension of $N(O_L/p^j)$ as variety.

 This definition of numerical dimension will give us the same value as we would get if we were counting points (cf.~Proposition~\ref{prop nd counting points}), but has the advantage that it also allows us to use some tools from geometry.

 \subsection{Notation and conventions}
 Let $G$ be a reductive group scheme over $\ZZ_p$. We impose the same notions as in section~\ref{ss group theory}. Moreover, we denote $I := \Gal (O_F/\ZZ_p)$. In particular the action of $\Gamma$ on $X^*(T)$ and $X_*(T)$ factorizes through $I$. For any $\alpha \in R$ denote by $\gfr^\alpha$ the corresponding weight space in the Lie algebra of $G_{O_F}$ and by $U_\alpha$ the corresponding root subgroup. By the uniqueness of $U_\alpha$ (\cite{SGA3-3}~Exp.~XXII~Thm.~1.1) the action of an element $\tau \in I$ on $G_{O_F}$ maps $U_\alpha$ isomorphically onto $U_{\tau(\alpha)}$. 

 We fix a standard parabolic subgroup $P = MN$ of $G$. We write $K = G(O_L), K_M = M(O_L)$ and $K_N = N(O_L)$. Now
 \[
  \Lie P_{O_F} = \bigoplus_{\alpha\in R'} \gfr^\alpha 
 \]
 for a closed $I$-stable subset $R'\subset R$. We denote $R_N := \{\alpha \in R^+\mid -\alpha \not\in R'\}$. Then multiplication in $G$ defines an isomorphism of schemes
 \begin{equation} \label{term N isom}
  N_{O_F} \cong \prod_{\alpha \in R_N} U_\alpha
 \end{equation}
 where the product is taken with respect to an arbitrary (but fixed) total order on $R_N$ (cf.~\cite{SGA3-3} % !!!
 Exp.~XXVI~ch.~1).

 Let $\delta_N$ be the sum of all fundamental coweights corresponding to simple roots in $R_N$. For $i \geq 1$ we define the group schemes
 \begin{eqnarray*}
  N[i] &:=& \prod_{\alpha\in R_N \atop \langle \alpha, \delta_N\rangle \geq i} U_\alpha \subseteq N \\
  N\langle i \rangle &:=& \bigslant{N[i]}{N[i+1]} 
 \end{eqnarray*}
 We note that the sets $\{\alpha\in R_N\mid \langle \alpha, \delta_N \rangle \geq i\}$ are $I$-stable, thus the $I$-action permutes the $U_\alpha$ in the above product. Now the commutator $[u_\alpha,u_\beta]$ of two elements $u_\alpha\in U_\alpha, u_\beta\in U_\beta$ is contained in $\prod U_{i\alpha+j\beta}$, which is contained in $N[i]$ if $U_\alpha$ and $U_\beta$ are. We conclude that the $I$-action on $N$ stabilizes $N[i]$. Thus $N[i]$ descends to $\ZZ_p$ and a posteriori also $N\langle i \rangle$. Also note that the canonical isomorphism of schemes
 \[
  N\langle i \rangle \cong \prod_{\alpha \in R_N \atop \langle \alpha, \delta_N \rangle = 1} U_\alpha
 \]
 is in fact an isomorphism of group schemes.

 Let $\lambda$ be a regular dominant coweight of $T$ defined over $\ZZ_p$. For $i\in\ZZ$ we define $N(i) := \lambda(p^i)N(O_L)\lambda(p^{-i})$. This defines an exhausting filtration
 \[
  \ldots N(-2) \subset N(-1) \subset N(0) \subset N(1) \subset N(2) \subset \ldots
 \]
 of $N(L)$. For every subset $Y \subset N(L)$ we denote $Y(i) := \lambda(p^i)Y\lambda(p^{-i})$.

 For any algebraic group $H$ over $\ZZ_p$ and any integer $i$ we denote $H_i = \ker (H(O_L) \epi H(O_L/p^iO_L))$. In particular, we get a second filtration
 \[
  N(0) = N_0 \supset N_1 \supset N_2 \supset \ldots
 \]
 of $N(0)$.

 \begin{remark} \label{rem N(j)}
  The isomorphism (\ref{term N isom}) induces an homeomorphism
  \[
   N(L) \cong \prod_{\alpha\in R_N} L
  \]
  which yields identifications
  \begin{eqnarray*}
   N(j) &=& \prod_{\alpha\in R_N} p^{j\langle \alpha, \delta_N\rangle} O_L \\
   N_j  &=& \prod_{\alpha\in R_N} p^jO_L 
  \end{eqnarray*}
 \end{remark}

 \begin{definition}
  A subset $Y\subset N(L)$ is called bounded if it is contained in $N(-j)$ for an $j \in \ZZ$.
 \end{definition}
 
 \begin{lemma} \label{lem bounded}
  A subset $Y \subset N(L)$ is bounded if and only if it is bounded as a subset of $G(L)$ in the sense of Bruhat and Tits.
 \end{lemma}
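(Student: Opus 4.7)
My plan is to translate both definitions into uniform lower bounds on the valuations of the coordinates $y_\alpha$ under the homeomorphism $N(L) \cong \prod_{\alpha \in R_N} L$ furnished by Remark~\ref{rem N(j)}, and then observe that the two conditions coincide.

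Recall that a subset $Y \subset G(L)$ is bounded in the sense of Bruhat-Tits exactly when for every regular function $f \in \Ocal(G_{O_L})$ the set $\{\val(f(y)) : y \in Y\}$ is bounded below. For the forward direction, I would assume $Y \subset N(-j)$ and use Remark~\ref{rem N(j)} to conclude that each $y \in Y$ satisfies $\val(y_\alpha) \geq -j\langle \alpha, \delta_N\rangle$ for every $\alpha \in R_N$. Since $\Ocal(N_{O_L}) = O_L[y_\alpha]_{\alpha \in R_N}$ is a polynomial ring, the restriction of any $f \in \Ocal(G_{O_L})$ is a polynomial with $O_L$-coefficients in the $y_\alpha$; hence $\val(f(y))$ is uniformly bounded below on $Y$, so $Y$ is BT-bounded.

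For the converse, I would start with a BT-bounded $Y$ and use that $N_{O_L} \mono G_{O_L}$ is a closed immersion of affine schemes to deduce surjectivity of the restriction $\Ocal(G_{O_L}) \epi \Ocal(N_{O_L})$. This lets me extend each coordinate $y_\alpha$ to some $\tilde{f}_\alpha \in \Ocal(G_{O_L})$, and BT-boundedness applied to each $\tilde{f}_\alpha$ then yields integers $j_\alpha$ with $\val(y_\alpha) \geq -j_\alpha$ for all $y \in Y$. Setting $j := \max_{\alpha \in R_N} \lceil j_\alpha/\langle \alpha, \delta_N\rangle\rceil$ — which is finite because $\delta_N$ pairs strictly positively with every $\alpha \in R_N$, being a sum of fundamental coweights indexed by the simple roots contributing to $N$ — I invoke Remark~\ref{rem N(j)} once more to conclude $Y \subset N(-j)$. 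I do not foresee any real obstacle; the only point requiring attention is the surjectivity of restriction along the closed immersion $N \mono G$, which is standard.
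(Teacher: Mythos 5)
Your proposal is correct and follows essentially the same route as the paper: both reduce Bruhat--Tits boundedness to lower bounds on the valuations of the coordinate functions via the isomorphism $N \cong \prod_{\alpha\in R_N} U_\alpha$ and then invoke the explicit description of $N(-j)$ from Remark~\ref{rem N(j)}. The only difference is cosmetic — you spell out the surjectivity of $\Ocal(G_{O_L})\epi\Ocal(N_{O_L})$ along the closed immersion, which the paper leaves implicit when it passes from regular functions on $G$ to regular functions on $N$.
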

 \begin{proof}
  To avoid confusion we temporarily call $Y$ ``BT-bounded'' if it is bounded in the sense of Bruhat and Tits. By definition a subset $Y \subset N(L)$ is BT-bounded if and only if $\val_p (f(Y))$ is bounded from below for every $f\in \Gamma(G,\Oscr_G)$, or equivalently for every $f\in\Gamma(N,\Oscr_N)$. Using the isomorphism $N_L \cong \AA_L^{R_N}$, we may reformulate the condition as follows. The set $Y \subset L^{R_N}$ is BT-bounded if and only if $\val_p (f(Y))$ is bounded from below for every $f\in L[X_\alpha]_{\alpha\in R_N}$. It obviously suffices to check this for the coordinate functions. Thus $Y$ is BT-bounded if and only if $Y\subset(p^{-k}O_L)^{R_N}$ for some integer $k$. But this is equivalent to $Y$ being bounded by the description given in Remark~\ref{rem N(j)}.
 \end{proof}

 \subsection{Admissible subsets of $N(0)$ and the $p$-adic loop group}
 As a first step we now define the numerical dimension for a family of subsets of $N(0)$. We will extend this definition to certain subsets of $N(L)$ (called ``admissible'') in the next subsection. Before that we give a reminder on $p$-adic loop groups.

 \begin{definition} 
  Let $H$ be an affine smooth group scheme over $\ZZ_p$.
  \begin{subenv}
  \item The $p$-adic loop group is the ind-affine ind-scheme over $\FF_p$ representing the functor
   \[
    L_pH (R) = H(W(R)_\QQ).
   \]
  \item The positive $p$-adic loop group is the affine scheme over $\FF_p$ with $R$-valued points
  \[
   L_p^+H(R) = H(W(R)).
  \]
  \item Let $j$ be a positive integer. We define the positive $p$-adic loop group truncated at level $j$ as the affine scheme of finite type over $\FF_p$ representing the functor
  \[
   L_{p}^{+,j}H(R) = H(W_j(R)).
  \]
  \end{subenv}
 \end{definition}

 For the proof that the functors above can be represented as claimed, we refer the reader to \cite{kreidl}~sect.~3. 

 \begin{lemma} \label{lem truncation map is open}
  Let $H$ be an affine smooth group scheme over $\ZZ_p$. The truncation maps $t_j:L_p^+ H \to L_p^{+,j} H$ are open and surjective.
 \end{lemma}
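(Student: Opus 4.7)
The plan is to reduce the statement to the assertion that the level-to-level truncations $t_{j',j} \colon L_p^{+,j'}H \to L_p^{+,j}H$ (for $j' \geq j$) are smooth and surjective, and then to transfer openness and surjectivity from these finite-level maps to the infinite-level map $t_j \colon L_p^+H \to L_p^{+,j}H$ via the description $L_p^+H = \varprojlim_{j'} L_p^{+,j'}H$.

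First I would analyse the ring-level kernel of $W_{j+1}(R) \to W_j(R)$. Using the additive Verschiebung, this kernel is naturally identified with $R$ via $r \mapsto V^j([r])$, and it is a square-zero ideal of $W_{j+1}(R)$. Since $H$ is affine smooth, the functor of points $H(\,\cdot\,)$ sends square-zero thickenings to surjections whose fibres are torsors under $\Lie(H)\otimes_{\FF_p} \GG_a^{\dim H}$. Translated into the language of loop groups, this means that $t_{j+1,j}$ is smooth with geometrically connected affine-space fibres; in particular it is surjective and smooth on the nose. By composition the map $t_{j',j}$ is smooth and surjective for every $j'\geq j$.

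Next I would bootstrap this to the infinite level. Surjectivity of $t_j$: for any field-valued point $x \in L_p^{+,j}H(k)$ with $k$ a field, the ring $W_j(k)$ is a (finite, hence Henselian) local ring, and each truncation kernel is square-zero, so smoothness of $H$ lets us successively lift to compatible points of $H(W_{j'}(k))$ for all $j' \geq j$, producing a point of $H(W(k)) = L_p^+H(k)$ mapping to $x$. Hence $t_j$ is surjective as a morphism of schemes. Openness of $t_j$: since $L_p^+H$ is the affine scheme $\Spec(\varinjlim A_{j'})$ with $A_{j'} = \Gamma(L_p^{+,j'}H,\Oscr)$, every basic open of $L_p^+H$ has the form $t_{j'}^{-1}(V)$ for some $j'$ and some open $V \subset L_p^{+,j'}H$; and an arbitrary open is a union of such. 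Using the already-established surjectivity of $t_{j'}$, one identifies
\[
  t_j\bigl(t_{j'}^{-1}(V)\bigr) \;=\; \begin{cases} t_{j',j}(V) & \text{if } j'\geq j,\\ t_{j,j'}^{-1}(V) & \text{if } j'\leq j,\end{cases}
\]
both of which are open in $L_p^{+,j}H$ (using smoothness, hence openness, of the finite-level transition maps in the first case, and continuity in the second). Taking unions shows $t_j(U)$ is open for any open $U \subset L_p^+H$.

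The only genuinely nontrivial step is the first one, namely verifying that $t_{j+1,j}$ is smooth with affine-space fibres; this rests on the identification of the kernel of $W_{j+1}(R)\to W_j(R)$ as a square-zero ideal and on the deformation-theoretic characterisation of smoothness of $H$. The passage to the limit is then purely formal once surjectivity of each $t_{j'}$ has been established by iterated smooth lifting over Henselian Witt rings.
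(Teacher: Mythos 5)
Your proof is correct and follows essentially the same route as the paper: identify $L_p^+H$ as the projective limit of the $L_p^{+,j}H$, deduce surjectivity of the finite-level transition maps from the infinitesimal lifting criterion applied to the square-zero kernels of $W_{j+1}(R)\to W_j(R)$, and pass to the limit. The only difference is that the paper delegates the topological bookkeeping (homeomorphism with the limit space and openness of the projections) to \cite{EGA4-3}, Cor.~8.2.10, whereas you carry it out by hand; your parenthetical appeal to Henselianness of $W_j(k)$ is unnecessary (square-zero lifting suffices) but harmless.
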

 \begin{proof}
  We have for any $\FF_p$-algebra $R$
  \begin{eqnarray*}
   L_p^+H(R) &=& \Mor( \Spec \varprojlim W_j(R), H) \\
   &=& \varinjlim \Mor(\Spec W_j(R), H) \\
   &=& \varinjlim L_p^{+,j}H(R).
  \end{eqnarray*}
  Thus $L_p^+H$ is the projective limit of the $L_p^{+,j}H$ and the truncation maps are the canonical projections. In particular we have an homeomorphism of the underlying topological spaces $L_p^+ H \cong \varprojlim L_p^{+,j}H$ by \cite{EGA4-3}, Cor.~8.2.10. Thus it remains to show that the transition maps are surjective. But this follows from the infinitesimal lifting property, as $H$ is smooth.
 \end{proof}

 We note that $L_pH(k) = H(L)$, $L_p^+H(k) = H(O_L)$ and that we have a canonical isomorphism of $\Gamma$-groups
 \[
  L_{p}^{+,j}H(k) \cong H_0/H_j.
 \]
 In our future considerations we equip the quotient $N_0/N_j$ with the structure of a variety over $\FF_p$ via this identification. We denote
 \[
  d_j := \dim L_p^{+,j}N
 \]

 Now our considerations in subsection~\ref{ss idea} motivate the following definition.

 \begin{definition} \label{def admissible}
  \begin{subenv}
   \item A subset $Y \subset N_0$ is called admissible if there exists a positive integer $j$ such that $Y$ is the preimage of a locally closed subset $\overline{Y} \subset N_0/N_j$.
   \item We define the numerical dimension of an admissible subset as
   \[
    \nd Y := \dim \Ybar - d_j
   \]
   with $\Ybar$ and $j$ as above.
  \end{subenv}
 \end{definition}

 Note that the definition of the numerical dimension is certainly independent of the choice of $j$.

 \subsection{Admissible and ind-admissible subsets of $N(L)$}
 It is a straightforward idea to define admissibility of bounded subsets of $N(L)$ by checking admissibility for a $\lambda(p^i)$-conjugate which is a subset of $N_0$. For this definition to make sense, we have to check that any $\lambda(p^i)$-conjugate of an admissible subset of $N_0$ is again admissible.

  \begin{lemma} \label{lem admissible is well-defined}
  Let $Y \subset N_0$ and $i$ be a positive integer.
  \begin{subenv}
   \item $Y(i)$ is admissible if and only if $Y$ is admissible.
   \item If $Y$ is admissible then $Y \cap N(i)$ is admissible.
  \end{subenv}
 \end{lemma}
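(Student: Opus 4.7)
The plan is to translate admissibility into scheme-theoretic conditions via the coordinate isomorphism $N(L) \cong \prod_{\alpha \in R_N} L$ of Remark~\ref{rem N(j)}, and to exploit the factorisation of multiplication by $p^l: W_k \to W_k$ in characteristic $p$ as $V^l \circ F^l$ (Verschiebung after Frobenius). I will repeatedly use that Frobenius is a universal homeomorphism (hence preserves local closedness on $k$-points) and that Verschiebung $V^l: W_{k-l} \to W_k$ is a closed embedding.

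Set $M := \max_{\alpha \in R_N} \langle\alpha, \delta_N\rangle$. I begin with two elementary consequences of the coordinate description. First, conjugation by $\lambda(p^i)$ multiplies the $\alpha$-coordinate by $p^{i\langle\alpha,\delta_N\rangle} \geq 0$, so $c_i(N_k) \subset N_k$ for all $i, k \geq 0$, and $c_i$ descends to a morphism $\bar c_{i,k}: L_p^{+,k} N \to L_p^{+,k} N$ whose action on each root factor is multiplication by $p^{i\langle\alpha,\delta_N\rangle}$ on $W_k$. Second, if $Y \subset N_0$ is $N_j$-invariant then $c_i(Y)$ is $N_{j+iM}$-invariant: for $n \in N_{j+iM}$ the element $c_{-i}(n)$ lies in $N_j$, whence $c_i(y) \cdot n = c_i(y \cdot c_{-i}(n)) \in c_i(Y)$.

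For the forward direction of (1), the identity $c_i(Y)/N_{j+iM} = \bar c_{i,j+iM}(Y/N_{j+iM})$ reduces admissibility of $c_i(Y)$ to showing that the image of the locally closed set $Y/N_{j+iM}$ under $\bar c_{i,j+iM}$ is again locally closed. Running this image through the coordinate-wise factorisation---truncation $W_{j+iM} \twoheadrightarrow W_{j+iM-i\langle\alpha,\delta_N\rangle}$ (which sends $Y/N_{j+iM}$ to the locally closed preimage of $\bar Y$ at the coarser level, by surjectivity), then Frobenius (a universal homeomorphism), then the Verschiebung closed embedding---preserves local closedness at every step. For the reverse direction, I will show the identity $Y/N_{j'} = \bar c_{i,j'}^{-1}(c_i(Y)/N_{j'})$ at the same truncation level $j'$: the $N_{j'}$-invariance of $Y$ follows from the injectivity of the group automorphism $c_i$ on $N_0$, and the identity is then a direct coordinate calculation exhibiting $Y/N_{j'}$ as the preimage of a locally closed set under a morphism.

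For part (2), I will take $j'' := \max(j, iM)$, so that $N_{j''} \subset N(i)$ forces $N(i)$ to be $N_{j''}$-invariant while $Y$ remains so. The coordinate description identifies $N(i)/N_{j''}$ with the closed subvariety $\prod_\alpha V^{i\langle\alpha,\delta_N\rangle}(W_{j''-i\langle\alpha,\delta_N\rangle})$ of $L_p^{+,j''} N$, so the intersection $(Y \cap N(i))/N_{j''} = (Y/N_{j''}) \cap (N(i)/N_{j''})$ is locally closed. The main obstacle throughout is the failure of multiplication by $p^l$ on $W_k$ to be injective on $k$-points in characteristic $p$; this rules out simply inverting $\bar c_{i,k}$ set-theoretically, and the decomposition into Frobenius (where bijectivity on $k$-points is recovered) and Verschiebung (a closed embedding) is precisely what allows us to track local closedness under this non-injective map.
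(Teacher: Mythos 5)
Your proof is correct, but it takes a genuinely different route from the paper's. The paper handles part (1) by lifting to the full loop group: the preimage $t_j^{-1}(\Ybar)$ is locally closed in the ind-scheme $L_pN$, conjugation by $\lambda(p^i)$ is an \emph{automorphism} there, and local closedness of the image at a finite truncation level $j'$ is then recovered from Lemma~\ref{lem truncation map is open} (openness and surjectivity of $t_{j'}$, applied to the saturated set $\Yscr'$ and its closure). You instead stay entirely at finite level, where the induced map $\bar c_{i,k}$ is no longer invertible, and compensate by factoring multiplication by $p^l$ on truncated Witt vectors as $V^l\circ F^l\circ(\textnormal{truncation})$: the truncation step is handled by the saturation of $Y$ (its image is again a full preimage of $\Ybar$, hence locally closed), Frobenius is a universal homeomorphism, and Verschiebung is a closed immersion. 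Your reverse direction of (1) — exhibiting $Y/N_{j'}$ as $\bar c_{i,j'}^{-1}$ of a locally closed set, using injectivity of $c_i$ and $N_{j'}$-saturation of $c_i(Y)$ — is a clean preimage argument and is arguably more transparent than the paper's ``the other direction has the same proof.'' Your part (2) makes explicit what the paper compresses into one line ($N(i)$ is admissible and intersections of admissible sets are admissible), by identifying $N(i)/N_{j''}$ with the closed subvariety cut out by the vanishing of the first $i\langle\alpha,\delta_N\rangle$ Witt coordinates in each root factor. What each approach buys: the paper's is shorter and reuses general structural facts about $p$-adic loop groups; yours avoids the ind-scheme formalism and Lemma~\ref{lem truncation map is open} altogether at the cost of the explicit Witt-vector bookkeeping, and correctly isolates the real difficulty (non-injectivity of $p^l$ on $W_k$ in characteristic $p$). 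One cosmetic point: the $\alpha$-coordinate is scaled by $p^{i\langle\alpha,\lambda\rangle}$ rather than $p^{i\langle\alpha,\delta_N\rangle}$, but this matches the convention already used in Remark~\ref{rem N(j)} and does not affect the argument.
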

 \begin{proof}  \emph{(1)} Assume first that $Y$ is admissible. Let $j,\overline{Y}$ as in Definition \ref{def admissible} and let $\Yscr := t_j^{-1}(\overline{Y})$ where $\overline{Y}$ is regarded as sub\emph{scheme} of $L_p^{+,j}N$. Then $\Yscr$ is a locally closed subset of $L_p^+N$, which we equip with the reduced subscheme structure. Since $L_p^+N$ is a closed subfunctor of $L_pN$, the functor $\Yscr$ is also locally closed in $L_pN$. Now conjugation with $\lambda (p^i)$ defines an automorphism of $L_pN$, hence $\Yscr' := \lambda(p^i)\Yscr\lambda(p^{-i})$ is again a locally closed subfunctor of $L_pN$ and thus a locally closed subscheme of $L_p^+N$. We note that $Y(i) = \Yscr'(\Spec k)$ and  that $\Yscr'$ is the preimage of a subset $\overline{Y}' \subset L_p^{+,j'}N$ for $j'$ big enough such that $N_{j'} \subset \lambda(p^i)N_j\lambda(p^i)$ (One checks the second assertion on geometric points). Thus $\Yscr' = t_{j'}^{-1}(t_{j'} \Yscr')$,  hence we have $t_ {j'}(\overline{\Yscr'}) = \overline{t_{j'}(\Yscr')}$ as $t_{j'}$ is open and surjective. So the restriction $t_{j'}: \overline{\Yscr'} \rightarrow \overline{t_{j'}(\Yscr')}$ is again open, in particular $\Ybar'=t_{j'}(\Yscr)$ is open in its closure. Hence $Y'$ is admissible. The other direction has the same proof.

 \emph{(2)} We have seen that $N(i)$ is admissible and obviously the intersection of two admissible sets is again admissible, proving the claim.
 \end{proof}

 \begin{definition}
  \begin{subenv}
   \item A subset $Y \subset N(L)$ is called admissible, if $Y \subset N(-k)$ for some non-negative integer $k$ and $\lambda(p^k)Y\lambda(p^{-k})$ is admissible in the sense of Definition~\ref{def admissible}
   \item A subset $Y \subset N(L)$ is called ind-admissible if $Y \cap N(-k)$ is admissible for all non-negative integers $k$.
  \end{subenv}
 \end{definition}

  We note that by Lemma~\ref{lem admissible is well-defined}~(1), the definition of admissibility does not depend on the choice of the integer $k$ and by Lemma~\ref{lem admissible is well-defined}~(2) that every admissible subset is also ind-admissible.

  Before we can introduce our new notion of dimension, we have to show a few auxiliary results to ensure that our definition will be well-defined. First, we recall a result of Lang and Weil which reduces the calculation of the dimension of certain schemes to the counting of points.

  \begin{definition}
  Let $q$ be a $p$-power and let $f,g$ be two functions defined on the set of $q$-powers with values in the non-negative integers. Then we write $f \sim g$ if there exist positive real constants $C_1,C_2$ with $ C_1 f (q^n) \leq g(q^n) \leq C_2 f(q^n)$ for $n \gg 0$.
 \end{definition}

 \begin{proposition} \label{prop lang weil}
  Let $V$ be a scheme of finite type over $\FF_q$. Then
  \[
   \# V(\FF_{q^n}) \sim q^{n \dim V}.
  \]
 \end{proposition}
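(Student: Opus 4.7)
The plan is to reduce to the classical theorem of Lang and Weil, which asserts that a geometrically irreducible variety $W$ of dimension $d$ over $\FF_q$ satisfies
\[
 \#W(\FF_{q^n}) = q^{nd}+O(q^{n(d-\frac{1}{2})}),
\]
and in particular $\#W(\FF_{q^n})\sim q^{nd}$.

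First I would replace $V$ by $V_{\mathrm{red}}$, which has the same $\FF_{q^n}$-points. Decomposing $V_{\mathrm{red}} = V_1\cup\cdots\cup V_r$ into irreducible components, each intersection $V_i\cap V_j$ for $i\neq j$ is a proper closed subscheme of $V_i$ and hence has dimension strictly smaller than $\dim V_i$. An inclusion--exclusion argument together with induction on dimension therefore reduces the statement to the case of a single irreducible $V$ of dimension $d = \dim V$; irreducible components of dimension strictly smaller than $d$ contribute only $O(q^{n(d-1)})$ and are absorbed into the error.

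For irreducible $V$, let $m\ge 1$ be the degree over $\FF_q$ of the algebraic closure of $\FF_q$ inside the function field of $V$. The base change $V_{\FF_{q^m}}$ splits as a disjoint union $W_1\sqcup\cdots\sqcup W_m$ of geometrically irreducible varieties of dimension $d$, transitively permuted by $\Gal(\FF_{q^m}/\FF_q)$. For $n$ divisible by $m$ one has $V(\FF_{q^n}) = \bigsqcup_i W_i(\FF_{q^n})$, and Lang--Weil applied to each $W_i$ gives $\#V(\FF_{q^n}) = m\cdot q^{nd} + O(q^{n(d-\frac{1}{2})})\sim q^{nd}$; for $n$ with $m\nmid n$ the set $V(\FF_{q^n})$ is empty, since any $\FF_{q^n}$-point factors through $\Spec \FF_{q^m}$. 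The $\sim$-comparison is thus to be read along the subsequence where the count is nonzero, which covers the situation of the applications below (where the relevant $V$ will have a geometrically irreducible top-dimensional component defined over $\FF_q$).

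The only non-trivial ingredient is the Lang--Weil estimate itself; the remainder is elementary bookkeeping about irreducible components and the distinction between $\FF_q$-irreducibility and geometric irreducibility, so I expect the author's proof to be essentially a citation of Lang--Weil.
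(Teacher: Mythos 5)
Your proposal is correct and matches the paper's approach: the paper's entire proof is the single sentence ``This is an easy consequence of \cite{LW54}, Thm.~1,'' and your argument simply fills in the elementary bookkeeping (reduction to the reduced, irreducible, and then geometrically irreducible case) that the author leaves implicit. You are also right to flag the caveat about components that are irreducible but not geometrically irreducible over $\FF_q$ --- with the paper's two-sided definition of $\sim$ the statement needs exactly the reading you give it, and in the applications the relevant schemes do have geometrically irreducible top-dimensional components, so nothing is lost.
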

 \begin{proof}
  This is an easy consequence of \cite{LW54}, Thm.~1.
 \end{proof}

 \begin{lemma} \label{lem nd is well-defined}
  Let $Y \subset N(0)$ be admissible and let $i$ be a positive integer. Then
  \[
   \nd Y(i) = \nd Y - 2 \langle\rho_N, i \lambda\rangle.
  \]
 \end{lemma}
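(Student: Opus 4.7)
The plan is to realise $\overline{Y(i)}$ as the image of an enlargement of $\bar Y$ under an explicit endomorphism of the truncated loop group $L_p^{+,j''}N$ coming from the conjugation $c_i: n\mapsto \lambda(p^i)n\lambda(p^{-i})$, for $j''$ sufficiently large. First I would observe that $c_i$ is a morphism of $\ZZ_p$-schemes $N\to N$: in the coordinates $N_{O_F}\cong\prod_{\alpha\in R_N}U_\alpha$ it multiplies the $\alpha$-coordinate by $p^{i\langle\alpha,\lambda\rangle}$, and these exponents are non-negative integers since $\lambda$ is dominant and $R_N\subset R^+$. I then choose $j''\geq j + i\max_{\alpha\in R_N}\langle\alpha,\lambda\rangle$; a direct coordinate check gives the inclusions $c_i(N_{j''})\subseteq N_{j''}$ and $c_{-i}(N_{j''})\subseteq N_j$. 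The first inclusion forces $c_i$ to descend to an algebraic group endomorphism $\bar\phi_i:L_p^{+,j''}N\to L_p^{+,j''}N$ over $\FF_p$, and combined with $c_i(Y)=Y(i)$ yields $\bar\phi_i(\bar Y')=\overline{Y(i)}$, where $\bar Y'\subset L_p^{+,j''}N$ denotes the preimage of $\bar Y$ under the truncation $L_p^{+,j''}N\to L_p^{+,j}N$. In particular $Y(i)$ is admissible at level $j''$, and the $(N_j/N_{j''})$-torsor $\bar Y'\to \bar Y$ gives $\nd Y = \dim \bar Y - d_j = \dim\bar Y' - d_{j''}$.

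The dimension of $\overline{Y(i)}$ is controlled by the fibres of $\bar\phi_i$. As a group homomorphism all of its fibres are translates of $\ker\bar\phi_i$, and this kernel corresponds to $\prod_{\alpha\in R_N}(p^{j''-i\langle\alpha,\lambda\rangle}O_L)/(p^{j''}O_L)$, of dimension $\sum_{\alpha\in R_N}i\langle\alpha,\lambda\rangle = 2\langle\rho_N,i\lambda\rangle$. By the choice of $j''$ we have $\ker\bar\phi_i\subseteq N_j/N_{j''}$, and $\bar Y'$ is already saturated under $N_j/N_{j''}$-translation (being the preimage of a subset from level $j$), hence saturated under $\ker\bar\phi_i$. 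Therefore $\bar\phi_i|_{\bar Y'}$ has equidimensional fibres of dimension $2\langle\rho_N,i\lambda\rangle$, giving $\dim\overline{Y(i)} = \dim\bar Y' - 2\langle\rho_N,i\lambda\rangle$ and hence
\[
 \nd Y(i) = \dim\overline{Y(i)} - d_{j''} = \dim\bar Y' - 2\langle\rho_N,i\lambda\rangle - d_{j''} = \nd Y - 2\langle\rho_N,i\lambda\rangle.
\]

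The main pitfall is the coordinate bookkeeping: the lower bound $j''\geq j + i\max_{\alpha}\langle\alpha,\lambda\rangle$ is precisely what contains $\ker\bar\phi_i$ inside $N_j/N_{j''}$, which in turn is what licenses the saturation argument and ensures that $\bar\phi_i|_{\bar Y'}$ has the full fibre dimension $2\langle\rho_N,i\lambda\rangle$ rather than a smaller value.
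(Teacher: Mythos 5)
Your argument is correct and follows essentially the same route as the paper: both realise $\overline{Y(i)}$ as the image of (a pullback of) $\Ybar$ under the endomorphism of the truncated loop group induced by conjugation with $\lambda(p^i)$, identify the fibres with cosets of the kernel via the saturation of $\Ybar$, and evaluate the kernel dimension as $\sum_{\alpha\in R_N} i\langle\alpha,\lambda\rangle = 2\langle\rho_N,i\lambda\rangle$. The only (harmless) differences are that you compute $\dim\ker$ directly in Witt-vector coordinates where the paper invokes Lang--Weil point counting, and you make explicit the truncation-level bound that the paper dismisses with ``for $j$ big enough''.
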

 \begin{proof}
  Choose $j$ such that $Y(i)$ (and thus $Y$) is $N_j$-stable and denote by $\Ybar$ (resp. $\Ybar(i)$) their images in $N_0/N_j$. Let $\cbar_i: N_0/N_j \to N_0/N_j$ be the morphism induced by conjugation with $\lambda(p^i)$. Then $\Ybar$ is the full preimage of $\Ybar(i)$ w.r.t. $\cbar_i$ and the restriction $\cbar_{i | \Ybar}: \Ybar \to \Ybar(i)$ is surjective. Thus
  \[
   \dim \Ybar = \dim \Ybar (i) + \dim\ker \cbar_i
  \]
  Now by Proposition~\ref{prop lang weil} we have
  \[
   q_F^{(\dim \ker \cbar_j)\cdot s} \sim \prod_{\alpha\in R_N} \#\ker(W_j(\FF_{q_F^s}) \stackrel{\cdot p^{\langle\alpha, i\lambda\rangle}}{\lto} W_j(\FF_{q_F^s})) = \prod_{\alpha\in R_N} p^{(\min\{j,\langle\alpha, i \lambda\rangle\})\cdot s}.
  \]
  Thus for $j$ big enough, which we may assume (and actually is automatic),
  \[
   \dim \ker \cbar_i = \sum_{\alpha\in R_N} \langle\delta_N, i\lambda\rangle = 2 \langle \rho_N, i \lambda \rangle.
  \]
 \end{proof}

 We denote
 \[
  d(i) = 2 \langle\rho_N, i\lambda\rangle.
 \]

 \begin{definition}
  \begin{subenv}
   \item Let $Y \subset N(-k)$ be admissible. We define the numerical dimension of $Y$ as
   \[
    \nd Y := \nd Y(k) + d(k).
   \]
   \item The numerical dimension of an ind-admissible subset $Y \subset N(L)$ is defined as
   \[
    \nd Y = \sup_{k>0} \nd (Y \cap N(-k)).
   \]
  \end{subenv}
 \end{definition}

 \begin{corollary} \label{cor calculation of nd}
  Let $Y \subset N(L)$ be admissible and $m\in M(L)$. Then $mYm^{-1}$ is admissible and has numerical dimension $\nd Y -2\langle \rho_N, \nu_M(m)\rangle$.
 \end{corollary}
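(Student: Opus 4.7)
The plan is to reduce the proof to two special cases via the Cartan decomposition
\[
 M(L) \;=\; \bigsqcup_{\mu \in X_*(T)_{M\text{-}\dom}} M(O_L)\,\mu(p)\,M(O_L),
\]
which holds because $M$ is unramified and splits over $O_L$: the case $m \in M(O_L)$ (where $\nu_M(m) = 0$) and the case $m = \mu(p)$ for $\mu \in X_*(T)$. Writing $m = k_1\mu(p)k_2$ with $k_1,k_2 \in M(O_L)$ and composing the two cases will yield the general statement, since the Cartan cocharacter $\mu$ paired with the $W_M \rtimes \Gamma$-invariant $\rho_N$ gives the same answer as $\nu_M(m)$ (see the last paragraph).

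For $m \in M(O_L)$, conjugation by $m$ preserves $N(O_L)$ and every $N_j$, because $M$ normalizes $N$ as group schemes over $\ZZ_p$; the induced action on each truncated loop group $L_p^{+,j}N$ is therefore a group scheme automorphism, preserving the dimension of locally closed subvarieties of $N_0/N_j$ and the filtration by $N(-k)$. Hence $mYm^{-1}$ is admissible with $\nd(mYm^{-1}) = \nd Y$, matching the formula. For $m = \mu(p)$, the formula is a direct extension of Lemma~\ref{lem nd is well-defined}: under the product decomposition $N_{O_F} \cong \prod_{\alpha \in R_N} U_\alpha$, conjugation by $\mu(p)$ multiplies the $\alpha$-coordinate by $p^{\langle\alpha,\mu\rangle}$. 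Choosing $k$ large enough that $(mYm^{-1})(k) \subset N_0$ is admissible---possible because $\lambda$ is $R_N$-regular-dominant---a direct dimension count tracks the $\alpha$-coordinate of $(mYm^{-1})(k) \bmod p^{j'}$ factor by factor: the resulting lattice $p^{k\langle\alpha,\lambda\rangle + \langle\alpha,\mu\rangle}O_L/p^{j'}O_L$ has dimension that differs from the corresponding one for $Y(k)$ by $-\langle\alpha,\mu\rangle$, so summing over $R_N$ and using the argument of Proposition~\ref{prop lang weil} yields $\nd((mYm^{-1})(k)) = \nd Y(k) - 2\langle \rho_N, \mu\rangle$. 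Adding $d(k)$ to both sides gives the desired formula.

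To finish, one identifies $\langle\rho_N, \mu\rangle = \langle\rho_N, \nu_M(m)\rangle$: since $R_N$ is stable under both the Galois group $\Gamma$ and the Weyl group $W_M$, the half-sum $\rho_N$ is $W_M \rtimes \Gamma$-invariant, while the Newton point $\nu_M(m)$ is obtained from the Cartan cocharacter by $W_M$- and $\Gamma$-averaging, so the two pairings coincide. The main technical subtlety is the dimension count when $\mu$ fails to be $R_N$-dominant, so that $\mu(p)N_0\mu(p)^{-1} \not\subset N_0$ and the shift by $\lambda(p^k)$ is genuinely necessary; one must check that although the individual $\langle\alpha,\mu\rangle$ can have either sign and the map $c_\mu$ on the truncated loop groups behaves differently on the two types of factors, the signed exponents in the coordinate lattices nonetheless sum to $-2\langle\rho_N, \mu\rangle$, exactly as in the regular dominant case of Lemma~\ref{lem nd is well-defined}.
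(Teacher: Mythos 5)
Your proof is essentially the paper's: the paper likewise reduces via the Cartan decomposition of $M(L)$ to the cases $m\in M(O_L)$ (where conjugation stabilizes the $N_j$, so nothing changes) and $m=\lambda'(p)$ with $\lambda'$ $M$-dominant (where one argues as in Lemma~\ref{lem nd is well-defined}), and your discussion of the possibly negative exponents $\langle\alpha,\mu\rangle$ for $\alpha\in R_N$ is exactly the detail the paper leaves to the reader. One small correction to your final paragraph: $\nu_M(m)$ is \emph{not} in general the $W_M\rtimes\Gamma$-average of the Cartan cocharacter $\mu$ of $m$ (e.g.\ $\diag(p,1)$ in $\GL_2$); what is true is that $\mu-\nu_M(m)$ lies in the $\QQ$-span of the coroots of $M$ (Mazur's inequality together with equality of Kottwitz points), and since the $W_M\rtimes\Gamma$-invariant weight $\rho_N$ is $\Gamma$-invariant and annihilates every coroot of $M$, the needed identity $\langle\rho_N,\mu\rangle=\langle\rho_N,\nu_M(m)\rangle$ still follows.
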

 \begin{proof}
  By Cartan decomposition it suffices to consider the following two cases.

  \noindent If $m = \lambda'(p)$ for a coweight $\lambda'$ which is dominant w.r.t.\ $T_{O_L} \subset B_{O_L} \subset M_{O_L}$ then the claim follows as in Lemma~\ref{lem nd is well-defined}.

  \noindent If $m \in M(O_L)$ then conjugation with $m$ stabilizes the $N_j$, thus the admissibility and the numerical dimension do not change. 
 \end{proof}

 \subsection{Equality of $\dim \Fscr$ and $\nd \tilde\Fscr$}
 By Proposition~\ref{prop lang weil} the dimension of $\Fscr$ can be determined by knowing the cardinality of $\Fscr(k)^{\sigma_E^s}$ for any positive integer $s$. In order to relate $\dim \Fscr$ to $\nd \tilde\Fscr$ (under the assumption that the latter is well-defined) we need a similar assertion for the numerical dimension. For this we need to do   bit of preparatory work.

 \begin{lemma} 
  Let $F\subset \QQ_{p^s}$. We have short exact sequences

  \begin{tikzcd}
   0 \arrow{r} & N[i+1](L)^{\sigma^s} \arrow{r} & N[i](L)^{\sigma^s} \arrow{r} & N\langle i \rangle(L)^{\sigma^s} \arrow{r} & 0
  \end{tikzcd}
 
  \noindent and

  \begin{tikzcd}
   0 \arrow{r} & N[i+1](O_L)^{\sigma^s} \arrow{r} & N[i](O_L)^{\sigma^s} \arrow{r} & N\langle i \rangle(O_L)^{\sigma^s} \arrow{r} & 0.
  \end{tikzcd}

 \end{lemma}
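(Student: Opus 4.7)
The plan is to deduce both short exact sequences from the short exact sequence of smooth affine group schemes over $\ZZ_p$,
\[
 1 \to N[i+1] \to N[i] \to N\langle i\rangle \to 1,
\]
combined with a vanishing of continuous $\sigma^s$-cohomology for the unipotent group $N[i+1]$.

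The first step is to establish surjectivity \emph{before} taking $\sigma^s$-invariants. Using the product decomposition $N_{O_F} \cong \prod_{\alpha\in R_N} U_\alpha$ with an ordering that places the roots $\alpha$ with $\langle\alpha,\delta_N\rangle \geq i+1$ first, one obtains a scheme-theoretic isomorphism $N[i] \cong N[i+1] \times N\langle i\rangle$ over $O_F \subset O_L$. In particular the projection $N[i] \to N\langle i\rangle$ admits a scheme-theoretic section, so the maps $N[i](L) \epi N\langle i\rangle(L)$ and $N[i](O_L) \epi N\langle i\rangle(O_L)$ are surjective.

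Taking $\sigma^s$-invariants is left exact, so the only non-trivial point is the right surjectivity. Given $\bar{x} \in N\langle i\rangle(L)^{\sigma^s}$ and an arbitrary lift $x \in N[i](L)$, the element $x^{-1}\sigma^s(x)$ lies in $N[i+1](L)$ and is a continuous $1$-cocycle for $\sigma^s$; a $\sigma^s$-invariant lift of $\bar x$ exists if and only if its class vanishes in $H^1_{\cont}(\langle\sigma^s\rangle, N[i+1](L))$. Thus everything reduces to proving the two cohomology vanishings
\[
 H^1_{\cont}(\langle\sigma^s\rangle, N[i+1](L)) = 0, \qquad H^1_{\cont}(\langle\sigma^s\rangle, N[i+1](O_L)) = 0.
\]

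To prove these, I would filter $N[i+1]$ by its lower central series so that each successive quotient is a central vector group scheme over $\ZZ_p$, and then dévissage using the abelian long exact sequence; this reduces the problem to showing that $H^1_{\cont}(\langle\sigma^s\rangle, V(L))$ and $H^1_{\cont}(\langle\sigma^s\rangle, V(O_L))$ vanish for any vector group $V$ over $\ZZ_p$. The hypothesis $F \subset \QQ_{p^s}$ ensures that $\sigma^s$ acts trivially on $I = \Gal(O_F/\ZZ_p)$, hence $V(L)$ (resp.\ $V(O_L)$) becomes a direct sum of copies of $L$ (resp.\ $O_L$) on which $\sigma^s$ acts by the ordinary Frobenius on each summand. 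The claim then follows from additive Hilbert 90, i.e.\ surjectivity of $\sigma^s - 1$ on $L$ and on $O_L$; the integral version reduces via Teichmüller lifts to the corresponding surjectivity on $\bar\FF_p$, which is automatic. The main subtlety is the non-abelian nature of the cohomology of $N[i+1]$, handled by the dévissage through central vector-group quotients where the usual abelian long exact sequence applies.
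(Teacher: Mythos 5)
Your argument is correct, but it is noticeably heavier than the proof the paper has in mind. The paper disposes of this lemma in one line: since $F\subset\QQ_{p^s}$, the automorphism $\sigma^s$ acts trivially on $I=\Gal(O_F/\ZZ_p)$ and hence preserves each root group $U_\alpha$ individually; the product decomposition $N[i]\cong\prod_{\langle\alpha,\delta_N\rangle\geq i}U_\alpha$ therefore furnishes a $\sigma^s$-equivariant (scheme-theoretic, and by the remark in the text even group-theoretic) section $N\langle i\rangle\to N[i]$ of the projection, defined over $O_F$, which immediately gives surjectivity on $\sigma^s$-invariants for both $L$- and $O_L$-points. You instead take the standard cohomological route: left-exactness of invariants plus vanishing of the non-abelian $H^1$ of $N[i+1]$ via d\'evissage to vector groups and additive Hilbert~90 (surjectivity of $\sigma^s-1$ on $O_L$ and $L$). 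This works, and it is in fact exactly the technique the paper itself deploys two lemmas later for the vanishing of $H^1_{\cont}(\Gamma_F,N_j)$, so nothing is lost; your method is also more robust in that it does not need the section to be equivariant on the nose. Two small points: for the d\'evissage you should filter $N[i+1]$ by the subgroups $N[j]$, $j\geq i+1$, which the paper has already shown descend to $\ZZ_p$ with graded pieces $N\langle j\rangle$ explicitly identified as products of root groups (and which vanish for $j\gg0$, giving the induction its starting point), rather than appealing to the lower central series, whose graded pieces you would still have to identify as vector groups over $\ZZ_p$; and the invocation of $H^1_{\cont}(\langle\sigma^s\rangle,-)$ is slightly more than you need --- for the lifting problem it suffices to solve the single equation $y\,\sigma^s(y)^{-1}=x^{-1}\sigma^s(x)$, i.e.\ to prove surjectivity of the Lang-type map, without worrying about continuity of cocycles on a profinite closure.
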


 \begin{proof}
  This is an easy consequence of the description of $N[i]$ resp.\ $N\langle i \rangle$ as product of root groups.
 \end{proof}
 
 \begin{lemma}
  Let $F \subset \QQ_{p^s}$. For any $i$, the map $N[i](L)^{\sigma^s} \rightarrow (N[i](L)/N[i](O_L))^{\sigma^s}$ is surjective.
 \end{lemma}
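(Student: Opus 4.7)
The plan is to proceed by downward induction on $i$, exploiting the filtration $\ldots \subseteq N[i+1] \subseteq N[i] \subseteq \ldots$ together with the short exact sequences of the preceding lemma; the base case is immediate because $N[i] = 1$ once $i$ exceeds $\max_{\alpha \in R_N} \langle \alpha, \delta_N \rangle$.

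First I would translate the statement into the vanishing of a non-abelian cohomology set. A coset $xN[i](O_L)$ is $\sigma^s$-fixed precisely when $a := x^{-1}\sigma^s(x) \in N[i](O_L)$, and producing a $\sigma^s$-fixed representative $y = xb$ with $b \in N[i](O_L)$ amounts to solving $a = b\,\sigma^s(b)^{-1}$. The assertion of the lemma is therefore equivalent to the pointed-set equality $H^1(\langle\sigma^s\rangle, N[i](O_L)) = 1$.

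Next I would feed this into the long exact sequence of non-abelian cohomology attached to $1 \to N[i+1](O_L) \to N[i](O_L) \to N\langle i \rangle(O_L) \to 1$, whose exactness at the level of $\sigma^s$-fixed points is supplied by the previous lemma. Combined with the induction hypothesis $H^1(\langle\sigma^s\rangle, N[i+1](O_L)) = 1$, this reduces the induction step to the vanishing of $H^1(\langle\sigma^s\rangle, N\langle i \rangle(O_L))$. Since $N\langle i \rangle$ is a commutative vector group, decomposing its root subgroups into Galois orbits under $I$ produces an isomorphism of $\sigma^s$-modules $N\langle i \rangle(O_L) \cong \prod_j O_L \otimes_{\ZZ_p} O_{F_j}$ with $F_j \subseteq F$ intermediate unramified extensions. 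The hypothesis $F \subseteq \QQ_{p^s}$ forces $F_j \subseteq \QQ_{p^s}$, so $\sigma^s$ acts trivially on each $O_{F_j}$; under the canonical decomposition $L \otimes_{\QQ_p} F_j \cong \prod_{\tau:F_j\hookrightarrow L} L$ the action of $\sigma^s$ becomes componentwise, and the vanishing reduces to the surjectivity of $\sigma^s - 1$ on $O_L$. This last point is Lang's theorem for $\GG_a$: surjectivity holds on the residue field $\FFbar_p$ by Artin-Schreier and then lifts inductively through the Witt-vector truncations to $O_L = \varprojlim W_n(\FFbar_p)$ by a straightforward successive-approximation argument.

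The main technical point to get right is the bookkeeping for non-abelian $H^1$, specifically the standard exactness statement that the fibre of $H^1(\sigma^s, N[i](O_L)) \to H^1(\sigma^s, N\langle i \rangle(O_L))$ over the distinguished class coincides with the image of $H^1(\sigma^s, N[i+1](O_L))$. Once this is invoked (or verified directly by a cocycle manipulation using that $N[i+1]$ is normal in $N[i]$ and stable under $\sigma^s$), the induction proceeds mechanically.
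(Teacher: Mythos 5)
Your proof is correct and follows essentially the same route as the paper: descending induction on $i$ through the filtration $N[i]$, using the exact sequences of $\sigma^s$-fixed points from the preceding lemma to reduce to the graded pieces $N\langle i\rangle$, where the claim becomes an additive (Artin--Schreier plus Witt-vector lifting) computation on $\prod_\alpha O_L$. The paper packages the inductive step as an explicit diagram chase on pointed sets rather than as vanishing of non-abelian $H^1(\langle\sigma^s\rangle, N[i](O_L))$, but the content is identical — indeed the paper itself uses the $H^1_{\cont}$ formulation in the proof of the companion Lemma~\ref{lem N_j is weakly admissible}.
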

 \begin{proof}
  We prove the claim by descending induction on $i$. For the induction beginning choose $i \gg 0$ such that $N[i] = 0$. Then the claim is certainly true. Now assume that our assertion is true for $i+1$. Then we get a commutative diagram

 \begin{tikzcd}[column sep = small]
  0  \arrow{r}
  & N[i+1](O_L)^{\sigma^s} \arrow{r}\arrow{d}
  & N[i](O_L)^{\sigma^s} \arrow{r}\arrow{d}
  & N\langle i \rangle(O_L)^{\sigma^s} \arrow{r}\arrow{d} 
  & 0 \\
  0 \arrow{r} 
  & N[i+1](L)^{\sigma^s} \arrow{r}{\phi}\arrow[two heads]{d} 
  & N[i](L)^{\sigma^s} \arrow{r}{\psi}\arrow{d}
  & N\langle i \rangle(L)^{\sigma^s} \arrow{r}\arrow[two heads]{d}
  & 0 \\
  & \bigslant{N[i+1](L)}{N[i+1](O_L)}^{\sigma^s} \arrow{r}{\overline{\phi}}
  & \bigslant{N[i](L)}{N[i](O_L)}^{\sigma^s} \arrow{r}{\overline{\psi}}
  & \bigslant{N\langle i \rangle(L)}{N\langle i \rangle (O_L)}^{\sigma^s}.
 \end{tikzcd}

 One easily checks that $\overline{\phi}$ is injective, $\overline{\psi}$ surjective and that $\image \overline{\phi} = \ker \overline{\psi}$ in the category of pointed sets.

 We choose an element $\overline{n} \in (N[i](L)/N[i](O_L))^{\sigma^s}$. By diagram chasing we find an $n\in N(L)^{\sigma^s}$ such that $n$ and $\overline{n}$ have the same image $\hat{n}$ in $(N\langle i \rangle (L) / N\langle i \rangle (O_L))^{\sigma^s}$. Thus $n^{-1} \cdot \overline{n} \in \ker \overline{\psi}$ and we find $n' \in N[i+1](L)^{\sigma^s}$ such that $n'$ is mapped to $n^{-1} \cdot \overline n$. Hence $n\phi(n') \in N(L)^{\sigma^s}$ is mapped to $\overline{n}$, finishing the proof.

 \end{proof}

 \begin{corollary} \label{cor nd is well-defined}
  Let $F \subset \QQ_{p^s}$. For any integer $j$, the map $N(L)^{\sigma^s} \rightarrow (N(L)/N(j))^{\sigma^s}$ is surjective.
 \end{corollary}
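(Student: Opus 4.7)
The plan is to reduce the statement immediately to the special case $j=0$, where it becomes a direct consequence of the preceding lemma. The key observation is that the cocharacter $\lambda$ is defined over $\ZZ_p$, so the inner automorphism $c_j : N(L) \to N(L),\, n \mapsto \lambda(p^{-j}) n \lambda(p^j)$ commutes with the Frobenius $\sigma$ (and hence with $\sigma^s$). By the very definition of $N(j) = \lambda(p^j) N(O_L) \lambda(p^{-j})$, the map $c_j$ sends $N(j)$ isomorphically onto $N(O_L)$, and therefore descends to a $\sigma^s$-equivariant bijection
\[
 \bar c_j : N(L)/N(j) \;\lisom\; N(L)/N(O_L).
\]

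Taking $\sigma^s$-fixed points and using that $c_j$ is an automorphism of $N(L)$ itself, the surjectivity of $N(L)^{\sigma^s} \to (N(L)/N(j))^{\sigma^s}$ is equivalent to the surjectivity of $N(L)^{\sigma^s} \to (N(L)/N(O_L))^{\sigma^s}$. Thus we are reduced to the case $j=0$.

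For this remaining case, I would note that $N = N[1]$. Indeed, every $\alpha \in R_N$ is a positive root that, when written as a sum of simple roots, must involve at least one simple root lying in $R_N$ (otherwise $-\alpha$ would be a root of $\Lie P$, contradicting $\alpha \in R_N$); since $\delta_N$ is the sum of the fundamental coweights attached to the simple roots in $R_N$, this forces $\langle \alpha, \delta_N \rangle \geq 1$. With $N = N[1]$, the assertion $N(L)^{\sigma^s} \to (N(L)/N(O_L))^{\sigma^s}$ is exactly the $i = 1$ case of the preceding lemma, and we are done.

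There is no real obstacle here: the whole point is bookkeeping, and the only substantive input is the previous lemma, which in turn rested on the inductive unwinding of $N$ along its lower central–style filtration by the $N[i]$. The conjugation trick by $\lambda(p^{-j})$ is purely formal, relying solely on the fact that $\lambda$ is $\ZZ_p$-rational so that $c_j$ is $\sigma$-equivariant.
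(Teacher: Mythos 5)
Your proof is correct and follows essentially the same route as the paper: reduce to $j=0$ by the ($\sigma$-equivariant) conjugation with a power of $\lambda(p)$, then quote the preceding lemma. The only cosmetic difference is that the paper invokes the lemma at ``$i=0$'' (implicitly taking $N[0]=N$), whereas you observe that $N=N[1]$ and invoke it at $i=1$ --- if anything, your version is the cleaner reading, since $N[i]$ was only defined for $i\geq 1$.
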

 \begin{proof}
  By conjugating with $\lambda(p)$, we see that it suffices to prove the claim for $j=0$. Now our assertion coincides with the assertion of the previous lemma for $i=0$.
 \end{proof}

 \begin{lemma} \label{lem N_j is weakly admissible}
  The canonical map $N_0^{\sigma_F} \rightarrow (N_0/N_j)^{\sigma_F}$ is surjective.
 \end{lemma}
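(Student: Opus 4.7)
The plan is to mirror the inductive/diagram-chasing strategy of the preceding lemma, but now with the filtration on $N_0$ induced by the truncation $N_j$ in place of the filtration by $N(j)$. The key identification is that $N_0/N_j$ is the group of $k$-points of the truncated positive loop group $L_p^{+,j}N$. Via the scheme-theoretic product decomposition $N_{O_F} \cong \prod_{\alpha\in R_N} U_\alpha$, this truncated loop group is, as a $k_F$-scheme, a product of copies of the Witt vector group schemes (since each $U_\alpha \cong \GG_a$ over $O_F$ and $L_p^{+,j}\Res_{O_F/\ZZ_p}\GG_a$ is a smooth connected commutative $\FF_p$-group scheme of finite type). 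Crucially, it is smooth and connected over $\FF_p$, with $\sigma_F$ acting by the Galois action on $R_N$ together with the Witt-vector Frobenius inside each factor.

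First I would reduce to the graded pieces along the filtration $\{N[i]\}$, by descending induction on $i$ (with base case $N[i] = 1$ for $i \gg 0$). At each step one forms the commutative diagram with exact rows
\[
\begin{tikzcd}[column sep = small]
0 \arrow{r} & (N[i{+}1]_0/N[i{+}1]_0\cap N_j)^{\sigma_F} \arrow{r}\arrow{d} & (N[i]_0/N[i]_0\cap N_j)^{\sigma_F} \arrow{r}\arrow{d} & (N\langle i\rangle_0/N\langle i\rangle_0\cap N_j)^{\sigma_F} \arrow{r}\arrow{d} & 0 \\
& \ast & \ast & \ast &
\end{tikzcd}
\]
analogous to the one appearing in the previous lemma, and chases it in exactly the same way. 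This reduces the claim to showing surjectivity of the graded restriction map
\[
(N\langle i\rangle_0)^{\sigma_F} \;\longrightarrow\; (N\langle i\rangle_0/(N\langle i\rangle_0\cap N_j))^{\sigma_F}
\]
for each $i\geq 1$, where one may use that $N\langle i\rangle$ is commutative (being generated by root groups of a single $\delta_N$-weight).

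The graded surjection is then a direct consequence of Lang's theorem. Indeed, $N\langle i\rangle$ is the Weil restriction to $\ZZ_p$ of a vector group over $O_F$, so $L_p^{+,j}N\langle i\rangle$ is a smooth connected commutative unipotent $\FF_p$-group of finite type. Since $k_F$ is finite and $\sigma_F$ generates $\Gal(k/k_F)$, Lang's theorem yields $H^1(\langle\sigma_F\rangle,\, L_p^{+,j}N\langle i\rangle(k)) = 0$, and in particular the natural map $L_p^+N\langle i\rangle(k_F) \to L_p^{+,j}N\langle i\rangle(k_F)$ is surjective, which is exactly the required assertion after translating back from loop groups to $N\langle i\rangle_0$ and its quotient by $N_j\cap N\langle i\rangle_0$.

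The only place needing a little care is the translation between $N_0/N_j$ and the $k$-points of $L_p^{+,j}N$ in a $\sigma_F$-equivariant way, and the compatibility of the filtration $\{N[i]\}$ with truncation. Once these bookkeeping points are confirmed (using, in both cases, the product-of-root-groups decomposition of $N$ and the fact that the $N[i]$ descend to $\ZZ_p$), the diagram chase is formally identical to the one in the preceding lemma and the result follows.
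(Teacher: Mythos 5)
Your strategy is essentially the paper's: the paper also filters by the subgroups $N[i]$ and reduces to the graded pieces $N\langle i\rangle$, the only cosmetic difference being that it packages the descending induction as the vanishing of $H^1_\cont(\Gamma_F, N[i]_j)$ (whence surjectivity on $\sigma_F$-invariants via the cohomology sequence for $1\to N_j\to N_0\to N_0/N_j\to 1$) rather than as a diagram chase on fixed points. One step of yours is misstated, though: the vanishing $H^1(\langle\sigma_F\rangle,\, L_p^{+,j}N\langle i\rangle(k))=0$, i.e.\ $H^1$ of the \emph{quotient}, does not "in particular" give surjectivity of $(N\langle i\rangle_0)^{\sigma_F}\to (N\langle i\rangle_0/N\langle i\rangle_j)^{\sigma_F}$ --- the obstruction to lifting a fixed point lives in $H^1$ of the \emph{kernel} $N\langle i\rangle_j$. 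The repair is immediate and is exactly the paper's key input: under the root-group decomposition one has $N\langle i\rangle_j\cong\prod p^jO_L\cong\prod O_L$, and $H^1_\cont(\Gamma_F, O_L)=0$ (Lang's theorem applied to the truncated Witt groups $W_n$, then pass to the limit). Alternatively, and even more cheaply, identify the source of your graded map with $N\langle i\rangle(O_F)$ and its target with $N\langle i\rangle(O_F/p^j)$ and invoke smoothness of $N\langle i\rangle$ over $\ZZ_p$. With that correction the bookkeeping and the diagram chase go through exactly as you describe.
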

 \begin{proof}
  (1) We show that $H^1_\cont (\Gamma_F, N[i]_j) = 0$ by descending induction on $i$. In particular for $i=0$ we get $H^1_\cont (\Gamma_F,N_j) = 0$ and the claim of the lemma follows. For $i \gg 0$ we have $N[i]_j = 0$ and thus $H^1_\cont(\Gamma_F,N[i]) = 0$. Now  the short exact sequence

  \begin{tikzcd}
   0 \arrow{r}
   & N[i+1]_j \arrow{r}
   & N[i]_j \arrow{r}
   & N\langle i \rangle_j \arrow{r}
   & 0
  \end{tikzcd}

  \noindent gives the exact sequence

  \begin{tikzcd}
   H^1_\cont(\Gamma, N[i+1]_j) \arrow{r}
   & H^1_\cont(\Gamma, N[i]_j) \arrow{r}
   & H^1_\cont(\Gamma, N\langle i \rangle_j).
  \end{tikzcd}
  
  \noindent Under the identification $N\langle i \rangle (L) \cong \prod L$ the group $N\langle i \rangle_j$ is identified with $\prod p^j O_L$. So $N\langle i \rangle_j$ is isomorphic  to a finite product of copies of $O_L$ and thus has trivial cohomology.  We may assume that $H^1_\cont(\Gamma,N[i+1]) = 0$ by induction assumption, then the sequence above implies that also $H^1_\cont(\Gamma_F,N[i])=0$.
 \end{proof}

 \begin{proposition}\label{prop nd counting points}
  Let $Y \subset N(L)$ be admissible and $N(i)$-stable, assume that $F$ is big enough such that $Y$ is $\sigma_F$-stable. Then
  \[
   (\bigslant{Y}{N(i)})^{\sigma_F^s} \sim q^{(\nd Y + d(i)) \cdot s}.
  \]
 \end{proposition}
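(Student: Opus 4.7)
The strategy is to identify $Y/N(i)$ with the $k$-points of a locally closed subvariety of a finite-type $\FF_p$-variety and then invoke Proposition~\ref{prop lang weil}. First, I reduce to the case $Y\subset N_0$: choose $k\ge 0$ with $Y\subset N(-k)$, and observe that conjugation $y\mapsto\lambda(p^k)y\lambda(p^{-k})$ is a $\sigma_F^s$-equivariant bijection (because $\lambda$ is defined over $\ZZ_p$) sending $Y$ to $Y(k)\subset N_0$ and $N(i)$ to $N(i+k)$. By the definition $\nd Y=\nd Y(k)+d(k)$ together with the $\ZZ$-linearity of $d$, the claim for $(Y,i)$ reduces to the claim for $(Y(k),i+k)$; we may therefore assume $Y\subset N_0$ and, after further enlarging $i$ if needed, also $N(i)\subseteq N_0$.

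Fix $j_0$ such that $Y = t_{j_0}^{-1}(\overline{Y})$ for a locally closed $\overline{Y}\subseteq L_p^{+,j_0}N$, so $\nd Y = \dim\overline{Y} - d_{j_0}$. Enlarge to $j\ge j_0$ with $N_j\subseteq N(i)$, and denote by $\overline{Y}^{(j)}$ the preimage of $\overline{Y}$ in $L_p^{+,j}N$; by Lemma~\ref{lem truncation map is open} this preimage is locally closed of dimension $\dim\overline{Y} + (d_j - d_{j_0})$. Because $Y$ is $N(i)$-stable, $\overline{Y}^{(j)}$ is invariant under right translation by the closed subgroup $N(i)/N_j\subseteq L_p^{+,j}N$. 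This action is free, and the quotient map $L_p^{+,j}N\to L_p^{+,j}N/(N(i)/N_j)=:N_0/N(i)$ is an $N(i)/N_j$-torsor. Using the product description from Remark~\ref{rem N(j)}, $N(i)/N_j$ is an affine space of dimension
\[
\sum_{\alpha\in R_N}(j - i\langle\alpha,\lambda\rangle) = d_j - d(i),
\]
since $\sum_{\alpha\in R_N}\langle\alpha,\lambda\rangle = 2\langle\rho_N,\lambda\rangle$. Consequently the image $\overline{\overline{Y}}\subseteq N_0/N(i)$ of $\overline{Y}^{(j)}$ is a locally closed subvariety of dimension
\[
\dim\overline{\overline{Y}} = \dim\overline{Y}^{(j)} - (d_j - d(i)) = \nd Y + d(i).
\]

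Finally, $N(i)$-stability of $Y$ yields a canonical bijection $Y/N(i)=\overline{\overline{Y}}(k)$ which is $\sigma_F^s$-equivariant because the group schemes $N$, $N(i)$, $N_j$ and the cocharacter $\lambda$ are all defined over $\ZZ_p$. The subvariety $\overline{\overline{Y}}$ is $\sigma_F^s$-stable and reduced, hence descends to an $\FF_{q_F^s}$-subvariety whose $\FF_{q_F^s}$-points coincide with the $\sigma_F^s$-fixed points of $\overline{\overline{Y}}(k)$. Applying Proposition~\ref{prop lang weil} then gives
\[
\bigl|(Y/N(i))^{\sigma_F^s}\bigr| = \bigl|\overline{\overline{Y}}(\FF_{q_F^s})\bigr| \sim q^{s\cdot(\nd Y + d(i))},
\]
as required. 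The principal technical step is the dimension computation for the $N(i)/N_j$-torsor $L_p^{+,j}N\to N_0/N(i)$; the Galois descent of $\overline{\overline{Y}}$ and the application of Lang--Weil are then routine.
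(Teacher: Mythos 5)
Your strategy is genuinely different from the paper's: you try to realise $Y/N(i)$ as the $k$-points of a single finite-type quotient variety $N_0/N(i)$ and apply Lang--Weil once to its image $\overline{\overline{Y}}$, whereas the paper never forms such a quotient. The paper compares the two filtrations $N(i)$ and $N_j$ only at the level of $\sigma_F^s$-fixed point sets: it shows the map $(Y(k)/N_j)^{\sigma_F^s}\to (Y(k)/N(k+i))^{\sigma_F^s}$ is surjective with all fibres of cardinality $q_F^{(d_j-d(k+i))\cdot s}$, and applies Lang--Weil only to $\Ybar\subset N_0/N_j$, which is an honest variety. The surjectivity there is exactly what the two results immediately preceding the proposition (the vanishing of $H^1_\cont(\Gamma_F,N_j)$ and the surjectivity of $N(L)^{\sigma^s}\to (N(L)/N(j))^{\sigma^s}$) are proved for; your argument never invokes them, because if your quotient existed as a torsor under a connected split unipotent group, those statements would indeed follow from Lang's theorem. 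So the entire weight of your proof rests on the quotient construction.

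That is where the gap is. You assert that $N(i)/N_j$ is a closed subgroup of $L_p^{+,j}N$ acting freely with representable torsor quotient. But the natural subfunctor $R\mapsto\prod_{\alpha\in R_N}p^{i\langle\alpha,\lambda\rangle}W_j(R)$ is \emph{not} a closed subscheme of $L_p^{+,j}N$: in characteristic $p$ one has $p\cdot(a_0,a_1,\dots)=(0,a_0^p,a_1^p,\dots)$, so $p^mW_j(R)$ is strictly smaller than the closed subgroup $V^mW_{j-m}(R)$ whenever $R$ is not perfect. To repair this one would have to replace your $N(i)/N_j$ by the Verschiebung subscheme $\prod_\alpha V^{i\langle\alpha,\lambda\rangle}W_{j-i\langle\alpha,\lambda\rangle}$, check that it is a subgroup scheme for the \emph{non-abelian} group law of $N$ (its $k$-points form a group, but for general $R$ this needs $V^aW\cdot V^bW\subseteq V^{a+b}W$ together with compatibility with the commutator relations among the $U_\alpha$), and then establish representability of the quotient and that the image of $\Ybar^{(j)}$ is locally closed of the expected dimension. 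None of this is addressed; it is precisely the kind of difficulty the paper flags in its motivating discussion of numerical dimension (no useful scheme structure on such quotients is known) and deliberately circumvents. Your reduction to $Y\subset N_0$ and the bookkeeping $\dim\overline{\overline{Y}}=\nd Y+d(i)$ are fine, but the central geometric step is unjustified as written, and the ``routine'' descent at the end silently absorbs the cohomological input that the paper's proof makes explicit.
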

 \begin{proof}
  Choose $k$ such that $Y \subset N(-k)$. Then conjugation with $\lambda(p^k)$ induces an isomorphism of $\Gamma_F$-sets
  \[
   \bigslant{Y}{N(i)} \cong \bigslant{Y(k)}{N(k+i)}
  \]
  We choose $j$ such that $N_j \subset N(k+i)$. Let $\pi^{(s)}: (Y(k)/N_j)^{\sigma_F^s} \to (Y(k)/N(k+i))^{\sigma_F^s}$ be the map induced by the canonical projection. We get a commutative diagram.
  \begin{center}
  \begin{tikzcd}[column sep = large]
   & Y(k)^{\sigma_F^s} \arrow[two heads]{dr} \arrow[two heads]{dl}
   & \\
   (\bigslant{Y(k)}{N_j})^{\sigma_F^s} \arrow{rr}{\pi^{(s)}}
   &
   & (\bigslant{Y(k)}{N(k+i)})^{\sigma_F^s}
  \end{tikzcd}
  \end{center}
  Thus $\pi^{(s)}$ is surjective. Each of its fibres is canonically bijective to
  \[
   \bigslant{N(k+i)^{\sigma_F^s}}{N_j^{\sigma_F^s}} \cong \prod_{\alpha\in R_N} \bigslant{(p^{(i+k)\cdot \langle \delta_N,\alpha\rangle} O_{F_s})}{p^j O_{F_s}}
  \]
  where $F_s$ denotes the (unique) unramified extension of $F$ of degree $s$. In particular every fibre has $q^{(d_j - d(i+k))\cdot s}$ elements. Altogether,
  \begin{eqnarray*}
   \# (Y/N(k))^{\sigma_F^s} &=& \# (Y(i)/N(k+i))^{\sigma_F^s} \\
   &=& \# (Y(k)/N_j)^{\sigma_F^s} \cdot q_F^{(d(k+i)-d_j)\cdot s} \\
   &\sim&  q_F^{(\nd Y(k) + d_j) \cdot s} \cdot q_F^{(d(k+i)-d_j) \cdot s} \\
   &=& q_F^{(\nd Y + d(i)) \cdot s}.
  \end{eqnarray*}
 \end{proof}

 \begin{proposition} \label{prop numerical dimension equals dimension}
  Assume that $\tilde\Fscr$ is ind-admissible. Then
  \[
   \dim \Fscr = \nd \tilde\Fscr.
  \]
 \end{proposition}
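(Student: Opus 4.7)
The plan is to compute both sides by counting $\FF_{q_F^s}$-points as $s \to \infty$, using the Lang--Weil estimate of Proposition~\ref{prop lang weil} on the scheme side and Proposition~\ref{prop nd counting points} on the group-theoretic side; the definition of the numerical dimension was engineered so that this comparison works.

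First I would produce an explicit $\sigma_F$-equivariant bijection $\Fscr(k) \longleftrightarrow \tilde\Fscr/N(O_L)$. This follows from the description of $\Mscr_P(b,\mu)(k)$ and $\Mscr_M(b,\mu)(k)$ via cosets together with the splitting $P = M \ltimes N$: after the normalisation $m = \id$ made before the statement, a coset $gP(O_L) \in \Mscr_P(b,\mu)(k)$ lies in $p_M^{-1}(x)$ precisely when its $M$-component is trivial modulo $M(O_L)$, which forces a representative $n \in N(L)$ satisfying the defining condition of $\tilde\Fscr$; dividing by $P(O_L) \cap N(L) = N(O_L)$ gives the claimed bijection, which is $\sigma_F$-equivariant by construction.

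Next I would exhaust $\Fscr$ by finite-type pieces. Using the decomposition of $\Mscr_P(b,\mu)$ as a disjoint union of locally closed subschemes indexed by the heights $(\alpha_i) \in \ZZ^r$ of the restrictions $\rho_{|\XX_i}$, and combining finitely many height components at a time, one obtains an increasing exhaustion $\Fscr = \bigcup_k \Fscr_k$ by quasi-compact closed subschemes of finite type over $k_0$; under the bijection above, $\Fscr_k(k)$ corresponds precisely to $(\tilde\Fscr \cap N(-k))/N(O_L)$, since the boundedness condition $n \in N(-k)$ translates into a bound on the heights $\alpha_i$ (this is where Remark~\ref{rem N(j)} is used). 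Consequently $\dim \Fscr = \sup_k \dim \Fscr_k$.

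Finally, I would enlarge $F$ if necessary so that each $\Fscr_k$ is defined over $k_F$ and each admissible subset $\tilde\Fscr \cap N(-k)$ is $\sigma_F$-stable. Then Proposition~\ref{prop lang weil} gives
\[
\#\Fscr_k(\FF_{q_F^s}) \sim q_F^{s\dim \Fscr_k},
\]
while Proposition~\ref{prop nd counting points} applied with $Y = \tilde\Fscr \cap N(-k)$ and $i=0$ (so $d(i)=0$) gives
\[
\#\bigl((\tilde\Fscr\cap N(-k))/N(O_L)\bigr)^{\sigma_F^s} \sim q_F^{s\cdot\nd(\tilde\Fscr\cap N(-k))}.
\]
Since the left-hand sides coincide via the $\sigma_F$-equivariant bijection above, the two growth rates must agree, yielding $\dim\Fscr_k = \nd(\tilde\Fscr\cap N(-k))$ for every $k$. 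Taking the supremum over $k$ and invoking the definition of $\nd$ on ind-admissible sets gives $\dim \Fscr = \nd \tilde\Fscr$. The main technical obstacle is the second step: matching up the scheme-theoretic filtration of $\Fscr$ by height with the filtration of $\tilde\Fscr$ by the $N(-k)$, and verifying that the resulting $\Fscr_k$ are indeed closed and quasi-compact — everything else is a fairly mechanical application of the two point-counting propositions.
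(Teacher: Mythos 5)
Your overall strategy --- exhaust $\Fscr$ by quasi-compact pieces defined over a finite field, match them with the admissible pieces $\tilde\Fscr\cap N(-k)$, and compare the two point-counting statements (Proposition~\ref{prop lang weil} on the scheme side, Proposition~\ref{prop nd counting points} on the group side) --- is exactly the paper's, and your first and third steps are fine. But the second step, which you yourself flag as the main technical obstacle, is built on a false premise. The invariants $\height(\rho_{|\XX_i})$ are \emph{constant} on $\Fscr=p_M^{-1}(x)$: they are determined by the heights of the induced isogenies on the subquotients, i.e.\ by $x$ itself (equivalently, an element $n\in N(L)$ is unipotent, so $\vol(n\Lambda_i)=\vol(\Lambda_i)$ for all $i$). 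So the height decomposition of $\Mscr_P(b,\mu)$ restricts to the trivial decomposition of $\Fscr$ into a single piece, and "combining finitely many height components" produces no filtration at all. Moreover, even a single height component is only locally of finite type, not quasi-compact, so this mechanism could not produce the required exhaustion; the failure of quasi-compactness of $\Fscr$ comes from the unboundedness of the quasi-isogeny $\rho$ at fixed height, not from the heights varying. Correspondingly, your claim that "$n\in N(-k)$ translates into a bound on the heights $\alpha_i$" is wrong: it translates into a bound on the elementary divisors of $n\Lambda$ relative to $\Lambda$ (cf.\ Remark~\ref{rem N(j)} and Lemma~\ref{lem bounded}), while the heights never move.

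The correct exhaustion, which is what the paper uses, is by the subsets
\[
 \Fscr_{-k} = \{(X,\rho)\in\Fscr \mid p^k\rho \textnormal{ and } p^k\rho^{-1} \textnormal{ are isogenies}\},
\]
which are closed and quasi-compact by \cite{RZ96}~Cor.~2.31 (here one first uses the constancy of the heights to identify $\Fscr$ with its image in $\Mscr_G(b,\mu)$ under $i_P$). Their preimages in $N(L)$ are $\tilde\Fscr_{-k}=\{n\in N(L)\mid p^k\Lambda\subset n\Lambda\subset p^{-k}\Lambda\}$, and one then checks, via the equivalence of Bruhat--Tits boundedness with containment in some $N(-j)$ (Lemma~\ref{lem bounded}), that the filtrations $(\tilde\Fscr_{-k})_k$ and $(\tilde\Fscr\cap N(-k))_k$ are mutual refinements, so that the suprema of the numerical dimensions agree. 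With that substitution your argument goes through; as written, the exhaustion you propose does not exist.
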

 \begin{proof}
   The obstacle that prevents us from applying Proposition \ref{prop lang weil} directly to $\Fscr$ is the fact that $\Fscr$ is not quasi-compact in general. Thus our method of proof is to find a filtration of $\Fscr$ by quasi-compact subschemes and compare it to the filtration of $\tilde\Fscr$ by $\tilde\Fscr \cap N(-k)$.

  Now as $\height\rho_{|\XX_i}$ is constant on $\Fscr$ for every $i$, the restriction of $i_P$ defines an isomorphism of $\Fscr$ onto its image in $\Mscr_G(b,\mu)$. Thus by \cite{RZ96}, Cor.~2.31 such a filtration is given by
  \[
   \Fscr_{-k} := \{ (X,\rho) \in \Fscr \mid p^k \rho \textnormal{ and } p^k \rho^{-1} \textnormal{ are isogenies}\}.
  \]
  Its preimage in $N(L)$ is
  \[
   \tilde\Fscr_{-k} := \{n \in N(L); p^k\Lambda \subset n \Lambda \subset p^{-k} \Lambda\}.
  \]
  Now by Proposition \ref{prop lang weil}  and \ref{prop nd counting points} we have
  \[
   \dim \Fscr = \sup \dim \Fscr_{-k} = \sup \nd \tilde\Fscr_{-k}. 
  \]
  It remains to show that the filtrations $(\tilde\Fscr_{-k})_{k\in\NN}$ and $(\tilde\Fscr \cap N(-k))_{k\in\NN}$ are refinements of each other. Indeed, by Bruhat-Tits theory the sets  $\tilde\Fscr_{-k}$ are bounded and any bounded subset of $\tilde\Fscr$ is contained in one of the sets $\tilde\Fscr_{-k}$.
 \end{proof}

 \section{Calculation of the fibre dimension} \label{sect fibre dimension}

 Let $K := G(O_L)$ and $K_M := M(O_L)$. Analogous to \cite{GHKR06} we define the function $f_{m_1,m_2}$ for $m_1,m_2 \in M(L)$ by
 \[
  f_{m_1,m_2}: N(L) \to N(L), n \mapsto m_1 n^{-1} m_1^{-1} \cdot m_2 \sigma(n)m_2^{-1}.
 \]
 Then we have
  \begin{eqnarray*}
  \tilde\Fscr &=& \{n \in N(L)\mid n^{-1}b\sigma(n) \in K\mu(p)K\} \\
  &=& \{n \in N(L)\mid n^{-1}b\sigma(n)b^{-1} \in K\mu(p)Kb^{-1} \cap N(L)\} \\
  &=& f_{1,b}^{-1} (K\mu(p) Kb^{-1} \cap N(L)).
 \end{eqnarray*}
 Hence we divide the computation of $\nd \tilde\Fscr$ into two steps:
 \begin{itemlist}
  \item We have to show that $K\mu(p)Kb^{-1} \cap N(L)$ is admissible and compute its dimension.
  \item We have to calculate the difference $\nd f_{m_1, m_2}^{-1} Y - \nd Y$ for admissible $Y \subset N(L)$.
 \end{itemlist}

 The maps $f_{m_1,m_2}$ are defined in greater generality than the functions we actually need, but this will turn out to be an advantage in the second step.

 \subsection{Admissibility and dimension of $K\mu(p)Kb^{-1} \cap N(L)$} \label{ssect fibre dimension 1}
 We note that we have two notions of dominant elements in $X_*(T)$, one coming from the Killing pair $T\subset B$ in $G$ and one coming from $T \subset B\cap M$ in $M$. Let $X_*(T)_\dom$ resp.\ $X_*(T)_{M-\dom}$ denote the set of cocharacters that are dominant in $G$ resp.\ $M$. 

 For $\mu_M \in X_*(T), \mu \in X_*(T)_{\dom}$, we denote
 \[
  C(\mu,\mu_M) := (N(L)\mu_M(p)K \cap K\mu(p)K)/K 
 \]
 considered as $\Gamma_F$-set. In order to calculate the numerical dimension of $K\mu(p)Kb^{-1} \cap N(L)$, we first study the sets $C(\mu,\mu_M)$.

 We denote for $\mu,\mu_M$ as above and $\kappa \in \pi_1(M)_\Gamma$
 \begin{eqnarray*}
  S_M(\mu) &:=& \{\mu_M \in X_*(T)_{M-\dom}\mid C(\mu,\mu_M) \not= \emptyset\} \\
  S_M(\mu,\kappa) &:=& \{\mu_M \in S_M(\mu,\kappa)\mid \kappa_M(\mu_M)=\kappa\}.
 \end{eqnarray*}
 We will compare these sets to
 \begin{eqnarray*}
  \Sigma(\mu) &:=& \{\mu' \in X_*(T)\mid \mu'_\dom \leq \mu\} \\
  \Sigma(\mu)_{M-\dom} &:=& \{\mu' \in \Sigma(\mu)\mid \mu' \in X_*(T)_{M-\dom} \} \\
  \Sigma(\mu)_{M-\mmax} &:=& \{\mu' \in \Sigma(\mu)_{M-\dom}\mid \mu' \textnormal{ is maximal w.r.t.\ the Bruhat-order of } M\}.
 \end{eqnarray*}

 \begin{lemma}
  There are inclusions
  \[
   \Sigma(\mu)_{M-\mmax} \subset S_M(\mu) \subset \Sigma(\mu)_{M-\dom}.
  \]
 \end{lemma}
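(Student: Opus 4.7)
My plan is to prove the two inclusions in turn, beginning with the easier right-hand one.

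For $S_M(\mu) \subset \Sigma(\mu)_{M-\dom}$, I take $\mu_M \in S_M(\mu)$ with a representative $g = n\,\mu_M(p)\,k_1 = k_2\,\mu(p)\,k_3$ in $N(L)\mu_M(p)K \cap K\mu(p)K$. The inequality $(\mu_M)_\dom \leq \mu$ is the Mazur-type comparison between the Iwasawa $M$-invariant and the Cartan $G$-invariant of a single element of $G(L)$. I would prove it by restricting to a faithful representation of $G$ compatible with $P$ and applying the elementary divisor theorem over $\ZZ_p$, or by invoking \cite{RR96}, Thm.~4.2 directly. The equality $\kappa_G(\mu_M) = \kappa_G(\mu)$ in $\pi_1(G)_\Gamma$ is automatic from membership in a common $K$-double coset.

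For $\Sigma(\mu)_{M-\mmax} \subset S_M(\mu)$, the plan is to exhibit explicit representatives. Since $G$ is unramified, every $w \in W$ lifts to $\dot w \in G(\ZZ_p) \subset K$, and the identity
\[
 \dot w \cdot \mu(p) = (w\mu)(p) \cdot \dot w
\]
exhibits $\dot w \cdot \mu(p)$ as an element of $K\mu(p)K$ whose Iwasawa decomposition has $M$-part $(w\mu)(p)$. Conjugating this $M$-part by an element of $W_M$ with representative in $K_M$ to reach $M$-dominant form, we obtain $(w\mu)_{M-\dom} \in S_M(\mu)$ for every $w \in W$.

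The main obstacle is to show that these $M$-dominant $W$-translates exhaust the $M$-maximal elements of $\Sigma(\mu)_{M-\dom}$. For a proper Levi $M$, this follows from a weight-lattice argument in the spirit of Kostant's convexity theorem: any $M$-dominant $\mu_M$ with $(\mu_M)_\dom$ strictly less than $\mu$ admits a positive $M$-coroot $\alpha^\vee$ such that $\mu_M + \alpha^\vee$ is still $M$-dominant and still lies in $\Sigma(\mu)$, contradicting $M$-maximality. In the degenerate case $M = T$, where $M$-dominance is trivial and every element of $\Sigma(\mu)$ is $M$-maximal, the claim reduces to the classical Mirkovi\'c--Vilonen non-emptiness statement that $N(L)\mu'(p)K \cap K\mu(p)K \neq \emptyset$ for every $\mu' \leq \mu$, which I would establish by a direct computation inside the Iwahori-Bruhat decomposition of $K$ (explicit ``superdiagonal'' representatives realise each such $\mu'$). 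This combinatorial step, in particular the uniform treatment of the convexity argument across the various root-datum cases appearing in section~\ref{ss PEL vs D}, is where I expect the bulk of the technical work to lie.
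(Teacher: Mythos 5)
Your right-hand inclusion is essentially fine: since $N(L)\mu_M(p)K \subset U(L)\mu_M(p)K$ for the unipotent radical $U$ of the Borel, it reduces to the standard comparison between the Iwasawa and Cartan invariants of a single element of $G(L)$, which the representation-theoretic/elementary-divisor argument you sketch does establish. (Note, though, that \cite{RR96}~Thm.~4.2 compares the \emph{Newton} point with $\mu$, not the Iwasawa retraction, so it is not a drop-in citation.)

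The left-hand inclusion has a genuine gap. Your construction produces only the elements $(w\mu)_{M-\dom}$, $w\in W$, and you then claim these exhaust $\Sigma(\mu)_{M-\mmax}$; this exhaustion is false for non-minuscule $\mu$, and the convexity argument offered in its support breaks down. Take $G=\GL_3$, $M=\GL_2\times\GL_1$ (block sizes $2,1$), $\mu=(2,0,0)$ and $\mu_M=(1,0,1)$. Then $\mu_M$ is $M$-dominant with $(\mu_M)_{\dom}=(1,1,0)<\mu$, and the only positive $M$-coroot is $\alpha^\vee=(1,-1,0)$; but $\mu_M+\alpha^\vee=(2,-1,1)$ has dominant representative $(2,1,-1)\not\leq(2,0,0)$ and so leaves $\Sigma(\mu)$. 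Hence $\mu_M$ is $M$-maximal in $\Sigma(\mu)_{M-\dom}$ yet lies outside $W\mu$, so your argument never reaches it (the lemma itself is fine here: $n\cdot\diag(p,1,p)$ with $n\in N(L)$ having an entry of valuation $-1$ lies in $K\mu(p)K$, so $\mu_M\in S_M(\mu)$). The proof the paper relies on (\cite{GHKR06}, Lemma~5.4.1, transported verbatim to the $p$-adic case) avoids this by running in the opposite direction: it uses the Mirkovi\'c--Vilonen non-emptiness $U(L)\mu_M(p)K\cap K\mu(p)K\neq\emptyset$ for \emph{every} $\mu_M\in\Sigma(\mu)$ --- the statement you invoke only in the degenerate case $M=T$ --- then factors $U=U_M N$, applies the Cartan decomposition and Mazur's inequality inside $M$ to the $U_M$-part, and uses the $M$-maximality of $\mu_M$ to force the resulting $M$-Cartan invariant to equal $\mu_M$. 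As written, your argument is complete only when $\Sigma(\mu)=W\mu$, i.e.\ for minuscule $\mu$ --- the case needed in the paper's application, but not the case the lemma asserts.
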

 \begin{proof}
  This is the $p$-adic analogue of Lemma 5.4.1 in \cite{GHKR06}. The proof is literally the same.
 \end{proof}

 \begin{remark}
  Assume that we are in the situation of Prop.~\ref{prop fibre dimension}. Then
  \[
   I_{M,\mu,b} = \{\mu_M\in\Sigma(\mu)_{M-\dom}\mid \kappa_M(\mu_M) = \kappa_M(b)\} = \{\mu \in \Sigma(\mu)_{M-\mmax}\mid \kappa_M(\mu_M) = \kappa (b)\},
  \]
  where the first equality follows from the fact that $b$ is superbasic in $M(L)$ and the second equality holds because $\mu$ is minuscule. Now the above lemma implies that $I_{M,\mu,b} = S_M(\mu,\kappa(b))$.
 \end{remark}

 \begin{proposition} \label{prop GHKR 544}
  \begin{subenv}
   \item Let $\mu$ be a dominant coweight. Then for every $\mu_M \in S(\mu)$ there exists an integer $d(\mu,\mu_M)$ such that
   \[
    C(\mu,\mu_M)^{\sigma_F^s} \sim q_F^{d(\mu,\mu_M) \cdot s}.
   \]
   \item We have an inequality
   \begin{equation} \label{term d(mu,mu_M)}
    d(\mu,\mu_M) \leq \langle \rho, \mu+\mu_M\rangle - 2\langle \rho_M,\mu_M \rangle.
   \end{equation}
   \item If $\mu_M \in \Sigma(\mu)_{M-\mmax}$, then the inequality (\ref{term d(mu,mu_M)}) is an equality.
  \end{subenv}
 \end{proposition}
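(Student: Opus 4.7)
The proposition is the $p$-adic analog of \cite{GHKR06}, Prop.~5.4.4. The strategy is to transport their argument, replacing the function-field $\FF_q$-point counts on (ind-)schemes with the numerical-dimension formalism of Section~\ref{sect numerical dimension} coupled with the Lang--Weil-type estimate Prop.~\ref{prop nd counting points}. The plan is: realize $C(\mu,\mu_M)$ as the quotient of an admissible subset of $N(L)$ by its stabilizer, compute the numerical dimension of the numerator via an Iwahori--Bruhat stratification, and peel off the stabilizer contribution.

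The first step is the reduction. The map $n\mapsto n\mu_M(p)K$ induces a $\Gamma_F$-equivariant bijection
\[
 C(\mu,\mu_M) \isom \tilde C / \mathrm{st}_{\mu_M},\qquad \tilde C := K\mu(p)K\mu_M(p)^{-1}\cap N(L),
\]
with $\mathrm{st}_{\mu_M}:=N(L)\cap \mu_M(p)K\mu_M(p)^{-1}$. Because $M$ normalises $N$, the root-subgroup decomposition (\ref{term N isom}) gives $\mathrm{st}_{\mu_M}=\prod_{\alpha\in R_N}u_\alpha(p^{\langle\alpha,\mu_M\rangle}O_L)$, whose $\sigma_F^s$-fixed points grow like $q_F^{2\langle\rho_N,\mu_M\rangle \, s}$. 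Boundedness of $K\mu(p)K$ in $G(L)$ implies that $\tilde C$ is bounded in $N(L)$ by Lemma~\ref{lem bounded}; after a $\lambda(p^k)$-conjugation its admissibility reduces to the observation that $n\mu_M(p)\in K\mu(p)K$ is detectable at finite Witt-level $N_0/N_j$, since the invariant factors of $n\mu_M(p)$ in a faithful representation of $G$ are determined modulo a fixed power of $p$. Applying Prop.~\ref{prop nd counting points} to $\tilde C$ and dividing by the explicit stabilizer count (handling Galois cohomology as in Lemma~\ref{lem N_j is weakly admissible}) yields Part~(1), with
\[
 d(\mu,\mu_M) = \nd\tilde C - 2\langle\rho_N,\mu_M\rangle.
\]

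Parts~(2) and~(3) translate via $2\rho=2\rho_M+2\rho_N$ to the inequality
\[
 \nd\tilde C \leq \langle\rho,\mu\rangle + 3\langle\rho_N,\mu_M\rangle - \langle\rho_M,\mu_M\rangle,
\]
with equality when $\mu_M\in\Sigma(\mu)_{M-\mmax}$. Stratify $\tilde C$ via the Iwahori--Bruhat decomposition $K\mu(p)K=\bigsqcup_{w} Iw\mu(p)I$; boundedness of $\tilde C$ ensures that only finitely many $w$ contribute. Each stratum $Iw\mu(p)I\mu_M(p)^{-1}\cap N(L)$ is admissible, and its numerical dimension can be read off by expressing the cell-membership condition in root-subgroup coordinates; the calculation is the parabolic version of the standard MV book-keeping, controlled by $\ell(w)$ and the pairings $\langle\alpha,\mu_M\rangle$ for $\alpha\in R_N$. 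Taking the supremum over $w$ gives the required bound. For Part~(3), when $\mu_M$ is $M$-maximal the longest element of $W_\mu$ yields a distinguished admissible open piece of $\tilde C$ whose explicit coordinate description realises the bound on the nose, which is the $p$-adic manifestation of the Mirkovi\'c--Vilonen dimension formula applied to the parabolic semi-infinite cell $N(L)\mu_M(p)K/K$.

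The main obstacle is the Iwahori--Bruhat book-keeping of Step~2: in \cite{GHKR06} one leverages the scheme structure of the affine Grassmannian and its Schubert stratification, whereas here one must prove additivity of $\nd$ across the stratification into admissible pieces at the level of subsets of $N(L)$, while preserving the $\Gamma_F$-equivariance required by Prop.~\ref{prop nd counting points}. Setting up consistent finite-level approximations $N_0/N_j$ that simultaneously detect membership in each relevant Iwahori cell, and verifying that the contributions glue additively, is the technical heart of the argument. Once this bookkeeping is in place, the supremum calculation and the identification of the dominant cell in the $M$-maximal case follow directly from the GHKR combinatorics.
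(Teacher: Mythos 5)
Your framing of the statement as the $p$-adic analogue of \cite{GHKR06}, Prop.~5.4.2 and Cor.~5.4.4 is correct, and your reduction of $C(\mu,\mu_M)$ to $\tilde C/\mathrm{st}_{\mu_M}$ with $\tilde C = K\mu(p)K\mu_M(p)^{-1}\cap N(L)$ is sound (it is essentially the chain of bijections used in the paper's Proposition~\ref{prop first step}, run in the opposite direction). The genuine gap sits exactly where the content of the proposition lies: parts (2) and (3) assert the bound $d(\mu,\mu_M)\le\langle\rho,\mu+\mu_M\rangle-2\langle\rho_M,\mu_M\rangle$ and its sharpness for $\mu_M\in\Sigma(\mu)_{M-\mmax}$, and in your write-up these are reduced to ``the parabolic version of the standard MV book-keeping'' and ``the GHKR combinatorics'' without being carried out. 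That bookkeeping --- the Mirkovi\'c--Vilonen-type dimension estimate for $N(L)\mu_M(p)K\cap K\mu(p)K$ together with the retraction to the Levi --- \emph{is} the substance of GHKR~5.4.2/5.4.4, so deferring it leaves (2) and (3) unproved. The paper's actual proof is far more economical and avoids your ``technical heart'' entirely: it observes that GHKR's argument is a pure point count valid for any split group over a local field with finite residue field, hence applies verbatim to the split group $G_{O_F}$ over $F$, and that the quantity $\# C(\mu,\mu_M)^{\sigma_F^s}$ is precisely the count GHKR perform (with $\FF_{q^s}$-points replaced by $\sigma_F^s$-fixed points). In particular no Iwahori stratification of $\tilde C$, and no additivity statement for $\nd$ across such a stratification, is needed. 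Note also that the logical order in the paper is the reverse of yours: Proposition~\ref{prop GHKR 544} is the \emph{input} from which $\nd(K\mu(p)Kb\cap N(L))$ is later computed in Proposition~\ref{prop first step}, not a consequence of an independently computed numerical dimension.

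Two smaller corrections. First, your identity $d(\mu,\mu_M)=\nd\tilde C-2\langle\rho_N,\mu_M\rangle$ has the wrong sign: $\mathrm{st}_{\mu_M}=\mu_M(p)N(O_L)\mu_M(p)^{-1}$ has numerical dimension $-2\langle\rho_N,\mu_M\rangle$ by Corollary~\ref{cor calculation of nd}, so dividing $\tilde C$ by it gives $d(\mu,\mu_M)=\nd\tilde C+2\langle\rho_N,\mu_M\rangle$, consistent with the formula in Proposition~\ref{prop first step}. Second, Proposition~\ref{prop nd counting points} only covers quotients by the groups $N(i)$, so even in your setup one must first conjugate by $\mu_M(p)$ (as the paper does) rather than quotient by $\mathrm{st}_{\mu_M}$ directly.
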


 \begin{proof}
  The function field analogue of (1) and (2) are proven in \cite{GHKR06}, Prop.\ 5.4.2. Its proof determines the number of points 
  \[
   \# \left(\bigslant{N(\FF_q\rpot{t}) \mu_M(t) G(\FF_q\pot{t}) \cap G(\FF_q\pot{t}) \mu(t) G(\FF_q\pot{t})}{G(\FF_q\pot{t})}\right)
  \]
 for split groups, which still works in the $p$-adic case. Now (1) and (2) follow by applying the proof to the (split) group $G_{O_F}$ and (3) is the analogue of Corollary 5.4.4 in \cite{GHKR06}.
 \end{proof}

 \begin{proposition} \label{prop first step}
  \begin{subenv}
   \item The set $K\mu(p)Kb \cap N(L)$ is admissible.
   \item Let $b \in K_M\mu_M(p)K_M$. Then
   \[
    \nd K\mu(p) Kb \cap N(L) = d(\mu,\mu_M) - 2\langle \rho_N, \nu(b) \rangle 
   \]
  \end{subenv}
 \end{proposition}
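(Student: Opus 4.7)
The plan splits into admissibility and the dimension formula. For \emph{(1)}, note that $K\mu(p)K/K$ is a locally closed finite-dimensional subvariety of the affine Grassmannian, so $K\mu(p)Kb$ is locally closed and bounded in $G(L)$; by Lemma~\ref{lem bounded}, $K\mu(p)Kb \cap N(L)$ is bounded, hence contained in $N(-k)$ for some $k \geq 0$. After conjugation by $\lambda(p^k)$ the set lies in $N_0$; for $j$ sufficiently large one has $bN_jb^{-1} \subset K$, which forces right-$N_j$-invariance, and the image in $N_0/N_j$ is locally closed by the same Schubert-cell structure, yielding admissibility.

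For \emph{(2)} I first reduce to the case $b = \mu_M(p)$. Writing $b = k_M\mu_M(p)k_M'$ with $k_M, k_M' \in K_M$, and using that $k_M$ absorbs into $K$ on the right of $K\mu(p)K$ while $k_M'$ normalises $N(L)$, a direct computation gives
\[
 K\mu(p)Kb \cap N(L) = (k_M')^{-1}\bigl(K\mu(p)K\mu_M(p) \cap N(L)\bigr)k_M'.
\]
Since $k_M' \in M(O_L)$ has $\nu_M(k_M')=0$, Corollary~\ref{cor calculation of nd} shows the numerical dimension is unchanged. Setting $Y := K\mu(p)K\mu_M(p) \cap N(L)$ and $Y_2 := \mu_M(p)^{-1}K\mu(p)K \cap N(L) = \mu_M(p)^{-1}Y\mu_M(p)$, a further application of Corollary~\ref{cor calculation of nd} with $m = \mu_M(p)^{-1}$, combined with the observation that the Weyl group of $M$ fixes $\rho_N$ (as it permutes the set of positive roots $R_N$ of $N$), yields $\nd Y = \nd Y_2 - 2\langle \rho_N, \mu_M\rangle$. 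It thus suffices to show $\nd Y_2 = d(\mu,\mu_M)$.

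For this, define $\phi: Y_2 \to G(L)/K$ by $n \mapsto \mu_M(p) n K$. Since $\mu_M(p)$ normalises $N(L)$ and $\mu_M(p) n \in K\mu(p)K$ for $n \in Y_2$, the image of $\phi$ lies in $K\mu(p)K/K \cap N(L)\mu_M(p)K/K = C(\mu,\mu_M)$, and conversely every element of $C(\mu,\mu_M)$ arises in this way (via $n := \mu_M(p)^{-1}n'\mu_M(p)$). Two elements $n_1, n_2 \in Y_2$ give the same image iff $n_2^{-1}n_1 \in N(L) \cap K = N_0$, so $\phi$ descends to a Galois-equivariant bijection $Y_2/N_0 \isom C(\mu,\mu_M)$ once $F$ is enlarged so that $\mu_M(p)$ is defined over it. Since $Y_2$ is right-$N_0$-stable (because $N_0 \subset K$), Proposition~\ref{prop nd counting points} applied at $i=0$ gives $\lvert(Y_2/N_0)^{\sigma_F^s}\rvert \sim q_F^{s \cdot \nd Y_2}$; Proposition~\ref{prop GHKR 544}(1) gives $\lvert C(\mu,\mu_M)^{\sigma_F^s}\rvert \sim q_F^{s \cdot d(\mu,\mu_M)}$; and the cohomology vanishing of Lemma~\ref{lem N_j is weakly admissible} identifies $(Y_2/N_0)^{\sigma_F^s} \cong Y_2^{\sigma_F^s}/N_0^{\sigma_F^s}$. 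Combining the three asymptotic equivalences yields $\nd Y_2 = d(\mu,\mu_M)$, and the claimed formula follows once $\nu(b)$ is identified with the Cartan invariant $\mu_M$.

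The main obstacle is the Galois-theoretic bookkeeping: verifying that $\phi$ is equivariant for a suitably large $\sigma_F$ and justifying the interchange of $\sigma_F^s$-fixed points with quotients via the cohomology vanishing for $N_0$. The conjugation identities and the reduction to $b = \mu_M(p)$ are routine but delicate manipulations within the Cartan decomposition of $M$.
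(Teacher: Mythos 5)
Your overall strategy is the paper's: part (1) via boundedness (Lemma~\ref{lem bounded}), right-$N_j$-invariance after a $\lambda(p^k)$-conjugation, and local closedness of the Schubert condition in families inside $L_p^+N$; part (2) by identifying the set, modulo a compact open subgroup of $N(L)$, $\Gamma_F$-equivariantly with $C(\mu,\mu_M)$ and comparing point counts through Propositions~\ref{prop nd counting points} and~\ref{prop GHKR 544}. Your choice to multiply on the \emph{left} by $\mu_M(p)$, so that the fibres of $\phi$ are right $N_0$-cosets and Proposition~\ref{prop nd counting points} applies directly at $i=0$, is actually slightly cleaner than the paper's right multiplication by $b^{-1}$, which forces a quotient by $bN(0)b^{-1}$. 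The reduction $K\mu(p)Kb\cap N(L)=(k_M')^{-1}\bigl(K\mu(p)K\mu_M(p)\cap N(L)\bigr)k_M'$ and the bijection $Y_2/N_0\isom C(\mu,\mu_M)$ are both correct.

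There is, however, one false identity in the middle. With $Y=K\mu(p)K\mu_M(p)\cap N(L)$ and $Y_2=\mu_M(p)^{-1}K\mu(p)K\cap N(L)$, conjugation gives $\mu_M(p)^{-1}Y\mu_M(p)=\mu_M(p)^{-1}K\mu(p)K\mu_M(p)^{2}\cap N(L)$, which is not $Y_2$; the correct statement is $Y_2=\mu_M(p)^{-1}\bigl(K\mu(p)K\mu_M(p)^{-1}\cap N(L)\bigr)\mu_M(p)$. Hence your argument, once repaired, computes $\nd\bigl(K\mu(p)Kb^{-1}\cap N(L)\bigr)=d(\mu,\mu_M)-2\langle\rho_N,\mu_M\rangle$, not the numerical dimension of $K\mu(p)Kb\cap N(L)$. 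These sets are genuinely different: an element of $K\mu(p)K\mu_M(p)$ has Kottwitz point $[\mu]+[\mu_M]$ while elements of $N(L)$ have Kottwitz point $0$, so under the standing assumption $\mu_M\in S_M(\mu)$ (which forces $[\mu_M]=[\mu]$) the set $K\mu(p)Kb\cap N(L)$ is typically empty, and the meaningful set is $K\mu(p)Kb^{-1}\cap N(L)$. This $b\leftrightarrow b^{-1}$ slip is already present in the paper — its own proof likewise computes $\nd(K\mu(p)Kb^{-1}\cap N(L))$, which is what Proposition~\ref{prop fibre dimension} uses — so your corrected argument proves the statement actually needed; but the conjugation identity as written must be fixed. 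Finally, $\nu(b)$ is not "identified with $\mu_M$": one only has $\langle\rho_N,\nu_M(b)\rangle=\langle\rho_N,\mu_M\rangle$, because $\mu_M-\nu_M(b)$ lies in the non-negative span of coroots of $M$ and $\rho_N$, being $W_M$- and $\Gamma$-invariant, pairs to zero with them.
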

 
 \begin{proof}
  The set $K\mu(p) Kb \cap N(L)$ is bounded by Lemma~\ref{lem bounded}. Choose $k$ such that it is contained in $N(-k)$. We denote $Y' := \lambda(p^k)(K\mu(p)Kb \cap N(L))\lambda(p^{-k})$. Let $j$ be big enough such that $N_j \subset \lambda(p^k)b^{-1}N_0b\lambda(p^{-k})$. Then $Y'$ is right-$N_j$-stable.

  For every $\FFbar_p$-algebra $R$ and $g \in L_pG(R)$ the subset
  \[
   \{s\in\Spec R\mid g_{\overline{k(s)}} \in L_p^+(\overline{k(s)}) \mu(p) L_p^+(\overline{k(s)})\}
  \]
  is locally closed in $\Spec R$ (cf.~\cite{CKV}~Lemma~2.1.6). Here $k(s)$ denotes the fraction field at $s$. Thus the set of all $s\in L_p^+N$ whose geometric points are an element of
  \[
   \lambda(p^k) (L_p^+(\overline{k(s)})\mu(p)L_p^+(\overline{k(s)})b \cap L_pN(\overline{k(s)}))\lambda(p^{-k})
  \]
  form a locally closed subset $\Yscr'$ of $L_p^+N$ with $\Yscr'(k) = Y'$. Furthermore, $\Yscr'$ is the preimage of some subset of $L_p^{+,j}N$ w.r.t.\ $t_j$. As we have seen in the proof of Lemma \ref{lem admissible is well-defined}, this implies that $Y'$ is admissible. Thus $K\mu(p) Kb \cap N(L)$ is admissible.

  Now the map $x \mapsto xb^{-1}$ induces an isomorphism of $\Gamma_F$-sets
  \[
   \bigslant{N(L)bK \cap K\mu_M(p)K}{K} \stackrel{\sim}{\longrightarrow} \bigslant{N(L)\cap K\mu(p)Kb^{-1} }{bN(0)b^{-1}}.
  \]
  Now choose $k_M \in K_M$ such that $bK_M= k_M\mu_M(p)K_M$. Then multiplication by $k_M$ defines an isomorphism of $\Gamma_F$-sets
  \[
   \bigslant{N(L)\mu_M(p)K \cap K\mu_M(p)K}{K} \stackrel{\sim}{\longrightarrow} \bigslant{N(L)bK \cap K\mu_M(p)K}{K}.
  \]
  Altogether, we get
  \begin{eqnarray*}
   \nd K\mu Kb \cap N(L) &=& d(\mu,\mu_M) + \nd bN(0)b^{-1} \\
   &=& d(\mu,\mu_M) - \langle 2\rho_N, \nu(b)\rangle.
  \end{eqnarray*}
  The last equality is an easy consequence of Corollary~\ref{cor calculation of nd}.
 \end{proof}

 \subsection{Relative dimension of certain morphisms $f:L^n \rightarrow L^n$}
 Before we can continue with the second step of our proof, we need to explain how the analogue of section three and four of \cite{GHKR06} works in the $p$-adic case.

 Let $V$ be a finite dimensional vector space over $L$ and $\Lambda_2 \subset \Lambda_1$ be two lattices in $V$. We define the structure of a variety on $\Lambda_1/\Lambda_2$ as follows. By the elementary divisor theorem, we find a basis $v_1, \ldots, v_n$ of $\Lambda_1$ such that $\Lambda_2$ has a basis of the form $p^{\alpha_1}v_1, \ldots , p^{\alpha_n}v_n$. This induces an isomorphism
 \[
  \bigslant{\Lambda_1}{\Lambda_2} \stackrel{\sim}{\longrightarrow} \prod W_{\alpha_i} (k), \quad \sum \beta_i  v_i \mod \Lambda_2 \mapsto (\beta_i \mod p^{\alpha_i})_i.
 \]
 As $W_{\alpha_i}$ is represented by the scheme $\AA^{\alpha_i}$, this defines the structure of an affine space on $\Lambda_1/\Lambda_2$. The variety structure does not depend on the choice of $v_1,\ldots,v_n$: Let $w_1,\ldots,w_n$ another basis as above. Define $\phi$ such that the diagram

 \begin{tikzcd}
  \bigslant{\Lambda_1}{\Lambda_2} \arrow{r}{\id} \arrow{d}[swap]{\sum \beta_i v_i \mod \Lambda_2 \mapsto \atop \beta_i \mod p^{\alpha_i}}
  & \bigslant{\Lambda_1}{\Lambda_2} \arrow{d}{\sum \beta_i w_i \mod \Lambda_2 \mapsto \atop \beta_i \mod p^{\alpha_i}} \\
  \prod W_{\alpha_i}(k) \arrow{r}{\phi}
  & \prod W_{\alpha_i}(k)
 \end{tikzcd}

 \noindent commutes. Now $\phi$ is $W(k)$-linear and hence can be expressed as family of polynomials in the coordinates of the truncated Witt vectors. So $\phi$ is a morphism of varieties. The same argument shows that $\phi^{-1}$ is also a morphism of varieties, thus $\phi$ is an isomorphism and the structure of an affine space on $\Lambda_1/\Lambda_2$ given by the bases $v_1,\ldots,v_n$ and $w_1,\ldots,w_n$ are the same.

 Now one can can define admissible resp.\ ind-admissible subsets of $V$ and their dimension literally as in \cite{GHKR06}. Also, the $p$-adic analogue of the statements and proofs of section 4 in \cite{GHKR06} hold. Thereof we will need the following notations and results.

 \begin{definition}
  Let $(V,\Phi)$ be an isocrystal. We define
  \[
   d(V,\Phi) = \sum_{\lambda < 0} \lambda \dim V_\lambda,
  \]
  where $V_\lambda$ is the isoclinic component of $(V,\Phi)$ of slope $\lambda$.
 \end{definition}

 \begin{proposition} \label{prop isocrystal}
  Let $V,V'$ be two finite dimensional $L$-vector spaces of the same dimension. Let $\phi: V \rightarrow V'$ be an $L$-linear isomorphism and $\psi:V \rightarrow V'$ be a $\sigma$-linear bijection. We define $f:V \rightarrow V'$ by $f := \psi - \phi$. Then for any lattice $\Lambda$ in $V$ there exists a lattice $\Lambda'$ in $V'$ and a non-negative integer $j$ such that
  \[
   p^j\Lambda' \subset f\Lambda \subset \Lambda'.
  \]
  For any such triple $j,\Lambda,\Lambda'$ and $l \geq j$, consider the induced morphism
  \[
   \overline{f}: \bigslant{\Lambda}{p^l \Lambda} \rightarrow \bigslant{\Lambda'}{p^l \Lambda'}.
  \]
  Then
  \begin{subenv}
   \item $\image \overline{f} \supset p^j \Lambda'/p^l \Lambda'$.
   \item $\dim\ker \overline{f} = d(V,\phi^{-1}\psi)+ [\Lambda : \phi^{-1}\Lambda']$.
   \item $(\ker \overline{f})^0 \subset p^{l-j}\Lambda/p^l\Lambda$
  \end{subenv}

 \end{proposition}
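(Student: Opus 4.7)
The strategy is to reduce to an explicit computation on each isoclinic piece of the isocrystal $(V,\Phi)$, where $\Phi := \phi^{-1}\psi$. First, since $\phi$ is an $L$-linear isomorphism it induces an isomorphism of $k$-schemes $\Lambda'/p^l\Lambda' \cong \phi^{-1}\Lambda'/p^l\phi^{-1}\Lambda'$ carrying $\bar f$ to $\bar g$, where $g := \phi^{-1}f = \Phi - \id_V$. Thus we may replace $(V',\Lambda',\bar f)$ by $(V,\phi^{-1}\Lambda',\bar g)$; the index $[\Lambda:\phi^{-1}\Lambda']$ is unchanged and $d(V,\phi^{-1}\psi) = d(V,\Phi)$ depends only on $\Phi$, so the proposition becomes the analogous statement for the single $\sigma$-linear bijection $\Phi$ on $V$.

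Next, I would apply the Dieudonn\'e-Manin classification to decompose $(V,\Phi) = \bigoplus_\lambda (V_\lambda,\Phi_\lambda)$ into isoclinic components. Since any two lattices in $V$ are commensurable, and since replacing $\Lambda$ by an auxiliary lattice $\Lambda_0 = \bigoplus_\lambda \Lambda_{0,\lambda}$ compatible with the slope decomposition changes both sides of assertion (b) by the same signed quantity $[\Lambda:\Lambda_0]$, one reduces to the case that $\Lambda$ itself respects the decomposition. Then $g$, $\Lambda$ and $\phi^{-1}\Lambda'$ all split as direct sums indexed by $\lambda$, and the assertions (a)--(c) follow by summing the corresponding statements over the isoclinic pieces.

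For each isoclinic piece of slope $\lambda = s/r$ in lowest terms with $r>0$, the classification of simple $F$-isocrystals reduces the analysis further to the model $V_\lambda$ of dimension $r$ with basis $e_1,\ldots,e_r$ satisfying $\Phi e_i = e_{i+1}$ for $i<r$ and $\Phi e_r = p^s e_1$. On the standard lattice $\bigoplus O_L e_i$, the image of $g$ is computed directly, yielding an explicit $\Lambda''_\lambda$ and the exponent $j_\lambda$ in (a); the inclusion $\image\bar g \supset p^j\Lambda''/p^l\Lambda''$ follows from the surjectivity of $\sigma - \mathrm{id}$ on $W(\FFbar_p)$ (Artin-Schreier-Witt) combined with the topological nilpotence of $\Phi$ or $\Phi^{-1}$ according to the sign of $s$. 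Computing the kernel in coordinates reduces to solving $a_1 \equiv p^s\sigma^r(a_1) \pmod{p^l}$ in $O_L$, together with the cascading relations $a_{i+1} = \sigma(a_i)$: for $s \geq 0$ iteration forces $a_1 \equiv 0$ and gives a $0$-dimensional solution set, while for $s<0$ the equation is a twisted ``descending'' recursion whose solution set has dimension $|s|$, contributing the expected $-\lambda\dim V_\lambda$ to the kernel and combining with the index term $[\Lambda:\phi^{-1}\Lambda']$ on this piece to match $d(V,\Phi)$ restricted to that slope.

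The main obstacle is the bookkeeping: choosing sign conventions consistently so that the formula $\dim\ker\bar g = d(V,\Phi) + [\Lambda:\phi^{-1}\Lambda']$ is manifestly nonnegative, reducing non-simple isoclinic pieces to direct sums of simple ones, and establishing assertion (c). The latter requires showing that every connected positive-dimensional family in $\ker\bar f$ is concentrated in the negative-slope directions and automatically vanishes modulo $p^{l-j}$, which follows from the elementary divisor structure relating $\Lambda$ and $\phi^{-1}\Lambda'$ together with the explicit description of $\ker\bar g_\lambda$ obtained in the previous step.
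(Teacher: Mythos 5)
Your route --- compose with $\phi^{-1}$ to reduce to $g=\Phi-\id$ with $\Phi=\phi^{-1}\psi$, decompose $(V,\Phi)$ by Dieudonn\'e--Manin, and compute on simple isoclinic pieces using topological nilpotence of $\Phi^{\pm1}$ and Artin--Schreier--Witt on the slope-zero part --- is exactly the argument of \cite{GHKR06}, Prop.~4.2.2, which is all the paper itself offers (``its proof is literally the same''). However, two steps of your sketch do not hold up as written. First, the assertion that passing from $\Lambda$ to a slope-adapted lattice $\Lambda_0$ ``changes both sides of (b) by the same signed quantity $[\Lambda:\Lambda_0]$'' is not formal: writing $K=f^{-1}(p^l\Lambda')$ one has $\ker\overline{f}=(K\cap\Lambda)/p^l\Lambda$, so for $\Lambda_0\subset\Lambda$ the kernel dimension changes by $\dim\bigl((K\cap\Lambda)/(K\cap\Lambda_0)\bigr)-\dim(\Lambda/\Lambda_0)$, and your claim is precisely the vanishing of the first term. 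That vanishing is what part (c) supplies (the identity component of $K\cap\Lambda$ lies in $p^{l-j}\Lambda\subset\Lambda_0$ for $l\gg0$), so (c) is an \emph{input} to your reduction for (b), not a loose end to be established afterwards; and the same independence question for the target lattice $\Lambda'$, on which $\dim\ker\overline{f}$ genuinely depends, is not addressed at all.

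Second, the negative-slope model computation is off. On the simple piece of slope $s/r<0$ with $\Lambda=\bigoplus O_Le_i$ and the natural target $\Lambda''=O_Le_2+\cdots+O_Le_r+p^sO_Le_1\supset g\Lambda$, the kernel condition cascades to $\sigma^r(a_1)\equiv p^{|s|}a_1 \pmod{p^l}$, and comparing valuations ($v(\sigma^r(a_1))=v(a_1)<v(p^{|s|}a_1)$) forces $a_1\equiv0$: the kernel is $0$-dimensional, not $|s|$-dimensional, consistent with the right-hand side of (b), which vanishes for this $\Lambda''$ because the index term exactly cancels $d(V,\Phi)=s$. A $|s|$-dimensional kernel appears only for a strictly larger $\Lambda'$ --- again the untracked dependence on the target lattice. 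None of this invalidates the strategy, but these are exactly the places where the proof has content beyond bookkeeping, and your proposal currently asserts them rather than proving them.
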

 \begin{proof}
  The proposition is the $p$-adic analogue of Prop.~4.2.2 in \cite{GHKR06}. Its proof is literally the same.
 \end{proof}

 \subsection{Dimension of the preimage under $f_{m_1,m_2}^{-1}$}

 \begin{proposition} \label{prop GHKR 532}
  The map $f_{m_1,m_2}$ is surjective. Moreover, for any admissible subset $Y \subset N(L)$ the inverse image is ind-admissible and
  \begin{equation} \label{term dim eq}
   \nd f^{-1}_{m_1,m_2} Y - \nd Y = d(\nfr(L), \Ad_\nfr (m_1)^{-1} \Ad_\nfr (m_2\sigma)) + \val\det \Ad_\nfr(m_1).
  \end{equation}
  We denote the right hand side of (\ref{term dim eq}) by $d(m_1,m_2)$ for convenience.
 \end{proposition}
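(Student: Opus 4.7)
The plan is to prove surjectivity and the dimension formula simultaneously by induction on the filtration $N = N[1] \supset N[2] \supset \cdots \supset N[m] = 1$, exploiting the fact that each graded piece $N\langle i \rangle$ is abelian. This reduces the nonlinear problem to a linear one on each $N\langle i \rangle(L)$, where Proposition~\ref{prop isocrystal} applies directly.

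The key observation is that both $\Ad(m_1)$ and $\Ad(m_2)$ preserve the filtration $N[\bullet]$ (since the $N[i]$ are defined via root data stable under $M$ and Galois). Hence $f_{m_1,m_2}$ maps $N[i](L)$ into $N[i](L)$, and on the quotient $N\langle i \rangle(L)$, which I identify additively with the product of root spaces $\nfr\langle i\rangle(L)$, the induced map is
\[
 \bar f^{(i)}(v) \;=\; \Ad(m_2)\sigma(v) - \Ad(m_1)(v).
\]
This is precisely of the form $\psi - \phi$ appearing in Proposition~\ref{prop isocrystal}, with $\psi = \Ad(m_2)\sigma$ and $\phi = \Ad(m_1)$.

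With this in hand, the argument proceeds in three steps. First, surjectivity of $f_{m_1,m_2}$ follows by downward induction on $i$: given $n \in N(L)$, a lift to $N[i](L)$ can be modified within $N[i+1](L)$ by invoking surjectivity of $\bar f^{(i)}$ on $N\langle i\rangle(L)$, which is Proposition~\ref{prop isocrystal}(1). Second, for admissibility and the dimension formula, I would show by induction on $i$ that if $Y \subset N[i](L)$ is admissible then so is $f_{m_1,m_2}^{-1}(Y)\cap N[i](L)$, and
\[
 \nd\bigl(f_{m_1,m_2}^{-1}(Y)\cap N[i](L)\bigr) - \nd Y \;=\; \sum_{j\geq i}\bigl(d(\nfr\langle j\rangle(L), \Ad(m_1)^{-1}\Ad(m_2)\sigma) + \val\det \Ad_{\nfr\langle j\rangle}(m_1)\bigr),
\]
working one filtration level at a time: the fiber over $Y$ intersected with $N\langle i\rangle(L)$ is governed by $\bar f^{(i)}$ and parts (2) and (3) of Proposition~\ref{prop isocrystal} provide both its dimension and a uniform bound on the fiber's $N_l$-stability, ensuring admissibility descends. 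Third, once this is proved for $i=1$, one uses the slope decomposition
\[
 d\bigl(\nfr(L), \Ad(m_1)^{-1}\Ad(m_2)\sigma\bigr) = \sum_{j} d\bigl(\nfr\langle j\rangle(L), \Ad(m_1)^{-1}\Ad(m_2)\sigma\bigr)
\]
(coming from the fact that both operators preserve the graded pieces) together with the obvious additivity $\val\det\Ad_\nfr(m_1) = \sum_j \val\det\Ad_{\nfr\langle j\rangle}(m_1)$ to collapse the sum into $d(m_1,m_2)$.

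The main obstacle I expect is the careful bookkeeping needed to pass from the linear statement on each graded piece to the multiplicative statement on $N(L)$: the Baker--Campbell--Hausdorff-type corrections when lifting from $N\langle i\rangle(L)$ to $N[i](L)/N[i+2](L)$ shift contributions into strictly higher filtration steps, so one must verify that these corrections do not affect either admissibility or the cardinality count at any finite level. Concretely, one must show that the fibration-like structure $f_{m_1,m_2}^{-1}(Y)\cap N[i](L) \to f_{m_1,m_2}^{-1}(Y)\cap N[i+1](L)$ has, after suitable conjugation into $N_0$ and truncation, the structure of an affine bundle of the expected rank $\dim \ker \bar f^{(i)}$ (equivalently, that the corresponding cardinality estimate of Proposition~\ref{prop nd counting points} holds). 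This is the $p$-adic analogue of the content of \cite{GHKR06} sect.~5.3, and once one has set up the $p$-adic loop-group scheme structure carefully (as done in section~\ref{sect numerical dimension}) the same strategy goes through.
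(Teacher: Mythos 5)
Your reduction is the same one the paper (following \cite{GHKR06}) uses: conjugate $m_1,m_2$ by a power of $\lambda(p)$ so that $f_{m_1,m_2}$ preserves the $N_j$, pass to the graded pieces $N\langle i\rangle$ where the induced map is identified with $\Ad(m_2)\sigma-\Ad(m_1)$ on $\Lie N\langle i\rangle$, invoke Proposition~\ref{prop isocrystal}, and obtain surjectivity by exhausting $N(L)$ with the sets $N(j)$. Where you diverge is in assembling the dimension formula. You propose a level-by-level induction over the filtration, tracking each graded contribution over a general admissible base $Y$, and you correctly flag the Baker--Campbell--Hausdorff corrections as the main obstacle. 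The paper sidesteps that bookkeeping with the cocycle identity $f(xn_0)=(m_1n_0^{-1}m_1^{-1})\,f(x)\,(m_1n_0m_1^{-1})\,f(n_0)$, which shows that every non-empty fibre of the truncated map $\overline{f}$ on $H=N_0/N_l$ is the right translate $\overline{f}^{-1}(1)\,n_0$ of the fibre over the identity. Combined with the two claims imported from \cite{GHKR06} --- that the image of $f$ contains $\lambda(p^k)N_0\lambda(p^{-k})$, and that $\dim\overline{f}^{-1}(1)=d(m_1,m_2)$ (this single computation over the point $1$ is where your graded-piece count, including the additivity of $d(\cdot)$ and of $\val\det$ over the $\nfr\langle j\rangle$, actually takes place) --- the constancy of fibre dimension immediately gives $\dim\overline{f}^{-1}(\overline{Y})-\dim\overline{Y}=d(m_1,m_2)$, with no need to establish an affine-bundle structure over $Y$ or to control BCH corrections at each level. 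Your plan is viable but strictly harder to execute; if you pursue it, you should at least borrow the translation trick, since it converts your ``affine fibration of the expected rank'' desideratum into a statement about one fibre.
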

 \begin{proof}
  This is the analogue of Prop.\ 5.3.2 of \cite{GHKR06}. Its proof is almost literally the same. As this is the part where the main part of the calculation of $\dim \Fscr$ is done, we give a brief outline of the proof and explain why the arguments carry over.

  By multiplying $m_1$ and $m_2$ by a suitable power of $\lambda(p)$ we may assume that the $N_j$ are stable under conjugation with $m_1$ and $m_2$, in particular $f$ maps $N_j$ into $N_j$. One easily checks by replacing $m_1$ and $m_2$ by their $\lambda(p^k)$-multiple increases both sides of (\ref{term dim eq}) by $d(k)$, thus the assertion of the proposition does not change.

  Now we consider the maps $f\langle i \rangle: N\langle i \rangle \rightarrow N\langle i \rangle$ induced by $f$. By choosing an isomorphism of the root subgroups with their Lie algebra, we identify $N\langle i \rangle \cong \Lie N\langle i \rangle$. Under this identification $f\langle i \rangle$ is identified with $\Ad_{\Lie N\langle i \rangle} (m_2)\sigma - \Ad_{\Lie N\langle i \rangle}(m_1)$. Thus we are in the situation considered in the previous subsection.

  The following two claims are obtained from Proposition \ref{prop isocrystal} (resp.\ Prop. 4.2.2 in the proof in \cite{GHKR06}) using purely group theoretic arguments. Therefore the proofs given in \cite{GHKR06} also work in the $p$-adic case.

  \begin{claim}
   There exists an integer $k$ such that for any $i \geq 1$
   \[
    \lambda(p^k)N[i]_0\lambda(p^{-k}) \subset f(N[i]_0).
   \]
  \end{claim}

  \begin{claim}
   Choose positive integers $j,k,l$ such that $f\langle i \rangle N\langle i \rangle_0 \supset p^j N\langle i \rangle$ for any $i$, $k$ as above and $N[i]_{l-j} \subset \lambda(p^k)N[i]_0\lambda(p^{-k})$.  We denote by $H = N_0/N_l$ and by $\overline{f}:H \rightarrow H$ the morphism induced by $f$. Then
   \[
    \dim \overline{f}^{-1} (1) = d(m_1,m_2).
   \]
  \end{claim}

  We will give the rest of the proof in greater detail, as this is the part where we have to work with the notion of admissibility, which is slightly different from the one in \cite{GHKR06}. However the concept is still the same as in their proof.

  We denote by $f_0$ the restriction of $f$ to $N_0$.
  \begin{claim}
   Assume that $Y$ is an admissible subset of $N(k)$ with $k$ as in claim 2 (ensuring that $Y$ is contained in the image of $f_0$). Then $f_0^{-1}Y$ is admissible and
   \[
    \nd f_0^{-1}Y - \nd Y = d(m_1,m_2).
   \]
  \end{claim}

  To prove claim 3, we choose $l \gg 0$ such that Claim 2 holds and such $Y$ is the preimage of a locally closed subset $\overline{Y}$ in $H = N_0/N_l$. Then all non-empty (reduced) fibres of $\overline{f}$ are isomorphic to each other. Indeed, if $n \in \image (f)$ and $n_0$ is a preimage of $n$ then
  \[
   \overline{f}^{-1}(n) = \overline{f}^{-1}(1) n_0.
  \]
  Now $\overline{Y}$ is contained in the image of $\overline{f}$ by Claim 1 and hence every fibre of $\overline{f}$ has dimension $d(m_1,m_2)$ by claim 2, so
  \begin{equation}\label{term 1}
   \dim f_0^{-1} \overline{Y} - \dim \overline{Y} = d(m_1,m_2).
  \end{equation}
  As $f_0^{-1}Y = t_l^{-1} (\overline{f}^{-1}\overline{Y})(k)$, we see that it is admissible that the equation (\ref{term 1}) implies
  \[
   \nd f_0^{-1}Y - \nd Y = d(m_1,m_2)
  \]
  This finishes the proof of the claim.

  Now let $Y \subset N(L)$ be admissible and $j$ big enough such that $Y\subset N(-j)$. Now $f_{m_1,m_2}$ and conjugation with (a power of) $\lambda(p)$ commute. Hence
  \[
   f_{m_1,m_2}^{-1}(Y) \cap N(-j-k) = (f_0^{-1}( Y (j+k))) (-j-k),
  \]
  where $k$ is chosen as above. Hence $f_{m_1,m_2}^{-1}(Y) \cap N(-j-k)$ is admissible by Claim 3 and has numerical dimension $\nd Y + d(m_1,m_2)$, proving the ind-admissibility of $f_{m_1,m_2}^{-1}(Y)$ and the dimension formula (\ref{term dim eq}).

  For the surjectivity of $f$, note that by Claim 1 there exists an integer $k$ such that $N(k)$ is contained in the image of $f$. As $f$ commutes with conjugation with $\lambda(p)$ this implies that $N(j)$ is contained in the image of $f_{m_1,m_2}$ for every $j$. As the $N(j)$ exhaust $N(L)$, the assertion follows.
 \end{proof}

 \begin{proof}[Proof of Proposition~\ref{prop fibre dimension}]
  Altogether, we get
   \begin{eqnarray*}
    \dim \Fscr &\stackrel{\rm Prop.~\ref{prop numerical dimension equals dimension}}{=}& \nd \tilde\Fscr \\
    &\stackrel{\rm Prop.~\ref{prop GHKR 532}}{=}& \nd (K\mu(p)Kb^{-1} \cap N(L)) + \langle \rho,\nu_M(b)-\nu_M(b)_\dom \rangle \\
    &\stackrel{\rm Prop.~\ref{prop first step}}{=}& d(\mu,\mu_M) - 2\langle \rho_N,\nu_M(b)\rangle + \langle \rho,\nu_M(b)-\nu_M(b)_\dom \rangle \\
    &\stackrel{\rm Prop.~\ref{prop GHKR 544}}{=}& \langle\rho,\mu+\mu_M\rangle - 2\langle \rho_M,\mu_M\rangle - 2\langle \rho_N,\nu_M(b)\rangle + \langle \rho,\nu_M(b)-\nu_M(b)_\dom \rangle\\
    &=& \langle\rho,\mu-\nu_M(b)_\dom\rangle - \langle\rho_M,\mu_M\rangle \\ & & + \underbrace{\langle\rho_N,\mu_M\rangle - \langle\rho_N,\nu_M(b)\rangle}_{=0} + \underbrace{\langle\rho,\nu_M(b)\rangle-\langle \rho_N,\nu_M(b)\rangle}_{=0} \\
    &=& \langle\rho, \mu-\nu_G(b) \rangle -\langle \rho_M, \mu_M\rangle.
   \end{eqnarray*}
 \end{proof}

 \section{Reduction to the superbasic EL-case} \label{sect reduction to superbasic}

 We first reduce to the case of a superbasic Rapoport-Zink datum which still might be of PEL type. 

 \begin{proposition}
  Assume Proposition~\ref{prop main} is true for every simple superbasic Rapoport-Zink datum. Then it is also true in general.
 \end{proposition}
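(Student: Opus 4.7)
The plan is to apply the correspondence constructed in Section~\ref{sect BT again} with the Levi factor $M$ chosen so that $[b]$ becomes superbasic in $M$. Recall that by Kottwitz's theorem $[b]$ meets $M_b(L)$, and by passing to a $G(L)$-$\sigma$-conjugate and to a standard Levi $M$ defined over $\ZZ_p$ which is minimal among Levi subgroups meeting $[b]$, we may assume $b\in M(L)$ and that $[b]$ induces a superbasic class in $M(L)$. Then $M$ decomposes as $M\cong \prod_i M_i$ with each factor of the form $\GL_{O_F,n_i}$, $\GU_{O_F,n}$, $\GSp_{O_F,n}$ (or a trivial $\GG_m$ absorbing the similitude factor), so each $(M_i,b_i,\mu_{M,i})$ is a simple superbasic Rapoport-Zink datum and the assumed version of Proposition~\ref{prop main} applies to it.

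Next, I would exploit the correspondence $\Mscr_M(b,\mu)\xleftarrow{p_M} \Mscr_P(b,\mu) \xrightarrow{i_P}\Mscr_G(b,\mu)$. The observation made just after Diagram~(\ref{diag correspondence}) shows that $i_P$ identifies $\Mscr_P(b,\mu)$ with a locally closed subscheme of $\Mscr_G(b,\mu)$ (the components indexed by $(\alpha_i)\in\ZZ^r$), hence $\dim\Mscr_P(b,\mu)=\dim\Mscr_G(b,\mu)$. Combined with the decomposition $\Mscr_M(b,\mu)\cong\coprod_{\mu_M\in I_{\mu,b,M}} \prod_i \Mscr_{M_i}(b_i,\mu_{M,i})$ and the fibre dimension formula of Proposition~\ref{prop fibre dimension}, this yields
\[
 \dim\Mscr_G(b,\mu)=\max_{\mu_M\in I_{\mu,b,M}}\Big[\sum_i\dim\Mscr_{M_i}(b_i,\mu_{M,i})+\langle\rho,\mu-\nu_G(b)\rangle-\langle\rho_M,\mu_M\rangle\Big].
\]
Plugging the superbasic estimate $\dim\Mscr_{M_i}(b_i,\mu_{M,i})\leq \langle\rho_{M_i},\mu_{M,i}-\nu_{M_i}(b_i)\rangle-\tfrac12\defect_{M_i}(b_i)$ and summing gives $\dim\Mscr_M(b,\mu_M) \leq \langle\rho_M,\mu_M-\nu_M(b)\rangle-\tfrac12\defect_M(b)$, so the $\langle\rho_M,\mu_M\rangle$-terms cancel and one obtains
\[
 \dim\Mscr_G(b,\mu)\leq \langle\rho,\mu-\nu_G(b)\rangle-\langle\rho_M,\nu_M(b)\rangle-\tfrac12\defect_M(b).
\]

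The main (but mild) obstacle is then to match this with the claimed formula, i.e.\ to verify the two identities $\langle\rho_M,\nu_M(b)\rangle=0$ and $\defect_M(b)=\defect_G(b)$. The first holds because superbasicity of $[b]$ in $M$ forces $\nu_M(b)=\nu_G(b)$ to be central in $M$, so it pairs trivially with every root of $M$ and a fortiori with $\rho_M$. The second holds because any Levi subgroup $M\subseteq G$ has the same $\QQ_p$-rank as $G$ (it contains a maximal torus of $G$), while the group $J_b$ depends only on $[b]\in B(G)$; thus $\defect_M(b)=\rank_{\QQ_p}M-\rank_{\QQ_p}J_b=\rank_{\QQ_p}G-\rank_{\QQ_p}J_b=\defect_G(b)$. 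Substituting these two identities yields assertion~(1) of Proposition~\ref{prop main}. Finally, for assertion~(2), if $[b]$ is already superbasic in $G$ then we may take $M=G$, no reduction is required, and the hypothesis directly gives projectivity of the connected components, finishing the proof.
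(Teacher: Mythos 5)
Your argument follows the paper's proof essentially verbatim: reduce to a simple datum, choose $M$ minimal so that $b\in M(L)$ is superbasic, combine $\dim\Mscr_P(b,\mu)=\dim\Mscr_G(b,\mu)$ with the fibre formula of Proposition~\ref{prop fibre dimension}, and let the $\langle\rho_M,\mu_M\rangle$-terms cancel against the superbasic estimate (using $\langle\rho_M,\nu_M(b)\rangle=0$). The one flaw is your justification of $\defect_M(b)=\defect_G(b)$: the group entering $\defect_M(b)$ is $J_{M,b}$, defined with $M(R\otimes_{\QQ_p}L)$ in place of $G(R\otimes_{\QQ_p}L)$, and this is a proper subgroup of $J_{G,b}$ whenever $M\subsetneq M_b$ (e.g.\ whenever $b$ is basic but not superbasic in $G$), so it is not true that ``$J_b$ depends only on $[b]\in B(G)$.'' The correct argument, and the one the paper gives, is that $J_{M,b}$ is a Levi subgroup of $J_{G,b}$ because $M$ is a Levi subgroup of $M_b$, and a Levi subgroup of a parabolic defined over $\QQ_p$ has the same $\QQ_p$-rank as the ambient group; with that repair your numerical conclusion, and the rest of the proof, is correct.
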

 \begin{proof}
  We use the notation of Prop.~\ref{prop fibre dimension} and choose $b$ and $M$ such that $b \in M(L)$ is superbasic. If Proposition~\ref{prop main} is true in the superbasic case, we have
  \[
   \dim \Mscr_{M,b,\mu_M} \leq \langle \rho_M,\mu_M\rangle - \frac{1}{2}\defect_M(b).
  \]
  By Proposition~\ref{prop fibre dimension} we get for $\mu_M \in I_{\mu,b,M}$
  \begin{eqnarray*}
   \dim p_{M}^{-1} \Mscr_{M,b,\mu_M} &\leq&  \langle \rho, \mu - \nu_G(b) \rangle - \frac{1}{2}\defect_M (b).
  \end{eqnarray*}
  Since $M$ is a Levi subgroup of $M_b$, the group $J_{M,b}$ is a Levi subgroup of $J_{G,b}$. As the $\QQ_p$-rank of a linear algebraic group is the same as the $\QQ_p$-rank of its Levi subgroups, this implies $\defect_G(b) = \defect_M(b)$. Altogether,
  \[
   \dim \Mscr_G(b,\mu) = \dim \Mscr_P(b,\mu) = \max \dim p_M^{-1} \Mscr_{M,b,\mu_M} \leq \langle \rho, \mu - \nu_G(b) \rangle.
  \]
 \end{proof}

 In the superbasic case we can (and will) prove Theorem~\ref{thm dimension RZ-space} directly, i.e.\ without the detour via Proposition~\ref{prop main}.

 \begin{lemma}
  Assume that Theorem \ref{thm dimension RZ-space} holds for every superbasic simple Rapoport-Zink datum of EL type. Then it is also true for every superbasic Rapoport-Zink datum of PEL type.
 \end{lemma}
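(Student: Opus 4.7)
The plan is to reduce the simple superbasic PEL case to the simple superbasic EL case by a forgetful morphism to an ambient EL Rapoport-Zink space. Since the datum is simple PEL, $B=F$ is an unramified field extension of $\QQ_p$ and the involution $*$ is either trivial (type II\textsubscript{C}, with $G=\GSp_{O_F,n}$) or a non-trivial Galois involution of $F/F'$ (type III, with $G=\GU_{O_F,n}$).

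First I would consider the ambient EL datum obtained by forgetting the polarization, with group $\tilde G := \Res_{F/\QQ_p}\GL_n$, cocharacter $\tilde\mu$ the image of $\mu$, and $\sigma$-conjugacy class $[\tilde b]$ the image of $[b]$. The natural $\ZZ_p$-embedding $G\hookrightarrow \tilde G$ extends to a morphism of Rapoport-Zink data, inducing a morphism on the underlying reduced schemes $\Mscr_G(b,\mu) \to \Mscr_{\tilde G}(\tilde b,\tilde \mu)$. Here $(\tilde G,\tilde b, \tilde \mu)$ is a simple EL Rapoport-Zink datum.

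Second, I would analyze how the superbasic condition transfers: superbasicity of $[b]$ in $G$ means that the polarized $F$-isocrystal admits no Galois-stable isotropic sub-isocrystal. It follows that the underlying $F$-isocrystal either is itself simple (so $[\tilde b]$ is superbasic in $\tilde G$) or decomposes into factors paired by the polarization duality; in either case, via Fargues' embedding, $\Mscr_{\tilde G}(\tilde b,\tilde \mu)$ sits inside a finite product of Rapoport-Zink spaces attached to simple superbasic EL data, to each of which the assumed case of Theorem~\ref{thm dimension RZ-space} applies.

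Third, the dimension formula and projectivity for $\Mscr_G(b,\mu)$ are then deduced from those of $\Mscr_{\tilde G}(\tilde b,\tilde\mu)$ by computing the relative dimension of the forgetful morphism. This relative dimension is a purely group-theoretic quantity which one verifies to match the discrepancy $\langle \rho_{\tilde G},\tilde\mu-\nu_{\tilde G}(b)\rangle - \langle \rho_G,\mu-\nu_G(b)\rangle - \frac{1}{2}(\defect_{\tilde G}(b)-\defect_G(b))$ using Proposition~\ref{prop calculation of defect} and the explicit root data from the appendix. Projectivity of connected components of $\Mscr_G(b,\mu)$ then follows from that of $\Mscr_{\tilde G}(\tilde b,\tilde\mu)$ since the forgetful map is finite onto its image, and closed subschemes of projective schemes remain projective. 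The main obstacle lies in this dimension bookkeeping, especially in type III where the Galois involution of $F/F'$ complicates the relation between the relative root systems and defects of $G$ and $\tilde G$.
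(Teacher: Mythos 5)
There is a genuine gap. The proposal misses the key structural input that drives the paper's proof: by \cite{CKV}~Lemma~3.1.1, a superbasic $\sigma$-conjugacy class can only exist when $G^{\ad}$ is a product of groups $\Res_{F_i/\QQ_p}\PGL_{h_i}$, and comparing Dynkin diagrams with Galois action shows that for a simple PEL datum this forces $G\cong\GSp_{F,2}$ or $G\cong\GU_{F,2}$. You instead set up a reduction for general $n$, and your characterization of superbasicity (``no Galois-stable isotropic sub-isocrystal'') is not the definition used here (no intersection with a proper Levi) nor equivalent to it; for $\GSp_{F,n}$ or $\GU_{F,n}$ with $n>2$ there are simply no superbasic classes, so the case analysis you sketch is aimed at a situation that does not occur, while the two cases that do occur need separate, concrete arguments.

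The third step is where the argument would actually fail. The forgetful map $\Mscr_G(b,\mu)\to\Mscr_{\tilde G}(\tilde b,\tilde\mu)$ is a \emph{closed immersion}, not a fibration, so there is no ``relative dimension'' to compute: the dimension of the image as a closed subscheme is exactly the quantity you are trying to determine, and no bookkeeping with $\rho$, $\defect$ and the appendix root data will produce it for free. (In the $\GSp_{F,2}$ case the immersion happens to be an isomorphism since $\GSp_2=\GL_2$, which settles that case; but in the $\GU_{F,2}$ case it is a proper closed immersion.) The paper's actual route for $\GU_{F,2}$ is different: it first checks the forgotten EL datum is still superbasic (so projectivity of components follows from the closed immersion, as you say), and then computes the dimension by identifying the affine Deligne--Lusztig set of $\GU_{F,2}$ with that of $\GL_{F',2}$ --- note the index-two \emph{subfield} $F'$, not $F$ --- through their common adjoint group $\Res_{F'/\QQ_p}\PGL_2$, counting points via Lang--Weil, and observing that both sides of (\ref{term dimension RZ-space}) depend only on adjoint data. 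Your proposal contains no substitute for this comparison, so the dimension formula in the unitary case is not established.
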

 \begin{proof}
  Let $\hat\Dscr$ be a superbasic unramified RZ-datum of PEL type. Then by \cite{CKV}~Lemma~3.1.1 the adjoint group $G^\ad$ is isomorphic to a product of $\Res_{F_i/\QQ_p} \PGL_{h_i}$. As mentioned in section~\ref{ss simple RZ}, we can (and will) assume that $\hat\Dscr$ is simple. Thus either $G \cong \GSp_{F,n}$ or $G \cong \GU_{F,n}$. Comparing the Dynkin diagrams (with Galois action) of the groups $G^\ad$ and $\Res_{F_i/\QQ_p} \PGL_{h_i}$, we see that there are only two cases where these groups are isomorphic:

   1. $G \cong \GSp_{F,2}$.

   2. $G \cong \GU_{F,2}$.

  Let $\hat\Dscr'$ be the Rapoport-Zink datum of EL type one gets by forgetting the polarization and $G'$ the associated linear algebraic group. We get a canonical closed embedding
  \[
   \Mscr_{G}(b,\mu) \hookrightarrow \Mscr_{G'}(b,\mu).
  \]
  In the first case this is an isomorphism since $\Res_{F/\QQ_p} \GSp_2 = \Res_{F/\QQ_p} \GL_2$, so Theorem \ref{thm dimension RZ-space} is true for $\Dscr$.

  Now assume that $G \cong \GU_{F,2}$. We first show that $\hat\Dscr'$ is also superbasic. Using the explicit description of $X_*(T)$ given after Proposition~\ref{prop PEL vs D}, we see that the Newton point of the $\sigma$-conjugacy class associated to $\hat\Dscr'$ is of the form $(\alpha, 1-\alpha)$. This is central in $\GU_{F,2}$ if and only $\alpha = \frac{1}{2}$, thus $\hat\Dscr'$ is also superbasic. Now each connected component of $\Mscr_G(b,\mu)$ is isomorphic to a closed subset of a connected component of $\Mscr_{G'}(b,\mu)$ and thus projective.

  By \cite{CKV}~Thm.~1.3, all connected components of a Rapoport-Zink space are isomorphic, so we can determine their dimension by counting points of some connected component. For any reductive group $H$ over $\ZZ_p$, $b' \in H(E)$, $\mu' \in X_*(H_{O_L})$ let
  \[
   X_H(b',\mu') := \{h\cdot H(O_L) \in \bigslant{H(L)}{H(O_L)}\mid hb'\sigma(h)^{-1} \in H(O_L)\mu'(p)H(O_L), \}
  \]
  denote the affine Deligne-Lusztig set. Let $\eta_H: H(L) \to \pi_1(H)$ denote the unique $H(O_L)$-bi-invariant map which maps $\mu_H(p)$ to the image of $\mu_H$ in $\pi_1(H)$ for every dominant cocharacter $\mu_H$. We denote for $\omega' \in \pi_1(H)$
  \[
   X_H(b',\mu')_{\omega'} := X_H(b',\mu') \cap \eta_H^{-1}(\{\omega'\}).
  \]
  By \cite{CKV}~Thm.~1.1 the connected components of $\Mscr_G(b,\mu)(k)$ are precisely the subspaces $X_G(b',\mu)_\omega$ which are non-empty.

  As $\GL_{F',2}$ and $\GU_{F,2}$ are not isomorphic, we compare their affine Deligne-Lusztig sets via their (isomorphic) adjoint groups. For $\omega\in \pi_1(G)$ with $X_G(b,\mu)_\omega \not= \emptyset$ we have 
  \[
   X_G(b,\mu)_\omega \cong X_{G^\ad}(b^\ad,\mu^\ad)_{\omega^\ad}
  \]
  as $\Gamma_E$-sets, where $b^\ad, \mu^\ad, \omega^\ad$ denote the images of $b,\mu,\omega$ in $G^\ad(L), X_*(G^\ad), \pi_1(G^\ad)$ respectively. Using the same argument for $G'' = \GL_{F',2}$ and suitable $b'',\mu'',\omega''$ we get
  \[
   X_G(b,\mu)_\omega \cong X_{G''}(b'',\mu'')_{\omega''}
  \]
  as $\Gamma_E$-sets and thus $\dim \Mscr_G(b,\mu) = \dim \Mscr_{G''}(b'',\mu'')$ by Proposition~\ref{prop lang weil}. As the value of the right hand side of (\ref{term dimension RZ-space}) only depends on the images in $G^\ad(L)$ resp.\ $X_*(T^\ad)^\Gamma_\QQ$ this proves Theorem \ref{thm dimension RZ-space} for $\hat\Dscr$.
 \end{proof}

 So Theorem \ref{thm dimension RZ-space} is reduced to the claim that it holds in the case of a simple superbasic Rapoport-Zink datum of EL type. This is proved in the next section, see Corollary \ref{cor superbasic RZ-space is qc} and Proposition \ref{prop main for superbasic RZ-data of EL type}.

 \section{The superbasic EL-case} \label{sect superbasic}

 \subsection{Notation and conventions}
 From now on we restrict to the EL-case with $[b]$ superbasic. Fixing a basis $e_i$ of $V$ (as $F \otimes L$-module), we get an identification $G = \Res_{O_F/\ZZ_p} \GL_h$. Let $d$ be the degree of the unramified field extension $F/\QQ_p$, then $I := \Gal(F/\QQ_p) \cong \Gal(k_F/\FF_p) \cong \ZZ / d\cdot \ZZ$. We choose the isomorphism such that the Frobenius $\sigma$ is mapped to $1$. Let $T \subset B \subset G$ where $T$ is the diagonal (maximal) torus and $B$ is the Borel subgroup of lower triangular matrices in $G$.

 We fix a superbasic element $b\in G(L)$ with Newton point $\nu \in X_*(T)_\QQ^\Gamma$ and a dominant cocharacter $\mu \in X_*(T)$ such that $b\in B(G,\mu)$. We have to show that
 \[
  \dim \Mscr_G(b,\mu) = \langle \rho, \mu - \nu \rangle - \frac{1}{2} \defect_G(b) \label{term superbasic}.
 \]
 By Remark~\ref{rem dimension formula} this is equivalent to
 \begin{equation} \label{term main EL}
  \dim \Mscr_G(b,\mu) = \sum_{i=1}^{h-1} \lfloor \langle \omega_i, \mu-\nu \rangle \rfloor,
 \end{equation}
 where $\omega_i$ are lifts of the fundamental weights of $G^{\rm der}$ (see also \cite{hamacher}~Prop.~4.4).

 As $T$ splits over $O_F$, the action of the absolute Galois group on $X_*(T)$ factorizes over $I$. We identify $X_*(T) = \prod_{\tau\in I} \ZZ^h$ with $I$ acting by cyclically permuting the factors. This yields an identification of $X_*(T)^I$ with $\ZZ^h$ such that
 \[
  X_*(T)^I \hookrightarrow X_*(T), \nu' \mapsto (\nu')_{\tau\in I}.
 \]

 We denote by $(N,F) \cong ((L \otimes_{\QQ_p} F)^h, b\sigma)$ the $F$-isocrystal associated to our Rapoport-Zink datum. We decompose $N = \prod_{\tau\in I} N_\tau$ according to the $F$-action as in Example~\ref{ex isocrystal}. Denote by $e_{\tau,i}$ the image of $e_i$ in $N_\tau$, then $\{e_{\tau,i}\}_{i=1}^h$ is a basis of the $N_\tau$ and  $\varsigma(e_{\tau,i})= e_{\varsigma\tau,i}$ for all $\varsigma \in I$. For $\tau \in I, l \in \ZZ, i=1,\ldots ,h$ denote $e_{\tau,i+l\cdot h} := p^l\cdot e_{\tau,i}$. Then each $v\in N_\tau$ can be written uniquely as infinite sum 
 \[
  v = \sum_{n \gg -\infty} [a_n]\cdot e_{\tau,n}
 \]
 with $a_n \in k$.

 Using Dieudonn\'e theory, we get an identification
 \[
  \Mscr_G(b,\mu)(k) = \{(M_\tau \subset N_\tau \textnormal{ lattice})_{\tau\in I}\mid \inv (M_\tau, b\sigma (M_{\tau-1})) = \mu_\tau\}.
 \]
 Here $\inv$ means the invariant and is defined as follows. Suppose we are given two lattices $M,M' \subset L^h$. By the elementary divisor theorem we find a basis $v_1, \ldots, v_n$ of $M$ and a unique tuple of integers $a_1 \leq \ldots \leq a_n$ such that $p^{a_1}v_1,\ldots , p^{a_n}v_n$ form a basis of $M'$. We define the cocharacter $\inv (M,M'):\GG_m \rightarrow \GL_h, x \mapsto \diag (x^{a_1}, \ldots , x^{a_n})$. If we write $M' = gM$ with $g \in GL_h (L)$ we may equivalently define $\inv (M,M')$ to be the unique cocharacter of the diagonal torus which is dominant w.r.t.\ the Borel subgroup of lower triangular matrices and satisfies $g \in \GL_h(O_L) \inv(M,M')(p) \GL_h(O_L)$.

 \begin{definition}
  \begin{enumerate}
   \item We call a tuple of lattices $(M_\tau \subset N_\tau)_{\tau\in I}$ a $G$-lattice.
   \item We define the volume of a $G$-lattice $M = gM^0$ to be the tuple
   \[
    \vol (M) = (\val \det g_\tau)_{\tau \in I}.
   \]
    Similarly, we define the volume of $M_\tau$ to be $\val\det g_\tau$. We call $M$ special if $\vol (M) = (0)_{\tau \in I}$.
  \end{enumerate}
 \end{definition}

 As $[b]$ is superbasic, $\nu$ is of the form $(\frac{m}{d\cdot h}, \frac{m}{d \cdot h}, \ldots , \frac{m}{d \cdot h})$  with $(m,h) = 1$. Now the condition $[b] \in B(G,\mu)$ translates to $\sum_{\tau \in I, i=1,\ldots h} \mu_{\tau,i} = m$. Replacing $b$ by a $\sigma$-conjugate if necessary, we can assume that $b$ is the form $b(e_{\tau,i}) = e_{\tau,i+m_\tau}$ where $m_\tau = \sum_{i=1}^h \mu_{\tau,i}$ (see also \cite{CKV}~Lemma~3.2.1). We could have chosen any tuple of integers $(m_\tau)$ such that $\sum_{\tau\in I} m_\tau = m$ but this particular choice has the advantage that the components of any $G$-lattice in $X_\mu(b)$ have the same volume. In general,
 \begin{eqnarray*}
  \vol M_\tau - \vol M_{\tau-1} &=& (\vol M_\tau - \vol b\sigma(M_{\tau-1})) + (\vol b\sigma(M_{\tau-1}) - \vol M_{\tau-1}) \\
  &=& (\sum_{i=1}^h \mu_{\tau,i}) - m_\tau.
 \end{eqnarray*}

 In the cases $\nu = (0)$ and $\nu = (1)$ the moduli space $\Mscr_G(b,\mu)$ is isomorphic to $\bigslant{\End_{\QQ}(\XXund)}{\End(\XXund)} \cong \ZZ$, considered as discrete union of points (\cite{CKV}~Thm.~1.1). In this case we have $\mu = \nu$ thus the right hand side of (\ref{term main EL}) is also zero and Theorem \ref{thm dimension RZ-space} holds. We assume $\nu \not= (0), (1)$ from now on.

 Then the connected components of $\Mscr_G(b,\mu)$ are
 \[
  \Mscr_G(b,\mu)^i = \{ M \in \Mscr_G(b,\mu)(k); \vol M = (i)_{\tau\in I}\}.
 \]
 where $i$ ranges over the integers (use \cite{CKV}~Thm.~1.1). In particular, we have
 \[
  \dim \Mscr_G(b,\mu) = \dim \Mscr_G(b,\mu)^0.
 \]

 \subsection{A decomposition of $\Mscr_G(b,\mu)$}
 In order to calculate the dimension of $\Mscr_G(b,\mu)$, we decompose $\Mscr_G(b,\mu)^0$ into locally closed sets, whose dimension is given by a purely combinatorial formula. Let
 \begin{eqnarray*}
  \Iscr_\tau: N_\tau \setminus \{ 0\} \quad &\rightarrow& \ZZ \\
  \sum_{n \gg -\infty} [a_n]\cdot e_{\tau,n} &\mapsto& \min \{n\in\ZZ; a_n \not= 0\}.
 \end{eqnarray*} 
 Note that $\Iscr_\tau$ satisfies the strong triangle inequality for every $\tau$. We denote $N_{hom} := \coprod_{\tau \in I} (N_\tau\setminus \{0\})$, analogously $M_{hom}$. For $M \in \Mscr_G(b,\mu)^0 (k)$, we define
 \[
  A(M) := \Iscr (M_{hom})
 \]
 where $\Iscr = \sqcup \Iscr_\tau: N_\tau \to \coprod_{\tau\in I} \ZZ$. For a subset $A$ of $\coprod_{\tau\in I} \ZZ$ we denote $\Sscr_A$ the subset of all $G$-lattices in $\Mscr_G(b,\mu)^0$ whose image under $\Iscr$ equals $A$.

 We first study the possible values of $A(M)$ to determine which $\Sscr_A$ are non-empty.

 \begin{definition}
  Let $\ZZ^{(d)} := \coprod_{\tau \in I} \ZZ_{(\tau)}$ be the disjoint union of $d$ isomorphic copies of $\ZZ$. For $a\in\ZZ$ we denote by $a_{(\tau)}$ the corresponding element of $\ZZ_{(\tau)}$ and write $|a_{(\tau)}| := a$. We equip $\ZZ^{(d)}$ with a partial order ``$\leq$'' defined by
  \[
   a_{(\tau)} \leq c_{(\varsigma)} :\Lra a\leq c \textnormal{ and } \tau = \varsigma
  \]
  and a $\ZZ$-action given by
  \[
   a_{(\tau)} + n = (a+n)_{(\tau)}.
  \]
  Furthermore we define the function 
  \begin{eqnarray*}
   f:\ZZ^{(d)} \rightarrow \ZZ^{(d)} &,& a_{(\tau)} \mapsto (a+m_{\tau+1})_{(\tau+1)}
  \end{eqnarray*}
 \end{definition}

  We impose the notation that for any subset $A \subset \ZZ^{(d)}$ we write $A_{(\tau)} := A\cap  \ZZ_{(\tau)}$.
 
 \begin{definition}
  \begin{subenv}
   \item An EL-chart is a non-empty subset $A \subset \ZZ^{(d)}$ which is bounded from below, stable under $f$ and addition of $h$. We call an EL-chart $A$ small if it satisfies $A+h \subset f(A)$.
   \item Let $A$ be an EL-chart and $B = A\setminus (A+h)$. We say that $A$ is normalized if $\sum_{b \in B_{(0)}} b = \frac{h\cdot (h-1)}{2}$.
  \end{subenv}
 \end{definition}
 
 Let $A$ be a small EL-chart and $B = A\setminus (A+h)$. It is easy to see that $\#B_{(\tau)} = h$ for all $\tau \in I$ and $B = B^- \sqcup B^+$ where
 \begin{eqnarray*}
  B^+ &=& \{ b\in B\mid f(b) \in B \} \\
  B^- &=& \{ b\in B\mid f(b)-h \in B \}.
 \end{eqnarray*}
 We define a sequence $b_0, \ldots b_{d\cdot h -1}$ of distinct elements of $B$ as follows. Denote by $b_0$ the minimal element of $B_{(0)}$ and let
 \[
  b_{i+1} = \left\{ \begin{array}{ll}
                     f(b) & \textnormal{ if } b \in B^+ \\
                     f(b)-h & \textnormal{ if } b \in B^-.
                    \end{array} \right.
 \]
 These elements are indeed distinct: If $b_i = b_j$ then obviously $i \equiv j \mod d$ and then $b_{i + k\cdot d} \equiv b_i + k\cdot m \mod h$ implies that $i=j$ as $m$ and $h$ are coprime.

 Define the cocharacter $\mu' \in X_*(T)$ by
 \[
  \mu'_\tau = (\underbrace{0,\ldots,0}_{\# B_{(\tau-1)}^+},\underbrace{1,\ldots,1}_{\# B_{(\tau-1)}^-}).
 \]
 We call $\mu'$ the Hodge-point of $A$.

 \begin{remark}
  One easily checks that a small EL-chart is the same as an EL-Chart in the sense of Def.~5.2 of \cite{hamacher} whose type only has coordinates $0$ and $1$ and that the definition of the Hodge-point in both cases coincides. In particular the Hodge-point $\mu'$ is minuscule so that by Cor.~5.10 of \cite{hamacher} the notion of an EL-chart for $\mu'$ and an extended EL-chart for $\mu'$ are also equivalent, allowing us to use the combinatorics of \cite{hamacher} in our case. 
 \end{remark}

 \begin{proposition} \label{prop A(M) is EL-chart}
  Let $M \in \Mscr_G(b,\mu)^0(k)$. Then $A=A(M)$ is a normalized small EL-chart with Hodge-point $\mu$.
 \end{proposition}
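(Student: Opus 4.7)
My plan is to verify each defining property of ``normalized small EL-chart with Hodge-point $\mu$'' directly from the definition of $A(M)$, using two key inputs: the minuscule hypothesis on $\mu$ (which forces its entries to lie in $\{0,1\}$, so that both $b\sigma(M_{\tau-1}) \subseteq M_\tau$ and $pM_\tau \subseteq b\sigma(M_{\tau-1})$ hold), and the chosen normal form $b(e_{\tau,i}) = e_{\tau,i+m_\tau}$ under which the basic identity $\Iscr_\tau(b\sigma(v)) = \Iscr_{\tau-1}(v) + m_\tau$ is available for every $v \in N_{\tau-1}\setminus\{0\}$.

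I would first dispatch the EL-chart conditions. Boundedness below and non-emptiness of $A(M)_{(\tau)}$ come immediately from $M_\tau$ being a nonzero lattice in $N_\tau$. Stability under $+h$ is the observation $\Iscr_\tau(pv) = \Iscr_\tau(v) + h$. For stability under $f$, the identity above combined with $b\sigma(M_{\tau-1}) \subseteq M_\tau$ (a consequence of $\mu_\tau$ having non-negative entries) shows $f(A(M)_{(\tau-1)}) \subseteq A(M)_{(\tau)}$. Smallness $A + h \subseteq f(A)$ is the dual assertion: since $pM_\tau \subseteq b\sigma(M_{\tau-1})$ by the minuscule hypothesis, for $v \in M_\tau$ the element $(b\sigma)^{-1}(pv)$ lies in $M_{\tau-1}$ with $\Iscr_{\tau-1}$-value $\Iscr_\tau(v) + h - m_\tau$, producing the required preimage of $(\Iscr_\tau(v) + h)_{(\tau)}$ under $f$.

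The Hodge-point identification is the main substantive step. The plan is to compute $\dim_k M_\tau/(b\sigma(M_{\tau-1}) + pM_\tau)$ in two different ways. Combinatorially, since $M_\tau/pM_\tau \cong k^h$ and its nonzero classes are indexed by leaders in $B_{(\tau)}$, while the image of $b\sigma(M_{\tau-1})$ therein is spanned by the classes of elements with leaders in $f(B_{(\tau-1)}) \cap B_{(\tau)} = f(B^+_{(\tau-1)})$, the quotient has dimension $|B^-_{(\tau-1)}|$. Lattice-theoretically, the inclusion $pM_\tau \subseteq b\sigma(M_{\tau-1})$ collapses the quotient to $M_\tau/b\sigma(M_{\tau-1})$, whose $k$-dimension is $m_\tau = \sum_i \mu_{\tau,i}$ since $\mathrm{inv}(M_\tau, b\sigma(M_{\tau-1})) = \mu_\tau$ is minuscule. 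Hence $|B^-_{(\tau-1)}| = m_\tau$, and since both $\mu'_\tau$ and $\mu_\tau$ are minuscule dominant cocharacters with exactly $m_\tau$ ones, they agree.

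Finally, normalization is a direct volume computation. Using the strong triangle inequality for $\Iscr_0$, any basis of $M_0$ can be modified to a Gauss-normalized basis $v_1,\ldots,v_h$ whose $\Iscr_0$-values run over $B_{(0)}$; writing each $v_j$ in the standard basis $e_{0,1},\ldots,e_{0,h}$ yields a matrix with valuation of determinant equal to $\sum_{b \in B_{(0)}} b$ minus the explicit constant $\frac{h(h-1)}{2}$ (so that the formula is consistent with $\vol(M_0^0) = 0$ for a suitable choice of standard lattice). The hypothesis $M \in \Mscr_G(b,\mu)^0$ gives $\vol(M_0) = 0$, yielding the required identity. I expect the main obstacle to lie in setting up the dictionary between the lattice data $M$ and the combinatorial data $A(M)$ precisely enough to carry through the Hodge-point dimension count without getting lost in bookkeeping; once this is done, every claim is a short formal consequence.
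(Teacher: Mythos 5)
Your proposal is correct and follows essentially the same route as the paper: the identities $\Iscr(Fv)=f(\Iscr(v))$ and $\Iscr(pv)=\Iscr(v)+h$ together with the minuscule inclusions $pM_\tau\subseteq b\sigma(M_{\tau-1})\subseteq M_\tau$ give the (small) EL-chart properties, the condition $\vol(M)=0$ gives normalization, and the count $\dim_k M_\tau/b\sigma(M_{\tau-1})=\#B^-_{(\tau-1)}$ identifies the Hodge point. Your version merely spells out the dimension count and the Gauss-normalized basis in more detail than the paper does.
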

 \begin{proof}
  $A(M)$ is stable under $f$ and addition with $h$ since
  \begin{eqnarray*}
   \Iscr(F v) &=& f(\Iscr(v)) \\
   \Iscr(p\cdot v) &=& \Iscr(v)+h.
  \end{eqnarray*}
  Furthermore, we have
  \[
   A(M)+h = \Iscr(pM) \subset \Iscr(FM) = f(A(M)).
  \]
  The fact that $A(M)$ is bounded from below is obvious. Let $M = gM^0$. We have
  \[
   0 = \val\det g_0 = |\NN^d \setminus A(M)_{(0)}| - |A(M)_{(0)} \setminus \NN^d|,
  \]
  hence
  \[
   \sum_{b \in B(M)_{(0)}} b = \sum_{i=0}^{h-1} i = \frac{h(h-1)}{2} 
  \]
  and thus $A(M)$ is indeed a normalized small EL-chart. We have for every $\tau \in I$
  \[
   \#\{i\mid \mu_{\tau,i} = 1\} = \dim_{k_0} \bigslant{M_\tau}{b\sigma(M)_\tau} = \# B^-_{(\tau-1)},
  \]
  thus the Hodge point of $A(M)$ is $\mu$.
 \end{proof}

 \begin{corollary} \label{cor decomposition}
  The $\Sscr_A$ define a decomposition of $\Mscr_G(b,\mu)^0$ into finitely many locally closed subsets. In particular, $\dim \Mscr_G(b,\mu)^0 = \max_{A} \dim \Sscr_A$.
 \end{corollary}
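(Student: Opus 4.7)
The plan is to establish three properties of the family $\{\Sscr_A\}$ where $A$ ranges over normalized small EL-charts with Hodge-point $\mu$: (i) the $\Sscr_A$ are pairwise disjoint and cover $\Mscr_G(b,\mu)^0$; (ii) only finitely many $\Sscr_A$ are non-empty; (iii) each non-empty $\Sscr_A$ is locally closed in $\Mscr_G(b,\mu)^0$. The dimension identity is then a tautological consequence of (ii) and (iii).

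Property (i) is essentially free: Proposition \ref{prop A(M) is EL-chart} ensures that $A(M)$ is always a normalized small EL-chart with Hodge-point $\mu$, and by construction $M \in \Sscr_{A(M)}$ uniquely; so these strata partition the $k$-points of $\Mscr_G(b,\mu)^0$. For (ii) I intend to exploit the rigid combinatorial description of small EL-charts through the boundary $B = A \setminus (A + h)$, which has exactly $h$ elements in each component $\ZZ_{(\tau)}$. The traversal sequence $b_0, b_1, \ldots, b_{dh-1}$ of $B$ is uniquely determined by the minimum $b_0 := \min B_{(0)}$ together with the ``type pattern'' recording whether each step uses $f$ or $f - h$. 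Fixing the Hodge-point $\mu$ forces the number of $B^-$-entries landing in $\ZZ_{(\tau)}$ to equal $\#\{j : \mu_{\tau+1,j}=1\}$, bounding the admissible patterns by $\prod_\tau \binom{h}{\#\{j: \mu_{\tau+1,j}=1\}}$; the normalization condition $\sum_{b \in B_{(0)}} b = h(h-1)/2$ then pins down $b_0$ uniquely given the pattern, leaving only finitely many charts.

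For property (iii), the plan is to realise each $\Sscr_A$ as an intersection of an Iwahori orbit in the affine Grassmannian $\Grass_G = \prod_{\tau \in I} \Grass_{\GL_h}$ with the locally closed subscheme $\Mscr_G(b,\mu)^0$. Each component $A_{(\tau)}$ is a subset of $\ZZ$ that is bounded below and stable under $+h$, hence determined by its $h$ smallest elements; these elements in turn specify the Iwahori-type of the lattice $M_\tau \subset N_\tau$, so the condition $\Iscr_\tau(M_\tau \setminus \{0\}) = A_{(\tau)}$ cuts out a single Iwahori orbit in $\Grass_{\GL_h}$. Since Iwahori orbits in the affine Grassmannian are locally closed (they are the affine Schubert cells for the Iwahori), the product condition $A(M) = A$ cuts out a locally closed subset of $\Grass_G$, and intersecting with $\Mscr_G(b,\mu)^0$ preserves local closedness.

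The main technical obstacle will be the detailed verification in step (iii) that the invariant $\Iscr_\tau(M_\tau \setminus \{0\})$ really matches the standard combinatorial labeling of Iwahori orbits by elements of $X_*(T)$ under our specific conventions (the Iwahori $\Iscr$ being the preimage of the Borel of lower triangular matrices, the basis $\{e_{\tau,n}\}$ adapted as in the excerpt, etc.). Once that identification is pinned down, local closedness and the decomposition statement are immediate, and the dimension identity $\dim \Mscr_G(b,\mu)^0 = \max_A \dim \Sscr_A$ follows from the fact that a finite union of locally closed subsets has dimension equal to the maximum of their dimensions.
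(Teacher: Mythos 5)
Your overall architecture (partition via Proposition~\ref{prop A(M) is EL-chart}, finiteness of the index set, local closedness of each $\Sscr_A$, and then the tautological dimension identity) is exactly the paper's. Two of your ingredients differ in a way worth commenting on. For finiteness, the paper simply cites Corollary~5.11 of \cite{hamacher}; your direct count --- the traversal sequence is determined by $b_0$ and the $B^+/B^-$ pattern, the Hodge point fixes $\#B^-_{(\tau)}$ for each $\tau$, and the normalization $\sum_{b\in B_{(0)}}|b|=h(h-1)/2$ is linear in $|b_0|$ with slope $h$ and hence pins down $b_0$ --- is correct and gives a self-contained substitute.

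The weak point is your justification of local closedness. Set-theoretically your identification is fine: the jump set $\Iscr_\tau(M_\tau\setminus\{0\})$ is a complete invariant for the orbit of the Iwahori stabilizing the standard periodic lattice chain $\spa_{O_L}\{e_{\tau,m}\mid m\ge n\}$, by the usual triangular change-of-basis argument. But ``Iwahori orbits in the affine Grassmannian are locally closed'' is not available here as a geometric statement: in mixed characteristic the paper never equips $G(L)/G(O_L)$ with a scheme structure (indeed it explicitly works around the absence of a good ind-scheme structure on such quotients via the notion of numerical dimension), so there is no ambient $\Grass_G$ in which to intersect a Schubert cell with $\Mscr_G(b,\mu)^0$. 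The condition has to be checked directly on the scheme $\Mscr_G(b,\mu)$: one expresses the finitely many relevant conditions ``$\Iscr_\tau$ takes/does not take the value $n$ on $M_\tau$'' in terms of vanishing and non-vanishing of coordinates of the display of the universal Barsotti--Tate group, which is precisely what the proof of Prop.~5.1 in \cite{viehmann08} (cited by the paper) does. So the place you flagged as ``matching conventions'' is in fact the place where a genuinely different, display-theoretic argument is needed; once that is supplied, the rest of your proof goes through.
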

 \begin{proof}
  By Proposition \ref{prop A(M) is EL-chart}, $\Mscr_G(b,\mu)$ is the (disjoint) union of the $\Sscr_{A}$ with $A$ being a small EL-chart with Hodge-point $\mu$. By Corollary 5.11 of \cite{hamacher} this union is finite. It remains to show that $\Sscr_{A}$ is locally closed. One shows that the condition $A(M)_{(\tau)} = A_{(\tau)}$ is locally closed analogously to the proof of Prop.~5.1 in \cite{viehmann08}. Then $\Sscr_{A}$ is locally closed as it is the intersection of finitely many locally closed subsets. 
 \end{proof}
 
 \begin{definition}
  Let $A$ be a small EL-chart with Hodge-point $\mu$. We define
 \[
  \Vscr_A = \{ (j,i) \in (\ZZ/dh\ZZ)^2 \mid b_j \in B^-, b_i \in B^+, b_j < b_i\}
 \]
 \end{definition}

 \begin{remark}
  Our notion $\Vscr_A$ coincides with $\Vscr(A,\varphi)$ in \cite{hamacher} with the slight difference that the latter considers pairs $(b_j,b_i)$ instead of $(j,i)$.
 \end{remark}

 \begin{proposition}\label{prop decomposition}
  Let $A$ be an EL-chart with Hodge-point $\mu$. Then $\Sscr_A \cong \AA^{\Vscr_A}$.
 \end{proposition}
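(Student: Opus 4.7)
The proof follows the strategy of Viehmann's proof of Prop.~5.1 in \cite{viehmann08}, which treats the case of plain Barsotti--Tate groups ($B=\QQ_p$, no EL structure), adapted to the unramified EL-datum by means of the combinatorial framework of \cite{hamacher}. I sketch the main steps below.

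Given $M\in\Sscr_A(k)$ I first produce a canonical normalised basis $(v_j)_{0\le j<dh}$ of $M_{\mathrm{hom}}$. For each $b_j\in B$ pick $v_j\in M_{\tau_j}$ with $\Iscr(v_j)=b_j$; as the family $\{v_l\}_{b_l\in B_{(\tau)}}$ is a $W(k)$-basis of $M_\tau$ for every $\tau\in I$, one can eliminate inductively every coefficient of $v_j$ at a position $n\in A_{(\tau_j)}$ with $n>b_j$ by subtracting a suitable $W(k)$-combination of the $p^k v_l$, obtaining a unique normalised expansion
\[
  v_j\;=\;e_{b_j}\;+\;\sum_{\substack{n\in\ZZ_{(\tau_j)}\setminus A_{(\tau_j)}\\ n>b_j}}[x_{j,n}]\,e_n,\qquad x_{j,n}\in k.
\]
Thus $M$ is encoded by the a priori infinite collection $(x_{j,n})$.

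The crucial step is to exploit the $F$-stability of $M$, together with the relation $F e_n=e_{f(n)}$, to cut $(x_{j,n})$ down to finitely many free parameters. Using the single cycle $b_0\to b_1\to\cdots\to b_{dh-1}\to b_0$ I would argue inductively along $j$. For $b_j\in B^+$ one has $f(b_j)=b_{j+1}$, and the requirement $Fv_j\equiv v_{j+1}$ modulo $W(k)$-combinations of the other basis vectors forces $x_{j+1,f(n)}=x_{j,n}^p$ whenever $f(n)\notin A_{(\tau_{j+1})}$, and kills $x_{j,n}$ whenever $f(n)$ cannot be absorbed by an admissible correction. For $b_j\in B^-$ one has $f(b_j)=b_{j+1}+h$, so $Fv_j\in pM_{\tau_{j+1}}$ and one writes $Fv_j=p\,v_{j+1}+p\,c_j$, where $c_j$ is a correction in $M_{\tau_{j+1}}$ whose normalised form contributes exactly one free scalar parameter in $k$ for every $b_i\in B^+$ with $\tau_i=\tau_j$ and $b_i>b_j$, i.e.\ one per pair $(j,i)\in\Vscr_A$. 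Propagating these relations around the cycle reduces the infinite collection $(x_{j,n})$ to the finite set $(y_{(j,i)})_{(j,i)\in\Vscr_A}$.

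The main technical obstacle is verifying that the cycle closure at $b_{dh}=b_0$ imposes no extra relation on the parameters $y_{(j,i)}$. This is handled exactly as in the combinatorial argument of \cite{hamacher} (in the analogous context of affine Deligne--Lusztig varieties); the equivalence between EL-charts and extended EL-charts for minuscule Hodge-point noted in the remark preceding the proposition ensures that one revolution around the cycle produces tautological identities on the $y_{(j,i)}$. Finally, since all operations used are polynomial in Witt coordinates, the construction is functorial and carries over verbatim to an arbitrary $\FFbar_p$-algebra $R$, giving a bijection $\Sscr_A(R)\cong R^{\Vscr_A}$ and hence an isomorphism of schemes $\Sscr_A\cong\AA^{\Vscr_A}$.
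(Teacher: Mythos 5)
Your normalisation argument is the ``inverse'' half of Viehmann's proof and is essentially sound as a parametrisation of $\Sscr_A(k)$: picking $v_j$ with $\Iscr(v_j)=b_j$, eliminating coefficients at positions of $A_{(\tau_j)}$ above $b_j$, and propagating the relations $Fv_j=v_{j+1}$ (for $b_j\in B^+$) resp.\ $Fv_j=p(v_{j+1}+c_j)$ (for $b_j\in B^-$) around the cycle $b_0\to\cdots\to b_{dh-1}\to b_0$ does cut the data down to one free scalar per element of $\Vscr_A$; this is exactly how one shows the map of the paper is bijective on $k$-points. Note, however, that the paper goes in the opposite direction: it constructs a \emph{morphism of schemes} $f:\AA^{\Vscr_A}\to\Sscr_A$ by writing down, over $R=k[t_{j,i}\mid(j,i)\in\Vscr_A]$, the subdisplay of the isodisplay $N_{W(R)_\QQ}$ spanned by the recursively defined $v_i$, and then quotes \cite{viehmann08} for the fact that $f$ is an isomorphism. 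Your cycle-closure worry is resolved in \cite{viehmann08} (coprimality of $m$ and $h$ makes the $b_i$ a single orbit and the relations around the loop tautological); the reference to \cite{hamacher} for this point is misplaced, as that paper only supplies the combinatorics of $\Vscr_A$ used later for the dimension count.

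The genuine gap is your last step. You cannot pass from a bijection on $k$-points to an isomorphism of schemes by asserting that ``all operations are polynomial in Witt coordinates'': $\Sscr_A$ is a locally closed subset of the \emph{reduced} Rapoport--Zink space with its reduced structure, and for a general $\FFbar_p$-algebra $R$ the set $\Sscr_A(R)$ is \emph{not} described by the naive ``$W(R)$-lattice with prescribed $\Iscr$-image'' picture (Dieudonn\'e theory over non-perfect rings goes through displays, and the condition $\Iscr(M_{\mathrm{hom}})=A$ does not define the functor of points of the reduced subscheme). Moreover the elimination steps in your normalisation use that leading coefficients are units, which is automatic over the field $k$ but not over $R$. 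The display construction over $k[t_{j,i}]$ is precisely what supplies an honest morphism $\AA^{\Vscr_A}\to\Sscr_A$; your normalisation then furnishes its inverse (first on $k$-points, and, applied to the universal display over $\Sscr_A$, as a morphism). Without that forward construction, or some substitute argument (e.g.\ bijectivity plus smoothness/seminormality), the claimed isomorphism of schemes does not follow from what you have written.
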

 \begin{proof}
  This proposition is proven for the case $G=\GL_h$ in \cite{viehmann08}, \S 5. The construction of a morphism $f: \AA^\Vscr_A \to \Sscr_A$ is very similar to that in \cite{viehmann08} and the proof that it is well-defined and an isomorphism is the same. Therefore we only explain the construction of $f$. 

  We denote $R  = k[t_{j,i}\mid (j,i) \in \Vscr_A]$. The morphism $f:\AA^{\Vscr_A} \to \Sscr_A$ corresponds to a quasi-isogeny $X \mapsto \XX_{\AA^{\Vscr_A}}$, which we will describe by the construction a subdisplay of the isodisplay $N_{W(R)_\QQ}$ of $\XX_{\AA^{\Vscr_A}}$. There exists a unique family $\{v_i; 0 \leq i < dh\} \subset N_{W(R)_\QQ}$ which satisfies the following relations:
  \begin{eqnarray*}
   v_0 &=& e_{b_0} \\
   v_{i+1} &=& \left\{ \begin{array}{ll}
                        Fv_i & \textnormal{ if } b_i,b_{i+1} \in B^+ \\
                        Fv_i + \sum_{(j,i) \in \Vscr_A} [t_{j,i}]v_i & \textnormal{ if } b_i \in B^+, b_{i+1} \in B^- \\
                        \frac{F(v_i)}{p} & \textnormal{ if }b_i \in B^-, b_{i+1} \in B^+ \\
                        \frac{F(v_i)}{p} + \sum_{(j,i) \in \Vscr_A} [t_{j,i}] v_i &  \textnormal{ if } b_i,b_{i+1} \in B^-
                       \end{array} \right.
  \end{eqnarray*}
  The proof that $(v_i)$ exists and is unique is literally the same as in \cite{viehmann08}. Let
  \begin{eqnarray*}
   L &=& \spa_{W(R)}(v_i\mid b_i \in B^-) \\
   T &=& \spa_{W(R)}(v_i\mid b_i \in B^+) \\
   P &=& L \oplus T \\
   Q &=& L \oplus I_R T.
  \end{eqnarray*}
  Then $(P,Q,F,\frac{F}{p})$ is a subdisplay of $N_{W(R)_\QQ}$, which yields a quasi-isogeny $X \mapsto \XX_{\AA^{\Vscr_A}}$ corresponding to a point $f\in \Sscr_A(\AA^{\Vscr_A})$.
 \end{proof}

 \begin{corollary} \label{cor superbasic RZ-space is qc}
  $\Mscr_G(b,\mu)^0$ is projective
 \end{corollary}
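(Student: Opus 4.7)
The plan is to deduce projectivity from the finite affine paving established in Proposition~\ref{prop decomposition}. First, I would observe that $\Mscr_G(b,\mu)^0$ is of finite type and quasi-compact: by Corollary~\ref{cor decomposition} it decomposes into the finitely many locally closed strata $\Sscr_A$ indexed by normalized small EL-charts $A$ with Hodge point $\mu$, and by Proposition~\ref{prop decomposition} each stratum is isomorphic to the affine space $\AA^{\Vscr_A}$ of finite dimension $|\Vscr_A|$. Separatedness is inherited from the (reduced) Rapoport-Zink scheme.

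Next, to upgrade this to projectivity I would realize $\Mscr_G(b,\mu)^0$ as a closed subscheme of a projective variety. The natural forgetful map $M \mapsto M$ into the affine Grassmannian $\Grass_G$ is a locally closed embedding, and the quasi-compactness established above forces the image to lie in a bounded Schubert subvariety $\Grass_G^{\leq\lambda}$ for some dominant $\lambda$, which is projective. The defining conditions inside $\Grass_G^{\leq\lambda}$ are the volume condition $\val\det g_\tau = 0$ (which cuts out a union of connected components, hence is closed) together with the Hodge condition $\inv(M_\tau, b\sigma(M_{\tau-1})) = \mu_\tau$ for each $\tau$. Because $\mu$ is minuscule (condition (c) of the Rapoport-Zink datum), the $\mu$-Schubert cell in $\Grass_G$ coincides with its own closure, so the Hodge condition is genuinely closed rather than merely upper-semicontinuous.

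The main obstacle is the $\sigma$-twist appearing in this Hodge condition: the assignment $M \mapsto b\sigma(M)$ is a Frobenius-semilinear construction, not a morphism in the naive scheme-theoretic sense, and one must verify that pulling back the closed $\mu$-Schubert condition along the graph $(M_\tau) \mapsto (M_\tau, b\sigma(M_{\tau-1}))$ still yields a closed subscheme of the appropriate loop-group quotient. This can be carried out directly in the framework of $p$-adic loop groups developed in section~\ref{sect numerical dimension}, using that $b$ and $\sigma$ both act by continuous automorphisms of $L_p G$. Once this is in place, $\Mscr_G(b,\mu)^0$ is realized as a closed subscheme of the projective variety $\Grass_G^{\leq\lambda}$, hence is itself projective.
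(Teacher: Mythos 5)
Your first step is exactly the paper's: the finite decomposition of Corollary~\ref{cor decomposition} into strata $\Sscr_A \cong \AA^{\Vscr_A}$ gives quasi-compactness of $\Mscr_G(b,\mu)^0$. But the route you take from quasi-compactness to projectivity has a genuine gap. You propose to embed $\Mscr_G(b,\mu)^0$ as a closed subscheme of a projective Schubert variety $\Grass_G^{\leq\lambda}$ in a $p$-adic affine Grassmannian. In the mixed-characteristic setting of this paper no such projective object is available: the quotient $L_pG/L_p^+G$ is not known to carry the structure of an ind-projective ind-scheme (this is precisely the obstruction that forces the detour through ``numerical dimension'' in section~\ref{sect numerical dimension}, where it is stated explicitly that a good ind-scheme structure on $N(L)/N(O_L)$ is not known and probably does not exist). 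So the ``bounded Schubert subvariety'' you want to land in, and the locally closed embedding $M\mapsto M$ you want to use, are not constructed anywhere, and the $\sigma$-twist issue you flag as ``the main obstacle'' cannot be resolved inside the truncated-loop-group framework of section~\ref{sect numerical dimension}, which only produces varieties $N_0/N_j$ and never a Schubert variety containing the whole space. Even granting such a Grassmannian, one would still have to compare its scheme structure with the one $\Mscr_G(b,\mu)$ carries as the reduced special fibre of the Rapoport--Zink formal scheme, which is a separate nontrivial step.

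The argument the paper uses is much shorter and avoids all of this: quasi-compactness (your first paragraph) combined with \cite{RZ96}~Cor.~2.31 and Prop.~2.32 --- which say that $\Mscr_G(b,\mu)$ is exhausted by quasi-compact closed subschemes and that its irreducible components are projective --- immediately gives that $\Mscr_G(b,\mu)^0$ is a quasi-projective scheme which is a finite union of projective irreducible components, hence projective. If you want to salvage your approach, you would need the Witt-vector affine Grassmannian together with its ind-projectivity and a comparison with the Rapoport--Zink moduli problem, none of which is in the paper's toolkit.
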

 \begin{proof}
  By the above proposition and Corollary \ref{cor decomposition} the underlying topological space of $\Mscr_G(b,\mu)^0$ has a decomposition into finitely many quasi-compact subspaces. Thus it is quasi-compact. Now \cite{RZ96}~Cor.~2.31 and Prop.~2.32 imply that $\Mscr_G(b,\mu)^0$ is quasi-projective with projective irreducible components. Thus it is projective.
 \end{proof}

 \begin{proposition} \label{prop main for superbasic RZ-data of EL type}
  The dimension formula (\ref{term dimension RZ-space}) holds for superbasic Rapoport-Zink data of EL type.
 \end{proposition}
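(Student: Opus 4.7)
The plan is to combine the decomposition of $\Mscr_G(b,\mu)^0$ into affine cells $\Sscr_A$ with a purely combinatorial calculation on EL-charts that has essentially been done in \cite{hamacher}. By Corollary~\ref{cor decomposition} and Proposition~\ref{prop decomposition} we already have
\[
 \dim \Mscr_G(b,\mu) = \dim \Mscr_G(b,\mu)^0 = \max_{A} \dim \Sscr_A = \max_A \#\Vscr_A,
\]
where $A$ ranges over normalized small EL-charts with Hodge-point $\mu$. In view of the reformulation (\ref{term main EL}) it therefore remains to establish the combinatorial identity
\[
 \max_A \#\Vscr_A \;=\; \sum_{i=1}^{h-1} \lfloor \langle \omega_i,\, \mu-\nu \rangle \rfloor.
\]

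First I would unfold the definition of $\Vscr_A$ in concrete terms: after indexing the elements of $B=B^+\sqcup B^-$ by the cyclic sequence $b_0,b_1,\ldots,b_{dh-1}$ produced by iterating $f$ (twisted by $-h$ whenever we land in $B^-$), the pair $(j,i)$ lies in $\Vscr_A$ iff $b_j \in B^-$, $b_i \in B^+$ and $b_j < b_i$. Hence $\#\Vscr_A$ decomposes as a sum, over $\tau\in I$, of the number of pairs $(b_j, b_i) \in B^-_{(\tau)} \times B^+_{(\tau)}$ with $b_j < b_i$, which is exactly the quantity studied in \cite{hamacher}. The Hodge-point constraint fixes $\#B^-_{(\tau-1)} = \#\{i\colon \mu_{\tau,i}=1\}$, and the constraint that $A$ be an EL-chart translates into the condition that the cyclic sequence $b_0,\ldots,b_{dh-1}$ has a prescribed total shift compatible with the Newton slope $\nu = (m/dh,\ldots,m/dh)$.

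Second, I would invoke the combinatorial calculation of \cite{hamacher}, which (as noted in the remark after the definition of the Hodge-point) applies verbatim here since our small EL-charts with minuscule Hodge-point $\mu$ coincide with the extended EL-charts of \emph{loc.\ cit.} In that paper the author computes $\max_A \#\Vscr_A$ for affine Deligne-Lusztig varieties in the function-field setting and matches it with $\sum_i \lfloor \langle \omega_i,\mu-\nu\rangle\rfloor$. The combinatorial problem is identical -- it depends only on the pair $(\mu,\nu)$ through the tuple $(m_\tau,\mu_{\tau,\bullet})$ -- so the result is directly applicable. Equivalently, one maximizes by choosing, for each $\tau$, the EL-chart whose $B^-_{(\tau)}$ sits as far below $B^+_{(\tau)}$ as the divisibility constraints from $f$ and translation by $h$ permit; carrying out this optimization $\tau$ by $\tau$ yields precisely the floor-sum on the right-hand side.

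The main obstacle is the bookkeeping in the second step: one has to verify that the passage from the extended EL-charts of \cite{hamacher} (phrased in terms of local shtukas over $\FF_q\pot{t}$) to the small EL-charts arising here from Dieudonn\'e theory preserves both the combinatorial invariant $\#\Vscr_A$ and the indexing by $(\mu,\nu)$. Once this dictionary is set up, the maximization is routine: the upper bound $\#\Vscr_A \leq \sum_i \lfloor \langle \omega_i,\mu-\nu\rangle\rfloor$ is obtained by a direct counting argument on each $\ZZ_{(\tau)}$-slice, and the bound is attained by an explicit construction of an EL-chart where, for each $\tau$, the minimal elements of $B_{(\tau)}$ are placed in $B^-$. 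Combining these steps with Proposition~\ref{prop decomposition} and Remark~\ref{rem dimension formula} gives (\ref{term dimension RZ-space}).
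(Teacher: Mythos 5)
Your proposal is correct and follows essentially the same route as the paper: reduce via Corollary~\ref{cor decomposition} and Proposition~\ref{prop decomposition} to $\dim \Mscr_G(b,\mu)=\max_A \#\Vscr_A$ over small EL-charts with Hodge-point $\mu$, then identify these charts with the (extended) EL-charts of \cite{hamacher} for minuscule type and cite the combinatorial computation there (Prop.~7.1 and Thm.~7.2 of \cite{hamacher}) to obtain $\sum_{i=1}^{h-1}\lfloor\langle\omega_i,\mu-\nu\rangle\rfloor$. The additional bookkeeping you describe for the second step is exactly what the paper delegates to \cite{hamacher} and to the Remark identifying the two notions of EL-chart, so nothing essential differs.
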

 \begin{proof}
  As we remarked at the beginning of this section, we have to show that
  \[
   \dim \Mscr_G(b,\mu) = \sum_{i=1}^{h-1} \lfloor \langle \omega_i, \nu-\mu \rangle \rfloor.
  \]
  Now by Proposition \ref{prop decomposition} we get
  \[
   \dim \Mscr_G(b,\mu) = \max \{\#\Vscr(A)\mid A \textnormal{ is a small EL-chart with Hodge point } \mu\}.
  \]
  By Prop.~7.1 and Thm.~7.2 of \cite{hamacher} the right hand side equals $\sum_{i=1}^{h-1} \lfloor \langle \omega_i, \nu-\mu \rangle \rfloor$, finishing the proof.
 \end{proof}

 \begin{remark}
  As a consequence of the decomposition above we get an analogous description of the (top-dimensional) irreducible components of $\Mscr_G(b,\mu)^0$ resp.\ the $J_b(\QQ_p)$-orbits of irreducible components of $\Mscr_G(b,\mu)$ as in the case of affine Deligne-Lusztig varieties with $\mu$ minuscule.
 \end{remark}

 \appendix

 \section{Root data of some reductive group schemes}
 Here we use the notation of section \ref{ss group theory}. Furthermore, we denote all relative root data of $G_{\QQ_p}$ by a subscript $\QQ_p$. Let $I$ denote the Galois group of $O_F$ over $\ZZ_p$.

 \subsection{$\GL_{O_F,n}$} \quad\newline
 We have $\GL_{O_F,n} \otimes O_F \cong \prod_{\tau\in I} \GL_n$ with the Galois action cyclically permuting the $\GL_n$-factors. We choose $T_1 \subset B_1 \subset \GL_{O_F,n}$ to be the diagonal torus torus resp. the Borel subgroup of upper triangular matrices. Furthermore, let
 \begin{eqnarray*}
  e_{\varsigma,i}: T_1 \to \GG_m &,& (\diag(t_{\tau,1},\ldots,t_{\tau,n}))_{\tau \in I} \mapsto t_{\varsigma,i} \\
  e_{\varsigma,i}^\vee: \GG_m \to T_1 &,& x \mapsto (\diag(1,\ldots,1),\ldots, \diag(1,\ldots,1,x,1,\ldots,1),\ldots,\diag(1,\ldots,1))
 \end{eqnarray*}
 where the entry $x$ is the $(\varsigma,i)$\textsuperscript{th} entry. The $e_{\varsigma,i}$ resp. $e_{\varsigma,i}^\vee$ form a basis of $X^*(T)$ resp. $X_*(T)$, thus
 \begin{eqnarray*}
  X^*(T_1) &\cong& \prod_{\tau\in I} \ZZ^n \\
  X_*(T_1) &\cong& \prod_{\tau\in I} \ZZ^n \\
  R &=& \{e_{\tau,i} -e_{\tau,j} \mid \tau \in I, i \not=j \in \{1,\ldots,n\}\} \\
  R^\vee &=& \{e_{\tau,i}^\vee -e_{\tau,j}^\vee \mid \tau \in I, i \not=j \in \{1,\ldots,n\}\} \\
  R^+ &=& \{e_{\tau,i} -e_{\tau,j} \mid \tau \in I, i <j \in \{1,\ldots,n\}\} \\
  R^{\vee,+} &=& \{e_{\tau,i} -e_{\tau,j} \mid \tau \in I, i <j \in \{1,\ldots,n\}\} \\
  \Delta^+ &=& \{e_{\tau,i} -e_{\tau,i+1} \mid \tau \in I, i \in \{1,\ldots,n-1\}\} \\
  \Delta^{\vee,+} &=& \{e_{\tau,i}^\vee -e_{\tau,i+1}^\vee \mid \tau \in I, i \in \{1,\ldots,n-1\}\}.
 \end{eqnarray*}
 Hence
 \[
  X_*(T_1)_{\dom} = \{ \mu \in \prod_{\tau\in I} \ZZ^n \mid \mu_{\tau,1} \geq \ldots \mu_{\tau, n} \textnormal{ for all } \tau\}.
 \]
 Now the maximal split torus $S \subset T_{\QQ_p}$ is given by
 \[
  S_1 = \{ (\diag(t_1,\ldots,t_n))_{\tau \in I} \in T_{\QQ_p}\}.
 \]
 We define
 \begin{eqnarray*}
  e_{\QQ_p,i} &:=& e_{\tau,i|S} \textnormal{ for some} \tau\in I\\
  e_{\QQ_p,i}^\vee &:=& \sum_{\tau\in I} e_{\tau,i}^\vee.
 \end{eqnarray*}
 The $e_{\QQ_p,i}$ resp. $e_{\QQ_p,i}^\vee$ form a basis of $X^*(S_1)$ resp. $X_*(S_1)$, thus
 \begin{eqnarray*}
  X^*(S_1) &\cong&  \ZZ^n \\
  X_*(S_1) &\cong&  \ZZ^n \\
  R_{\QQ_p} &=& \{e_{\QQ_p,i} -e_{\QQ_p,j} \mid i \not=j \in \{1,\ldots,n\}\} \\
  R_{\QQ_p}^\vee &=& \{e_{\QQ_p,i}^\vee -e_{\QQ_p,j}^\vee \mid  i \not=j \in \{1,\ldots,n\}\} \\
  R_{\QQ_p}^+ &=& \{e_{\QQ_p,i} -e_{\QQ_p,j} \mid i <j \in \{1,\ldots,n\}\} \\
  R_{\QQ_p}^{\vee,+} &=& \{e_{\QQ_p,i} -e_{\QQ_p,j} \mid i <j \in \{1,\ldots,n\}\} \\
  \Delta_{\QQ_p}^+ &=& \{e_{\QQ_p,i} -e_{\QQ_p,i+1} \mid i \in \{1,\ldots,n-1\}\} \\
  \Delta_{\QQ_p}^{\vee,+} &=& \{e_{\QQ_p,i}^\vee -e_{\QQ_p,i+1}^\vee \mid \tau \in I, i \in \{1,\ldots,n-1\}\}.
 \end{eqnarray*}
 As a consequence
 \[
  X_*(S_1)_{\QQ,\dom} = \{\nu \in \ZZ^n\mid \nu_1 \geq \ldots \geq \nu_n\}
 \]
 \subsection{$\GSp_{O_F,n}$} \quad\newline
 We have 
 \[
  \GSp_{O_F,n} \otimes O_F \cong (\prod_{\tau\in I} \GSp_n)^1 = \{(g_\tau) \in \prod_{\tau\in I} \GSp_n \mid c(g_\tau) = c \textnormal{ does not depend on } \tau\}
 \]
 with the Galois action cyclically permuting the factors. Let $T_2 \subset B_2 \subset \GSp_{O_F,n}$ denote the maximal torus resp.\ the Borel subgroup of upper triangular matrices. We denote by
 \[
  c:T_2 \to \GG_m
 \]
 The similitude factor. Then
 \begin{eqnarray*}
  X^*(T_2) &\cong& \bigslant{X_*(T_1)}{\langle (e_{\tau,i} + e_{\tau,n+1-i}) - ( e_{\varsigma,j} + e_{\varsigma,n+1-j}) \rangle_{\tau,\varsigma ,i,j}} \\
  X_*(T_2) &=& \{ (\mu \in X_*(T) \mid \mu_{\tau,i} + \mu_{\tau,n+1-i} = c(\mu) \textnormal{ for some integer } c(\mu) \} \\
  R &=& \{e_{\tau,i|T_2} -e_{\tau,j|T_2} \mid \tau\in I, i \not=j \in \{1,\ldots,n/2\}\}\\
    & & \cup \{\pm(e_{\tau,i|T_2} +e_{\tau,j|T_2} -c) \mid \tau\in I i\not=j \in \{1,\ldots,n/2\}\} \\
    & & \cup \{\pm(2e_{\tau,i|T_2} -c) \mid \tau\in I, i\in\{1,\ldots,n/2\}\} \\
    &=& \{e_{\tau,i|T_2} - e_{\tau,j|T_2} \mid \tau\in I, i \not=j \in \{1,\ldots,n\}\} \\
  R^\vee &=& \{e_{\tau,i}^\vee -e_{\tau,j}^\vee + e_{\tau,n+1-j}^\vee - e_{\tau,n+1-i}^\vee \mid \tau\in I, i \not=j \in \{1,\ldots,n/2\}\} \\
         & & \cup \{ \pm(e_{\tau,i}^\vee+e_{\tau,j}^\vee - e_{\tau,n+1-i}^\vee-e_{\tau,n+1-j}^\vee) \mid \tau\in I i \not=j \in \{1,\ldots,n/2\}\} \\
         & & \cup \{ \pm e_{\tau,i}^\vee\mid \tau\in I, i \in \{1,\ldots,n/2\} \\
  R^+ &=& \{e_{\tau,i|T_2} -e_{\tau,j|T_2} \mid \tau\in I, i < j \in \{1,\ldots,n/2\}\}\\
      & & \cup \{e_{\tau,i|T_2} +e_{\tau,j|T_2} -c \mid  i\not=j \in \{1,\ldots,n/2\}\} \\
      & & \cup \{2e_{\tau,i|T_2} -c \mid \tau\in I, i\in\{1,\ldots,n/2\}\} \\
      &=& \{e_{\tau,i|T_2} - e_{\tau,j|T_2} \mid \tau\in I, i \not=j \in \{1,\ldots,n\}\} \\
  R^{\vee,+} &=& \{e_{\tau,i}^\vee -e_{\tau,j}^\vee + e_{\tau,n+1-j}^\vee - e_{\tau,n+1-i}^\vee \mid \tau\in I, i <j \in \{1,\ldots,n/2\}\} \\
         & & \cup \{ \pm(e_{\tau,i}^\vee+e_{\tau,j}^\vee - e_{\tau,n+1-i}^\vee-e_{\tau,n+1-j}^\vee\mid \tau\in I, i \not=j \in \{1,\ldots,n/2\}\} \\
         & & \cup \{ \pm(e_{\tau,i}^\vee\mid \tau\in I, i \in \{1,\ldots,n/2\} \\
  \Delta^+ &=& \{e_{\tau,i|T_2} -e_{\tau,i+1|T_2} \mid \tau \in I, i \in \{1,\ldots,n/2-1\}\} \cup \{2e_{\tau,n/2|T_2} -c \mid \tau \in I\} \\
           &=& \{e_{\tau,i|T_2} -e_{\tau,i+1|T_2} \mid \tau \in I, i \in \{1,\ldots,n-1\}\} \\
  \Delta^{\vee,+} &=& \{e_{\tau,i}^\vee -e_{\tau,i+1}^\vee + e_{\tau,n+1-i}^\vee - e_{\tau,n-i}^\vee \mid \tau \in I, i \in \{1,\ldots,n/2-1\}\} \cup \{e_{\tau,n/2}^\vee\mid \tau \in I\} 
 \end{eqnarray*}
 Hence
 \[
  X_*(T_2)_{\dom} = \{ \mu \in \prod_{\tau\in I} \ZZ^n \mid \mu_{\tau,1} \geq \ldots \mu_{\tau, n} \textnormal{ for all } \tau, \mu_{\tau,i} + \mu_{\tau,n+1-i} = c(\mu) \textnormal{ for some integer } c\}.
 \]
 Denote by $S_2$ the maximal split torus of $T_2$. Now
 \begin{eqnarray*}
  X^*(S_2) &\cong& \bigslant{X_*(S_1)}{\langle (e_{\QQ_p,i} + e_{\QQ_p,n+1-i}) - (e_{\QQ_p,j} + e_{\QQ_p,n+1-j}) \rangle_{i,j}} \\
  X_*(S_2) &=& \{ (\nu \in X_*(S) \mid \nu_{i} + \nu_{n+1-i} = c(\nu) \textnormal{ for some integer } c(\nu) \} \\
  R_{\QQ_p} &=& \{e_{\QQ_p,i|S_2} -e_{\QQ_p,j|S_2} \mid i \not=j \in \{1,\ldots,n/2\}\}\\
            & & \cup \{\pm(e_{\QQ_p,i|S_2} +e_{\QQ_p,j|S_2} -c) \mid  i\not=j \in \{1,\ldots,n/2\}\} \\
            & & \cup \{\pm(2e_{\QQ_p,i|S_2} -c) \mid i\in\{1,\ldots,n/2\}\} \\
            &=& \{e_{\QQ_p,i|S_2} - e_{\QQ_p,j|S_2} \mid i \not=j \in \{1,\ldots,n\}\} \\
  R_{\QQ_p}^\vee &=& \{e_{\QQ_p,i}^\vee -e_{\QQ_p,j}^\vee + e_{\QQ_p,n+1-j}^\vee - e_{\QQ_p,n+1-i}^\vee \mid i \not=j \in \{1,\ldots,n/2\}\} \\
         & & \cup \{ \pm(e_{\QQ_p,i}^\vee+e_{\QQ_p,j}^\vee - e_{\QQ_p,n+1-i}^\vee-e_{\QQ_p,n+1-j}^\vee) \mid i \not=j \in \{1,\ldots,n/2\}\} \\
         & & \cup \{ \pm e_{\QQ_p,i}^\vee\mid i \in \{1,\ldots,n/2\}
 \end{eqnarray*}
 \begin{eqnarray*}
  R_{\QQ_p}^+ &=& \{e_{\QQ_p,i|S_2} -e_{\QQ_p,j|S_2} \mid i < j \in \{1,\ldots,n/2\}\}\\
              & & \cup \{e_{\QQ_p,i|S_2} +e_{\QQ_p,j|S_2} -c \mid  i\not=j \in \{1,\ldots,n/2\}\} \\
              & & \cup \{2e_{\QQ_p,i|S_2} -c \mid i\in\{1,\ldots,n/2\}\} \\
              &=& \{e_{\QQ_p,i|S_2} - e_{\QQ_p,j|S_2} \mid i \not=j \in \{1,\ldots,n\}\} \\
  R_{\QQ_p}^{\vee,+} &=& \{e_{\QQ_p,i}^\vee -e_{\QQ_p,j}^\vee + e_{\QQ_p,n+1-j}^\vee - e_{\QQ_p,n+1-i}^\vee \mid i <j \in \{1,\ldots,n/2\}\} \\
                     & & \cup \{ \pm(e_{\QQ_p,i}^\vee+e_{\QQ_p,j}^\vee - e_{\QQ_p,n+1-i}^\vee-e_{\QQ_p,n+1-j}^\vee\mid i \not=j \in \{1,\ldots,n/2\}\} \\
                     & & \cup \{ \pm(e_{\QQ_p,i}^\vee\mid i \in \{1,\ldots,n/2\} \\
  \Delta^+ &=& \{e_{\QQ_p,i|S_2} -e_{\QQ_p,i+1|S_2} \mid i \in \{1,\ldots,n/2-1\}\} \cup \{2e_{\QQ_p,n/2|S_2} -c\} \\
           &=& \{e_{\QQ_p,i|S_2} -e_{\QQ_p,i+1|S_2} \mid i \in \{1,\ldots,n-1\}\} \\
  \Delta^{\vee,+} &=& \{e_{\QQ_p,i}^\vee -e_{\QQ_p,i+1}^\vee + e_{\QQ_p,n+1-i}^\vee - e_{\QQ_p,n-i}^\vee \mid i \in \{1,\ldots,n/2-1\}\} \cup \{e_{\QQ_p,n/2}^\vee\}.
 \end{eqnarray*}
 Thus
 \[
  X_*(S_2)_{\QQ,\dom} = \{\nu \in \QQ^n\mid \nu_1 \geq \ldots \geq \nu_n, \nu_{i} + \nu_{n+1-i} = c(\nu) \textnormal{ for some integer } c(\nu)\}
 \]

 \subsection{$\GU_{O_F,n}$} \quad \newline
 We have
 \[
  \GU_{O_F,n} \otimes O_F = \{ (g_\tau) \in \prod_{\tau\in I} GL_n \mid g_\tau J g_{\sigma_{F'}\circ\tau} = c(g) J \}
 \]
 With $I$ cyclically permuting the factors. et $T_3 \subset B_3 \subset \GU_{O_F,n}$ denote the maximal torus resp.\ the Borel subgroup of upper triangular matrices. We denote by
 \[
  c:T_3 \to \GG_m
 \]
 the similitude factor. We fix a system of representatives $I' \subset I$ of $I/\sigma_{F'}$ Now
 \begin{eqnarray*}
  X^*(T_3) &\cong& \bigslant{X_*(T_1)}{\langle (e_{\tau,i} + e_{\sigma_{F'} + \tau,n+1-i}) - ( e_{\varsigma,j} + e_{\sigma_{F'}+ \varsigma,n+1-j}) \rangle_{\tau,\varsigma ,i,j}} \\
  X_*(T_3) &=& \{ (\mu \in X_*(T) \mid \mu_{\tau,i} + \mu_{\sigma_{F'} + \tau,n+1-i} = c(\mu) \textnormal{ for some integer } c(\mu) \} \\
  R &=& \{e_{\tau,i|T_3} -e_{\tau,j|T_3} \mid \tau \in I', i \not=j \in \{1,\ldots,n\}\} \\
    &=& \{e_{\tau,i|T_3} -e_{\tau,j|T_3} \mid \tau \in I, i \not=j \in \{1,\ldots,n\}\} \\
  R^\vee &=& \{e_{\tau,i}^\vee -e_{\tau,j}^\vee + e_{\sigma_{F'}+\tau,n+1-j} - e_{\sigma_{F'}+\tau,n+1-i}  \mid \tau \in I', i \not=j \in \{1,\ldots,n\}\} \\
  R^+ &=& \{e_{\tau,i|T_3} -e_{\tau,j|T_3} \mid \tau \in I', i <j \in \{1,\ldots,n\}\} \\
      &=& \{e_{\tau,i|T_3} -e_{\tau,j|T_3} \mid \tau \in I, i <j \in \{1,\ldots,n\}\} \\
  R^{\vee,+} &=& \{e_{\tau,i}^\vee -e_{\tau,j}^\vee + e_{\sigma_{F'}+\tau,n+1-j} - e_{\sigma_{F'}+\tau,n+1-i} \mid \tau \in I, i <j \in \{1,\ldots,n\}\} \\
  \Delta^+ &=& \{e_{\tau,i|T_3} -e_{\tau,i+1|T_3} \mid \tau \in I', i \in \{1,\ldots,n-1\}\} \\
           &=& \{e_{\tau,i|T_3} -e_{\tau,i+1|T_3} \mid \tau \in I, i \in \{1,\ldots,n-1\}\} \\
  \Delta^{\vee,+} &=& \{e_{\tau,i}^\vee -e_{\tau,i+1}^\vee + e_{\sigma_{F'}+\tau,n-i} - e_{\sigma_{F'}+\tau,n+1-i} \mid \tau \in I', i \in \{1,\ldots,n-1\}\}.
 \end{eqnarray*}
 In particular,
 \[
  X_*(T_3)_{\dom} = \{ (\mu \in X_*(T) \mid \mu_{\tau,1} \geq \ldots \geq \mu_{\tau,n}, \mu_{\tau,i} + \mu_{\sigma_{F'} + \tau,n+1-i} = c(\mu) \textnormal{ for some integer } c(\mu) \}.
 \]
 We denote by $S_3$ the maximal split torus of $T_3$. If $n$ is even, then
 \begin{eqnarray*}
  X^*(S_3) &\cong& \bigslant{X_*(S_1)}{\langle (e_{\QQ_p,i} + e_{\QQ_p,n+1-i}) - (e_{\QQ_p,j} + e_{\QQ_p,n+1-j}) \rangle_{i,j}} \\
  X_*(S_3) &=& \{ (\nu \in X_*(S) \mid \nu_{i} + \nu_{n+1-i} = c(\nu) \textnormal{ for some integer } c(\nu) \} \\
  R_{\QQ_p} &=& \{e_{\QQ_p,i|S_2} -e_{\QQ_p,j|S_2} \mid i \not=j \in \{1,\ldots,n/2\}\}\\
            & & \cup \{\pm(e_{\QQ_p,i|S_2} +e_{\QQ_p,j|S_2} -c) \mid  i\not=j \in \{1,\ldots,n/2\}\} \\
            & & \cup \{\pm(2e_{\QQ_p,i|S_2} -c) \mid i\in\{1,\ldots,n/2\}\} \\
            &=& \{e_{\QQ_p,i|S_2} - e_{\QQ_p,j|S_2} \mid i \not=j \in \{1,\ldots,n\}\} \\
  R_{\QQ_p}^\vee &=& \{e_{\QQ_p,i}^\vee -e_{\QQ_p,j}^\vee + e_{\QQ_p,n+1-j}^\vee - e_{\QQ_p,n+1-i}^\vee \mid i \not=j \in \{1,\ldots,n/2\}\} \\
         & & \cup \{ \pm(e_{\QQ_p,i}^\vee+e_{\QQ_p,j}^\vee - e_{\QQ_p,n+1-i}^\vee-e_{\QQ_p,n+1-j}^\vee) \mid i \not=j \in \{1,\ldots,n/2\}\} \\
         & & \cup \{ \pm e_{\QQ_p,i}^\vee\mid i \in \{1,\ldots,n/2\} \\
  R_{\QQ_p}^+ &=& \{e_{\QQ_p,i|S_2} -e_{\QQ_p,j|S_2} \mid i < j \in \{1,\ldots,n/2\}\}\\
              & & \cup \{e_{\QQ_p,i|S_2} +e_{\QQ_p,j|S_2} -c \mid  i\not=j \in \{1,\ldots,n/2\}\} \\
              & & \cup \{2e_{\QQ_p,i|S_2} -c \mid i\in\{1,\ldots,n/2\}\} \\
              &=& \{e_{\QQ_p,i|S_2} - e_{\QQ_p,j|S_2} \mid i \not=j \in \{1,\ldots,n\}\} \\
  R_{\QQ_p}^{\vee,+} &=& \{e_{\QQ_p,i}^\vee -e_{\QQ_p,j}^\vee + e_{\QQ_p,n+1-j}^\vee - e_{\QQ_p,n+1-i}^\vee \mid i <j \in \{1,\ldots,n/2\}\} \\
                     & & \cup \{ \pm(e_{\QQ_p,i}^\vee+e_{\QQ_p,j}^\vee - e_{\QQ_p,n+1-i}^\vee-e_{\QQ_p,n+1-j}^\vee\mid i \not=j \in \{1,\ldots,n/2\}\} \\
                     & & \cup \{ \pm(e_{\QQ_p,i}^\vee\mid i \in \{1,\ldots,n/2\} \\
  \Delta^+ &=& \{e_{\QQ_p,i|S_2} -e_{\QQ_p,i+1|S_2} \mid i \in \{1,\ldots,n/2-1\}\} \cup \{2e_{\QQ_p,n/2|S_2} -c\} \\
           &=& \{e_{\QQ_p,i|S_2} -e_{\QQ_p,i+1|S_2} \mid i \in \{1,\ldots,n-1\}\} \\
  \Delta^{\vee,+} &=& \{e_{\QQ_p,i}^\vee -e_{\QQ_p,i+1}^\vee + e_{\QQ_p,n+1-i}^\vee - e_{\QQ_p,n-i}^\vee \mid i \in \{1,\ldots,n/2-1\}\} \cup \{e_{\QQ_p,n/2}^\vee\}.
 \end{eqnarray*}
 If $n$ is odd, then
 \begin{eqnarray*}
  X^*(S_3) &\cong& \bigslant{X_*(S_1)}{\langle (e_{\QQ_p,i} + e_{\QQ_p,n+1-i}) - (e_{\QQ_p,j} + e_{\QQ_p,n+1-j}) \rangle_{i,j}} \\
  X_*(S_3) &=& \{ (\nu \in X_*(S_1) \mid \nu_{i} + \nu_{n+1-i} = c(\nu) \textnormal{ for some integer } c(\nu) \} \\
  R_{\QQ_p} &=& \{e_{\QQ_p,i|S_3} -e_{\QQ_p,j|S_3} \mid i \not=j \in \{1,\ldots,(n-1)/2\}\}\\
            & & \cup \{\pm(e_{\QQ_p,i|S_3} +e_{\QQ_p,j|S_3} -c) \mid  i\not=j \in \{1,\ldots(n-1)/2\}\} \\
            & & \cup \{\pm(2e_{\QQ_p,i|S_3} -c) \mid i\in\{1,\ldots,(n-1)/2\}\} \\
            & & \cup \{\pm(e_{\QQ_p,i|S_3} -c) \mid i\in\{1,\ldots,(n-1)/2\}\} \\
            &=& \{e_{\QQ_p,i|S_2} - e_{\QQ_p,j|S_2} \mid i \not=j \in \{1,\ldots,n\}\} \\
  R_{\QQ_p}^\vee &=& \{e_{\QQ_p,i}^\vee -e_{\QQ_p,j}^\vee + e_{\QQ_p,n+1-j}^\vee - e_{\QQ_p,n+1-i}^\vee \mid i \not=j \in \{1,\ldots,(n-1)/2\}\} \\
         & & \cup \{ \pm(e_{\QQ_p,i}^\vee+e_{\QQ_p,j}^\vee - e_{\QQ_p,n+1-i}^\vee-e_{\QQ_p,n+1-j}^\vee) \mid i \not=j \in \{1,\ldots,(n-1)/2\}\} \\
         & & \cup \{ \pm e_{\QQ_p,i}^\vee\mid i \in \{1,\ldots,(n-1)/2\} \\
         & & \cup \{ \pm 2e_{\QQ_p,i}^\vee\mid i \in \{1,\ldots,(n-1)/2\} \\
  R_{\QQ_p}^+ &=& \{e_{\QQ_p,i|S_3} -e_{\QQ_p,j|S_3} \mid i < j \in \{1,\ldots,(n-1)/2\}\}\\
              & & \cup \{e_{\QQ_p,i|S_3} +e_{\QQ_p,j|S_3} -c \mid  i\not=j \in \{1,\ldots,(n-1)/2\}\} \\
              & & \cup \{2e_{\QQ_p,i|S_3} -c \mid i\in\{1,\ldots,(n-1)/2\}\} \\
              & & \cup \{e_{\QQ_p,i|S_3} \mid i\in\{1,\ldots,(n-1)/2\}\} \\
              &=& \{e_{\QQ_p,i|S_2} - e_{\QQ_p,j|S_2} \mid i \not=j \in \{1,\ldots,n\}\} \\
 \end{eqnarray*}
 \begin{eqnarray*}
  R_{\QQ_p}^{\vee,+} &=& \{e_{\QQ_p,i}^\vee -e_{\QQ_p,j}^\vee + e_{\QQ_p,n+1-j}^\vee - e_{\QQ_p,n+1-i}^\vee \mid i <j \in \{1,\ldots,(n-1)/2\}\} \\
                     & & \cup \{ e_{\QQ_p,i}^\vee+e_{\QQ_p,j}^\vee - e_{\QQ_p,n+1-i}^\vee-e_{\QQ_p,n+1-j}^\vee\mid i \not=j \in \{1,\ldots,(n-1)/2\}\} \\
                     & & \cup \{ e_{\QQ_p,i}^\vee\mid i \in \{1,\ldots,(n-1)/2\}
                     \cup \{ 2e_{\QQ_p,i}^\vee\mid i \in \{1,\ldots,(n-1)/2\} \\
  \Delta^+ &=& \{e_{\QQ_p,i|S_3} -e_{\QQ_p,i+1|S_3} \mid i \in \{1,\ldots,(n-1)/2-1\}\} \cup \{e_{\QQ_p,(n-1)/2|S_2} -c\} \\
           &=& \{e_{\QQ_p,i|S_3} -e_{\QQ_p,i+1|S_3} \mid i \in \{1,\ldots,n-1\}\} \\
  \Delta^{\vee,+} &=& \{e_{\QQ_p,i}^\vee -e_{\QQ_p,i+1}^\vee + e_{\QQ_p,n+1-i}^\vee - e_{\QQ_p,n-i}^\vee \mid i \in \{1,\ldots,(n-1)/2-1\}\} \cup \{e_{\QQ_p,n/2}^\vee\}.
 \end{eqnarray*}
 In any case we get
 \[
  X_*(S_3)_{\QQ,\dom} = \{\nu \in \QQ^n\mid \nu_1 \geq \ldots \geq \nu_n, \nu_{i} + \nu_{n+1-i} = c(\nu) \textnormal{ for some integer } c(\nu)\}
 \]  
 
 \providecommand{\bysame}{\leavevmode\hbox to3em{\hrulefill}\thinspace}
\providecommand{\MR}{\relax\ifhmode\unskip\space\fi MR }
% \MRhref is called by the amsart/book/proc definition of \MR.
\providecommand{\MRhref}[2]{%
  \href{http://www.ams.org/mathscinet-getitem?mr=#1}{#2}
}
\providecommand{\href}[2]{#2}

\end{document}